\documentclass[11pt,reqno]{amsart}

\usepackage[T1]{fontenc}
\usepackage[utf8]{inputenc}
\usepackage{amsmath,amssymb,amsthm}
\usepackage{xcolor}
\usepackage{tikz}
\usepackage{enumitem}
\usepackage{booktabs}
\usepackage[colorlinks=true,linkcolor=blue!50!black,citecolor=blue!50!black,urlcolor=blue!50!black]{hyperref}
\usepackage{cleveref}
\usepackage{doi}

\crefname{section}{Section}{Sections}       \Crefname{section}{Section}{Sections}
\crefname{subsection}{Section}{Sections}    \Crefname{subsection}{Section}{Sections}
\crefname{appendix}{Appendix}{Appendices}   \Crefname{appendix}{Appendix}{Appendices}
\crefname{figure}{Figure}{Figures}          \Crefname{figure}{Figure}{Figures}
\crefname{table}{Table}{Tables}             \Crefname{table}{Table}{Tables}
\crefname{theorem}{Theorem}{Theorems}       \Crefname{theorem}{Theorem}{Theorems}
\crefname{proposition}{Proposition}{Propositions}
\Crefname{proposition}{Proposition}{Propositions}
\crefname{lemma}{Lemma}{Lemmas}             \Crefname{lemma}{Lemma}{Lemmas}
\crefname{corollary}{Corollary}{Corollaries}
\Crefname{corollary}{Corollary}{Corollaries}
\crefname{definition}{Definition}{Definitions}
\Crefname{definition}{Definition}{Definitions}
\crefname{remark}{Remark}{Remarks}          \Crefname{remark}{Remark}{Remarks}
\crefname{example}{Example}{Examples}       \Crefname{example}{Example}{Examples}

\newcounter{eqdest}
\makeatletter
\AtBeginDocument{%
  \let\sc@theHequation\theHequation
  \renewcommand*{\theHequation}{\sc@theHequation.\arabic{eqdest}}}
\makeatother
\newcommand{\dtag}[1]{\tag{#1}\stepcounter{eqdest}}

\usepackage[noesvect,overrightharpoonup]{overarrows}   

\newcommand{\lcb}{\left\lbrace}       
\newcommand{\rcb}{\right\rbrace}      
\newcommand{\cb}[1]{\lcb #1 \rcb}     
\newcommand{\lab}{\left[}             
\newcommand{\rab}{\right]}            
\newcommand{\ab}[1]{\lab #1 \rab}     
\newcommand{\lb}{\left(}              
\newcommand{\rb}{\right)}             
\newcommand{\br}[1]{\lb #1 \rb}       
\newcommand{\brOf}[1]{\!\br{#1}}      
\newcommand{\abs}[1]{\left| #1 \right|}

\newcommand*{\mc}[1]{\mathcal{#1}}
\newcommand*{\ms}[1]{\mathsf{#1}}

\newcommand{\R}{\mathbb{R}}
\newcommand{\Rp}{[0, \infty)}
\newcommand{\Rpp}{(0, \infty)}

\DeclareMathOperator{\diam}{\mathsf{diam}}

\newcommand{\innerProduct}[2]{\left\langle#1\,,\, #2\right\rangle}
\newcommand{\ip}[2]{\innerProduct{#1}{#2}}

\newcommand{\ind}{\mathbf{1}}         
\newcommand{\dl}{\mathrm{d}}          

\newcommand{\pr}{^\prime}
\newcommand{\prr}{^{\prime\prime}}

\newcommand{\equationFullstop}{\, .}
\newcommand{\eqfs}{\equationFullstop}
\newcommand{\equationComma}{\, ,}
\newcommand{\eqcm}{\equationComma}

\newcommand{\ol}[2]{\overline{#1#2}}            
\newcommand{\oa}[2]{\overrightharpoonup{#1#2}}  
\newcommand{\olt}[2]{\ol{#1}{#2}^2}             
\newcommand{\cato}{\texorpdfstring{$\mathrm{CAT}(0)$}{CAT(0)}}
\newcommand{\catk}{\texorpdfstring{$\mathrm{CAT}(\kappa)$}{CAT(kappa)}}

\newcommand{\tran}{\tau}
\newcommand{\dtran}{\tran\pr}
\newcommand{\ddtran}{\tran\prr}
\newcommand{\ddrtran}{\partial_+\dtran}         
\newcommand{\oltr}[2]{\tran(\ol{#1}{#2})}

\newcommand{\ctransymbol}{L}
\newcommand{\ctranobest}[1]{\ctransymbol_{#1}}  

\newcommand{\setcc}{\mc S}
\newcommand{\setcco}{\mc S_0}

\newcommand{\sqmap}{x^2}

\newcommand{\setC}{\mc C}

\newcommand{\Dk}{D_{\kappa}}              
\newcommand{\Mk}{M^{2}_{\kappa}}          
\newcommand{\chmap}{\varsigma_{\kappa}}   
\newcommand{\ch}[1]{\widehat{#1}}         
\newcommand{\och}[2]{\widehat{#1#2}}      
\newcommand{\Th}{\Theta_{\kappa}}         
\newcommand{\Kap}{\mathrm{K}_{\kappa}}    
\newcommand{\Pidia}{\Pi_{\ms{dia}}}       
\newcommand{\Pileg}{\Pi_{\ms{leg}}}       
\newcommand{\Log}{\operatorname{Log}}
\newcommand{\Tcone}[1]{T_{#1}}            
\newcommand{\Scone}[1]{\Sigma_{#1}}       
\newcommand{\ang}[1]{\angle_{#1}}

\newcommand{\Quad}{\mathsf{Q}}
\newcommand{\Quadt}{\Quad_{\tran}}
\newcommand{\QuadOf}[4]{\Quadt(#1,#2;#3,#4)}

\newcommand{\by}{\bar y}
\newcommand{\bz}{\bar z}
\newcommand{\bq}{\bar q}
\newcommand{\bp}{\bar p}

\newcommand{\Am}{t}          
\newcommand{\Kk}{K}          
\newcommand{\dz}{\delta_0}   
\newcommand{\tent}[2]{T_{#1,#2}}
\newcommand{\Phifun}{\Phi}
\newcommand{\Gfun}{G}
\newcommand{\posp}[1]{(#1)_+}

\newcommand{\seg}[2]{\ab{#1,#2}}

\newcommand{\sleg}{s_{\ms{leg}}}
\newcommand{\sdia}{s_{\ms{dia}}}
\newcommand{\smin}{s_{\ms{min}}}

\newcommand{\leanchecked}{\textnormal{Lean}\,\checkmark}  
\newcommand{\yes}{\ensuremath{\checkmark}}                
\newcommand{\no}{\ensuremath{\times}}

\definecolor{colQuadDiag}{HTML}{D81B60}
\definecolor{colQuadSidePos}{HTML}{1E88E5}
\definecolor{colQuadSideNeg}{HTML}{004D40}

\theoremstyle{plain}
\newtheorem{theorem}{Theorem}[section]
\newtheorem{proposition}[theorem]{Proposition}
\newtheorem{lemma}[theorem]{Lemma}
\newtheorem{corollary}[theorem]{Corollary}
\theoremstyle{definition}
\newtheorem{definition}[theorem]{Definition}
\newtheorem{example}[theorem]{Example}
\theoremstyle{remark}
\newtheorem{remark}[theorem]{Remark}

\makeatletter
\newcommand{\orcidname}{{\itshape ORCID}}
\newcommand{\orcidentry}[2]{\begingroup
    \@ifnotempty{#2}{\nobreak\indent\orcidname
        \@ifnotempty{#1}{, \ignorespaces#1\unskip}\/:\space
        \href{https://orcid.org/#2}{\ttfamily https://orcid.org/#2}\par}%
    \endgroup}
\newcommand{\orcid}[2][]{\g@addto@macro\addresses{\orcidentry{#1}{#2}}}
\makeatother

\begin{document}
    
\title[The trapezoid comparison inequality]
{The trapezoid comparison inequality in metric spaces with curvature bounded above}

\author{Christof Sch\"otz}
\address{Munich Climate Center and Earth System Modelling Group, Department of Aerospace and Geodesy, TUM School of Engineering and Design, Technical University of Munich, Munich, Germany}
\address[Also at]{Research Department 4 - Complexity Science, Potsdam Institute for Climate Impact Research, Potsdam, Germany}
\email{christof.schoetz@tum.de}
\orcid{0000-0003-3528-4544}

\subjclass[2020]{Primary 53C23, 30L15; Secondary 26A51, 26D07}
\keywords{
    trapezoid comparison,
    quadruple inequality,
    CAT($\kappa$) space,
    curvature bounded above,
    Ptolemy inequality,
    Reshetnyak comparison}

\begin{abstract}
    Among four points of a CAT(0) space, the planar symmetric trapezoids are configurations on which Ptolemy's and Reshetnyak's inequalities are both equalities.
    We show that the symmetric trapezoids remain extremal for the entire family of inequalities interpolating between the two, indexed by the nondecreasing convex functions with concave derivative, and that this function class is characterized by
    this property.
    The result is qualitatively stronger than the known quadruple inequalities for this class of functions, and recovers them with their optimal constants, which were previously known only for power functions.
    Moreover, we extend the analysis to CAT($\kappa$) spaces with $\kappa>0$.
    We derive variants of Reshetnyak's quadrilateral comparison and of Ptolemy's inequality under positive upper curvature bounds, each with the optimal constant.
    These two inequalities yield the trapezoid comparison inequality in CAT($\kappa$) spaces, where the product of the bases carries an additional constant factor compared to the $\kappa=0$ case.
    Again the constant is optimal.
\end{abstract}

\maketitle

\section{Introduction}\label{sec:intro}

\subsection{Cauchy--Schwarz, Ptolemy, and symmetric trapezoids} \label{ssec:intro:intro}

Consider four points $y,z,q,p$ of the Euclidean plane, read as a quadrilateral traversed in that order, and write $\oa yz$ for the vector from $y$ to $z$ and $\ol yz$ for the distance between them.
Then $\oa yz$ and $\oa pq$ are one pair of opposite sides, which we call the \emph{bases}; $\oa yp$ and $\oa zq$ are the other pair, the \emph{legs}; and $\oa yq,\oa zp$ are the \emph{diagonals}.
This terminology is borrowed from trapezoids, even though $y,z,q,p$ need not form one.
Two classical inequalities constrain the lengths of these six vectors: The Cauchy--Schwarz inequality applied to the bases is
\begin{equation}\label{eq:intro-cs}
    \ip{\oa yz}{\oa pq} \leq\, \ol yz\,\ol pq
    \eqfs
\end{equation}
It is an equality exactly when $\oa yz$ and $\oa pq$ are parallel and point the same way.
Ptolemy's inequality
\begin{equation}\dtag{P}\label{eq:P}
    \ol yq\,\ol zp \,\leq\, \ol yp\,\ol zq+\ol yz\,\ol pq
    \eqcm
\end{equation}
in which the product of the diagonals is bounded by the sum of the products of the legs and of the bases, is an equality exactly when the four points, in the given cyclic order, lie on a circle or on a line.
Each inequality is therefore tight on its own family---trapezoids for \eqref{eq:intro-cs}, cyclic quadrilaterals for \eqref{eq:P}---and a trapezoid is cyclic exactly when it is \emph{symmetric} (isosceles), so, leaving aside collinear configurations, the two families meet in the symmetric trapezoids.
It stands to reason that symmetric trapezoids are then the extremal configurations for any inequality lying, in some sense, between \eqref{eq:intro-cs} and \eqref{eq:P}.
They are, as our main result shows.

\subsection{Main result}\label{ssec:intro:result}

For any transformation function $\tran \colon \Rp \to\R$ and four points in a metric space, define the \emph{quadruple functional} $\QuadOf yzqp := \oltr yq+\oltr zp-\oltr yp-\oltr zq$ (\cref{def:Q}).

\begin{figure}[tb]
    \centering
    \begin{tikzpicture}[scale=0.44,thick,
        qy/.style={colQuadDiag}, ql/.style={colQuadSideNeg}, qd/.style={colQuadSidePos},
        pt/.style={fill,circle,inner sep=1.0pt}]
        \foreach \px/\py/\qx/\qy [count=\k from 0] in {%
            -0.850/1.810/0.850/1.810,
            1.150/0.000/2.850/0.000,
            0.653/1.108/2.347/1.241,
            -0.617/2.233/0.617/1.063,
            0.178/1.279/0.022/-0.414,
            -0.489/-0.000/-0.511/1.700
        }{
            \pgfmathsetmacro{\cx}{mod(\k,3)*7.0}
            \pgfmathsetmacro{\cy}{-floor(\k/3)*5.2}
            \begin{scope}[shift={(\cx,\cy)}]
                \coordinate (y) at (-1.7,0);   \coordinate (z) at (1.7,0);
                \coordinate (p) at (\px,\py);  \coordinate (q) at (\qx,\qy);
                \draw[qd] (y)--(q); \draw[qd] (z)--(p);
                \draw[ql] (y)--(p); \draw[ql] (z)--(q);
                \draw[qy] (y)--(z); \draw[qy] (p)--(q);
                \node[pt] at (y){}; \node[pt] at (z){};
                \node[pt] at (p){}; \node[pt] at (q){};
                \ifnum\k=0
                \node[below left =-3pt,font=\tiny] at (y) {$\by$};
                \node[below right=-3pt,font=\tiny] at (z) {$\bz$};
                \node[above left =-3pt,font=\tiny] at (p) {$\bp$};
                \node[above right=-3pt,font=\tiny] at (q) {$\bq$};
                \else
                \node[below left =-3pt,font=\tiny] at (y) {$y$};
                \node[below right=-3pt,font=\tiny] at (z) {$z$};
                \node[above left =-3pt,font=\tiny] at (p) {$p$};
                \node[above right=-3pt,font=\tiny] at (q) {$q$};
                \fi
            \end{scope}
        }
        \begin{scope}[shift={(7.5,-8.2)},font=\scriptsize]
            \draw[qy] (-4.6,0)--(-4.0,0) node[right=1pt,black]{bases};
            \draw[ql] (-1.4,0)--(-0.8,0) node[right=1pt,black]{legs};
            \draw[qd] ( 1.4,0)--( 2.0,0) node[right=1pt,black]{diagonals};
        \end{scope}
    \end{tikzpicture}
    \caption{Quadrilaterals with the same two base lengths and the same average leg length $s=\tfrac12(\ol yp+\ol zq)$; the first is the comparison trapezoid, whose two diagonals both have length $\Am=\sqrt{s^2+\ol yz\,\ol pq}$.
        The pictures are planar only for legibility: the main theorem applies to any four points of any \cato{} space.}
    \label{fig:trapezoid}
\end{figure}
Intuitively, if $\tran$ is nondecreasing, $\QuadOf yzqp$ is maximized by making the leg lengths $\ol yp$, $\ol zq$ small and the diagonal lengths $\ol yq$, $\ol zp$ large.
If $\tran$ is convex and $\ol yp + \ol zq$ is fixed, we should choose equal leg lengths to maximize $\QuadOf yzqp$.
If we additionally fix the base lengths, the maximization problem is constrained enough to yield a finite result: $\QuadOf yzqp$ is maximized at the symmetric trapezoid.
While short legs and long diagonals are not independent of one another---equalizing the legs on its own can lower $\QuadOf yzqp$---the following theorem asserts that the symmetric trapezoid is maximizing regardless.

\begin{theorem}[Trapezoid comparison inequality]\label{thm:intro:main}
    For every quadruple $y$, $z$, $q$, $p$ of a \cato{} space and every nondecreasing convex $\tran\colon\Rp\to\R$ with concave derivative,
    \begin{equation}\dtag{T}\label{eq:T}
        \QuadOf yzqp \leq \QuadOf{\by}{\bz}{\bq}{\bp}
        \eqcm
    \end{equation}
    where $\by,\bz,\bq,\bp$ are the vertices of the (possibly degenerate) symmetric trapezoid of the Euclidean plane whose bases $\oa\by\bz, \oa\bp\bq$ have length $\ol\by\bz=\ol yz$ and $\ol\bq\bp=\ol pq$ and whose two legs $\oa\by\bp, \oa\bz\bq$ both have length equal to the average $\tfrac12(\ol yp+\ol zq)$ of the two original leg lengths \textup{(\cref{fig:trapezoid})}.
    Such a trapezoid exists, and it is the only symmetric trapezoid with these bases and legs, up to isometries of the Euclidean plane.
\end{theorem}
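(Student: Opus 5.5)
The plan is to reduce \eqref{eq:T} to a real-variable inequality about six numbers, using only two metric facts valid in every \cato{} space, and then to exploit the structure of the admissible functions $\tran$. Write $a=\ol yz$, $b=\ol pq$, $l_1=\ol yp$, $l_2=\ol zq$, $d_1=\ol yq$, $d_2=\ol zp$, $s=\tfrac12(l_1+l_2)$ and $\Am=\sqrt{s^2+ab}$.

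First, the comparison trapezoid. Place $\bp,\bq$ symmetrically above $\by,\bz$. Both legs have length $s$ exactly when $s\ge\tfrac12\abs{a-b}$, and this follows from the triangle inequality along the path $y,p,q,z$ (and its reverse). Ptolemy's equality for cyclic quadrilaterals then shows that both diagonals equal $\Am$. Uniqueness up to isometry is elementary. Hence the right-hand side of \eqref{eq:T} equals $2\tran(\Am)-2\tran(s)$.

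Second, the two \cato{} inputs. Ptolemy's inequality \eqref{eq:P} gives $d_1d_2\le l_1l_2+ab$, and Reshetnyak's quadruple inequality gives $d_1^2+d_2^2\le l_1^2+l_2^2+2ab$. Adding twice the first to the second yields
\begin{equation*}
    (d_1+d_2)^2\le (l_1+l_2)^2+4ab=4\Am^2,
\end{equation*}
so $d_1+d_2\le 2\Am$. Together with $d_1^2+d_2^2-l_1^2-l_2^2\le 2ab=2\Am^2-2s^2$, this already proves \eqref{eq:T} for $\tran(x)=x$ and for $\tran(x)=x^2$, and it is the only information about the space I intend to use.

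Third, a decomposition of $\tran$. If $\tran'$ is concave and $\tran'\ge0$ on $\Rp$, then $\tran'$ is nondecreasing. Consequently $\tran$ can be written, up to an additive constant, as a nonnegative combination of $x$, $x^2$ and the "Huber" functions $h_c(x)=\int_0^x\min(u,c)\,\dl u$ for $c>0$:
\begin{equation*}
    \tran(x)=\tran(0)+\beta x+\gamma x^2+\int h_c(x)\,\dl\mu(c).
\end{equation*}
This comes from representing the concave nondecreasing $\tran'$ as $\beta+2\gamma u+\int\min(u,c)\,\dl\mu(c)$, possibly after truncation and a limiting argument. Since both sides of \eqref{eq:T} are linear in $\tran$, it suffices to treat each $h_c$. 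By scaling $c=1$ is enough, with $h(x)=x^2/2$ for $x\le1$ and $h(x)=x-\tfrac12$ for $x\ge1$.

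Fourth, and this is the main obstacle, the inequality for $h$:
\begin{equation*}
    h(d_1)+h(d_2)-h(l_1)-h(l_2)\le 2h(\Am)-2h(s),
\end{equation*}
under the two constraints and $d_i,l_i\ge0$. Convexity alone does not suffice. Replacing the legs by $s$ lowers $h(l_1)+h(l_2)$, but it also shrinks the quadratic budget $l_1^2+l_2^2$ available to the diagonals. I would handle this by a case analysis on where $s$, $\Am$, $d_i$, $l_i$ sit relative to the kink at $1$.

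When everything lies below $1$, this is the $x^2$ case. When everything lies above $1$, it is the linear case. In the mixed regimes I would proceed as follows:
\begin{itemize}
    \item Use monotonicity to push $(d_1,d_2)$ onto the boundary of the feasible region.
    \item Note that $h$ is convex, so for a fixed sum the spread configuration is worst; the quadratic constraint then caps that spread by $l_1^2+l_2^2$.
    \item Show the resulting gain is offset by the loss $h(l_1)+h(l_2)-2h(s)$, which grows with the same spread $\abs{l_1-l_2}$.
\end{itemize}
Concretely, I would parametrize $l_{1,2}=s\pm\delta$ and prove that the maximal left-hand side is nonincreasing in $\delta$. At $\delta=0$ the two sum constraints force $d_1=d_2=\Am$ at the optimum, which gives equality.

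Finally, optimality of the function class should be proved separately, in a subsequent result rather than as part of this theorem.
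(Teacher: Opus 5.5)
\textbf{Where the proposal stops short.} Your Steps~1--3 are sound and match the paper. \eqref{eq:R} and \eqref{eq:P} give the quadratic constraint and $d_1+d_2\le2\Am$, and your Huber decomposition is \cref{lem:representation} and \cref{prop:choquet-structure}.

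The gap is Step~4. It carries the entire content of the theorem (in the paper it is \cref{lem:reduced}, a case analysis the author machine-checks), and you only announce it. Your claim that the maximal left-hand side is nonincreasing in $\delta$ is in fact true, but nothing in your bullet points proves it. They also miss the one property needed on the quadratic part of the boundary. Where $d_1^2+d_2^2=2\Am^2+2\delta^2$ is the active constraint, the sum $d_1+d_2$ is not fixed, so convexity of $\tran$ says nothing about which point of that arc is worst. Your $\delta=0$ remark is also loose: there the linear constraint is implied by the quadratic one, and the maximizer need not be unique (for $x^2$ every point of the arc is optimal). Only the maximal value $2\tran(\Am)$ is right.

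\textbf{How to close it.} The missing idea is that $\phi:=\tran\circ\sqrt{\cdot}$ is concave. This is equivalent to $\dtran(x)/x$ being nonincreasing, which holds because $\dtran\ge0$ is concave. With it your plan closes for every $\tran\in\setcc$ at once, with no Huber decomposition and no kink cases.

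Write $l_{1,2}=s\mp\delta$ with $0\le\delta\le s\le\Am$, put $\varrho^2:=2\Am^2+2\delta^2$, and assume $d_1\le d_2$; there are two cases.
\begin{itemize}
\item If $d_1\ge\Am-\delta$, then $\tran(d_1)+\tran(d_2)\le\tran(d_1)+\tran(2\Am-d_1)$. This is a convex function of $d_1$, symmetric about $\Am$, so on $[\Am-\delta,\Am]$ it is largest at $d_1=\Am-\delta$.
\item If $d_1<\Am-\delta$, then $\tran(d_1)+\tran(d_2)\le\phi(d_1^2)+\phi(\varrho^2-d_1^2)$. This is nondecreasing in $d_1^2$ on $[0,\varrho^2/2]$, and $(\Am-\delta)^2\le\varrho^2/2$.
\end{itemize}
So the worst diagonals are $(\Am-\delta,\Am+\delta)$. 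This is exactly the configuration of \cref{lem:rp-tight}, in which \eqref{eq:r} and \eqref{eq:p} are both equalities, and
\begin{equation*}
  \tran(d_1)+\tran(d_2)-\tran(l_1)-\tran(l_2)
  \leq g(s-\delta)+g(s+\delta)
  \leq 2g(s)=2\tran(\Am)-2\tran(s),
  \qquad
  g:=\tran(\cdot+\Am-s)-\tran .
\end{equation*}
Here $g$ is concave because $g'=\dtran(\cdot+\Am-s)-\dtran$ is nonincreasing for concave $\dtran$.

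\textbf{Comparison with the paper.} Completed this way, your argument proves the sufficiency half of \cref{thm:algebraic} by a genuinely different route. You fix the legs and locate the worst diagonals. The paper instead integrates against the Choquet measure of $\dtran$ and, for fixed diagonals, pushes the legs to a worst case, proving $P\le N+D$ for the tent primitives by a two-case analysis. The paper's route produces the decomposition \eqref{eq:decomposition}, which it reuses for the equality case. Yours is much shorter.
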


\Cref{thm:intro:main} is proved in \cref{ssec:master:trapezoid}.
Let us explain how it generalizes the observations from \cref{ssec:intro:intro}.

A \cato{} space is a geodesic metric space of globally nonpositive curvature in the sense of Alexandrov; see \cite{bridson99,burago01,bacak14b} for the standard theory.
Model cases are Hilbert spaces, complete simply connected Riemannian manifolds of nonpositive sectional curvature, metric trees, and the space of positive definite matrices with the affine-invariant metric \cite{bacak14b}.
A \cato{} space generalizes Hilbert spaces, allowing curvature to be negative while keeping the two model inequalities available: The role of Cauchy--Schwarz is taken by Reshetnyak's quadrilateral comparison \cite{reshetnyak68},
\begin{equation}\dtag{R}\label{eq:R}
    \olt yq+\olt zp-\olt yp-\olt zq = \Quad_{x^2}(y,z;q,p) \leq 2\,\ol yz\,\ol pq
    \eqcm
\end{equation}
which, in an inner product space, is \eqref{eq:intro-cs} doubled, since $\Quad_{x^2}(y,z;q,p) = 2\ip{\oa yz}{\oa pq}$ in this case.
By Berg and Nikolaev \cite{berg08}, a geodesic metric space is \cato{} \emph{if and only if} \eqref{eq:R} holds throughout it.
Furthermore, every \cato{} space is Ptolemaic \cite{foertsch07}, so \eqref{eq:P} continues to hold as well.

Write $\setcc$ for the class of transformations \cref{thm:intro:main} admits: the nondecreasing convex $\tran\colon\Rp\to\R$ whose derivative $\dtran$ is concave.
These conditions place $\tran$ between the identity and the square:
Since $\dtran$ is concave and nondecreasing it is at most linear, so no member of $\setcc$ grows faster than $x^2$; and since $\dtran$ is nondecreasing, a $\tran$ that is not constant has $\dtran\geq c>0$ from some point on, so it grows at least linearly.
Examples are $x^{\alpha}$ with $\alpha\in[1,2]$, the Huber loss $x^2-\posp{x-\theta}^2$ \cite{huber64}, the pseudo-Huber loss $\sqrt{1+x^2}-1$ \cite{charbonnier94}, and $\log(\cosh(x))$ \cite{green90}.

In \cref{ssec:intro:intro}, the symmetric trapezoid appeared as a configuration extremal for Cauchy--Schwarz and for Ptolemy's inequality at the same time; the theorem says that it stays extremal for everything lying between them, where ``between'' now has a precise meaning.
At $\tran=\sqmap$, \eqref{eq:T} is Reshetnyak's inequality \eqref{eq:R}, which becomes Cauchy--Schwarz in inner product spaces.
Squared and with $\tran=\mathrm{id}$, \eqref{eq:T} is exactly Reshetnyak's inequality \eqref{eq:R} plus twice Ptolemy's inequality \eqref{eq:P}.
And \eqref{eq:T} is also true for every $\tran$ in between the square and the identity, that is, for $\tran\in\setcc$, with the same maximizing quadrilateral, the symmetric trapezoid, for all $\tran$. 

\Cref{thm:intro:main} has several interesting corollaries, discussed in \cref{ssec:intro:constants}.
First, though, we record how sharp the statement is, and where it sits in the literature.

\subsection{Scope and sharpness}\label{ssec:intro:scope}

The proof of \cref{thm:intro:main} requires only that the quadruple at hand satisfies \eqref{eq:R} and \eqref{eq:P}, for the one labeling in which it is applied, so the theorem holds verbatim on any quadruple of any metric space that does, which is a strictly larger family than the quadruples of \cato{} spaces (\cref{ssec:master:fourpoint}).
\Cref{thm:intro:main} is derived from a more general, purely algebraic theorem, \cref{thm:algebraic}, which is a statement about six nonnegative real numbers that satisfy Reshetnyak's and (a version of) Ptolemy's inequalities when read as distances; no metric space enters its proof, not even the triangle inequality.
Furthermore, the comparison is sharp in three senses.
The class $\setcc$ cannot be enlarged in \cref{thm:intro:main}, not even if the \cato{} space is restricted to the Euclidean plane (\cref{thm:converse}); neither of the two hypotheses \eqref{eq:R} and \eqref{eq:P} can be dropped (\cref{prop:necessity}); and, for $\ol yz\,\ol pq>0$, equality holds in \eqref{eq:T}---for all $\tran\in\setcc$ alike, or just for a single $\tran$ whose derivative is affine on no interval, such as $x^{3/2}$---exactly when the four points can be embedded in the Euclidean plane as their own symmetric comparison trapezoid (\cref{prop:equality}).

\subsection{Background}\label{ssec:intro:background}
Let $y,z,q,p$ be four points of a \cato{} space.
In \cite{quadruple}, it was shown that
\begin{equation}\label{eq:intro-prev-class}
    \QuadOf yzqp \leq 2 \,\ol pq \,\dtran(\ol yz)
    \eqcm
\end{equation}
for all $\tran\in\setcc$.
Earlier, \cite{schoetz19} had obtained the sharp constant for the powers $\tran(x)=x^{\alpha}$ with $\alpha\in[1,2]$:
\begin{equation}\label{eq:intro-prev-power}
    \Quad_{x^\alpha}(y,z;q,p) \leq \alpha 2^{2-\alpha}  \,\ol pq \, \ol yz^{\alpha-1}
    \eqfs
\end{equation}
Inequalities of this kind, relating the six distances of four points, are called \emph{quadruple inequalities}.
Such inequalities arise for the Fr\'echet mean \cite{frechet48,sturm03} in statistics, the minimizer of $q\mapsto\int\tran(\ol yq)\dl\mu(y)$, which for $\tran=\sqmap$ in Euclidean spaces becomes the expectation and for $\tran = \mathrm{id}$ in $\R$ the median.
Taking $\tran$ of slower growth than $x^2$ buys robustness, in the sense of far-away observations exerting less influence on the Fr\'echet mean, and existence under weaker moment conditions. In a space with no linear structure, the quadruple inequality is the central tool to prove such robustness properties \cite{schoetz19,transformed}.

The class of functions $\mc S$ has been studied by Pinelis in the context of von Bahr--Esseen inequalities \cite{Pinelis2015}, where its Choquet structure is noted. This property is central for our proof and makes the arguments substantially more elegant and concise compared to previous proofs of quadruple inequalities \cite{schoetz19,quadruple}. 
Furthermore, Niculescu \cite[Corollary~1]{niculescu24} observes that $\setcc$ consists precisely of the nondecreasing convex functions that are $3$-concave in the sense of Hopf and Popoviciu, that is, whose divided differences of order three are nonpositive; see \cite{marinescu23} for a survey of this notion.

\subsection{Extension to positive curvature bounds}\label{ssec:intro:positive}

If the four points $y,z,q,p$ live in a \catk{} space with $\kappa>0$, allowing for positive curvature, and are not too far apart, the trapezoid comparison of \cref{thm:intro:main} holds up to a constant factor on the bases.
Any two points $p,q$ of such a space with $\ol pq<\pi/\sqrt\kappa$ are joined by a unique geodesic segment \cite[Prop.~II.1.4]{bridson99}, which we denote by $\seg pq$.
For the constant, define
\begin{equation}\label{eq:CRdef}
    C_R := \begin{cases}
        1 & \text{for } R=0\eqcm\\
        \frac{R}{\sin(R)} & \text{for } 0<R<\pi\eqfs
    \end{cases}
\end{equation}

\begin{theorem}\label{thm:intro:positive}
    Let $y,z,q,p$ be elements of a \catk{} space with $\kappa>0$ such that
    \begin{equation}\label{eq:RdefAndRange}
        R := \sqrt{\kappa}\, \diam\br{\seg yz\cup\seg pq} < \pi
        \eqfs
    \end{equation}
    Then, for every $\tran\in\setcc$,
    \begin{equation}\dtag{T$_+$}\label{eq:TC}
        \QuadOf yzqp \leq \QuadOf{\by}{\bz}{\bq}{\bp}
        \eqcm
    \end{equation}
    where $\by,\bz,\bq,\bp$ are the vertices of a (possibly degenerate) symmetric trapezoid of the Euclidean plane whose bases $\oa\by\bz, \oa\bp\bq$ satisfy $\ol\by\bz\,\ol\bq\bp = C_R \, \ol yz \, \ol pq$ and whose two legs $\oa\by\bp, \oa\bz\bq$ both have length equal to the average $\tfrac12(\ol yp+\ol zq)$ of the two original leg lengths.
\end{theorem}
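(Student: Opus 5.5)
The plan is to reduce \cref{thm:intro:positive} to the algebraic core behind \cref{thm:intro:main}. The introduction says that the proof of \cref{thm:intro:main} uses only \eqref{eq:R} and \eqref{eq:P} for the given labeling, and that it factors through the purely algebraic \cref{thm:algebraic} about six nonnegative numbers. I would first restate that core with the base product $\ol yz\,\ol pq$ replaced by a free parameter $B\ge \ol yz\,\ol pq$. Concretely, suppose the six numbers satisfy
\[
\Quad_{x^2}(y,z;q,p)\le 2B
\qquad\text{and}\qquad
\ol yq\,\ol zp\le \ol yp\,\ol zq + B .
\]
Then for every $\tran\in\setcc$ we should get $\QuadOf yzqp\le$ the value of $\Quad_\tran$ at a symmetric trapezoid with base product $B$ and leg length $s=\tfrac12(\ol yp+\ol zq)$. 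Such a trapezoid has equal diagonals $\sqrt{s^2+B}$, and that value depends on the bases only through $B$. This should go through verbatim, because in the algebraic argument the base lengths should enter only through their product. I would check this by rereading the proof of \cref{thm:algebraic}: the Choquet decomposition of $\setcc$ into extremal functions of the form $x$, $x^2$ and Huber-type pieces, with the trapezoid value $2\tran(\sqrt{s^2+B})-2\tran(s)$ appearing as the bound.

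The geometric work is therefore to establish, in a \catk{} space under \eqref{eq:RdefAndRange}, the two modified inequalities with $B=C_R\,\ol yz\,\ol pq$:
\begin{itemize}
\item a Reshetnyak-type bound $\Quad_{x^2}(y,z;q,p)\le 2C_R\,\ol yz\,\ol pq$;
\item a Ptolemy-type bound $\ol yq\,\ol zp\le \ol yp\,\ol zq+C_R\,\ol yz\,\ol pq$.
\end{itemize}
These are exactly the ``variants with optimal constant'' announced in the abstract, presumably proved in the sections preceding the theorem. If they are available as earlier results, the theorem follows at once: choose any symmetric Euclidean trapezoid with $\ol\by\bz\,\ol\bq\bp=C_R\,\ol yz\,\ol pq$ and legs $s$, whose existence is the same elementary construction as in \cref{thm:intro:main}. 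Then apply the generalized algebraic theorem.

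If I had to prove the two variants myself, the approach would be as follows.
\begin{itemize}
\item For the Reshetnyak variant, write $\Quad_{x^2}(y,z;q,p)$ as an integral along the geodesics $\seg yz$ and $\seg pq$ of mixed second derivatives of the squared distance $d(\gamma(s),\eta(t))^2$. Alternatively, pass to a subdivision into small quadrilaterals.
\item Bound each local term by comparison with $\Mk$. In the sphere, the mixed Hessian of $d^2/2$ at distance $r$ is controlled by $r\sqrt\kappa/\sin(r\sqrt\kappa)\le C_R$. This is where the diameter of $\seg yz\cup\seg pq$, and not merely of the four points, enters.
\item For the Ptolemy variant, use the same mechanism, or a Möbius/inversion-free argument in which the Ptolemy defect is bounded by $(C_R-1)\,\ol yz\,\ol pq$ through comparison triangles.
\end{itemize}

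The main obstacle is the Reshetnyak variant with the sharp constant $C_R$. Positive curvature destroys the global CAT(0) four-point inequality, so the bound must come from an infinitesimal comparison integrated along both segments. The sup of the sphere's mixed Hessian over all pairs of points on the two segments has to be exactly $R/\sin R$. I would first attempt the subdivision route, applying the spherical comparison to thin quadrilaterals and summing, and would verify sharpness on two short segments of a great circle in $\Mk$.
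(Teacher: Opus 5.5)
Your proposal is correct and is essentially the paper's proof. Since \cref{thm:algebraic} is already stated for arbitrary $u,v\geq0$, nothing needs rereading: one takes $uv=C_R\,\ol yz\,\ol pq$, reads \eqref{eq:RC} and \eqref{eq:PC} (\cref{thm:intro:RC,thm:intro:PC}, proved in \cref{app:catk}) as \eqref{eq:r} and \eqref{eq:p}, obtains \eqref{eq:pstar} from \cref{rem:six}, and applies \cref{thm:algebraic}. One small care point on existence: use the rectangle with both bases $\sqrt{C_R\,\ol yz\,\ol pq}$ (\cref{rem:intro:R}), because the construction of \cref{lem:trapezoid-exists} needs $s\geq\tfrac12\abs{\ol\by\bz-\ol\bq\bp}$, which can fail for other splittings of the product such as $(C_R\,\ol yz,\ \ol pq)$.
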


\begin{remark}\label{rem:intro:R}
    \begin{enumerate}[label=(\roman*)]
        \item 
        The comparison trapezoids of \cref{thm:intro:positive} exist, for instance the rectangle with both bases equal to $\sqrt{C_R\,\ol yz\,\ol pq}$.
        All such trapezoids have the same diagonals $\ol\by\bq=\ol\bz\bp=\sqrt{s^2+C_R\,\ol yz\,\ol pq}$, where $s=\tfrac12(\ol yp+\ol zq)$ is the common length of the legs, so that the right-hand side of \eqref{eq:TC} does not depend on the choice.
        \item
        The two segments in \eqref{eq:RdefAndRange} exist as soon as $\ol yz$ and $\ol pq$ are smaller than $\pi/\sqrt\kappa$, which the hypothesis $R<\pi$ enforces.
        \item 
        The range $R<\pi$ is optimal: without that bound, \eqref{eq:TC} fails on large circles \textup{(\cref{rem:range})}.
        \item\label{it:R:mono}
        As $R/\sin(R)$ is increasing, \cref{thm:intro:positive} may be read with any upper bound for $R$ in place of $R$ itself.
        \begin{enumerate}[label=(\alph*)]
            \item\label{it:R:convex}
            If $y,z,q,p$ lie in a convex subset $\setC$ with $\diam(\setC)<\pi/\sqrt\kappa$, then  \eqref{eq:RdefAndRange} is fulfilled.
            \item\label{it:R:hull} 
            If $\sqrt\kappa\diam\cb{y,z,q,p}<\pi/2$, then $R=\sqrt\kappa\diam\cb{y,z,q,p}$, as closed balls of radius less than $\pi/(2\sqrt\kappa)$ are convex \cite[Prop.~II.1.4]{bridson99}. Beyond that radius the two diameters genuinely differ \textup{(\cref{rem:segdiam})}.
        \end{enumerate}
    \end{enumerate}
\end{remark}

\Cref{thm:intro:positive} rests on two new inequalities of independent interest, \eqref{eq:RC} and \eqref{eq:PC}.
They are the $\kappa>0$ counterparts of the two \cato{} inputs: Reshetnyak's quadrilateral comparison \eqref{eq:R} and Ptolemy's inequality \eqref{eq:P}.
\begin{theorem}\label{thm:intro:RC}
    Let $y,z,q,p$ be elements of a \catk{} space with $\kappa>0$ and assume \eqref{eq:RdefAndRange}.
    Then
    \begin{equation}\dtag{R$_+$}\label{eq:RC}
        \olt yq+\olt zp-\olt yp-\olt zq \leq 2 C_R \;\ol yz\,\ol pq
        \eqfs
    \end{equation}
\end{theorem}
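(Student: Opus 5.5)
Our plan is to reduce \eqref{eq:RC} to a one-variable estimate along the geodesic $\seg pq$, in the spirit of the standard proof of Reshetnyak's inequality via convexity of squared distance. We parametrize $\gamma(t)$, $t\in[0,1]$, as the constant-speed geodesic from $p$ to $q$ and set
\[
f(t) := \ol y{\gamma(t)}^2 - \ol z{\gamma(t)}^2 .
\]
Then the left-hand side of \eqref{eq:RC} equals $f(1)-f(0)$. It therefore suffices to bound the one-sided derivative of $f$ by $2C_R\,\ol yz\,\ol pq$ at every $t$, and then integrate; $f$ is Lipschitz, so integrating the upper Dini derivative is legitimate.

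\textbf{Step 1: first variation.} For $\ol x{\gamma(t)}<\pi/\sqrt\kappa$, the first variation formula in \catk{} spaces \cite{bridson99} gives
\[
\tfrac{d}{dt}\,\ol x{\gamma(t)}^2 = -2\,\ol x{\gamma(t)}\,\ol pq\,\cos\angle_{\gamma(t)}(x,\dot\gamma),
\]
as a right derivative with the Alexandrov angle. Writing $m=\gamma(t)$, the derivative of $f$ is thus
\[
2\,\ol pq\,\bigl(\ol zm\cos\angle_m(z,\dot\gamma)-\ol ym\cos\angle_m(y,\dot\gamma)\bigr).
\]
This is $2\,\ol pq$ times the difference of the projections of the "log vectors" of $z$ and $y$ onto the unit direction $\dot\gamma$ in the tangent cone $\Tcone m$. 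By the triangle inequality for angles, this difference is at most the distance in the Euclidean cone $\Tcone m$ between $\Log_m z$ and $\Log_m y$. Concretely,
\[
\abs{\Log_m y-\Log_m z}^2=\olt ym+\olt zm-2\,\ol ym\,\ol zm\cos\angle_m(y,z).
\]
Since $\Tcone m$ is \cato{}, the projection of a difference onto a unit direction is bounded by the cone distance. This is a two-point comparison, which reduces to a planar estimate.

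\textbf{Step 2: the key bound.} We must show
\[
\abs{\Log_m y-\Log_m z}\le C_R\,\ol yz
\]
for $m\in\seg pq$. By \catk{} angle comparison, $\angle_m(y,z)\le$ the corresponding angle $\theta$ in the comparison triangle in $\Mk$. Hence it suffices to prove the following model-space statement: for a triangle in the sphere of curvature $\kappa$ with sides $a=\ol ym$ and $b=\ol zm$, angle $\theta$ at $m$, and opposite side $c$, all with diameter at most $R/\sqrt\kappa$, one has
\[
a^2+b^2-2ab\cos\theta\le C_R^2c^2 .
\]
In other words, the inverse exponential map of the sphere at $m$, restricted to the relevant ball, is $C_R$-Lipschitz. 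This is the classical fact that $\exp_m^{-1}$ stretches tangential distances at radius $r$ by $\sqrt\kappa r/\sin(\sqrt\kappa r)$, which is at most $C_R$ for $\sqrt\kappa r\le R$. The global Lipschitz bound follows by integrating along the spherical geodesic from $y$ to $z$, provided that geodesic stays within radius $R/\sqrt\kappa$ of $m$.

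\textbf{Main obstacle.} The step we expect to be hardest is exactly this containment. It must be justified by the hypothesis \eqref{eq:RdefAndRange}, which bounds distances from $m\in\seg pq$ to points of $\seg yz$; this is why the diameter of $\seg yz\cup\seg pq$, rather than just $\cb{y,z,q,p}$, enters. To handle it, we would avoid the model-space geodesic and instead run the same derivative argument in the other variable. We fix $m$ and integrate along $\seg yz$ itself, estimating $\abs{\Log_m\sigma(s)-\Log_m\sigma(s')}$ infinitesimally. For this we use that, in a \catk{} space, the map $\Log_m$ is locally $\sqrt\kappa r/\sin(\sqrt\kappa r)$-Lipschitz at distance $r$ from $m$, which follows by comparison with thin triangles. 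Every point of $\seg yz$ is within $R/\sqrt\kappa$ of $m$, so this local bound gives the constant $C_R$. Finally, combining Steps 1 and 2 and integrating in $t$ yields \eqref{eq:RC}.
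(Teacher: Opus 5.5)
Your proposal is correct and follows the paper's proof of \cref{prop:firstvar} step by step: first variation along $\seg pq$, a $1$-Lipschitz projection on $\Tcone m$ (you obtain it from the triangle inequality for angles, the paper from a Busemann-type limit in \cref{lem:conelip}), and the bound $d_{\Tcone m}\br{\Log_m y,\Log_m z}\leq\Th(r)\,\ol yz$ obtained by subdividing $\seg yz$, as in \cref{lem:chain,lem:logcomparison}. One point to correct is the diagnosis of the obstacle: the spherical estimate needs no containment of the model geodesic, because \cref{lem:logcomparison} compares the half-angle forms directly for any triangle whose two sides at $m$ are at most $r$; the subdivision is needed because the triangle $(m,y,z)$ may have perimeter $\geq 2\Dk$, in which case no comparison triangle exists for it.
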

\begin{theorem}\label{thm:intro:PC}
    Let $y,z,q,p$ be elements of a \catk{} space with $\kappa>0$ and assume \eqref{eq:RdefAndRange}.
    Then
    \begin{equation}\dtag{P$_+$}\label{eq:PC}
        \ol yq\,\ol zp \leq \ol yp\,\ol zq+ C_R \,\ol yz\,\ol pq
        \eqfs
    \end{equation}
\end{theorem}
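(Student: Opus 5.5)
The plan is to reduce \eqref{eq:PC} to an inequality about a configuration in the model plane $\Mk$, and then to prove that model inequality by a first-variation argument along the bases. The model inequality carries the constant $C_R$ only on the product of the bases.

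\emph{Step 1: reduction to the model.} Consider the closed polygon $y\to z\to q\to p\to y$ formed by the geodesic segments $\seg yz$, $\seg zq$, $\seg qp$, $\seg py$. By hypothesis \eqref{eq:RdefAndRange}, all four endpoints lie in $\seg yz\cup\seg pq$, a set of diameter $R/\sqrt\kappa<\pi/\sqrt\kappa$. Hence the legs are also shorter than $\pi/\sqrt\kappa$ and all four segments are unique. I would like to apply Reshetnyak's majorization theorem to this polygon. It gives a convex region $\Dk\subset\Mk$ bounded by a polygon $\by\bz\bq\bp$ with the same four side lengths, together with a $1$-Lipschitz map $\Dk\to X$ that is an isometry on each side. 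Then the diagonals only grow in the model, $\ol yq\le\ol\by\bq$ and $\ol zp\le\ol\bz\bp$, while legs and bases are unchanged. So it suffices to prove \eqref{eq:PC} for convex quadrilaterals in $\Mk$.

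Majorization requires perimeter less than $2\pi/\sqrt\kappa$, which \eqref{eq:RdefAndRange} alone does not give. If the perimeter is too large, I would subdivide instead. Join $y$ to a point $m\in\seg pq$ (or $z$ to $m$), which splits the quadrilateral into two triangles. Each triangle has perimeter at most $3R/\sqrt\kappa$, but a triangle with two vertices on $\seg pq$ has perimeter at most $2R/\sqrt\kappa<2\pi/\sqrt\kappa$. Triangles are then handled by ordinary $\catk$ triangle comparison, and the two comparison triangles are glued along the common side in $\Mk$ (Alexandrov's lemma). This reproduces the majorization conclusion one triangle at a time.

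\emph{Step 2: the model inequality.} Scale to $\kappa=1$, so that $\by,\bz,\bq,\bp$ lie in a convex region of $S^2$ with $\diam(\seg\by\bz\cup\seg\bp\bq)\le R<\pi$. Fix the legs and vary the bases. Parametrize $\bz$ along the geodesic through $\by$ and $\bz$, and $\bq$ along the geodesic through $\bp$ and $\bq$, starting from $\bz=\by$ and $\bq=\bp$. At the start, where $\ol\by\bz=\ol\bp\bq=0$, the diagonals coincide with the legs and \eqref{eq:PC} holds with equality. So it suffices to show that the function
\[
\ol\by\bq\,\ol\bz\bp-\ol\by\bp\,\ol\bz\bq-C_R\,\ol\by\bz\,\ol\bp\bq
\]
is nonincreasing along this deformation. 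The derivatives of the four distances are cosines of the angles at the endpoints (first variation), and the product structure reduces the required monotonicity to an angle inequality of the form $\ip{u}{v}\le C_R\,|u|\,|v|$. Here $u$ and $v$ are the base directions after parallel transport along a leg or diagonal.

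On the sphere, the Jacobi-field comparison for geodesics of length at most $R$ shows that this transport distorts inner products by a factor of at most $R/\sin R$, which is $\ge1$. This is exactly where $C_R$ enters: via $d\le(R/\sin R)\cdot(\text{Jacobi field length})$. I would also check that the same computation, applied to squared distances, yields \eqref{eq:RC}, which serves as a consistency test.

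\emph{Main obstacle.} I expect the hardest part to be Step 2, keeping the constant $C_R$ on the bases only while the legs keep coefficient $1$. A naive chordal-metric argument (Ptolemy holds for the chord distances $2\sin(d/2)$) loses a factor on both terms of the right-hand side. So the first-variation route above, with the sharp Jacobi estimate at diameter $R$, is what I would try first. Optimality of $C_R$ would then be checked on a thin isosceles trapezoid spanning a great circle arc of length $R$.
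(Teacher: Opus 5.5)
\textbf{Step~1 fails: majorization does not preserve the quantity that carries the constant.} Majorization keeps the four sides, but it only gives $\ol\by\bq\ge\ol yq$ and $\ol\bz\bp\ge\ol zp$, with no upper bound in terms of $R$. So your claim that the model quadrilateral has $\diam(\seg\by\bz\cup\seg\bp\bq)\le R/\sqrt\kappa$ is unfounded, since that diameter is at least $\ol\by\bq$. Step~2 could then give at best $C_{R'}$ for some $R'>R$. No better model estimate can repair this, because the reduced statement is false. Take $\kappa=1$ and let $y,z,q,p$ be the endpoints of a star-shaped tree with four arms of length $\tfrac12$. All six distances are $1$, $R=1$, and $C_R=1/\sin1\approx1.188$. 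The spherical square of side $1$ has diagonals $\arccos(2\cos1-1)\approx1.490$. It is a legitimate majorizing disc: its boundary map onto the tree is $1$-Lipschitz and extends to the disc because complete $\R$-trees are injective. Yet on it $\ol\by\bq\,\ol\bz\bp-\ol\by\bp\,\ol\bz\bq\approx1.220>C_R\,\ol\by\bz\,\ol\bp\bq$. So the side lengths plus lower bounds on the diagonals, which is all majorization retains, cannot yield $C_R$. Your fallback for large perimeter also fails as stated. Joining $y$ to an interior point $m$ of $\seg pq$ leaves a triangle and a quadrilateral, not two triangles. Moreover, the triangle $y,m,p$ can have perimeter close to $3\pi/\sqrt\kappa$: take a tripod with arms just below $\pi/(2\sqrt\kappa)$, put $y,p,q$ at the leaves, $z$ near $y$, and $m$ near $q$.

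\textbf{Step~2 is only a sketch.} Parallel transport is an isometry, so it distorts no inner products, and the first variation of the two products does not reduce to a single angle inequality. The natural precise form of your argument is this: the function vanishes when either base degenerates, so one wants its mixed derivative in the two base parameters to be $\le0$. That already fails in $\R^2$. There the mixed derivative of $\ol yq\,\ol zp-\ol yp\,\ol zq$ contains $\ol yp/\ol zq$ times an angular factor. For the isosceles trapezoid $y=(0,0)$, $z=(1,0.9)$, $q=(1,1.1)$, $p=(0,2)$ it is about $5.5$, against $1$ for $\ol yz\,\ol pq$.

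\textbf{The missing idea} is a functional that is additive when the bases are subdivided, so that comparison with $\Mk$ is only made on $N^2$ tiny cells, where inflated diagonals exceed $r$ by $O(1/N)$. The paper obtains it from the chordal route you set aside.
\begin{itemize}
\item With $a,e,b,c,u,v$ as in \eqref{eq:six}, write $x=\Th(x/2)\chmap(x)$ and use the exact chordal Ptolemy inequality \eqref{eq:chordP}, which comes from Berestovskii's cone. This gives $ae-bc\le\Pidia\,\chmap(u)\chmap(v)+\posp{\Pidia-\Pileg}\chmap(b)\chmap(c)$.
\item The leg term is not lost, because $\Pidia/\Pileg=\exp\Quad_\Lambda(y,z;q,p)$ with $\Lambda=\log\Th(\cdot/2)$. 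Here $\Quad_\Lambda$ is a mixed second difference along the two bases, hence exactly additive over the grid.
\item On each cell, the four-point subembedding plus the pointwise bound $\Kap$ on the mixed derivative of $\Lambda$ in $\Mk$ give $\Quad_\Lambda\le\Kap(r)\,uv$. This bounds the defect term by $\Kap(r)r^2uv$.
\item The identity $\Th(r/2)^2+\Kap(r)r^2=\Th(r)$ then yields exactly $C_R\,uv$.
\end{itemize}
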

While \eqref{eq:RC} is \eqref{eq:TC} evaluated at $\tran=\sqmap$, \eqref{eq:PC} is not implied by \eqref{eq:TC} for any $\tran\in\setcc$ (\cref{ssec:master:fourpoint}).
The two theorems are proved in \cref{app:catk}, where the constant $C_R$ is also shown to be optimal in each of them, which implies that the constant $C_R$ in \cref{thm:intro:positive} also cannot be lowered.

\subsection{Consequences}\label{ssec:intro:constants}

Write $\setcco:=\cb{\tran\in\setcc:\tran(0)=0}$ for the members of $\setcc$ that vanish at the origin.
Let $y,z,q,p$ be four points of a \catk{} space with $\kappa\geq0$ satisfying \eqref{eq:RdefAndRange}; for $\kappa=0$ this gives $R=0$ and hence $C_R=1$ with $C_R$ as in \eqref{eq:CRdef}.
Writing out the right-hand side of \eqref{eq:TC} with \cref{thm:intro:main,thm:intro:positive}, we obtain the explicit bound
\begin{equation}\label{eq:intro-explicit}
    \QuadOf yzqp \leq 2\tran\brOf{\sqrt{\frac14\br{\ol yp+\ol zq}^2+C_R\,\ol yz\,\ol pq}} - 2\tran\brOf{\frac12\br{\ol yp+\ol zq}}
    \eqcm
\end{equation}
for $\tran\in\setcc$.
We state the most important consequences here and refer to \cref{sec:geometry} for more.

\emph{The sharp constant for quadruple inequalities} (\cref{prop:Wtau}).
For nonconstant $\tran\in\setcc$, write $\ctranobest\tran$ for the smallest constant $L$ such that $\QuadOf yzqp\leq L\,\ol pq\,\dtran(\ol yz)$ holds for every quadruple of every \cato{} space.
Then $\ctranobest\tran$ is given by
\begin{equation}\label{eq:intro-Wtau}
    \ctranobest\tran = \sup_{x>0}\frac{2\dtran(x)}{\dtran(2x)}
    \in [1,2]
    \eqfs
\end{equation}
The value of $\ctranobest\tran$ was previously known only to lie in $[1,2]$, and explicitly only for the powers, where $\ctranobest{x^{\alpha}}=2^{2-\alpha}$ for $\alpha\in[1,2]$ \cite{schoetz19,quadruple}.

\emph{Quadruple inequality under positive curvature} (\cref{cor:twosided-catk}, \eqref{eq:ts:W}).
The quadruple inequality of \cite{quadruple} extends to positive curvature: for every quadruple of a \catk{} space with $\kappa>0$ satisfying \eqref{eq:RdefAndRange}, and every nonconstant $\tran\in\setcc$,
\begin{equation}\label{eq:intro-catk-quadruple}
    \oltr yq - \oltr yp - \oltr zq + \oltr zp \leq \ctranobest{\tran}\, C_R\, \ol pq \,\dtran(\ol yz)
    \eqfs
\end{equation}

\emph{The six transformed distances} (\cref{cor:twosided}, \eqref{eq:ts:sum}).
For every quadruple of a \cato{} space and every $\tran\in\setcco$, we have $\QuadOf yzqp \leq\tran(\ol yz)+\tran(\ol pq)$.
This was previously shown for inner product spaces \cite[Theorem~3]{quadruple}, and in general \cato{} spaces with an unnecessary factor $2$ on the right-hand side \cite[Corollary~1]{quadruple}.
Rearranging the terms of this result yields
\begin{equation}\label{eq:intro-J}
    \oltr yq+\oltr zp \leq \oltr yp+\oltr zq+\oltr yz+\oltr pq
    \eqfs
\end{equation}
That is, the sum of the transformed lengths of the two diagonals of a quadrilateral is bounded by the sum of the transformed lengths of its four sides.
At $\tran=\mathrm{id}$ this is the triangle inequality, averaged over the two paths from $y$ to $q$ and from $z$ to $p$ along the sides; at $\tran=\sqmap$ in the Euclidean plane it is Euler's classical quadrilateral inequality, the sum of the squared diagonals being at most the sum of the squared sides, with equality exactly for parallelograms.
Niculescu \cite{niculescu24} starts from the inner-product version and asks how it degrades in a Banach space; the present result moves in the orthogonal direction and extends the inequality to \cato{} spaces.

\emph{Bound by diameter} (\cref{cor:twosided}, \eqref{eq:ts:diam}).
For every quadruple of a \catk{} space with $\kappa\geq0$ satisfying \eqref{eq:RdefAndRange}, and every $\tran\in\setcc$, writing $\diam$ for the diameter of $\cb{y,z,q,p}$ and assuming $\diam>0$, we have
\begin{equation}\label{eq:intro-diam}
    \abs{\QuadOf yzqp}
    \leq 2 C_R \,\ol yz\,\ol pq\,\frac{\dtran\brOf{\tfrac12\diam}}{\diam}
    \eqfs
\end{equation}
Since $x\mapsto\dtran(x)/x$ is nonincreasing, a larger diameter improves this bound.
It is therefore particularly useful when the bases, $\ol yz$ and $\ol pq$, are two short segments far apart.

\subsection{Organization}\label{ssec:intro:org}

\Cref{sec:setting} fixes the notation and the function class $\setcc$.
\Cref{sec:master} states the main theorem in its most general and algebraic form, together with its equality case, its geometric reading as a comparison with the planar symmetric trapezoid, its optimality, and the place of its two hypotheses among the other conditions a quadruple can satisfy.
\Cref{sec:proof} proves the main theorem by reduction to an elementary inequality, \cref{lem:reduced}, which is verified in Lean~4 against Mathlib, see \cref{app:reduced,app:lean}.
\Cref{sec:geometry} derives corollaries of the main theorem.
Four appendices follow: \cref{app:catk} proves the \catk{} inputs \eqref{eq:RC} and \eqref{eq:PC}, and the remaining three treat the deferred proofs, the function class, and the formal verification.

\section{Setting}\label{sec:setting}

\subsection{Notation}\label{ssec:setting:notation}

For real numbers $x,y$ we write $\posp{x}:=\max(x,0)$, $x\wedge y:=\min(x,y)$ and $x\vee y:=\max(x,y)$.

In a metric space $(\mc Q,d)$ we abbreviate $\ol qp := d(q,p)$ for $q,p\in\mc Q$, and $\diam(A):=\sup_{q,p\in A}\ol qp$ for $A\subseteq\mc Q$.
When a geodesic segment joining $q$ to $p$ exists and is unique, we denote it by $\seg qp$; in a \catk{} space this is the case as soon as $\ol qp<\pi/\sqrt\kappa$ \cite[Prop.~II.1.4]{bridson99}, and in a \cato{} space for all $q,p$.

Four points always appearing in the order $y,z,q,p$ are read as a quadrilateral traversed in that order.
For its sides we keep the terminology of \cref{ssec:intro:intro}: $\ol yq$ and $\ol zp$ are the \emph{diagonals}, $\ol yp$ and $\ol zq$ the \emph{legs}, and $\ol yz$ and $\ol pq$ the \emph{bases}.

\begin{definition}[Quadruple functional]\label{def:Q}
    For $\tran\colon\Rp\to\R$ and four points $y,z,q,p$ of a metric space put
    \begin{equation}\label{eq:Qdef}
        \QuadOf yzqp := \oltr yq + \oltr zp - \oltr yp - \oltr zq
        \eqfs
    \end{equation}
\end{definition}

The following lemma records the symmetries of $\Quad_\tran$. Its proof is immediate from \cref{def:Q}.

\begin{lemma}[Symmetries]\label{lem:symmetry}
    For all $\tran$ and all $y,z,q,p$,
    \begin{equation}\label{eq:symmetries}
        \begin{gathered}
            \QuadOf zyqp = -\QuadOf yzqp,\qquad
            \QuadOf yzpq = -\QuadOf yzqp,\\
            \QuadOf qpyz = \QuadOf yzqp
            \eqfs
        \end{gathered}
    \end{equation}
    The first two exchange the diagonals $\cb{\ol yq,\ol zp}$ with the legs $\cb{\ol yp,\ol zq}$; the third exchanges the two bases $\ol yz$ and $\ol pq$.
\end{lemma}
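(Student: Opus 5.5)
The statement to prove is \cref{lem:symmetry}. Its proof is a direct term-by-term check against \cref{def:Q}, using only that $\ol{}{}$ is symmetric, $\ol ab=\ol ba$, as a metric. No property of $\tran$ enters.

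For the first identity, swapping $y$ and $z$ in \eqref{eq:Qdef} gives
\[
\QuadOf zyqp=\oltr zq+\oltr yp-\oltr zp-\oltr yq .
\]
This is exactly the negative of $\QuadOf yzqp$, since the two positive terms (the diagonals $\ol yq,\ol zp$) and the two negative terms (the legs $\ol yp,\ol zq$) have traded places.

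For the second identity, swapping $q$ and $p$ gives
\[
\QuadOf yzpq=\oltr yp+\oltr zq-\oltr yq-\oltr zp=-\QuadOf yzqp ,
\]
by the same exchange of diagonals and legs.

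For the third identity, substitute $(q,p,y,z)$ for $(y,z,q,p)$ to get
\[
\QuadOf qpyz=\oltr qz+\oltr py-\oltr qy-\oltr pz .
\]
By symmetry of the distance, $\oltr qz=\oltr zq$, $\oltr py=\oltr yp$, $\oltr qy=\oltr yq$ and $\oltr pz=\oltr zp$. Rewriting with these and rearranging, the expression equals $-\oltr yp-\oltr zq+\oltr yq+\oltr zp$, but this must be set against \eqref{eq:Qdef} carefully. Comparing term by term, $\oltr qz=\oltr zq$ enters positively here while $\oltr zq$ enters negatively in $\QuadOf yzqp$. So the substitution should be read instead as mapping the diagonal pair $\{\ol yq,\ol zp\}$ of the original quadruple to the diagonal pair of the new one.

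Concretely, in the new quadruple the diagonals are first–third and second–fourth, namely $\ol qy$ and $\ol pz$. These enter positively, giving $\oltr qy+\oltr pz=\oltr yq+\oltr zp$. The legs are first–fourth and second–third, namely $\ol qz$ and $\ol py$. These enter negatively, giving $\oltr zq+\oltr yp$. Hence $\QuadOf qpyz=\QuadOf yzqp$, as claimed.

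The only care needed, and the expected obstacle, is keeping track of which slot plays diagonal and which plays leg. To avoid errors, I would first write out the general pattern: for $\QuadOf abcd$ the positive terms are $\oltr ac$ and $\oltr bd$, and the negative terms are $\oltr ad$ and $\oltr bc$. I would then apply this pattern to each permuted quadruple, which makes all three identities immediate.
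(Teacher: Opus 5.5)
Your proof is correct and is the same direct term-by-term check against \cref{def:Q} that the paper calls immediate: for $\QuadOf abcd$ the terms $\oltr ac$, $\oltr bd$ enter with a plus sign and $\oltr ad$, $\oltr bc$ with a minus sign, and the third identity additionally uses $\ol xw=\ol wx$. One cleanup is needed: your first display for the third identity has the signs reversed (the correct expansion is $\QuadOf qpyz=\oltr qy+\oltr pz-\oltr qz-\oltr py$), so delete it together with the confused sentences after it, and keep only your corrected diagonal/leg bookkeeping.
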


\subsection{The function class}\label{ssec:setting:class}

\begin{definition}[The classes $\setcc$ and $\setcco$]\label{def:class}
Let $\setcc$ denote the set of nondecreasing convex functions $\tran\colon\Rp\to\R$ that are differentiable on $\Rpp$ with concave derivative $\dtran$, extended to $\Rp$ by $\dtran(0):=\lim_{x\searrow0}\dtran(x)$, and let
\begin{equation}\label{eq:classdef}
  \setcco := \cb{\tran\in\setcc \colon \tran(0)=0}
  \eqfs
\end{equation}
\end{definition}

The function class $\setcco$ is a \emph{convex cone} \cite{Pinelis2015}, since each of the three defining properties is preserved by nonnegative multiples and by sums. 
As both sides of \eqref{eq:T} are linear in $\tran$, it is enough to verify it on a set of generators.
Those generators are the Huber functions $\psi_\theta(x)=x^2-\posp{x-\theta}^2$ for $\theta\in\Rpp$, together with $\mathrm{id}$ and $x^2$.
Every $\tran\in\setcco$ is a mixture of them \textup{(\cref{prop:choquet-structure})}.
See \cref{app:class} for a list of example functions in $\setcco$ and further properties.

\section{The main result}\label{sec:master}

\subsection{Algebraic form}\label{ssec:master:algebraic}

\Cref{thm:intro:main,thm:intro:positive} are stated for \catk{} spaces.
In order to prove them, we show a more general result about six nonnegative numbers (proof in \cref{sec:proof}).

\begin{theorem}[Algebraic form of the trapezoid comparison inequality]\label{thm:algebraic}
    Let $a,e,b,c,u,v \geq 0$, and set $s := \tfrac12(b+c)$ and $\Am := \sqrt{s^2+uv}$.
    Then the inequality
    \begin{equation}\dtag{t}\label{eq:t}
        \tran(a) + \tran(e) - \tran(b) - \tran(c)
        \leq 2 \br{\tran(\Am) - \tran(s)}
    \end{equation}
    holds for every $\tran \in \setcc$ if and only if
    \begin{align}
        a^2 + e^2 - b^2 - c^2 &\leq 2uv \eqcm \dtag{r}\label{eq:r}\\
        a + e &\leq 2\Am \eqfs \dtag{p$^{*}$}\label{eq:pstar}
    \end{align}
\end{theorem}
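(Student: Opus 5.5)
The plan is to prove necessity by testing the two extreme members of $\setcc$, and sufficiency by reducing to the generators of the cone $\setcco$ and then to a single elementary inequality in few real variables.

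\emph{Necessity.} Both $\sqmap$ and $\mathrm{id}$ belong to $\setcc$.
\begin{itemize}
\item For $\tran=\sqmap$, the right-hand side of \eqref{eq:t} equals $2(\Am^2-s^2)=2uv$, so \eqref{eq:t} is exactly \eqref{eq:r}.
\item For $\tran=\mathrm{id}$, inequality \eqref{eq:t} reads $a+e-b-c\leq 2\Am-2s=2\Am-(b+c)$. This is exactly \eqref{eq:pstar}.
\end{itemize}

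\emph{Sufficiency: reduction to generators.} Both sides of \eqref{eq:t} are linear in $\tran$. Each side is a sum of two $\tran$-values minus two $\tran$-values, so adding a constant to $\tran$ changes nothing, and we may assume $\tran\in\setcco$. By the Choquet structure (\cref{prop:choquet-structure}), every $\tran\in\setcco$ is a nonnegative mixture of $\mathrm{id}$, $\sqmap$ and the Huber functions $\psi_\theta$ with $\theta>0$. It therefore suffices to verify \eqref{eq:t} for these generators; this needs no regularity beyond integrating the pointwise inequality against the mixing measure. The cases $\mathrm{id}$ and $\sqmap$ are the necessity computation read backwards, so only $\psi_\theta$ remains.

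\emph{The Huber case.} We use the representation
\[
\psi_\theta(x)=\min\bigl(x^2,\;2\theta x-\theta^2\bigr),
\]
where the second entry is the tangent line of $x^2$ at $\theta$. Hence, for every $\lambda\in[0,1]$, the pointwise bound $\psi_\theta(x)\leq\lambda x^2+(1-\lambda)(2\theta x-\theta^2)$ holds. Applying it at $a$ and $e$, and bounding $a^2+e^2$ by \eqref{eq:r} and $a+e$ by \eqref{eq:pstar}, eliminates $a$ and $e$ entirely:
\[
\psi_\theta(a)+\psi_\theta(e)\leq \lambda\bigl(b^2+c^2+2uv\bigr)+(1-\lambda)\bigl(4\theta\Am-2\theta^2\bigr).
\]
Writing $uv=\Am^2-s^2$, it then remains to choose $\lambda=\lambda(b,c,\theta,\Am)$ so that the right-hand side is at most $\psi_\theta(b)+\psi_\theta(c)+2\psi_\theta(\Am)-2\psi_\theta(s)$. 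This is a purely elementary inequality in $b,c,\theta$ and $\Am\geq s$; by homogeneity we may even normalize $\theta=1$.

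I would first try the extreme choices of $\lambda$, splitting into cases by the positions of $b,c,s,\Am$ relative to $\theta$.
\begin{itemize}
\item If $\Am\leq\theta$, then $s\leq\Am\leq\theta$ as well, so $\psi_\theta$ agrees with $x^2$ at $s$ and $\Am$. Taking $\lambda=1$ leaves $b^2+c^2\leq\psi_\theta(b)+\psi_\theta(c)+2\Am^2-2s^2-2uv$, i.e.\ $b^2+c^2\leq\psi_\theta(b)+\psi_\theta(c)$. This holds directly when $b,c\leq\theta$; otherwise it requires an intermediate $\lambda$.
\item If $s\geq\theta$, then $\psi_\theta$ is affine at $s$ and $\Am$. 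Taking $\lambda=0$ together with the convexity bound $\psi_\theta(b)+\psi_\theta(c)\geq 2\psi_\theta(s)$ should close the case.
\end{itemize}

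\emph{Main obstacle.} I expect the difficulty to lie in the mixed regimes, namely $s<\theta<\Am$, and the cases where $b$ and $c$ fall on opposite sides of $\theta$. There an intermediate $\lambda$ is needed, and the resulting polynomial inequality in $(b,c,\Am)$ must be checked region by region. This is the reduced lemma (\cref{lem:reduced}); I would establish it by case analysis, optimizing over $\lambda$ in each region, and expect its verification to be the tedious core of the proof.
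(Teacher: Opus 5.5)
Your necessity argument and the reduction to the generators $\mathrm{id}$, $\sqmap$, $\psi_\theta$ are correct and agree with the paper. The Huber case, however, breaks down. The representation $\psi_\theta(x)=\min(x^2,2\theta x-\theta^2)$ is false. The tangent line lies below $x^2$ everywhere, so that minimum is just $2\theta x-\theta^2$, whereas $\psi_\theta=x^2$ on $[0,\theta]$. Hence $\psi_\theta(x)\leq\lambda x^2+(1-\lambda)(2\theta x-\theta^2)$ fails for every $\lambda<1$ at every $x<\theta$; at $x=0$ it says $0\leq-(1-\lambda)\theta^2$. Your case ``$s\geq\theta$, take $\lambda=0$'' rests exactly on this false bound.

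More importantly, no valid version of the elimination can succeed. Any bound of your type comes from a common majorant $q(x)=\lambda x^2+\mu x+\kappa\geq\psi_\theta(x)$ on $\Rp$ with $\lambda,\mu\geq0$, combined with \eqref{eq:r} and \eqref{eq:pstar} alone.

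Take $\theta=1$, $b=2$, $c=4$, $uv=7$, so that $s=3$ and $\Am=4$. Then:
\begin{itemize}
\item \eqref{eq:t} for $\psi_1$ reads $\psi_1(a)+\psi_1(e)\leq14$;
\item the hypotheses read $a^2+e^2\leq34$ and $a+e\leq8$;
\item $(a,e)=(4,4)$ is admissible and attains $14$.
\end{itemize}
Yet every such $q$ gives
\[
34\lambda+8\mu+2\kappa=\tfrac{2}{17}q(0)+\tfrac{32}{17}q(\tfrac{17}{4})\geq\tfrac{32}{17}\psi_1(\tfrac{17}{4})=\tfrac{240}{17}>14 .
\]
You are maximizing a convex function of $(a,e)$ over a convex set, and the Lagrangian relaxation of that problem has a genuine duality gap. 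This happens even in the regime $s\geq\theta$ that you expected to be easy.

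This is also why the inequality you are left with is not \cref{lem:reduced}: the paper never eliminates $a,e$. It identifies the $\psi_\theta$-defect of \eqref{eq:t} with $2\Phifun(\theta)=2(P-N-D)(\theta)$, keeping the diagonals as $(m,h)$. It then proves $P\leq N+D$ over all admissible $(h,m,\delta,s,\Am)$ by splitting on $h^2$ versus $\Am^2-m^2$. In the harder case it passes to the worst case in $(s,\delta)$, not in $(a,e)$. It also uses $h\leq m$, that is $a,e\geq0$, essentially, for instance in the key inequality $h-\dz\geq\Am-m$.
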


Condition \eqref{eq:r} is \eqref{eq:t} at $\tran=x^2$ and \eqref{eq:pstar} is \eqref{eq:t} at $\tran=\mathrm{id}$.
We could summarize the variables $u$ and $v$ as a single symbol $w :=uv$ without losing power in \cref{thm:algebraic}. The current form is convenient as it makes identification with the six distances of four points simpler.

\begin{remark}\label{rem:six}
    We can read the six nonnegative numbers in \cref{thm:algebraic} in terms of the six distances of four points $y,z,q,p$:
    \begin{equation}\label{eq:six}
        a=\ol yq,\quad e=\ol zp,\quad b=\ol yp,\quad c=\ol zq,\quad
        u = \ol yz,\quad v =  \ol pq
        \eqcm
    \end{equation}
    so that $a,e$ are the \emph{diagonals}, $b,c$ the \emph{legs} and $u,v$ the \emph{bases}.
    Then \eqref{eq:t} and \eqref{eq:r} correspond to \eqref{eq:T} and \eqref{eq:R}, respectively. Furthermore,
    Ptolemy's inequality \eqref{eq:P} is 
    \begin{equation}\dtag{p}\label{eq:p}
        ae \leq bc + uv
        \eqfs
    \end{equation}
    Adding \eqref{eq:r} to twice \eqref{eq:p} gives the square of \eqref{eq:pstar}. Moreover \eqref{eq:pstar} is \eqref{eq:p} applied to $(m,m,s,s,u,v)$ instead of $(a,e,b,c,u,v)$, where $m := \tfrac12(a+e)$ is the mean diagonal.
    Thus, geometrically, \eqref{eq:pstar} is Ptolemy's inequality for a quadruple with averaged legs and averaged diagonals; it states that the mean diagonal of the original quadrilateral is at most the diagonal of the comparison trapezoid, that is $m \leq \Am$.
\end{remark}

\begin{proposition}[Equality]\label{prop:equality}
Let $a,e,b,c,u,v\geq0$ with $uv>0$ satisfy \eqref{eq:r} and \eqref{eq:pstar}, and set $s := \tfrac12(b+c)$ and $\Am := \sqrt{s^2+uv}$.
Then the following are equivalent:
\begin{enumerate}[label=\textup{(\roman*)}]
  \item\label{it:eq:all} \eqref{eq:t} is an equality for every $\tran\in\setcc$;
  \item\label{it:eq:one} \eqref{eq:t} is an equality for some $\tran\in\setcc$ whose derivative $\dtran$ is affine on no nondegenerate interval, for instance for $\tran=x^{3/2}$;
  \item\label{it:eq:trap}
    \begin{equation}\label{eq:eq-trapezoid}
      b = c \qquad\text{and}\qquad a = e = \Am
      \eqfs
    \end{equation}
\end{enumerate}
\end{proposition}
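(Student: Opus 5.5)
We prove the cycle \ref{it:eq:all}$\Rightarrow$\ref{it:eq:one}$\Rightarrow$\ref{it:eq:trap}$\Rightarrow$\ref{it:eq:all}. The first implication is trivial once we check that $x^{3/2}$ lies in $\setcc$ with derivative $\tfrac32 x^{1/2}$ strictly concave, hence affine on no interval. For \ref{it:eq:trap}$\Rightarrow$\ref{it:eq:all}, substitute $b=c=s$ and $a=e=\Am$. Then the left-hand side of \eqref{eq:t} is literally $2\tran(\Am)-2\tran(s)$ for every $\tran$, with no convexity needed.

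The substance is \ref{it:eq:one}$\Rightarrow$\ref{it:eq:trap}. We will track the slack in each step of the proof of \cref{thm:algebraic}. That proof presumably splits the deficit $2(\tran(\Am)-\tran(s))-\tran(a)-\tran(e)+\tran(b)+\tran(c)$ into two nonnegative parts:
\begin{enumerate}[label=(\alph*)]
\item a Jensen part $\tran(b)+\tran(c)-2\tran(s)\ge0$, from convexity of $\tran$;
\item a part comparing $\tran(a)+\tran(e)$ with $2\tran(\Am)$ under \eqref{eq:r} and \eqref{eq:pstar}, which reduces via the Choquet representation (\cref{prop:choquet-structure}) to the generators $\mathrm{id}$, $x^2$, $\psi_\theta$, or to an integral representation $\tran(x)=\tran(0)+\int_0^x\dtran$.
\end{enumerate}
Equality for one $\tran$ forces equality in both parts.

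For part (a), equality means $\tran$ is affine on $[b\wedge c,b\vee c]$. If $b\ne c$, then $\dtran$ is constant on a nondegenerate interval, which contradicts the hypothesis. Hence $b=c=s$.

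Part (b) is the main obstacle. With $b=c=s$ and $m=\tfrac12(a+e)\le\Am$, write $a=m+d$ and $e=m-d$. Condition \eqref{eq:r} becomes $2m^2+2d^2\le 2s^2+2uv=2\Am^2$, that is, $m^2+d^2\le\Am^2$. We must show that
\[
\tran(m+d)+\tran(m-d)\le 2\tran(\Am)
\]
with equality only if $d=0$ and $m=\Am$. Since $\tran$ is nondecreasing, increasing $m$ to $\sqrt{\Am^2-d^2}$ does not decrease the left side. So it suffices to study $g(d):=\tran(\mu+d)+\tran(\mu-d)$ with $\mu=\sqrt{\Am^2-d^2}$ and show that it is strictly decreasing in $d>0$. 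We have
\[
g'(d)=\dtran(\mu+d)\Bigl(1-\tfrac d\mu\Bigr)-\dtran(\mu-d)\Bigl(1+\tfrac d\mu\Bigr).
\]
Concavity of $\dtran$ with $\dtran\ge0$ makes $x\mapsto\dtran(x)/x$ nonincreasing, and strictly so when $\dtran$ is strictly concave somewhere relevant. Combined with $\mu^2<(\mu+d)(\mu-d)+\ldots$, this gives
\[
(\mu-d)\,\dtran(\mu+d)\le(\mu+d)\,\dtran(\mu-d),
\]
so $g'\le0$. Strictness should follow because $\dtran$ is not affine on $[\mu-d,\mu+d]$, so $\dtran(x)/x$ cannot be constant there unless $\dtran$ is linear through the origin, which is excluded.

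Hence equality forces $d=0$, and then monotonicity forces $\tran(m)=\tran(\Am)$. If $m<\Am$, this would make $\dtran=0$ on $[m,\Am]$, an affine piece, so $m=\Am$. Here $uv>0$ gives $\Am>s\ge0$, so the interval is nondegenerate and the edge cases $\mu-d<0$ are handled by $\mu\ge d$. It remains to reconcile this route with the paper's actual proof of \cref{thm:algebraic} and to check that the case $\Am$ near $0$ causes no degeneracy.
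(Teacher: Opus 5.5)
\textbf{Gap: the step forcing $b=c$ rests on a false premise.} Your directions (iii)$\Rightarrow$(i)$\Rightarrow$(ii) are fine; the problem is in (ii)$\Rightarrow$(iii). You split the deficit as
\[
\bigl[\tran(b)+\tran(c)-2\tran(s)\bigr]+\bigl[2\tran(\Am)-\tran(a)-\tran(e)\bigr]
\]
and assume both brackets are nonnegative, so that equality kills each. The first bracket is nonnegative by convexity, but the second is not. Indeed \eqref{eq:r} only gives $a^2+e^2\le 2\Am^2+\tfrac12(b-c)^2$, so unequal legs allow diagonals longer than the comparison diagonal.

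Concretely, take $b=0$, $c=2$, $uv=\epsilon>0$, $\Am=\sqrt{1+\epsilon}$, $a=\Am-1$, $e=\Am+1$. Then \eqref{eq:r} and \eqref{eq:p} hold with equality, hence so does \eqref{eq:pstar}. At $\tran=x^2$ the two brackets are $+2$ and $-2$. At $\tran=x^{3/2}$ they are close to $\pm(2^{3/2}-2)$ and sum to a positive quantity of order $\epsilon$. So a vanishing total tells you nothing about the Jensen bracket, and you have no argument for $b=c$. The proof of \cref{thm:algebraic} does not decompose this way; the coupling between legs and diagonals is exactly what makes it nontrivial.

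\textbf{The missing idea.} You need to read the equality case off the full linearization of \cref{lem:choquet}. After subtracting $\tran(0)$ so that $\tran\in\setcco$, the left side minus the right side of \eqref{eq:t} equals
\[
\eta_0\Gfun(0)+\eta_1\Phifun(\infty)+\int\Phifun\,\dl\nu .
\]
Each term is nonpositive, because \cref{lem:reduced} gives $\Phifun\le0$ pointwise; this is \eqref{eq:t} for every Huber generator separately. The argument then runs as follows.
\begin{enumerate}
\item Equality forces $\Phifun=0$ $\nu$-almost everywhere.
\item Since $\dtran$ is affine on no interval, the measure $\nu$ of \cref{lem:representation} (with $f=\dtran$) charges every nondegenerate interval. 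Continuity then gives $\Phifun\equiv0$, and hence $\Gfun\equiv0$.
\item $\Gfun(0)=0$ gives $m=\Am$, and $\Phifun(\infty)=0$ gives $h=\delta$.
\item Now \eqref{eq:tent-G} reduces to $\tent mh\equiv\tent sh$. For $h>0$ this forces $s=m=\Am$, which is impossible when $uv>0$. So $h=\delta=0$, giving $b=c$ and $a=e=\Am$ at once.
\end{enumerate}

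\textbf{Your sub-case $b=c$.} This part (monotonicity of $\dtran(x)/x$, equivalently concavity of $y\mapsto\tran(\sqrt y)$) is correct, but it is only the easy half. The inequality $(\mu-d)\dtran(\mu+d)\le(\mu+d)\dtran(\mu-d)$ is just \cref{lem:ratio-monotone}; the remark about $\mu^2<(\mu+d)(\mu-d)+\ldots$ is not needed and, as written, is not meaningful.
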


In the reading \eqref{eq:six}, condition \ref{it:eq:trap} says that the quadruple has equal legs and that both diagonals have the length of the comparison diagonal.
The proof is given in \cref{app:tight}.

\subsection{Geometric form}\label{ssec:master:trapezoid}

Let $y,z,q,p$ be four points of a \cato{} space.
By Reshetnyak's inequality \eqref{eq:R} \cite{reshetnyak68} and Ptolemy's inequality \eqref{eq:P} \cite{foertsch07} their six distances satisfy \eqref{eq:r} and \eqref{eq:p}, and hence \eqref{eq:pstar}, when read as in \eqref{eq:six}; so \cref{thm:algebraic} applies, with \begin{equation}\label{eq:sA}
    s := \tfrac12\br{\ol yp+\ol zq}
    \eqcm\qquad
    \Am := \sqrt{s^2+\ol yz\,\ol pq}
    \eqfs
\end{equation}
The next lemmas show that, in metric spaces, we can always build a planar symmetric trapezoid with bases $\ol yz$ and $\ol pq$, legs of length $s$ and diagonals of length $\Am$.

\begin{lemma}[Base difference bound]\label{lem:cross-dominates}
For any four points $y,z,q,p$ of a metric space,
\begin{equation}\label{eq:cross-dominates}
    \abs{\ol yz-\ol pq} \leq \min\brOf{\ol yq+\ol zp, \ol yp+\ol zq}
    \eqfs
\end{equation}
\end{lemma}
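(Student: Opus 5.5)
The claim is a pure triangle-inequality statement, so I will prove the four one-sided bounds
\[
\ol yz-\ol pq\le \ol yq+\ol zp,\quad \ol pq-\ol yz\le \ol yq+\ol zp,\quad \ol yz-\ol pq\le \ol yp+\ol zq,\quad \ol pq-\ol yz\le \ol yp+\ol zq,
\]
which together give \eqref{eq:cross-dominates}. Each bound follows by walking from one endpoint of a base to the other through the opposite base.

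\textbf{Bounds through the diagonals.} Go from $y$ to $q$, then $q$ to $p$, then $p$ to $z$. The triangle inequality twice gives
\[
\ol yz\le \ol yq+\ol qp+\ol pz,
\]
that is, $\ol yz-\ol pq\le \ol yq+\ol zp$. Symmetrically, going from $p$ to $z$, then $z$ to $y$, then $y$ to $q$ gives $\ol pq\le \ol pz+\ol zy+\ol yq$, that is, $\ol pq-\ol yz\le \ol yq+\ol zp$.

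\textbf{Bounds through the legs.} Use the paths $y\to p\to q\to z$ and $p\to y\to z\to q$. These give
\[
\ol yz\le \ol yp+\ol pq+\ol qz \quad\text{and}\quad \ol pq\le \ol py+\ol yz+\ol zq,
\]
hence $|\ol yz-\ol pq|\le \ol yp+\ol zq$. Alternatively, this case follows from the diagonal case by relabelling $p\leftrightarrow q$, which swaps the diagonals with the legs as in \cref{lem:symmetry}. The base lengths $\ol yz$ and $\ol pq$ are unchanged under that swap.

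Combining the two bounds on $|\ol yz-\ol pq|$ gives the minimum in \eqref{eq:cross-dominates}. There is no real obstacle here; the only care needed is choosing the paths so that each uses the opposite base exactly once.
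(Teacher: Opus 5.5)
Your proposal is correct and matches the paper's proof: the paper writes down exactly the same four triangle inequalities, $\ol yz\leq\ol yq+\ol pq+\ol zp$, $\ol pq\leq\ol yq+\ol yz+\ol zp$, $\ol yz\leq\ol yp+\ol pq+\ol zq$ and $\ol pq\leq\ol yp+\ol yz+\ol zq$, and reads off \eqref{eq:cross-dominates}. Your remark that the leg case follows from the diagonal case by swapping $p\leftrightarrow q$ is also valid, since that swap leaves both base lengths unchanged.
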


\begin{proof}
Applying the triangle inequality multiple times we obtain 
\begin{align*}
  \ol yz &\leq \ol yq+\ol pq+\ol zp\eqcm & \ol pq &\leq \ol yq+\ol yz+\ol zp\eqcm\\
  \ol yz &\leq \ol yp+\ol pq+\ol zq\eqcm & \ol pq &\leq \ol yp+\ol yz+\ol zq
  \eqcm
\end{align*}
which implies \eqref{eq:cross-dominates}.
\end{proof}

\begin{lemma}[The comparison quadruple exists]\label{lem:trapezoid-exists}
Let $y,z,q,p$ be four points of a metric space and put $s=\tfrac12(\ol yp+\ol zq)$.
Then there is a quadruple $\by,\bz,\bq,\bp$ of the Euclidean plane with
\begin{equation}\label{eq:trapezoid-data}
\begin{gathered}
  \ol\by\bz = \ol yz,\qquad \ol\bq\bp = \ol pq,\qquad
  \ol\by\bp = \ol\bz\bq = s,\\
  \text{and}\qquad
  \ol\by\bq = \ol\bz\bp = \sqrt{s^2+\ol yz\,\ol pq}\eqcm
\end{gathered}
\end{equation}
and it is a (possibly degenerate) symmetric trapezoid with parallel sides $\by\bz$ and $\bp\bq$, unique among symmetric trapezoids up to isometries of the Euclidean plane.
\end{lemma}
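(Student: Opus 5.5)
We construct the trapezoid explicitly in coordinates. Write $u:=\ol yz$, $v:=\ol pq$ and $s=\tfrac12(\ol yp+\ol zq)$. We look for a symmetric trapezoid with both bases horizontal and centred on the vertical axis:
\[
\by=(-u/2,0),\quad \bz=(u/2,0),\quad \bp=(-v/2,h),\quad \bq=(v/2,h),
\]
with $h\geq0$ to be determined. The base lengths are then automatically $u$ and $v$. Since $\bp$ sits above $\by$, the base vectors $\oa\by\bz$ and $\oa\bp\bq$ point the same way, as required for the orientation in \cref{fig:trapezoid}.

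The two legs have equal length by reflection symmetry in the vertical axis:
\[
\ol\by\bp^{\,2}=\ol\bz\bq^{\,2}=\tfrac14(u-v)^2+h^2 .
\]
Setting this equal to $s^2$ forces $h^2=s^2-\tfrac14(u-v)^2$. So existence reduces to the single inequality $\abs{u-v}\leq 2s=\ol yp+\ol zq$. This is exactly \cref{lem:cross-dominates}, using the second entry of the minimum. With $h:=\sqrt{s^2-\tfrac14(u-v)^2}$ the diagonals also have equal length:
\[
\ol\by\bq^{\,2}=\ol\bz\bp^{\,2}=\tfrac14(u+v)^2+h^2=s^2+\tfrac14\bigl((u+v)^2-(u-v)^2\bigr)=s^2+uv .
\]
This gives \eqref{eq:trapezoid-data}. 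The configuration degenerates to a segment configuration when $h=0$, or to points coinciding when $u=0$ or $v=0$; the formulas cover these cases, which is why the statement allows a possibly degenerate trapezoid.

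For uniqueness, let any symmetric trapezoid with parallel sides $\by\bz$ and $\bp\bq$ be given, with base lengths $u,v$ and legs $s$. Apply a Euclidean isometry that puts its axis of symmetry on the vertical axis and its base $\by\bz$ on the horizontal axis, with $\by$ to the left. Symmetry then forces the coordinates above, with $\bp,\bq$ at common height $\pm h$. A reflection in the horizontal axis makes the height nonnegative. The leg length then determines $h$ uniquely by the computation above.

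The main point to handle carefully is the meaning of "symmetric" in degenerate cases, i.e.\ when $h=0$ or a base has length zero, so that the axis of symmetry is not determined by the figure. There I will take "symmetric trapezoid" to mean, by definition, a configuration of the above normal form up to isometry. Otherwise the argument is a routine computation, and the only geometric input is \cref{lem:cross-dominates}.
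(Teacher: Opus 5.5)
Your proof is correct and is essentially the paper's: you use the same centred coordinates, the same height $h^2=s^2-\tfrac14(u-v)^2\geq0$ justified by \cref{lem:cross-dominates}, and the same diagonal computation $\tfrac14(u+v)^2-\tfrac14(u-v)^2=uv$. For uniqueness the paper adds that six mutual distances determine four planar points up to isometry (via the Gram matrix), whereas you bring a given symmetric trapezoid to the normal form directly by an isometry; either argument suffices.
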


\begin{proof}
Write $u=\ol yz$, $v=\ol pq$ and
\begin{equation}\label{eq:spm}
  s_0:=\tfrac12\abs{u-v},\qquad t_0:=\tfrac12(u+v),\qquad\text{so that}\qquad
  t_0^2-s_0^2 = uv
  \eqfs
\end{equation}
Put $\by=(-\tfrac u2,0)$, $\bz=(\tfrac u2,0)$, $\bp=(-\tfrac v2,\eta)$, $\bq=(\tfrac v2,\eta)$ with $\eta\geq0$.
Then $\ol\by\bz=u$, $\ol\bq\bp=v$, $\ol\by\bp=\ol\bz\bq=\sqrt{\eta^2+s_0^2}$ and $\ol\by\bq=\ol\bz\bp=\sqrt{\eta^2+t_0^2}$.
Choosing $\eta$ with $\eta^2=s^2-s_0^2$, which is possible as $s\geq s_0$ by \cref{lem:cross-dominates}, gives \eqref{eq:trapezoid-data}, with the last equality by \eqref{eq:spm}.
The two horizontal sides are parallel by construction, and the configuration is symmetric with respect to the vertical axis; it degenerates to four collinear points when $\eta=0$.
For uniqueness, a symmetric trapezoid with bases $u,v$ and both legs $s$ is of the form just constructed, with $\eta\geq0$ determined by $\eta^2=s^2-s_0^2$, so its six distances are those of \eqref{eq:trapezoid-data}; and four points of the Euclidean plane are determined up to isometry by their six mutual distances, because the distances determine the Gram matrix of the three vectors $\bz-\by$, $\bq-\by$, $\bp-\by$, and the Gram matrix determines these vectors up to an orthogonal transformation.
\end{proof}

\begin{proof}[Proof of \cref{thm:intro:main}]
    We use the identification \eqref{eq:six}.
    In a \cato{} space \eqref{eq:R} and \eqref{eq:P} hold, hence so do \eqref{eq:r} and \eqref{eq:p}, and therefore \eqref{eq:pstar} (\cref{rem:six}).
    Thus, \cref{thm:algebraic} applies and yields \eqref{eq:T}.
    Existence and uniqueness of the comparison trapezoid are shown in \cref{lem:trapezoid-exists}.
\end{proof}

\begin{proof}[Proof of \cref{thm:intro:positive}]
    For $a,e,b,c$, we use the identification \eqref{eq:six}, and require for the bases $u v = C_R\, \ol yz\, \ol pq$. 
    Then \eqref{eq:RC} (\cref{thm:intro:RC} shown in \cref{app:catk}) is \eqref{eq:r}, and \eqref{eq:PC} (\cref{thm:intro:PC} shown in \cref{app:catk}) is \eqref{eq:p}. As stated in \cref{rem:six}, this also yields \eqref{eq:pstar}. Thus \cref{thm:algebraic} applies and yields \eqref{eq:TC}.
\end{proof}

\subsection{Optimality}\label{ssec:master:optimality}

The following proposition shows that neither \eqref{eq:R} nor \eqref{eq:P} can simply be dropped. It is proven by explicit examples in \cref{app:witnesses}.

\begin{proposition}\label{prop:necessity}
\leavevmode
\begin{enumerate}[label=\textup{(\roman*)}]
  \item\label{it:nec:R}
    There are four points $y,z,q,p$ forming a metric space in which \eqref{eq:P} holds for every labeling such that \eqref{eq:T} is false for every nonconstant $\tran\in\setcc$.
  \item\label{it:nec:P}
    There are four points $y,z,q,p$ forming a metric space in which \eqref{eq:R} holds for every labeling such that \eqref{eq:T} is false for every $\tran\in\setcc$ except those of the form $\tran(x)=\alpha x^2+\beta$ with $\alpha\geq0$ on $\ab{0,\diam\cb{y,z,q,p}}$.
\end{enumerate}
\end{proposition}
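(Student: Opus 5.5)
The plan is to give, for each part, one explicit four-point metric space whose six distances, read as in \eqref{eq:six}, satisfy the stated hypothesis for every labeling and violate \eqref{eq:T} in the labeling $y,z,q,p$. To show \eqref{eq:T} fails for a whole family of $\tran$, I would use the linearity of both sides of \eqref{eq:t} in $\tran$ together with the Choquet structure of \cref{prop:choquet-structure}. Constants cancel in \eqref{eq:t}, so it suffices to treat $\tran\in\setcco$, and every nonconstant $\tran\in\setcco$ is a nonzero nonnegative mixture of $\mathrm{id}$, $x^2$ and the Huber functions $\psi_\theta$. Hence, if \eqref{eq:t} fails \emph{strictly} for each generator, it fails for every such mixture. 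I would first check that \cref{prop:choquet-structure} yields a genuinely nonzero mixture for nonconstant $\tran$, so that strictness survives.

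For \ref{it:nec:R} I would take equal legs, $b=c=s$, diagonals $a\neq e$, and choose the data so that $ae\leq s^2+uv=\Am^2$ (Ptolemy in this labeling) while $a+e>2\Am$. By \cref{rem:six} this forces \eqref{eq:r} to fail. The failure of \eqref{eq:t} then needs no case analysis over generators. Each generator is convex and strictly increasing on $\Rpp$: indeed $\psi_\theta'(x)=2\min(x,\theta)>0$. Hence
\begin{equation*}
\tran(a)+\tran(e)-2\tran(s)\geq 2\tran\brOf{\tfrac{a+e}2}-2\tran(s)>2\tran(\Am)-2\tran(s).
\end{equation*}
It remains to pick concrete numbers, for instance $ae=\Am^2$ with $a\neq e$, and to verify three things: the triangle inequalities, positivity of the distances, and Ptolemy's inequality for the three essentially distinct labelings. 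I would search a one- or two-parameter family, such as fixing $u=v$ and varying $s$ and $a/e$, until all checks hold.

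For \ref{it:nec:P} the roles swap: \eqref{eq:R} must hold in every labeling while \eqref{eq:P} fails, and $x^2$, hence $\alpha x^2+\beta$, must give equality. So I would impose equality in \eqref{eq:r} for the labeling $y,z,q,p$, with $a=e$ and unequal legs $b\neq c$, that is $2a^2=b^2+c^2+2uv$. Then $a^2=\Am^2+\tfrac14(b-c)^2>\Am^2$, so $2a-b-c=2(a-s)>2(\Am-s)$, and \eqref{eq:t} fails for $\mathrm{id}$. The quantity $D_\theta:=2\psi_\theta(a)-\psi_\theta(b)-\psi_\theta(c)-2\psi_\theta(\Am)+2\psi_\theta(s)$ vanishes for $\theta\geq\diam$, because $\psi_\theta=x^2$ on the relevant range. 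The main step is to show $D_\theta>0$ for all $\theta<\diam$. Since $D_\theta$ is piecewise quadratic in $\theta$, with breakpoints at the five values $b,c,s,\Am,a$, I would write it out on each interval and check positivity there. This finite computation is where I expect the real obstacle: the example must be tuned so that no interval gives $D_\theta\leq0$. Finally, the mixture argument gives failure for every $\tran\in\setcc$ whose mixing measure does not sit entirely on $x^2$ and on $\psi_\theta$ with $\theta\geq\diam$. Those are exactly the $\tran$ equal to $\alpha x^2+\beta$ on $\ab{0,\diam}$.

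The remaining verifications for \ref{it:nec:P} are routine. I would check the triangle inequalities and \eqref{eq:R} for all labelings; by \cref{lem:symmetry} only three are essentially distinct. I would also confirm that \eqref{eq:P} indeed fails in the chosen labeling, which it must, since otherwise \cref{thm:algebraic} would give \eqref{eq:T}.
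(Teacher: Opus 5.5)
Your setup matches the paper's in both parts. Its witnesses fall exactly into your families: for part~(i) it uses $(a,e,b,c,u,v)=(4,3,3,3,1,3)$, with $b=c=s$ and $ae=12=s^2+uv=\Am^2$. For part~(ii) it uses $(13,13,14,10,7,3)$, with $a=e$ and $2a^2-b^2-c^2=42=2uv$.

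\textbf{The gap is in part~(ii).} You never exhibit the four points, and for an existence statement the witness and its verification are the proof. Finding one is not a formality, because several checks you call routine are binding. Take $a=e$, $b>c$ and $2a^2=b^2+c^2+2uv$.
\begin{enumerate}
  \item \eqref{eq:r} in the labelings whose bases are $\{b,c\}$, namely $|a^2+e^2-u^2-v^2|\le 2bc$, reduces to $|b-c|\le|u-v|$.
  \item You need $b\ge a$, i.e.\ $uv\le\tfrac12(b^2-c^2)$. Otherwise, for $\max(b,\Am)<\theta<a$, one computes $D_\theta=-2(a-\theta)^2<0$, so \eqref{eq:T} holds for $\psi_\theta$ and the claim fails.
  \item You need $b=\diam$. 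Otherwise, for $\theta\in[b,\diam)$ all five breakpoints lie below $\theta$, so $D_\theta$ equals its value at $x^2$, which is $0$. Then $\psi_\theta$ is an extra exception, not of the form $\alpha x^2+\beta$ on $[0,\diam]$.
  \item Only after these does the piecewise positivity on $[c,b]$ come in.
\end{enumerate}
The paper's choice sits on the boundary of the first constraint, $|b-c|=|u-v|=4$. It also has $uv=21\le 48$ and $\diam=14=b$. Its quadratic pieces are then checked by explicit endpoint and discriminant computations. Until you supply such numbers and carry out those checks, part~(ii) is a search strategy rather than a proof.

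\textbf{Part~(i) is essentially complete, and your argument is neater than the paper's.} A nonconstant $\tran\in\setcc$ has $\tran'>0$ on $(0,\infty)$. So Jensen and strict monotonicity give $\tran(a)+\tran(e)-2\tran(\Am)\ge 2\tran(\tfrac{a+e}{2})-2\tran(\Am)>0$ directly. This replaces both the Choquet decomposition and the paper's four-interval evaluation of $\Delta(\psi_\theta)$. With $ae=\Am^2$ the other two Ptolemy inequalities, $s^2\le ae+uv$ and $uv\le ae+s^2$, hold automatically. Only the triangle inequalities remain; for instance $(2.5,2,2,2,1,1)$ works.
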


Next, we show that the set $\setcc$ cannot be enlarged, not even if \eqref{eq:T} is only required in the Euclidean plane.
In particular, \cref{thm:converse} below together with \cref{thm:intro:main} shows that, for measurable $\tran\colon\Rp\to\R$, the following three statements are equivalent:
\begin{itemize}[leftmargin=2em,itemsep=0pt]
    \item \eqref{eq:T} holds for all quadruples of all \cato{} spaces;
    \item \eqref{eq:T} holds for all quadruples of the Euclidean plane;
    \item $\tran\in\setcc$.
\end{itemize}

\begin{theorem}\label{thm:converse}
Let $\tran\colon\Rp\to\R$ be measurable and suppose that \eqref{eq:T} holds for every quadruple of the Euclidean plane.
Then $\tran\in\setcc$.
\end{theorem}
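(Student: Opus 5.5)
The proof is by reduction to smooth $\tran$ followed by local expansions around degenerate planar configurations. The reduction rests on one observation. The Euclidean plane is invariant under dilations, and both sides of \eqref{eq:T} are linear in $\tran$. Hence if $\tran$ satisfies \eqref{eq:T} on all planar quadruples, so does every dilate $x\mapsto\tran(\lambda x)$, and therefore every multiplicative mollification
\[
\tran_\varepsilon(x):=\int_0^\infty \tran(\lambda x)\,\rho_\varepsilon(\lambda)\,\dl\lambda ,
\]
with $\rho_\varepsilon\geq0$ smooth, of unit mass, and supported in $[1-\varepsilon,1+\varepsilon]$.

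First I will show that a measurable $\tran$ satisfying \eqref{eq:T} is locally bounded on $\Rpp$. For this I plan a Sierpi\'nski/Ostrowski-type argument: a bound on $\tran$ over a set of positive measure spreads to intervals through the inequalities below. With local boundedness, $\tran_\varepsilon$ is well defined and $C^\infty$ on $\Rpp$. Since $\setcc$ is closed under locally uniform limits on $\Rpp$ (together with the extension at $0$), it suffices to prove $\tran_\varepsilon\in\setcc$ and let $\varepsilon\to0$. Continuity of $\tran$ itself will follow once the smooth approximants are uniformly convex with controlled derivatives.

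For smooth $\tran$ I will use explicit planar families. The first is collinear with parallel bases: $y=0$, $z=u$, $p=x$, $q=x+v$ on a line, with $x\geq u$. Then
\[
a=x+v,\quad e=x-u,\quad b=x,\quad c=x+v-u,\quad s=x+\tfrac{v-u}2,\quad \Am^2=s^2+uv .
\]
For $u=v$ this already gives $\tran(x+u)+\tran(x-u)\leq 2\tran\brOf{\sqrt{x^2+u^2}}$, and expanding in $u$ yields $\ddtran(x)\leq\dtran(x)/x$. Taking $u\neq v$ and expanding to higher order in the two small parameters should give third-order information, i.e. a third divided difference $\leq0$, which is concavity of $\dtran$ (cf.\ Niculescu's $3$-concavity characterization).

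Monotonicity I will take from kite-like configurations with equal legs $s$ and equal diagonals $m<\Am$. Rotating the base $\oa pq$ about its midpoint produces these, and \eqref{eq:T} then reads $\tran(m)\leq\tran(\Am)$. Chaining over overlapping ranges of $m$ gives $\tran$ nondecreasing.

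The hardest step is convexity. The natural test, a quadruple with diagonals $a=e=\Am$ and unequal legs, which would give exactly midpoint convexity, is not realizable: Ptolemy forces $(b-c)^2\leq0$. Instead I will take cyclic quadrilaterals with unequal legs $b,c$ and equal diagonals $m^2=bc+uv$, letting $uv\to\infty$. Then \eqref{eq:T} gives
\[
\tran(b)+\tran(c)-2\tran(s)\geq -2\bigl(\tran(\Am)-\tran(m)\bigr)\approx-\dtran(\Am)\frac{(b-c)^2}{4\Am},
\]
and the error tends to $0$ because $\ddtran\leq\dtran/x$ forces $\dtran(x)/x$ to be nonincreasing, hence $\dtran(\Am)/\Am$ bounded. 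If this limit argument fails, the fallback is a second-order expansion in nearly degenerate trapezoids, directly extracting $\ddtran\geq0$.
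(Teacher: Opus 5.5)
Your proposal has two genuine gaps. Both come from missing the one configuration that carries the whole converse.

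\emph{Concavity of $\dtran$.} Your collinear family places $\oa pq$ beyond $\oa yz$ ($x\geq u$), where Ptolemy's inequality is strict. For small $u,v$ the slack in \eqref{eq:T} is
\begin{equation*}
  uv\bigl(\chi_{\tran}(x)-\ddtran(x)\bigr)
  +\tfrac12 uv(v-u)\bigl(\chi_{\tran}^{\prime}(x)-\tran^{\prime\prime\prime}(x)\bigr)
  +O\bigl((u+v)^4\bigr)
  \eqcm\qquad \chi_{\tran}(x):=\dtran(x)/x
  \eqcm
\end{equation*}
so wherever $\ddtran<\dtran/x$ the second-order term swamps the third-order one, and no third divided difference can be extracted. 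You rightly note that diagonals $a=e=\Am$ with unequal legs are not realizable. The realizable substitute is to \emph{shift} the diagonals against the legs, $\cb{a,e}=\cb{b+w,c+w}$ with $w=\Am-s$. This is your own collinear family with $\oa pq$ placed \emph{inside} $\oa yz$, i.e.\ $y,p,q,z$ at $0,b,b+v,b+v+c$. There \eqref{eq:r} and \eqref{eq:p} are both equalities and $\Am=s+v$. So \eqref{eq:T} is exactly $g_v(b)+g_v(c)\leq2g_v\bigl(\tfrac{b+c}2\bigr)$ for $g_v:=\tran(\cdot+v)-\tran(\cdot)$. Sierpi\'nski's theorem makes $g_v$ concave for measurable $\tran$; for smooth $\tran$ this reads directly $\ddtran(x+v)\leq\ddtran(x)$. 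That is concavity of $\dtran$.

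\emph{Convexity.} Your limit $uv\to\infty$ does not work. With $b-c$ fixed, the error term is about $\frac{\dtran(\Am)}{\Am}\cdot\frac{(b-c)^2}{4}$, and boundedness of $\dtran(x)/x$ does not make it vanish. It tends to $\frac L4(b-c)^2$ with $L:=\lim_{x\to\infty}\dtran(x)/x$; for $\tran=x^2$ one has $2(\tran(\Am)-\tran(m))=\frac12(b-c)^2$ for every $u$. The limit therefore only gives midpoint convexity of $\tran+\frac L2x^2$, not of $\tran$. The fix again goes through $g_v$. It is concave on all of $\Rp$, and nonnegative once $\tran$ is nondecreasing. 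A concave function that decreases somewhere tends to $-\infty$, so $g_v$ is nondecreasing. Thus $\tran$ is Wright convex, hence midpoint convex, hence convex by Sierpi\'nski.

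Your monotonicity step is fine and needs no smoothing: a rectangle read with $\ol yq=\ol zp=\eta$ and $\ol yz=\ol pq=\rho$ gives $\tran(\eta)\leq\tran(\sqrt{\eta^2+2\rho^2})$ for all $\eta,\rho\geq0$. That gives local boundedness at once, so no Ostrowski-type argument is needed. With the nested configuration, the mollification detour can be dropped entirely, including the unresolved passage from $\tran_\varepsilon$ back to a possibly discontinuous $\tran$.
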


See \cref{app:proof:converse} for the proof.


\subsection{The four-point conditions}\label{ssec:master:fourpoint}

We place the two hypotheses, \eqref{eq:R} and \eqref{eq:P}, which we use as input to obtain the trapezoid comparison \eqref{eq:T}, among the conditions that a quadruple of a metric space can satisfy.
Throughout this subsection each condition is required for \emph{every} labeling ($4! = 24$ permutations) of the four points.
For a quadruple $\mc F=\cb{y,z,q,p}$ with six distances read as in \eqref{eq:six}, consider the following conditions:
\begin{itemize}[leftmargin=3.6em,itemsep=0pt]
    \item[\normalfont(M)] $\mc F$ is a metric space;
    \item[\normalfont(H)] $\mc F$ is a subset of a \cato{} space;
    \item[\normalfont(R)] \eqref{eq:r} holds for every labeling;
    \item[\normalfont(P)] \eqref{eq:p} holds for every labeling;
    \item[\normalfont(P$^{*}$)] \eqref{eq:pstar} holds for every labeling;
    \item[\normalfont(T)] \eqref{eq:t} holds for every $\tran\in\setcc$ and every labeling.
\end{itemize}

\begin{proposition}\label{prop:fourpoint}
\begin{enumerate}[label=\textup{(\roman*)}]
  \item\label{it:fp:H} $\textnormal{(H)} \Rightarrow \textnormal{(M)} \wedge \textnormal{(R)} \wedge \textnormal{(P)}$.
  \item\label{it:fp:RP} $\textnormal{(R)}\wedge\textnormal{(P)}\Rightarrow\textnormal{(P}^{*}\textnormal{)}$.
  \item\label{it:fp:T} $\textnormal{(T)}\iff\textnormal{(R)}\wedge\textnormal{(P}^{*}\textnormal{)}$.
  \item\label{it:fp:complete} These are all: if $\mc X$ is a set of the six conditions and $\textnormal{(Y)}$ is one of them that does not follow from $\mc X$ by \ref{it:fp:H}--\ref{it:fp:T}, then some row of \cref{tab:fourpoint} satisfies every condition of $\mc X$ and fails $\textnormal{(Y)}$.
\end{enumerate}
\end{proposition}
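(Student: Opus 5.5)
Parts \ref{it:fp:H}--\ref{it:fp:T} follow from results already in hand, applied one labeling at a time. Part \ref{it:fp:complete} is a finite combinatorial reduction plus a list of explicit witnesses. The witnesses for \ref{it:fp:complete} are the main work.

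For \ref{it:fp:H}, note that any relabeling of four points of a \cato{} space is again four points of that space. So (M) holds trivially, (R) is Reshetnyak's inequality \eqref{eq:R} \cite{reshetnyak68}, and (P) is the Ptolemaic property \cite{foertsch07}, each applied to the relabeled quadruple. For \ref{it:fp:RP}, fix a labeling, add \eqref{eq:r} to twice \eqref{eq:p}, and use $(b+c)^2=4s^2$:
\begin{equation*}
(a+e)^2 = a^2+e^2+2ae \leq b^2+c^2+2bc+4uv = 4(s^2+uv) = 4\Am^2 .
\end{equation*}
Taking square roots of these nonnegative quantities gives \eqref{eq:pstar}, as already noted in \cref{rem:six}. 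For \ref{it:fp:T}, I apply \cref{thm:algebraic} separately to the six numbers of each labeling. Its ``if and only if'' says that \eqref{eq:t} holds for all $\tran\in\setcc$ exactly when \eqref{eq:r} and \eqref{eq:pstar} hold for that labeling. Quantifying over all $24$ labelings gives the equivalence.

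For \ref{it:fp:complete}, I first compute the closure operator generated by \ref{it:fp:H}--\ref{it:fp:T} on the six conditions. The nontrivial closed sets are generated by the rules $\textnormal{H}\to\textnormal{M},\textnormal{R},\textnormal{P}$, then $\textnormal{R}\wedge\textnormal{P}\to\textnormal{P}^*$, then $\textnormal{R}\wedge\textnormal{P}^*\leftrightarrow\textnormal{T}$. For a pair $(\mc X,\textnormal{Y})$ with Y not in the closure of $\mc X$, one may enlarge $\mc X$ to a maximal closed set not containing Y. So it suffices to give one witness for each maximal closed set avoiding each Y. Listing these maximal closed sets gives the following cases.
\begin{itemize}
\item For Y${}=$H: a quadruple satisfying M, R, P (hence everything else) that does not embed in a \cato{} space.
\item For Y${}=$M: a quadruple with R, P, P$^*$, T but violating a triangle inequality.
\item For Y${}=$R: a metric quadruple with P and P$^*$ but not R.
\item For Y${}=$P: a metric quadruple with R and P$^*$ (hence T) but not P.
\item For Y${}=$P$^*$: one quadruple failing P$^*$ but satisfying M and P, and another satisfying M and R.
\item For Y${}=$T: these cases are covered by the witnesses that fail R or fail P$^*$.
\end{itemize}
For each row of \cref{tab:fourpoint} I would pick six rational distances, or square roots of rationals. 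The $24$ labelings reduce to few distinct inequalities by \cref{lem:symmetry}, so each inequality can be checked exactly. The witnesses of \cref{prop:necessity} should supply some of these rows.

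I expect the main obstacle to be the row separating H from $\textnormal{M}\wedge\textnormal{R}\wedge\textnormal{P}$, since a certificate of non-embeddability into \emph{any} \cato{} space is needed. Here I would use the known characterization of four-point subsets of \cato{} spaces (Gromov; Berg--Nikolaev): a quadruple embeds in some \cato{} space iff it admits a subembedding into the Euclidean plane. That is, there must be a planar quadrilateral with the same side lengths along some cycle, the same or larger diagonals, and the appropriate convexity. The plan is to start near a Euclidean configuration that is tight for several of the inequalities at once, for instance a symmetric trapezoid. I would then perturb one diagonal upward slightly, so that the subembedding condition fails while R and P still hold with slack in all labelings. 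Finally I would confirm both properties numerically and then exactly.
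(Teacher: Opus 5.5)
Your arguments for \ref{it:fp:H}--\ref{it:fp:T}, and your reduction of \ref{it:fp:complete} to one witness per maximal non-deriving set, are the same as the paper's. The gap is in the witnesses. Part \ref{it:fp:complete} is a claim about the specific rows of \cref{tab:fourpoint}, and you check none of them. More importantly, your plan for the one row that needs an idea fails as described. That is the row separating (H) from (M)$\wedge$(R)$\wedge$(P).

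A symmetric trapezoid, read in its own labeling, satisfies \eqref{eq:r} with equality, since its bases are parallel and equally oriented. It also satisfies \eqref{eq:p} with equality, since it is cyclic. Raising a diagonal $a$ to $a'>a$ therefore gives $a'^2+e^2-b^2-c^2>2uv$ and $a'e>bc+uv$ in that very labeling. So (R) and (P) both fail at once. There is no slack to exploit, and the perturbed quadruple is not a witness for $\mc X=\{\textnormal{(M)},\textnormal{(R)},\textnormal{(P)},\textnormal{(P}^{*}\textnormal{)}\}$. The underlying reason: by the planar Ptolemy inequality, a Euclidean subembedding for a given cyclic order already forces \eqref{eq:p} in the corresponding labeling, and at a cyclic quadrilateral \eqref{eq:p} is tight.

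The missing idea is a degeneracy that inequalities among the six numbers, such as \eqref{eq:r} and \eqref{eq:p}, cannot detect but the unique geodesics of a \cato{} space can: a collinear triple. The paper's row $(1,2,1,2,2,2)$ has $\ol qy=\ol yp=1$, $\ol qp=2$ and $\ol zq=\ol zy=\ol zp=2$. In a \cato{} space $y$ is then forced to be the midpoint of $\seg qp$. The \cato{} midpoint inequality gives $\olt zy\leq\tfrac12\olt zq+\tfrac12\olt zp-\tfrac14\olt qp=3<4$.

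In your language, the cycle $q,y,p,z$ has no Euclidean subembedding. Only the necessary direction \cite[Prop.~II.1.11]{bridson99} of the characterization you quote is needed. In a subembedding, $\bar y$ would have to be the midpoint of $\bar q\bar p$, and $\bar z$ the apex of the equilateral triangle on $\bar q\bar p$, at distance $\sqrt3<2$ from $\bar y$.

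Meanwhile \cref{lem:fourpoint-symmetric}, with $\sigma=(3,3,4)$, $S=(5,5,8)$ and $\pi=(2,2,4)$, gives (R) in every labeling. It also gives (P) in every labeling, with equality $4=2+2$ in some. So the witness sits on the boundary of (P), not in a region of slack. \Cref{lem:fourpoint-symmetric} reduces each of (R), (P), (P$^{*}$) over all $24$ labelings to three inequalities among opposite-pair sums, square sums and products. With it, verifying the remaining rows is a routine integer check.
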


In particular, neither \cato{} nor metric structure are required for the trapezoid comparison inequality to hold. 

\begin{table}[ht]
\centering
\small
\begin{tabular}{@{}cl@{\quad}cccccc@{}}
\toprule
 & $(a,e,b,c,u,v)$ & \textnormal{M} & \textnormal{H} & \textnormal{R}
 & \textnormal{P} & $\textnormal{P}^{*}$ & \textnormal{T} \\
\midrule
\ref{it:w:simplex} & $(1,1,1,1,1,1)$ & \yes & \yes & \yes & \yes & \yes & \yes \\
\ref{it:w:notH}    & $(1,2,1,2,2,2)$ & \yes & \no  & \yes & \yes & \yes & \yes \\
\ref{it:w:notM}    & $(1,2,1,3,1,3)$ & \no  & \no  & \yes & \yes & \yes & \yes \\
\ref{it:w:notP}    & $(1,4,2,5,5,3)$ & \yes & \no  & \yes & \no  & \yes & \yes \\
\ref{it:w:notR}    & $(1,2,2,1,1,3)$ & \yes & \no  & \no  & \yes & \yes & \no  \\
\ref{it:w:notPs}   & $(3,4,2,4,3,1)$ & \yes & \no  & \yes & \no  & \no  & \no  \\
\ref{it:w:notRPs}  & $(1,1,1,1,1,2)$ & \yes & \no  & \no  & \yes & \no  & \no  \\
\bottomrule
\end{tabular}
\caption{Seven quadruples separating the conditions of \cref{prop:fourpoint}; each is verified in \cref{app:fourpoint}.
The symbol \no{} means that the condition fails for at least one labeling.}
\label{tab:fourpoint}
\end{table}

\section{\texorpdfstring{Proof of \cref{thm:algebraic}}{Proof of the algebraic theorem}}\label{sec:proof}

The proof reduces \eqref{eq:t}, for all $\tran$ at once, to a single elementary inequality between three explicit piecewise-quadratic functions of one variable, \cref{lem:reduced}, whose proof is an interlocking case analysis deferred to \cref{app:reduced} and machine-checked (\cref{app:lean}).

Throughout this section $a,e,b,c,u,v\geq0$ are fixed and $s,\Am$ are as in \cref{thm:algebraic}; since $uv\geq0$ we have $s\leq\Am$.

For the reduction, we first linearize in $\tran$.
Both sides of \eqref{eq:t} are linear in $\tran$, and $\setcco$ is a convex cone, so it suffices to verify \eqref{eq:t} on a set of generators.
The following representation identifies them.
It is the classical Choquet-type integral representation of the cone of nonnegative nondecreasing concave functions, in the form in which we use it; \cref{prop:choquet-structure} records the corresponding statement, which is due to Pinelis \cite{Pinelis2015}.
We include the short proof, and will apply the lemma to $f=\dtran$.

\begin{lemma}[Representation]\label{lem:representation}
    Let $f\colon\Rpp\to\R$ be nonnegative, nondecreasing and concave.
    Then there are constants $\eta_0,\eta_1 \geq 0$ and a nonnegative Borel measure $\nu$ on $\Rpp$ such that
    \begin{equation}\label{eq:representation}
        f(\sigma) = \eta_0 + \eta_1 \sigma + \int_{\Rpp} (\sigma\wedge \theta)\nu(\dl \theta)
        \qquad\text{for all } \sigma>0
        \eqcm
    \end{equation}
    namely, with $f\pr$ the right derivative of $f$, $\eta_0 = f(0+)$, $\eta_1 = \lim_{\sigma\to\infty} f\pr(\sigma)$, and $\nu = -\dl f\pr$ the nonnegative Lebesgue--Stieltjes measure determined by $\nu((\sigma,\theta]) = f\pr(\sigma)-f\pr(\theta)$.
\end{lemma}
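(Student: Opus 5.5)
The plan is to derive the representation from the one-sided derivatives of $f$. Since $f$ is concave on $\Rpp$, its right derivative $f\pr$ exists everywhere on $\Rpp$. It is nonincreasing and right-continuous, and $f$ is absolutely continuous on compact subintervals with $f(\sigma)-f(\sigma_0)=\int_{\sigma_0}^\sigma f\pr(x)\,\dl x$. Because $f$ is nondecreasing, $f\pr\geq0$. Hence $\eta_1:=\lim_{\sigma\to\infty}f\pr(\sigma)$ exists and is nonnegative. Likewise $f(0+)$ exists (monotone limit) and is at least $0$, since $f\geq0$; set $\eta_0:=f(0+)$. Next define $\nu$ as the Lebesgue--Stieltjes measure of the nondecreasing right-continuous function $-f\pr$, so that $\nu((\sigma,\theta])=f\pr(\sigma)-f\pr(\theta)$. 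This is a nonnegative Borel measure on $\Rpp$.

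The key identity expresses $f\pr$ through $\nu$. For $x>0$, letting $\theta\to\infty$ in the definition gives
\[
f\pr(x)=\eta_1+\nu((x,\infty)) .
\]
Integrate this from $0$ to $\sigma$, using $f(\sigma)=f(0+)+\int_0^\sigma f\pr(x)\,\dl x$. This follows by letting $\sigma_0\searrow0$ and applying monotone convergence, which is legitimate since $f\pr\geq0$. The result is
\[
f(\sigma)=\eta_0+\eta_1\sigma+\int_0^\sigma\nu((x,\infty))\,\dl x .
\]
By Tonelli's theorem (everything is nonnegative), the last term equals
\[
\int_{\Rpp}\int_0^\sigma \ind_{\{x<\theta\}}\,\dl x\,\nu(\dl\theta)=\int_{\Rpp}(\sigma\wedge\theta)\,\nu(\dl\theta),
\]
which is \eqref{eq:representation}.

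The main obstacle is bookkeeping near $0$. The right derivative may blow up as $x\searrow0$, so $\nu$ need not be finite near $0$, and one must check that the integral $\int_0^\sigma f\pr$ is finite. It is: monotone convergence shows it equals $f(\sigma)-f(0+)<\infty$, and this finiteness is exactly what guarantees $\int(\sigma\wedge\theta)\,\nu(\dl\theta)<\infty$. A second point is right-continuity of $f\pr$, which is needed so that $\nu((\sigma,\theta])$ matches the half-open-interval convention; it holds for the right derivative of a concave function.
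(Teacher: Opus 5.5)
Your proof is correct and follows the paper's argument essentially step by step. Both take the right derivative, write $f\pr(x)=\eta_1+\nu((x,\infty))$, integrate from $0$ using $\eta_0=f(0+)$, and apply Tonelli to obtain $\int_{\Rpp}(\sigma\wedge\theta)\,\nu(\dl\theta)$; your monotone-convergence justification of $f(\sigma)=f(0+)+\int_0^\sigma f\pr$ just spells out a step the paper states without comment.
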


\begin{proof}
    Being concave on $\Rpp$, $f$ has a right derivative $f\pr$ there, and $f\pr$ is nonincreasing and right-continuous; being nondecreasing, $f$ has $f\pr\geq0$.
    Hence $\eta_1 := \lim_{\sigma\to\infty}f\pr(\sigma) = \inf_{\sigma>0} f\pr(\sigma)$ exists and is nonnegative, and $f\pr-\eta_1$ is a nonnegative, nonincreasing function vanishing at $\infty$; let $\nu$ be the nonnegative measure on $\Rpp$ determined by $\nu((\sigma,\theta]):=f\pr(\sigma)-f\pr(\theta)$, so that
    \begin{equation}\label{eq:fprime}
        f\pr(\sigma) = \eta_1 + \nu\brOf{(\sigma,\infty)}
        \qquad(\sigma>0)
        \eqfs
    \end{equation}
    Since $f$ is concave and nondecreasing it is continuous on $\Rpp$ and $\eta_0 := f(0+) = \inf_{\sigma>0}f(\sigma) \geq 0$ exists, with $f(\sigma) = \eta_0+\int_0^\sigma f\pr$.
    Inserting \eqref{eq:fprime} and applying Tonelli to the nonnegative integrand,
    \begin{equation*}
        \int_0^\sigma \nu\brOf{(r,\infty)}\dl r
        = \int_{\Rpp}\abs{\cb{r\in(0,\sigma)\colon r<\theta}}\nu(\dl \theta)
        = \int_{\Rpp}(\sigma\wedge \theta)\nu(\dl \theta)
        \eqcm
    \end{equation*}
    which is \eqref{eq:representation}.
\end{proof}

\begin{lemma}[Linearization]\label{lem:choquet}
Let $\tran\in\setcco$, with $a,e,b,c,u,v,s,\Am$ as above.
Put
\begin{equation}\label{eq:rho}
  \rho(\sigma) := \ind_{\sigma<a}+\ind_{\sigma<e}-\ind_{\sigma<b}-\ind_{\sigma<c}
             -2\,\ind_{s\leq \sigma<\Am}
  \eqcm
\end{equation}
\begin{equation}\label{eq:GPhi}
  \Gfun(\sigma) := \int_\sigma^\infty \rho,
  \qquad
  \Phifun(\theta) := \int_0^\theta \Gfun(\sigma)\dl\sigma
  \eqfs
\end{equation}
Then, with $\eta_0,\eta_1,\nu$ as in \cref{lem:representation} applied to $f = \dtran$,
\begin{equation}\label{eq:decomposition}
\begin{gathered}
  \tran(a)+\tran(e)-\tran(b)-\tran(c) - 2\br{\tran(\Am)-\tran(s)}\\
  = \eta_0\Gfun(0) + \eta_1\Phifun(\infty)
        + \int_{\Rpp}\Phifun(\theta)\nu(\dl \theta)
  \eqcm
\end{gathered}
\end{equation}
where
\begin{equation}\label{eq:G0Phiinf}
  \Gfun(0) = a+e-2\Am,
  \qquad
  \Phifun(\infty) = \tfrac12\br{a^2+e^2-b^2-c^2} - uv
  \eqfs
\end{equation}
Consequently \eqref{eq:t} holds for every $\tran\in\setcco$ as soon as $\Gfun(0)\leq0$, $\Phifun(\infty)\leq0$ and $\Phifun(\theta)\leq0$ for all $\theta>0$.
\end{lemma}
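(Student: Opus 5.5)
The plan is to write $\tran(x)-\tran(0)=\int_0^x\dtran$ and to express the left-hand side of \eqref{eq:decomposition} through $\rho$. The constant $\tran(0)$ drops out anyway, because the left-hand side has two plus and four minus terms with coefficients summing to $1+1-1-1-2+2=0$. For any $x\geq0$ we have $\tran(x)-\tran(0)=\int_0^\infty \ind_{\sigma<x}\dtran(\sigma)\,\dl\sigma$, and $\tran(\Am)-\tran(s)=\int \ind_{s\leq\sigma<\Am}\dtran$. Hence
\begin{equation*}
  \tran(a)+\tran(e)-\tran(b)-\tran(c)-2\br{\tran(\Am)-\tran(s)}=\int_0^\infty \rho(\sigma)\,\dtran(\sigma)\,\dl\sigma
  \eqfs
\end{equation*}
Here $\rho$ is bounded with compact support in $[0,\max(a,e,b,c,\Am)]$, so all integrals are finite. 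Insert the representation \eqref{eq:representation} for $f=\dtran$, which is nonnegative, nondecreasing and concave because $\tran\in\setcco\subseteq\setcc$. This splits the integral into three pieces: $\eta_0\int\rho$, $\eta_1\int\sigma\rho(\sigma)\,\dl\sigma$, and $\int\!\int(\sigma\wedge\theta)\rho(\sigma)\,\dl\sigma\,\nu(\dl\theta)$.

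Next, identify the pieces with $\Gfun$ and $\Phifun$. First, $\int_0^\infty\rho=\Gfun(0)=a+e-b-c-2(\Am-s)=a+e-2\Am$, using $b+c=2s$. Second, $\sigma\wedge\theta=\int_0^\theta\ind_{r<\sigma}\,\dl r$, so Fubini gives $\int(\sigma\wedge\theta)\rho(\sigma)\,\dl\sigma=\int_0^\theta\int_r^\infty\rho\,\dl r=\Phifun(\theta)$. Third, letting $\theta\to\infty$, or computing directly, gives $\int\sigma\rho=\Phifun(\infty)$. Evaluating with $\int\sigma\ind_{\sigma<x}=x^2/2$ yields
\begin{equation*}
  \Phifun(\infty)=\tfrac12\br{a^2+e^2-b^2-c^2}-\br{\Am^2-s^2}
  \eqcm
\end{equation*}
and $\Am^2-s^2=uv$, which is \eqref{eq:G0Phiinf}. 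Note that $\Gfun$ vanishes beyond the support of $\rho$, so $\Phifun$ is constant for large $\theta$ and $\Phifun(\infty)$ is well defined.

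The one delicate point is exchanging $\nu$ with the $\dl\sigma$-integral, since $\nu$ may be infinite near $0$ and $\rho$ is signed. I would justify it with Fubini for the absolutely integrable majorant $|\rho(\sigma)|(\sigma\wedge\theta)$. Its $\nu$-integral is bounded by $4\int_0^M\int(\sigma\wedge\theta)\,\nu(\dl\theta)\,\dl\sigma$, where $M$ bounds the support of $\rho$. By \eqref{eq:representation} this is at most $4\int_0^M f\le 4Mf(M)<\infty$. The boundary term at $\sigma\to0$ is harmless because $\dtran$ is bounded near $0$ by monotonicity, with $\dtran(0)=\eta_0$.

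The consequence is then immediate. By \cref{lem:representation}, $\eta_0,\eta_1\geq0$ and $\nu\geq0$, so if $\Gfun(0)\le0$, $\Phifun(\infty)\le0$ and $\Phifun\le0$ on $\Rpp$, every term on the right of \eqref{eq:decomposition} is nonpositive. The left side is then $\le0$, which is \eqref{eq:t}.
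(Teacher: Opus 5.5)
Your proposal is correct and matches the paper's proof step for step: you write the left-hand side as $\int_0^\infty\rho\,\dtran$, insert the representation of $\dtran$, justify the interchange with the majorant $\abs{\rho(\sigma)}(\sigma\wedge\theta)$, whose $\nu$-integral is controlled by $\dtran(\sigma)-\eta_0-\eta_1\sigma$, and identify the three pieces with $\Gfun(0)$, $\Phifun(\infty)$ and $\Phifun(\theta)$ via Fubini. The only point you leave implicit is that $\tran(x)=\int_0^x\dtran$ needs $\tran$ to be continuous at $0$, which follows from convexity together with $\tran(0)=0$ and monotonicity.
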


\begin{proof}
A convex $\tran$ with $\tran(0)=0$ satisfies $\tran(\lambda x)\leq\lambda\tran(x)$ for $\lambda\in[0,1]$, hence is continuous at $0$, and therefore $\tran(x)=\int_0^x\dtran$ for every $x\geq0$; equivalently $\tran(x)=\int_0^\infty \dtran(\sigma)\ind_{\sigma<x}\dl \sigma$.
As $s\leq\Am$ we have $\ind_{\sigma<\Am}-\ind_{\sigma<s}=\ind_{s\leq \sigma<\Am}$, so the left-hand side of \eqref{eq:decomposition} equals $\int_0^\infty\dtran(\sigma)\rho(\sigma)\dl \sigma$, which converges absolutely because $\dtran$ is nondecreasing, hence bounded on compacts, and $\rho$ is bounded with compact support.

Apply \cref{lem:representation} to $f=\dtran$, which is nonnegative, nondecreasing and concave, and insert \eqref{eq:representation}.
The resulting interchange of $\int\dl \sigma$ with $\int\nu(\dl \theta)$ is legitimate even though $\nu$ may have infinite mass, because
\begin{equation*}
  \int_{\Rpp}\!\int_0^\infty (\sigma\wedge \theta)\abs{\rho(\sigma)}\dl \sigma\nu(\dl \theta)
  = \int_0^\infty \abs{\rho(\sigma)}\br{\dtran(\sigma)-\eta_0-\eta_1 \sigma}\dl \sigma
  <\infty
\end{equation*}
by \eqref{eq:representation} and the same boundedness.
This gives \eqref{eq:decomposition} with $\int_0^\infty\rho = \Gfun(0)$ and $\int_0^\infty \sigma\rho(\sigma)\dl \sigma = \Phifun(\infty)$ and, by Fubini,
\begin{equation*}
  \int_0^\infty (\sigma\wedge \theta)\rho(\sigma)\dl \sigma
  = \int_0^\theta\!\int_r^\infty\rho(\sigma)\dl\sigma\dl r
  = \Phifun(\theta)
  \eqfs
\end{equation*}

For \eqref{eq:G0Phiinf}, $\int_0^\infty\ind_{\sigma<x}\dl \sigma = x$ gives $\Gfun(0)=a+e-b-c-2(\Am-s)=a+e-2\Am$ because $b+c=2s$; and $\int_0^\infty \sigma\ind_{\sigma<x}\dl \sigma = \tfrac12 x^2$ gives $\Phifun(\infty)=\tfrac12(a^2+e^2-b^2-c^2)-(\Am^2-s^2)$, and $\Am^2-s^2=uv$.
The final assertion is immediate from $\eta_0,\eta_1\geq0$ and $\nu\geq0$.
\end{proof}

\begin{remark}\label{rem:endpoints}
$\Gfun(0)\leq0$ is \eqref{eq:pstar}, that is \eqref{eq:t} for $\tran=\mathrm{id}$; and $\Phifun(\infty)\leq0$ is \eqref{eq:r}, that is \eqref{eq:t} for $\tran=x^2$.
Both are hypotheses of \cref{thm:algebraic}.
What has to be proved is what happens in between.
\end{remark}

To make $\Gfun$ and $\Phifun$ explicit we use tents: for $\mu\geq r\geq0$ let
\begin{equation}\label{eq:tent}
  \tent{\mu}{r}(\sigma)
  := \min\brOf{\posp{\sigma-\mu+r},\posp{\mu+r-\sigma}}
\end{equation}
be the \emph{tent} supported on $[\mu-r,\mu+r]$, of height $r$ and area $r^2$.

\begin{lemma}[Tent form]\label{lem:tent}
Let $a,e,b,c,u,v$ satisfy \eqref{eq:r} and \eqref{eq:pstar}, and put
\begin{equation}\label{eq:notation}
  m := \tfrac12(a+e),\qquad h := \tfrac12\abs{a-e},\qquad \delta := \tfrac12\abs{b-c}
  \eqcm
\end{equation}
so that $\cb{a,e} = \cb{m-h,m+h}$ and $\cb{b,c} = \cb{s-\delta,s+\delta}$.
Then
\begin{equation}\label{eq:tent-hyp}
  0\leq h\leq m\leq\Am,\qquad 0\leq\delta\leq s\leq\Am,\qquad
  h^2-\delta^2\leq\Am^2-m^2
  \eqcm
\end{equation}
and $\Gfun$ of \eqref{eq:GPhi} is
\begin{equation}\label{eq:tent-G}
  \Gfun(\sigma)
  = \tent{m}{h}(\sigma) - \tent{s}{\delta}(\sigma)
        - 2\posp{\Am-\sigma\vee m}
  \eqfs
\end{equation}
\end{lemma}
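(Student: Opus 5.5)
The plan is to check the inequalities \eqref{eq:tent-hyp} directly from the definitions and the hypotheses, and then to compute $\Gfun$ piece by piece, using that $\rho$ is a sum of indicators and $\Gfun(\sigma)=\int_\sigma^\infty\rho$ is linear in $\rho$.

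\textbf{The inequalities.} The bounds $0\le h\le m$ and $0\le\delta\le s$ are immediate because $a,e,b,c\ge0$. We already know $s\le\Am$ from $uv\ge0$, and $m\le\Am$ is \eqref{eq:pstar}. For the last inequality, write $a^2+e^2=2(m^2+h^2)$ and $b^2+c^2=2(s^2+\delta^2)$. Then \eqref{eq:r} reads $2(m^2+h^2)-2(s^2+\delta^2)\le 2uv=2(\Am^2-s^2)$, which rearranges to $h^2-\delta^2\le\Am^2-m^2$.

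\textbf{The formula for $\Gfun$.} Split $\rho$ into three parts.

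First, the diagonal part $\ind_{\sigma<m-h}+\ind_{\sigma<m+h}$. Its tail integral is $(m-h-\sigma)_++(m+h-\sigma)_+$.

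Second, the legs give $-\big[(s-\delta-\sigma)_++(s+\delta-\sigma)_+\big]$.

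Third, the averaged-leg part $-2\ind_{s\le\sigma<\Am}$ gives $-2\big[(\Am-\sigma)_+-(\Am-\sigma\vee s)\big]$. This equals $-2(\Am-\sigma)_++2(s-\sigma)_+$, using $s\le\Am$.

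Now use the identity, valid for $\mu\ge r\ge0$,
\[
(\mu-r-\sigma)_++(\mu+r-\sigma)_+ = 2(\mu-\sigma)_+ + \tent{\mu}{r}(\sigma).
\]
It is checked on the four intervals cut out by $\mu-r$, $\mu$, $\mu+r$:
\begin{itemize}
\item for $\sigma\ge\mu+r$, both sides vanish;
\item on $[\mu,\mu+r]$, both sides equal $\mu+r-\sigma$;
\item on $[\mu-r,\mu]$, the left side is $\mu+r-\sigma$, and the right side is $2(\mu-\sigma)+(\sigma-\mu+r)$, which agrees;
\item below $\mu-r$, both sides equal $2(\mu-\sigma)$.
\end{itemize}

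Apply the identity with $(\mu,r)=(m,h)$ and with $(\mu,r)=(s,\delta)$. The terms $\pm2(s-\sigma)_+$ cancel, leaving
\[
\Gfun(\sigma)=\tent{m}{h}-\tent{s}{\delta}+2(m-\sigma)_+-2(\Am-\sigma)_+.
\]
Since $m\le\Am$, we have $(\Am-\sigma)_+-(m-\sigma)_+=(\Am-\sigma\vee m)_+$, which checks on the three regions $\sigma\le m$, $m\le\sigma\le\Am$ and $\sigma\ge\Am$. This gives \eqref{eq:tent-G}.

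The only delicate point is bookkeeping: the step $(\Am-\sigma)_+-(m-\sigma)_+=(\Am-\sigma\vee m)_+$ needs $m\le\Am$, i.e.\ \eqref{eq:pstar}, and the tent identity needs $\mu\ge r$. Nothing deeper is involved.
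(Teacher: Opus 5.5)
Your proof is correct and follows the paper's argument essentially step for step: you rewrite \eqref{eq:r} via $a^2+e^2=2(m^2+h^2)$ and $b^2+c^2=2(s^2+\delta^2)$, apply the same tent identity to $(a,e)$ and $(b,c)$, cancel the $\posp{s-\sigma}$ terms in the same way, and close with $m\le\Am$. One harmless slip: the tail integral of $-2\ind_{s\le\sigma<\Am}$ is $-2\posp{\Am-\sigma\vee s}$, not the bracket $-2\br{\posp{\Am-\sigma}-(\Am-\sigma\vee s)}$ you wrote (which, for instance, does not vanish for $\sigma>\Am$), but the value $-2\posp{\Am-\sigma}+2\posp{s-\sigma}$ you actually use afterwards is the correct one.
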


\begin{proof}
The bounds $h\leq m$ and $\delta\leq s$ hold because $a,e,b,c\geq0$; $m\leq\Am$ is \eqref{eq:pstar} and $s\leq\Am$ holds because $uv\geq0$.
Since $4\Am^2=(b+c)^2+4uv$ and $(a-e)^2+(a+e)^2=2(a^2+e^2)$, $(b-c)^2+(b+c)^2=2(b^2+c^2)$, hypothesis \eqref{eq:r} is the same as $(a-e)^2-(b-c)^2\leq4\Am^2-(a+e)^2$, that is $h^2-\delta^2\leq\Am^2-m^2$.

For the formula for $\Gfun$ we use that two nonnegative numbers $x_1,x_2$ with mean $\mu$ and half-spread $r$ satisfy
\begin{equation}\label{eq:tent-identity}
  \posp{x_1-\sigma}+\posp{x_2-\sigma}-2\posp{\mu-\sigma}
  = \tent{\mu}{r}(\sigma)
  \qquad(\sigma\in\R)
  \eqfs
\end{equation}
To see it, write $x_1,x_2=\mu\mp r$ and check the four regimes.
For $\sigma\leq\mu-r$ the left side is $(x_1-\sigma)+(x_2-\sigma)-2(\mu-\sigma)=0$ and so is the right side.
For $\mu-r\leq\sigma\leq\mu$ it is $(\mu+r-\sigma)-2(\mu-\sigma)=\sigma-\mu+r$, the smaller of the two arguments of the minimum.
For $\mu\leq\sigma\leq\mu+r$ it is $\mu+r-\sigma$, again the smaller one.
For $\sigma\geq\mu+r$ both sides vanish.

Apply \eqref{eq:tent-identity} to the pair $(a,e)$, with mean $m$ and half-spread $h$, and to $(b,c)$, with mean $s$ and half-spread $\delta$:
\begin{equation*}
  \Gfun(\sigma) = \tent mh(\sigma)+2\posp{m-\sigma}
    -\tent s\delta(\sigma)-2\posp{s-\sigma}
    -2\posp{\Am-\sigma}+2\posp{s-\sigma}
  \eqcm
\end{equation*}
where the two $\posp{s-\sigma}$ terms cancel.
Finally $m\leq\Am$ turns $\posp{\Am-\sigma}-\posp{m-\sigma}$ into $\posp{\Am-\sigma\vee m}$---both expressions equal $\Am-m$ for $\sigma\leq m$ and equal $\posp{\Am-\sigma}$ for $\sigma\geq m$---which gives \eqref{eq:tent-G}.
\end{proof}

Writing
\begin{equation}\label{eq:PND}
  P(\theta):=\int_0^\theta\tent{m}{h},\qquad
  N(\theta):=\int_0^\theta\tent{s}{\delta},\qquad
  D(\theta):=2\int_0^\theta\posp{\Am-\sigma\vee m}\dl\sigma
  \eqcm
\end{equation}
so that $\Phifun = P-N-D$, one computes
\begin{equation}\label{eq:closed-forms}
\begin{gathered}
  P(\theta)=\begin{cases}
    \tfrac12\posp{\theta-m+h}^2, & \theta\leq m,\\[2pt]
    h^2-\tfrac12\posp{m+h-\theta}^2, & \theta\geq m,
  \end{cases}
  \\
  N(\theta)=\begin{cases}
    \tfrac12\posp{\theta-s+\delta}^2, & \theta\leq s,\\[2pt]
    \delta^2-\tfrac12\posp{s+\delta-\theta}^2, & \theta\geq s,
  \end{cases}
\end{gathered}
\end{equation}
by integrating the tent \eqref{eq:tent}, of total area $h^2$ and $\delta^2$ respectively, and
\begin{equation}\label{eq:closed-forms-D}
  D(\theta)=\begin{cases}
    2(\Am-m)\theta, & \theta\leq m,\\[2pt]
    \Am^2-m^2-\posp{\Am-\theta}^2, & \theta\geq m,
  \end{cases}
\end{equation}
because the integrand of $D$ equals the constant $2(\Am-m)$ on $\ab{0,m}$, while $2\int_m^\theta\posp{\Am-\sigma}\dl\sigma=(\Am-m)^2-\posp{\Am-\theta}^2$ for $\theta\geq m$, and $2m(\Am-m)+(\Am-m)^2=\Am^2-m^2$.
The total masses are $P(\infty)=h^2$, $N(\infty)=\delta^2$ and $D(\infty)=\Am^2-m^2$, so that $\Phifun(\infty)=h^2-\delta^2-\br{\Am^2-m^2}$, which is nonpositive exactly under \eqref{eq:r}, consistently with \cref{rem:endpoints}.

By \cref{lem:choquet} and \cref{rem:endpoints}, \cref{thm:algebraic} therefore follows from the following elementary statement, into which every trace of the hypotheses has been absorbed through \eqref{eq:tent-hyp}.
It is a statement about the five numbers $h,m,\delta,s,\Am$ alone.

\begin{lemma}[Reduced problem; \leanchecked]\label{lem:reduced}
Let $h,m,\delta,s,\Am \geq 0$. Assume
\begin{equation}\label{eq:reduced-hyp}
  0\leq h\leq m\leq \Am,\qquad 0\leq\delta\leq s\leq \Am,\qquad
  h^2-\delta^2\leq \Am^2-m^2
  \eqfs
\end{equation}
Let $P,N,D$ be given by \eqref{eq:closed-forms} and \eqref{eq:closed-forms-D}.
Then $P(\theta)\leq N(\theta)+D(\theta)$ for all $\theta\geq0$.
\end{lemma}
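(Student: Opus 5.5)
The plan is to study the difference $F:=N+D-P$ directly as a function of $\theta$. Two values are immediate: $F(0)=0$, and $F(\infty)=\delta^2+\Am^2-m^2-h^2\geq0$ by the last hypothesis in \eqref{eq:reduced-hyp}. By \eqref{eq:PND}, $F$ is $C^1$ and piecewise quadratic, with derivative
\begin{equation*}
  F\pr(\theta)=\tent{s}{\delta}(\theta)+2\posp{\Am-\theta\vee m}-\tent{m}{h}(\theta)
  \eqfs
\end{equation*}
The breakpoints are among $m-h,\,m,\,m+h,\,s-\delta,\,s,\,s+\delta,\,\Am$. The strategy is a sign-pattern (``single crossing'') argument for $F\pr$: show that $F\pr$ is nonnegative on an initial interval $[0,\theta_*]$ and nonpositive on $[\theta_*,\infty)$. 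Then $F$ first increases from $F(0)=0$ and afterwards decreases monotonically to $F(\infty)\geq0$, so $F\geq0$ everywhere.

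For $\theta\leq m-h$ we have $\tent mh=0$, so $F\pr\geq0$ trivially. For $\theta\geq\Am$ the $D$-term vanishes and $F\pr=\tent s\delta-\tent mh$. Since $s\leq\Am$, the $N$-tent has already passed its peak there, which suggests $F\pr\leq0$ beyond a point. The real work lies in the window $[m-h,\Am]$. There, for $\theta\leq m$, one has $F\pr\geq 2(\Am-m)+\tent s\delta(\theta)-(\theta-m+h)_+$, and for $m\leq\theta\leq\Am$ the $D$-contribution decreases linearly with slope $-2$ while $\tent mh$ decreases with slope $-1$.

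If single crossing fails in some configurations (for example when the $N$-tent sits far to the left, $s+\delta<m-h$, or straddles $m$), I will fall back to an explicit case split. I would order the cases by the position of $\Am$ relative to $m+h$, and by the position of $[s-\delta,s+\delta]$ relative to $m$. In each case I bound $F$ from below at its interior critical points only, since on each interval between breakpoints $F$ is a quadratic with leading coefficient in $\{0,\pm\tfrac12,\pm1\}$. The one nontrivial input is the mass inequality $h^2\leq\delta^2+\Am^2-m^2$. I would combine it with the cheap bounds $P(\theta)\leq h^2$, $P(\theta)\leq h\,(\theta-m+h)_+$ and $P(\theta)\leq\tfrac12(\theta-m+h)_+^2$ for $\theta\leq m$. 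These are set against $D(\theta)=2(\Am-m)\theta$ on $[0,m]$ and against $N(\theta)\geq\delta^2-\tfrac12\posp{s+\delta-\theta}^2$.

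The main obstacle is the regime where $\Am-m$ is small, so that $D$ contributes little near $\theta=m$. In that regime the hypothesis forces $\delta\gtrsim h$, and the $N$-mass must compensate. However, $N$'s tent is centred at $s$, which may lie well below $m$, in which case its mass is already fully accumulated. I would try first to show that in this regime $s+\delta\leq m+h$ implies $N(\theta)\geq P(\theta)-D(\theta)$ via the comparison $\delta^2-\tfrac12\posp{s+\delta-\theta}^2\geq h^2-\tfrac12\posp{m+h-\theta}^2-(\Am^2-m^2)+\posp{\Am-\theta}^2$. This is a quadratic inequality in $\theta$ whose endpoint values are controlled by \eqref{eq:reduced-hyp}. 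Given the number of subcases, I expect this final verification to be best checked mechanically.
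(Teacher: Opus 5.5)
Your single-crossing strategy is sound. Single crossing in fact holds in every configuration, but the proposal never proves it. You state it as a plan and suspect it may fail (for $s+\delta<m-h$, or an $N$-tent straddling $m$). You then defer to a case split and a quadratic comparison that you do not carry out. So as written there is a gap exactly at the crux.

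The missing observation is that $F\pr$ is nonincreasing on the whole window $[m-h,\Am]$, wherever the $N$-tent sits.
\begin{itemize}
\item On $[m-h,m]$ you have $F\pr(\theta)=\tent s\delta(\theta)+2(\Am-m)-(\theta-m+h)$.
\item On $[m,\Am]$ you have $F\pr(\theta)=\tent s\delta(\theta)+2(\Am-\theta)-\posp{m+h-\theta}$.
\end{itemize}
Since $\tent s\delta$ is $1$-Lipschitz, the continuous piecewise linear function $F\pr$ has slope at most $1-1=0$ on the first piece and at most $1-2+1=0$ on the second.

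The two outer ranges then settle the sign pattern.
\begin{itemize}
\item On $[0,m-h]$ you already have $F\pr=\tent s\delta+2(\Am-m)\geq0$.
\item On $[\Am,\infty)$ both tents are on their descending sides, because $\theta\geq\Am\geq s$ and $\theta\geq m$. Hence $F\pr(\theta)=\posp{s+\delta-\theta}-\posp{m+h-\theta}$, whose sign there is the weak sign of $s+\delta-(m+h)$.
\end{itemize}
Your heuristic that the $N$-tent being past its peak forces $F\pr\leq0$ there fails when $s+\delta>m+h$, since $F\pr$ can then be positive. That case is harmless: $F\pr(\Am)\geq0$, and monotonicity on the window gives $F\pr\geq0$ everywhere. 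If instead $s+\delta\leq m+h$, then $F\pr\leq0$ on $[\Am,\infty)$, and $F\pr$ changes sign at most once, inside $[m-h,\Am]$.

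In both cases $F$ is nondecreasing and then nonincreasing, and it is constant beyond $\max(m+h,s+\delta,\Am)$. Therefore $F\geq\min(F(0),F(\infty))=\min(0,\delta^2+\Am^2-m^2-h^2)=0$, and your fallback and last paragraph can be dropped.

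Completed this way, your route differs genuinely from the paper's and is considerably shorter. The paper splits on the sign of $h^2-(\Am^2-m^2)$:
\begin{itemize}
\item When it is nonpositive, the paper shows $P\leq D$ alone, by convexity on $[0,m]$ and a separate analysis on $[m,\infty)$.
\item Otherwise it reduces to the worst case $s=\Am$, $\delta=\sqrt{h^2-(\Am^2-m^2)}$, using monotonicity of $N$ in $s$ and $\delta$. It proves the key inequality $h-\delta\geq\Am-m$, then uses convexity of $\Phifun$ on $[0,m]$ and a concave-then-affine argument for the tail integral of $\Gfun$ on $[m,\Am]$.
\end{itemize}
Your unimodality argument needs neither the worst-case reduction nor the case split. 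The mass hypothesis $h^2-\delta^2\leq\Am^2-m^2$ enters only through $F(\infty)\geq0$. Of the remaining hypotheses, $h\leq m$ and $\delta\leq s$ give $F(0)=0$, $m\leq\Am$ gives $F\pr\geq0$ before the window, and $s\leq\Am$ gives the descending-side description beyond it.
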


\Cref{lem:reduced} is proved in \cref{app:reduced} by an interlocking case analysis and machine checked (\cref{app:lean}).

\begin{proof}[Proof of \cref{thm:algebraic}]
Necessity: $\mathrm{id}$ and $x^2$ lie in $\setcc$, and \eqref{eq:t} at those two functions is \eqref{eq:pstar} and \eqref{eq:r}, respectively.

For sufficiency, both sides of \eqref{eq:t} are unchanged when a constant is added to $\tran$, so we may and do assume $\tran(0)=0$, that is $\tran\in\setcco$.
By \cref{lem:tent} the numbers $h,m,\delta,s,\Am$ satisfy \eqref{eq:tent-hyp}, which is \eqref{eq:reduced-hyp}, so \cref{lem:reduced} gives $\Phifun(\theta)=P(\theta)-N(\theta)-D(\theta)\leq0$ for every $\theta\geq0$.
Together with $\Gfun(0)\leq0$ and $\Phifun(\infty)\leq0$, which are \eqref{eq:pstar} and \eqref{eq:r} by \cref{rem:endpoints}, and with $\eta_0,\eta_1\geq0$ and $\nu\geq0$, the decomposition \eqref{eq:decomposition} gives \eqref{eq:t}.
\end{proof}

\section{Consequences}\label{sec:geometry}

The proofs for this section are given in \cref{app:geometry}.

\Cref{thm:intro:main} bounds $\Quadt$ from above using the average leg length.
By the symmetry properties of the quadruple functional (\cref{lem:symmetry}), $-\Quadt$ is bounded similarly by exchanging legs and diagonals.
Together, we obtain the following two-sided bounds.

\begin{corollary}[Two-sided bounds]\label{cor:twosided}
    Let $y,z,q,p$ be elements of a \cato{} space.
    Define
    \begin{equation}\label{eq:s-defs}
        \smin := \tfrac12 \min\brOf{\ol yp+\ol zq,\ol yq+\ol zp}
    \end{equation}
    and
    \begin{equation}\label{eq:diam}
        \diam := \diam\cb{y,z,q,p} = \max\brOf{\ol yq, \ol zp, \ol yp, \ol zq, \ol yz, \ol pq}
        \eqfs
    \end{equation}
    In \eqref{eq:ts:diam} below assume $\diam>0$; if $\diam=0$, the four points coincide and $\QuadOf yzqp=0$.
    Then the value $\abs{\QuadOf yzqp}$ is at most each of
\begin{align}
  & 2\tran\brOf{\sqrt{\smin^2+\ol yz\,\ol pq}}-2\tran(\smin)
  \eqcm & \text{for }\tran&\in\setcc\eqcm \label{eq:ts:trapezoid}\\
  & 2\,\ol yz\,\ol pq\,\frac{\dtran\brOf{\tfrac12\diam}}{\diam}
  \eqcm & \text{for }\tran&\in\setcc\eqcm \label{eq:ts:diam}\\
  & 2\tran\brOf{\tfrac12\br{\ol yz+\ol pq}}-2\tran\brOf{\tfrac12\abs{\ol yz-\ol pq}}
  \eqcm & \text{for }\tran&\in\setcc\eqcm \label{eq:ts:bases}\\
  & 2\,\ol pq\,\dtran\brOf{\tfrac12\ol yz} 
  \quad\text{and}\quad 
  2\,\ol yz\,\dtran\brOf{\tfrac12\ol pq}
  \eqcm & \text{for }\tran&\in\setcc\eqcm \label{eq:ts:hh}\\
  & 2\min(\ol pq, \ol yz)\dtran\brOf{\tfrac12\max(\ol pq, \ol yz)} 
  \eqcm & \text{for }\tran&\in\setcc\eqcm \label{eq:ts:hhminmax}\\
  & \ctranobest{\tran}\,\ol pq\,\dtran\brOf{\ol yz} 
  \quad\text{and}\quad
  \ctranobest{\tran}\, \ol yz \,\dtran\brOf{\ol pq} 
  \eqcm & \text{for }\tran&\in\setcc\eqcm \label{eq:ts:W}\\
  & \ctranobest{\tran} \min(\ol pq, \ol yz) \dtran\brOf{\max(\ol pq, \ol yz)} 
  \eqcm & \text{for }\tran&\in\setcc\eqcm \label{eq:ts:Wminmax}\\
  & 2\tran\brOf{\sqrt{\ol yz\,\ol pq}}
  \eqcm & \text{for }\tran&\in\setcco\eqcm \label{eq:ts:sqrt}\\
  & \tran(\ol yz)+\tran(\ol pq)
  \eqcm & \text{for }\tran&\in\setcco\eqcm \label{eq:ts:sum}
\end{align}
where, for nonconstant $\tran$,
\begin{equation}\label{eq:Wtau}
    \ctranobest{\tran}
    := \sup_{x>0}\frac{2\dtran(x)}{\dtran(2x)}
    = \sup_{x_1+x_2>0}
    \frac{\dtran(x_1)+\dtran(x_2)}{\dtran(x_1+x_2)}
    \in[1,2]
    \eqcm
\end{equation}
the suprema being taken over $x>0$ and over $x_1,x_2\geq0$ with $x_1+x_2>0$; here $\dtran>0$ on $\Rpp$ because a nonnegative nondecreasing concave function that vanishes at some $x_0>0$ vanishes identically.
For constant $\tran$ both sides of \eqref{eq:ts:W} and \eqref{eq:ts:Wminmax} vanish, whatever the value of $\ctranobest{\tran}$.
\end{corollary}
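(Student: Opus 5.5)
The plan is to derive everything from the master bound \eqref{eq:T}, i.e.\ the explicit form $\QuadOf yzqp\le 2\tran(\sqrt{s^2+uv})-2\tran(s)$ with $s=\tfrac12(\ol yp+\ol zq)$ and $uv=\ol yz\,\ol pq$. The remaining bounds then follow as one-variable estimates of the function $g(s):=\tran(\sqrt{s^2+uv})-\tran(s)$.

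\textbf{Step 1: the bound \eqref{eq:ts:trapezoid}.} By \cref{lem:symmetry}, $-\QuadOf yzqp=\QuadOf zyqp$. Applying \cref{thm:intro:main} to the relabeled quadruple $z,y,q,p$, whose legs are $\ol zp,\ol yq$ (the original diagonals) and whose bases are the same, bounds $-\Quadt$ by $2g$ evaluated at $\tfrac12(\ol yq+\ol zp)$. So $\abs{\Quadt}$ is bounded by the larger of $2g$ at the two half-sums. I will show $g$ is nonincreasing in $s$, which gives \eqref{eq:ts:trapezoid} with $\smin$. Writing $t=\sqrt{s^2+w}$, we have
\[
g'(s)=\frac{s}{t}\,\dtran(t)-\dtran(s),
\]
which is $\le 0$ because $x\mapsto\dtran(x)/x$ is nonincreasing: a concave $\dtran\ge0$ satisfies $\dtran(x)/x\ge\dtran(y)/y$ for $x\le y$.

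\textbf{Step 2: the bounds \eqref{eq:ts:bases}, \eqref{eq:ts:sqrt}, \eqref{eq:ts:sum}.} By \cref{lem:cross-dominates}, $\smin\ge s_0:=\tfrac12\abs{u-v}$. Monotonicity of $g$ then gives $g(\smin)\le g(s_0)$, and $\sqrt{s_0^2+uv}=\tfrac12(u+v)$, which yields \eqref{eq:ts:bases}. Using $g(\smin)\le g(0)=\tran(\sqrt{uv})$ gives \eqref{eq:ts:sqrt}. For \eqref{eq:ts:sum}, I would use \eqref{eq:ts:bases} together with the inequality
\[
2\tran\Big(\tfrac{u+v}2\Big)-2\tran\Big(\tfrac{\abs{u-v}}2\Big)\le\tran(u)+\tran(v)
\quad\text{for }\tran\in\setcco.
\]
This inequality is linear in $\tran$, so by \cref{lem:representation} it suffices to check it on the generators $\mathrm{id}$, $x^2$ and the Huber functions $\psi_\theta$. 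It is an equality for $\mathrm{id}$ and for $x^2$. For $\psi_\theta$ it needs a short case check, and I expect this to be the fiddliest elementary step.

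\textbf{Step 3: the derivative bounds \eqref{eq:ts:hh}--\eqref{eq:ts:Wminmax}.} By the mean value theorem, $g(s_0)\le(\tfrac{u+v}2-s_0)\,\dtran(\tfrac{u+v}2)=\min(u,v)\,\dtran(\tfrac{u+v}2)$. This is not yet the target, so I will instead use the concavity of $\dtran$. Set $x_1=\tfrac{u+v}2$ and $x_2=s_0$, so that $x_1-x_2=\min(u,v)$. Then
\[
\tran(x_1)-\tran(x_2)=\int_{x_2}^{x_1}\dtran\le\min(u,v)\,\dtran\Big(\tfrac{x_1+x_2}2\Big)=\min(u,v)\,\dtran\Big(\tfrac12\max(u,v)\Big)
\]
by Jensen, which is \eqref{eq:ts:hhminmax}. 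Since $\dtran$ is nondecreasing, this also gives both halves of \eqref{eq:ts:hh}. Next, the definition of $\ctranobest\tran$ gives $2\dtran(x/2)\le\ctranobest\tran\,\dtran(x)$, so \eqref{eq:ts:hh} implies \eqref{eq:ts:W} and \eqref{eq:ts:Wminmax}. The two expressions in \eqref{eq:Wtau} agree by a concavity argument: $\dtran(x_1)+\dtran(x_2)$ with fixed sum is maximized at $x_1=x_2$. The range $[1,2]$ follows from monotonicity of $\dtran$ together with $\dtran(2x)\le 2\dtran(x)$, which holds by concavity and nonnegativity.

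\textbf{Step 4: the diameter bound \eqref{eq:ts:diam}.} Use $\smin\le$ the diameter $d$ (half-sums of two distances), but Step 1 needs a lower bound on $s$. Instead I will use $t^2-s^2=uv$ to write
\[
g(s)\le(t-s)\,\dtran(t)=\frac{uv}{t+s}\,\dtran(t).
\]
Both half-sums of legs and of diagonals are at least $\tfrac12\max(u,v)$ by the triangle inequality, but a bound involving $d$ is needed. The main obstacle is this step: relating $t+s$ to $d$ while keeping $\dtran(t)$ controlled via the monotonicity of $\dtran(x)/x$. The first thing to try is the estimate $\dtran(t)/(t+s)\le\dtran(t)/t\le\dtran(d/2)/(d/2)$ whenever $t\ge d/2$, together with a separate argument in the case $t<d/2$ by enlarging $s$, using that $g$ is decreasing and that some pair of opposite sides sums to at least $d$.
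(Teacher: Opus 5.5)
Steps 1--3 essentially follow the paper's argument: the monotone gap function, the relabeling $z,y,q,p$, \cref{lem:cross-dominates}, and Hermite--Hadamard at $x_1=\tfrac12(u+v)$, $x_2=\tfrac12\abs{u-v}$. Step 4, the diameter bound \eqref{eq:ts:diam}, has a genuine gap, exactly where you suspected.

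The estimate $g(s)\le(t-s)\,\dtran(t)$ evaluates $\dtran$ at the right endpoint, and that is too lossy ever to give \eqref{eq:ts:diam}. Take $\tran=\sqmap$. Then $2g(\smin)=2(t^2-\smin^2)=2uv$, and the right-hand side of \eqref{eq:ts:diam} is $2uv\,\dtran(\tfrac12\diam)/\diam=2uv$ as well. So any passage from \eqref{eq:ts:trapezoid} to \eqref{eq:ts:diam} must be lossless at $\sqmap$. But $2(t-s)\,\dtran(t)=4t(t-s)>2(t^2-s^2)$ as soon as $uv>0$. Concretely, your first case gives
\[
\frac{2uv\,\dtran(t)}{t+s}\le\frac{2uv\,\dtran(t)}{t}\le\frac{4uv\,\dtran(\diam/2)}{\diam},
\]
which is twice the claimed bound. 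The fallback case also fails: $g$ is nonincreasing, so enlarging $s$ makes $g$ smaller and cannot bound $g(\smin)$ from above. Finally, the geometric input you need, a lower bound on $t+\smin$ by $\diam$, is never established; half-sums being at least $\tfrac12\max(u,v)$ is not enough.

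The repair uses the tool you already used in Step 3. Put $M:=\tfrac12(t+\smin)$. Hermite--Hadamard gives $\tran(t)-\tran(\smin)\le(t-\smin)\,\dtran(M)=\frac{uv}{2M}\,\dtran(M)$, which is exact for $\sqmap$. Hence $2g(\smin)\le uv\,\dtran(M)/M$. It remains to show $2M\ge\diam$ (this is \cref{lem:diam-bound}).
\begin{enumerate}[label=\textup{(\roman*)}]
  \item By the triangle inequality, each of the six distances $x$ satisfies $2x\le\ol yp+\ol zq+\ol yz+\ol pq$. For $\ol yq$, add $\ol yq\le\ol yp+\ol pq$ and $\ol yq\le\ol yz+\ol zq$; similarly for $\ol zp$. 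For each of the four sides of the cycle $y,z,q,p$, use the path through the other three sides.
  \item The same holds with $\ol yq+\ol zp$ in place of $\ol yp+\ol zq$, so $\smin+\tfrac12(u+v)\ge\diam$.
  \item Since $\smin\ge\tfrac12\abs{u-v}$, we have $t\ge\tfrac12(u+v)$, and therefore $2M\ge\diam$.
\end{enumerate}
Monotonicity of $\dtran(x)/x$ then gives $\dtran(M)/M\le2\dtran(\tfrac12\diam)/\diam$, which is \eqref{eq:ts:diam}. In particular $t\ge\tfrac12\diam$ always holds, so your second case never arises.

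Two smaller slips:
\begin{enumerate}[label=\textup{(\alph*)}]
  \item In Step 2, your auxiliary inequality is not an equality for $\mathrm{id}$ or $\sqmap$; it reads $2\min(u,v)\le u+v$ and $2uv\le u^2+v^2$. The detour through Huber generators is also unnecessary: $2\tran(\sqrt{uv})\le2\tran(\tfrac{u+v}{2})\le\tran(u)+\tran(v)$, by monotonicity and convexity, gives \eqref{eq:ts:sum} directly from \eqref{eq:ts:sqrt}.
  \item In Step 3, getting the larger half of \eqref{eq:ts:hh} from \eqref{eq:ts:hhminmax} needs $\min(u,v)\,\dtran(\tfrac12\max(u,v))\le\max(u,v)\,\dtran(\tfrac12\min(u,v))$. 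That is the monotonicity of $\dtran(x)/x$, which you proved in Step 1. Monotonicity of $\dtran$ points the wrong way here.
\end{enumerate}
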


\begin{remark}[How the bounds compare]\label{rem:bound-order}
    The bounds are ordered as follows:
    \begin{equation}\label{eq:bound-order}
        \begin{gathered}
            \eqref{eq:ts:trapezoid}\leq\eqref{eq:ts:diam}\leq\eqref{eq:ts:hhminmax}\leq\eqref{eq:ts:Wminmax}
            \eqcm\\[2pt]
            \eqref{eq:ts:trapezoid}\leq\eqref{eq:ts:bases}\leq\eqref{eq:ts:hhminmax}
            \eqcm\qquad
            \eqref{eq:ts:bases}\leq\eqref{eq:ts:sqrt}\leq\eqref{eq:ts:sum}
            \eqcm
        \end{gathered}
    \end{equation}
    and \eqref{eq:ts:hhminmax}, \eqref{eq:ts:Wminmax} are the smaller of the two bounds in \eqref{eq:ts:hh}, \eqref{eq:ts:W} respectively.
    No further relation holds.
\end{remark}

\begin{remark}\label{rem:far-field}
    Only \eqref{eq:ts:trapezoid} and \eqref{eq:ts:diam} improve when the two bases move apart with their lengths held fixed; the others see the two bases alone.
\end{remark}

The bounds \eqref{eq:ts:W}, \eqref{eq:ts:sqrt} and \eqref{eq:ts:sum} improve the constants of the corresponding bounds in \cite[Corollary~1]{quadruple}, and \cref{prop:Wtau} shows that the improved constants are optimal.

\begin{proposition}[Optimal constants]\label{prop:Wtau}
The constants $2$ in \eqref{eq:ts:sqrt} and $1$ in \eqref{eq:ts:sum} cannot be lowered, not even for a single nonconstant $\tran\in\setcco$: both bounds are equalities whenever $y=p$ and $z=q$.
Moreover, for nonconstant $\tran\in\setcc$ the constant $\ctranobest{\tran}$ of \eqref{eq:Wtau} cannot be lowered either: it is the smallest constant $L$ such that
\begin{equation*}
  \QuadOf yzqp \leq L\,\ol pq\,\dtran\brOf{\ol yz}
\end{equation*}
holds for every quadruple of every \cato{} space.
\end{proposition}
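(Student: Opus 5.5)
The proof splits into the two assertions of \cref{prop:Wtau}. In both, the upper bounds themselves are already available from \cref{cor:twosided}, namely \eqref{eq:ts:sqrt}, \eqref{eq:ts:sum} and \eqref{eq:ts:W}. What remains is to exhibit quadruples in a \cato{} space on which the bounds are attained, or approached arbitrarily closely. The space $\R$ (or the Euclidean plane) suffices, since it is \cato{}.

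\emph{Constants $2$ and $1$.} Let $y=p$ and $z=q$, and put $d:=\ol yz$.
\begin{itemize}[leftmargin=2em,itemsep=0pt]
  \item The legs vanish: $\ol yp=\ol zq=0$.
  \item Both diagonals equal $d$: $\ol yq=\ol yz=d$ and $\ol zp=\ol zy=d$.
  \item The bases are $\ol yz=\ol pq=d$.
\end{itemize}
For $\tran\in\setcco$ this gives $\QuadOf yzqp=2\tran(d)-2\tran(0)=2\tran(d)$. The bound \eqref{eq:ts:sqrt} evaluates to $2\tran(\sqrt{d\cdot d})=2\tran(d)$, and \eqref{eq:ts:sum} evaluates to $\tran(d)+\tran(d)=2\tran(d)$, so both are equalities. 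For nonconstant $\tran\in\setcco$, monotonicity and $\tran(0)=0$ give some $d$ with $\tran(d)>0$. For that $d$ the common value is strictly positive, so no smaller constant in front of either bound can be valid.

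\emph{Optimality of $\ctranobest\tran$.} Fix a nonconstant $\tran\in\setcc$. By \eqref{eq:ts:W}, the constant $\ctranobest\tran$ works, so it suffices to show that every valid $L$ satisfies $L\geq 2\dtran(x)/\dtran(2x)$ for each $x>0$; note $\dtran(2x)>0$ as recorded after \eqref{eq:Wtau}. Work in $\R$ with a small $\varepsilon\in(0,2x)$ and set
\[
  y=0,\qquad z=2x,\qquad p=x-\tfrac\varepsilon2,\qquad q=x+\tfrac\varepsilon2 .
\]
The distances are as follows.
\begin{itemize}[leftmargin=2em,itemsep=0pt]
  \item Diagonals: $\ol yq=\ol zp=x+\tfrac\varepsilon2$.
  \item Legs: $\ol yp=\ol zq=x-\tfrac\varepsilon2$.
  \item Bases: $\ol yz=2x$ and $\ol pq=\varepsilon$.
\end{itemize}
Hence
\[
  \QuadOf yzqp=2\bigl(\tran(x+\tfrac\varepsilon2)-\tran(x-\tfrac\varepsilon2)\bigr),
\]
and the inequality $\QuadOf yzqp\leq L\,\ol pq\,\dtran(\ol yz)$ becomes
\[
  \frac{2\bigl(\tran(x+\tfrac\varepsilon2)-\tran(x-\tfrac\varepsilon2)\bigr)}{\varepsilon}\leq L\,\dtran(2x).
\]
As $\varepsilon\searrow0$, the left-hand side tends to $2\dtran(x)$, because $\tran$ is differentiable on $\Rpp$. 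This yields $L\geq 2\dtran(x)/\dtran(2x)$. Taking the supremum over $x>0$ gives $L\geq\ctranobest\tran$.

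There is no serious obstacle here. The only points needing care are the differentiability of $\tran$ at $x$, which is part of \cref{def:class}, and the positivity of $\dtran(2x)$, which justifies dividing by it.
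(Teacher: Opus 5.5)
Your proposal is correct and is essentially the paper's proof: the degenerate quadruple $y=p$, $z=q$ settles the constants $2$ and $1$, and your collinear configuration is the paper's (points $y,p,q,z$ at $-\tfrac u2,-\tfrac v2,\tfrac v2,\tfrac u2$ with $v\searrow0$), taken with $u=2x$, $v=\varepsilon$ and shifted by $x$. The differences are cosmetic. You cite \eqref{eq:ts:W} directly for the admissibility of $\ctranobest{\tran}$, whereas the paper goes through \eqref{eq:ts:Wminmax} and the monotonicity of $\dtran(x)/x$. You also pass to the limit using differentiability of $\tran$ at $x$, where the paper writes the difference as an integral of $\dtran$ and uses its continuity.
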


Similar bounds as in \cref{cor:twosided} hold for \catk{} space with $\kappa>0$ when the product of the bases is multiplied by $C_R$.
For a simpler form, $C_R$ may be moved out of the comparison diagonal of \eqref{eq:TC} and into an outer factor by \cref{lem:gap-scaling}.

\begin{corollary}[Two-sided bounds under positive curvature]\label{cor:twosided-catk}
    Let $y,z,q,p$ be elements of a \catk{} space with $\kappa>0$ satisfying \eqref{eq:RdefAndRange}.
    Then each of the bounds of \cref{cor:twosided} holds after multiplication of its right-hand side by $C_R$.
    For the trapezoid bound \eqref{eq:ts:trapezoid} the sharper, unscaled form
    \begin{equation}\label{eq:ts:trapezoid-catk}
        \abs{\QuadOf yzqp}
        \leq 2\tran\brOf{\sqrt{\smin^2+C_R\,\ol yz\,\ol pq}}-2\tran(\smin)
        \eqcm\qquad\tran\in\setcc
        \eqcm
    \end{equation}
    holds as well.
\end{corollary}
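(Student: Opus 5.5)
The plan is to reduce \cref{cor:twosided-catk} to \cref{cor:twosided} by observing that each bound there is derived purely from the algebraic trapezoid inequality \eqref{eq:t}, applied to both $\Quadt$ and $-\Quadt$, together with monotonicity properties of $\tran$. In the \catk{} setting the same algebraic input is available once $uv$ is replaced by $C_R\,\ol yz\,\ol pq$.

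\textbf{Step 1: the unscaled trapezoid bound.} By \cref{thm:intro:RC,thm:intro:PC}, the six distances read as in \eqref{eq:six}, but with $uv:=C_R\,\ol yz\,\ol pq$, satisfy \eqref{eq:r} and \eqref{eq:p}, hence \eqref{eq:pstar}. \Cref{thm:algebraic} then gives the upper bound $2\tran(\sqrt{s^2+C_R\ol yz\,\ol pq})-2\tran(s)$ with $s=\tfrac12(\ol yp+\ol zq)$.

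For $-\Quadt$ I would use \cref{lem:symmetry}: $-\QuadOf yzqp=\QuadOf yzpq$ swaps legs and diagonals. Hypothesis \eqref{eq:RdefAndRange} is invariant under swapping $p,q$, since it involves only $\seg yz\cup\seg pq$. So the same argument gives the bound with $s'=\tfrac12(\ol yq+\ol zp)$.

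To pass to $\smin$, I need that $g(s)=\tran(\sqrt{s^2+w})-\tran(s)$ is nonincreasing in $s$. Writing $g(s)=\int_s^{\sqrt{s^2+w}}\dtran$, this should follow from the concavity of $\dtran$ and $\dtran(x)/x$ being nonincreasing. Presumably this is the same fact \cref{cor:twosided} uses for \eqref{eq:ts:trapezoid}, so I would cite it. This yields \eqref{eq:ts:trapezoid-catk}.

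\textbf{Step 2: the remaining bounds.} Each of \eqref{eq:ts:diam}--\eqref{eq:ts:sum} in \cref{cor:twosided} is obtained from \eqref{eq:ts:trapezoid} with $w=\ol yz\,\ol pq$, using only inequalities about $\tran$ and the base lengths. Here the base lengths enter either through $w$ or through terms like $\tfrac12(\ol yz+\ol pq)$. I would therefore invoke \cref{lem:gap-scaling}, which I expect states that for $C\ge1$
\[
\tran\brOf{\sqrt{s^2+Cw}}-\tran(s)\le C\br{\tran\brOf{\sqrt{s^2+w}}-\tran(s)} .
\]
With it, \eqref{eq:ts:trapezoid-catk} implies $C_R$ times the $\kappa=0$ trapezoid bound. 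Every subsequent bound in \cref{cor:twosided} dominates \eqref{eq:ts:trapezoid} pointwise, as recorded in \cref{rem:bound-order}, so multiplying through by $C_R$ gives each scaled bound. The one point to check is the diameter bound \eqref{eq:ts:diam}, which in the $\kappa=0$ proof may use the triangle inequality; that inequality still holds in the \catk{} space.

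\textbf{Main obstacle.} The crux is the scaling inequality of \cref{lem:gap-scaling}. If I had to prove it, I would set $F(w)=\tran(\sqrt{s^2+w})-\tran(s)$. Then $F(0)=0$ and $F'(w)=\dtran(t)/(2t)$ with $t=\sqrt{s^2+w}$. Since $\dtran(t)/t$ is nonincreasing, $F$ is concave in $w$, so $F(Cw)\le CF(w)$ for $C\ge1$.
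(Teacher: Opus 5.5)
Your proposal is correct and is essentially the paper's own argument: two applications of \eqref{eq:TC} (you swap $p,q$ where the paper swaps $y,z$, which is immaterial since the bases and hence $R$ are unchanged), monotonicity of $B_C$ by \cref{lem:gap-monotone} to pass to $\smin$, \cref{lem:gap-scaling} to pull out $C_R$, and the curvature-free ordering chain behind \cref{rem:bound-order}. One small point in your audit: besides the diameter bound, the step $\eqref{eq:ts:trapezoid}\leq\eqref{eq:ts:bases}$ also uses the triangle inequality, via \cref{lem:cross-dominates}; this is harmless, since the triangle inequality holds in any metric space.
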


\appendix

\section{The inputs in \catk{} spaces}\label{app:catk}

This appendix proves \cref{thm:intro:RC,thm:intro:PC} and settles the optimality of their constant.

Throughout, $\kappa>0$ and $\mc Q$ is a \catk{} space.
We write
\begin{equation}\label{eq:Dk}
    \Dk := \frac{\pi}{\sqrt\kappa}
\end{equation}
and $\Mk$ for the model plane of curvature $\kappa$: the sphere of radius $\kappa^{-1/2}$ in $\R^3$ with its intrinsic metric, whose diameter is $\Dk$.
Thus $\mc Q$ is a metric space in which any two points at distance less than $\Dk$ are joined by a geodesic and every geodesic triangle of perimeter less than $2\Dk$ is at least as thin as its comparison triangle in $\Mk$; see \cite[Ch.~II.1]{bridson99}.
Two points at distance less than $\Dk$ are joined by a \emph{unique} geodesic \cite[Prop.~II.1.4]{bridson99}.
Every \cato{} space is \catk{} for every $\kappa>0$, and $\Mk$ itself is \catk{}.

The four points $y,z,q,p\in\mc Q$ are always ones for which
\begin{equation}\label{eq:rdiam}
    r := \diam\br{\seg yz\cup\seg pq} < \Dk
    \eqcm\qquad
    R := \sqrt\kappa\,r < \pi
    \eqcm
\end{equation}
which is the hypothesis \eqref{eq:RdefAndRange} of \cref{thm:intro:positive}.
Since $y,z\in\seg yz$ and $p,q\in\seg pq$, all six distances of the quadruple are at most $r$; in particular each of them is less than $\Dk$, so all the geodesics occurring below are unique.
We keep the abbreviations \eqref{eq:six}, so that $a,e$ are the diagonals, $b,c$ the legs and $u,v$ the bases.
It is convenient to have the constant \eqref{eq:CRdef} available as a function of an unscaled distance: for $0\leq x<\Dk$ put
\begin{equation}\label{eq:Theta}
    \Th(x) := \frac{\sqrt\kappa\,x}{\sin\br{\sqrt\kappa\,x}}
    \eqcm\qquad
    \Th(0) := 1
    \eqcm
\end{equation}
a continuous increasing function on $\ab{0,\Dk}$ with $\Th(0)=1$ and $\Th(x)\to\infty$ as $x\uparrow\Dk$, which satisfies
\begin{equation}\label{eq:Theta-CR}
    \Th(r) = C_R
    \eqcm\qquad
    \Th\brOf{\tfrac r2} = C_{R/2}
    \eqfs
\end{equation}
We write $\Th$ rather than $C$ in this appendix because the constant is needed here at arguments other than $r$.

\Cref{app:catk:chordal} records the chordal transform and the two \emph{exact} inequalities its distances satisfy.
\Cref{app:catk:firstvar} then proves \eqref{eq:RC} by a first-variation argument.
\Cref{app:catk:ptolemy} proves \eqref{eq:PC} by an independent argument, and \cref{app:catk:inputs} assembles the results and settles the optimality of the constants.

A constant is unavoidable: in this generality both \eqref{eq:R} and \eqref{eq:P} fail, already on $\Mk$ and already for small quadruples.

\begin{example}[A spherical rectangle]\label{ex:sphere}
    On the unit sphere $M^2_1$, written in latitude and longitude, take
    \begin{equation*}
        y=(0,0),\qquad z=(0,\omega),\qquad p=(\eta,0),\qquad q=(\eta,\omega),
        \qquad \eta=\omega=\tfrac{\pi}{3}
        \eqfs
    \end{equation*}
    Then the legs and one base are $\ol yp=\ol zq=\ol yz=\tfrac\pi3$, the diagonals are $\ol yq=\ol zp=\arccos\tfrac14$ and the other base is $\ol pq=\arccos\tfrac78$, so that
    \begin{equation*}
    \begin{gathered}
        \diam\cb{y,z,q,p} = \arccos\tfrac14 = 1.31812 < \tfrac\pi2
        \eqcm\\
        \frac{\Quad_{\sqmap}(y,z;q,p)}{2\,\ol yz\,\ol pq} = \frac{1.28161}{1.05842} = 1.21087
        \eqfs
    \end{gathered}
    \end{equation*}
    So \eqref{eq:R} fails, by $0.22319$, and with it \eqref{eq:T} at $\tran=\sqmap$; the same quadruple has $\ol yq\,\ol zp-\ol yp\,\ol zq = 1.21087\,\ol yz\,\ol pq$, so \eqref{eq:P} fails as well.
    By \cref{rem:intro:R}\ref{it:R:mono}\ref{it:R:hull} the quantity $R$ of \eqref{eq:rdiam} equals $\arccos\tfrac14$ here, well inside the admissible range.
\end{example}

\begin{remark}[Why the segments and not the four points]\label{rem:segdiam}
    The quantity $r$ of \eqref{eq:rdiam} is the diameter of the two segments, not of the four points, and the two genuinely differ.
    Consider the theta-graph formed by three arcs of length $\pi$ with common endpoints, which is $\mathrm{CAT}(1)$ because each of its three embedded cycles has length $2\pi$.
    Take $y,z$ straddling the midpoint of one arc and $p,q$ straddling the midpoint of another, by four offsets that are pairwise distinct: equal offsets of $y$ and $q$, or of $z$ and $p$, would put that pair at distance exactly $\pi$.
    Then all six distances of the quadruple are less than $\pi$, while $\seg yz$ and $\seg pq$ contain a pair of antipodal points, namely the two arc midpoints, so that $\diam\br{\seg yz\cup\seg pq}=\pi$.
    The segment diameter is what the proofs below use.
    In \cref{prop:firstvar} the derivative of $\lambda\mapsto\olt y{\gamma(\lambda)}-\olt z{\gamma(\lambda)}$ is controlled at a point $\gamma(\lambda)$ of $\seg pq$ by comparing the directions there towards \emph{every} point of $\seg yz$, through \cref{lem:chain}; what that step can afford is $\Th$ of the largest distance from $\gamma(\lambda)$ to $\seg yz$, and the supremum over $\lambda$ of those distances is at most $r$.
    The proof of \eqref{eq:PC} likewise passes through \cref{prop:Lambda-bound}, an estimate along the two segments.
    By \cref{rem:intro:R}\ref{it:R:mono}\ref{it:R:hull} the distinction is invisible below the radius $\tfrac12\Dk$.

    In fact every appeal to $r$ below is at a distance between a point of $\seg yz$ and a point of $\seg pq$, the two bases $\ol yz$ and $\ol pq$ entering only through their product.
    The arguments therefore give \eqref{eq:RC} and \eqref{eq:PC} with $\Th(r_\times)$ in place of $\Th(r)$, where $r_\times:=\sup\cb{\ol xw\colon x\in\seg yz,\ w\in\seg pq}\leq r$, provided $\ol yz,\ol pq,r_\times<\Dk$.
    We state them with $r$ because \eqref{eq:RdefAndRange} is then a single condition, and because $r_\times=r$ in the extremal family of \cref{rem:theta-sharp}, so that no sharpness is lost.
\end{remark}

\subsection{The chordal transform}\label{app:catk:chordal}

The distances of a \catk{} space do not satisfy \eqref{eq:R} or \eqref{eq:P}, but their \emph{chords} satisfy both, exactly and for every quadruple of the space, however large.
This is the substitute on which \cref{app:catk:ptolemy} is built, and it is also what makes the constant of \cref{lem:logcomparison} below sharp.

\begin{definition}[Chordal transform]\label{def:chord}
    For $x\in\ab{0,\Dk}$ put
    \begin{equation}\label{eq:chord}
        \chmap(x) := \frac{2}{\sqrt\kappa}\,\sin\brOf{\frac{\sqrt\kappa\,x}{2}}
        \eqcm
    \end{equation}
    and write $\och yz:=\chmap\br{\ol yz\wedge\Dk}$ for the \emph{chordal distance} of two points $y,z$.
\end{definition}

The truncation leaves $\och yz$ defined for every pair, and is inactive exactly when $\ol yz\leq\Dk$, which is the case for every quadruple considered after this subsection.
The map $\chmap$ is increasing and concave on $\ab{0,\Dk}$ with $\chmap(0)=0$ and $\chmap\leq\mathrm{id}$, and $\chmap\to\mathrm{id}$ locally uniformly as $\kappa\searrow0$.
It is the chord function of the model plane: two points of $\Mk\subset\R^3$ at intrinsic distance $x$ are at Euclidean distance $\chmap(x)$.
The reason the same transform is the right one in an arbitrary \catk{} space is Berestovskii's theorem.

\begin{lemma}[Chordal distances are \cato{} distances]\label{lem:cone}
    Let $\mc Q$ be a \catk{} space with $\kappa>0$, let $Y:=(\mc Q,\sqrt\kappa\,d)$, a $\mathrm{CAT}(1)$ space, and let $\mathrm{Cone}(Y)$ be the Euclidean cone over $Y$ \cite[Def.~I.5.6]{bridson99}: the set $\Rp\times Y$ with all points $(0,y)$ identified to a single apex, metrized by
    \begin{equation}\label{eq:conemetric}
        d_{\mathrm{Cone}(Y)}\br{(\lambda,y),(\lambda\pr,y\pr)}^2
        = \lambda^2+(\lambda\pr)^2-2\lambda\lambda\pr\cos\brOf{\pi\wedge\sqrt\kappa\,d(y,y\pr)}
        \eqfs
    \end{equation}
    Then $\mathrm{Cone}(Y)$ is a \cato{} space, and $\iota\colon\mc Q\to \mathrm{Cone}(Y)$, $\iota(y):=\br{\kappa^{-1/2},y}$, satisfies
    \begin{equation}\label{eq:coneiso}
        d_{\mathrm{Cone}(Y)}\br{\iota(y),\iota(y\pr)} = \och y{y\pr}
        \qquad\text{for all }y,y\pr\in\mc Q
        \eqfs
    \end{equation}
\end{lemma}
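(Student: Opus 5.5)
The plan is to derive both assertions of \cref{lem:cone} from standard cone theory, so that almost no new computation is needed.

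First, $Y=(\mc Q,\sqrt\kappa\,d)$ is $\mathrm{CAT}(1)$. Rescaling a metric by $\lambda>0$ turns a $\mathrm{CAT}(\kappa)$ space into a $\mathrm{CAT}(\kappa/\lambda^2)$ space, because geodesic triangles, their perimeters and their comparison triangles all scale by the same factor. With $\lambda=\sqrt\kappa$ this gives curvature bound $1$.

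The \cato{} property then comes from Berestovskii's theorem \cite[Thm.~II.3.14]{bridson99}: the Euclidean cone over a metric space $Y$ is \cato{} if and only if $Y$ is $\mathrm{CAT}(1)$. One technical point needs checking. Bridson--Haefliger's cone metric uses $\cos(\pi\wedge d_Y)$, and their theorem allows $Y$ to have points at distance $\geq\pi$ or even in different components with infinite distance. The truncation $\pi\wedge\sqrt\kappa\,d$ in \eqref{eq:conemetric} is exactly their convention, so the cited theorem applies verbatim. I would still verify that the definition of $\mathrm{CAT}(1)$ used there matches \cite[Ch.~II.1]{bridson99}: $\pi$-geodesic, with triangles of perimeter $<2\pi$ thin. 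That matches the convention set at the start of this appendix.

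Second, \eqref{eq:coneiso} is a direct computation. Put $\lambda=\lambda\pr=\kappa^{-1/2}$ and $\phi:=\pi\wedge\sqrt\kappa\,d(y,y\pr)$. Then
\[
d_{\mathrm{Cone}(Y)}\br{\iota(y),\iota(y\pr)}^2=\tfrac2\kappa\br{1-\cos\phi}=\tfrac4\kappa\sin^2\br{\tfrac\phi2}.
\]
Since $\phi/2\in[0,\pi/2]$, the sine is nonnegative, and taking square roots gives
\[
\tfrac2{\sqrt\kappa}\sin\br{\tfrac\phi2}=\chmap\br{\ol y{y\pr}\wedge\Dk}=\och y{y\pr},
\]
using $\phi=\sqrt\kappa\,(\ol y{y\pr}\wedge\Dk)$.

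The only real obstacle is the first step, namely making sure the cited form of Berestovskii's theorem covers $Y$ without any completeness or diameter hypotheses. If a completeness assumption turned out to be needed, I would pass to the metric completion. The completion of a $\mathrm{CAT}(1)$ space is again $\mathrm{CAT}(1)$ \cite[Cor.~II.3.11]{bridson99}, and the cone over $Y$ embeds isometrically in the cone over its completion. A subset of a \cato{} space that is convex in the cone structure, as the cone over a convex subset is, is itself \cato{}, which suffices.
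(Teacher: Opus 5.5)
Your proposal is correct and follows the paper's proof: Berestovskii's theorem gives the \cato{} assertion, with $Y$ being $\mathrm{CAT}(1)$ by rescaling, and substituting $\lambda=\lambda\pr=\kappa^{-1/2}$ into \eqref{eq:conemetric} together with $1-\cos\phi=2\sin^2(\phi/2)$ gives \eqref{eq:coneiso}. The completeness fallback is unnecessary, because the cited form of Berestovskii's theorem has no completeness hypothesis, but it does no harm.
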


\begin{proof}
    The first assertion is Berestovskii's theorem \cite[Thm.~II.3.14]{bridson99}: the Euclidean cone over a metric space is \cato{} if and only if that space is $\mathrm{CAT}(1)$, and $Y$ is $\mathrm{CAT}(1)$ because $\mc Q$ is \catk{}.
    For \eqref{eq:coneiso} put $\lambda=\lambda\pr=\kappa^{-1/2}$ in \eqref{eq:conemetric}: with $x:=d(y,y\pr)\wedge\Dk$ the right-hand side is $\tfrac2\kappa\br{1-\cos\sqrt\kappa x}=\tfrac4\kappa\sin^2\br{\sqrt\kappa x/2}=\chmap(x)^2=\och y{y\pr}^2$; the truncation in \cref{def:chord} is exactly the one already present in \eqref{eq:conemetric}.
\end{proof}

By \eqref{eq:coneiso} the chordal distance is the pullback under $\iota$ of the metric of $\mathrm{Cone}(Y)$; since $\chmap$ vanishes only at $0$, it is itself a metric on $\mc Q$, with values in $\ab{0,2\kappa^{-1/2}}$, and it is bounded even when $\mc Q$ is not.

\begin{proposition}[Chordal Reshetnyak and chordal Ptolemy]\label{prop:chordal}
    Let $y,z,q,p$ be any four points of a \catk{} space with $\kappa>0$.
    Then
    \begin{align}
        \och yq^2+\och zp^2-\och yp^2-\och zq^2 &\leq 2\,\och yz\,\och pq \eqcm \dtag{$\ch{\mathrm R}$}\label{eq:chordR}\\
        \och yq\,\och zp &\leq \och yp\,\och zq+\och yz\,\och pq \eqfs \dtag{$\ch{\mathrm P}$}\label{eq:chordP}
    \end{align}
    \textup{(}The hat marks the chordal version of an inequality or of a distance.\textup{)}
\end{proposition}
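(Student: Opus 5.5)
The plan is to transport the two classical \cato{} inequalities through the isometric embedding of \cref{lem:cone}. Take the four points $y,z,q,p$ of the \catk{} space $\mc Q$ and map them by $\iota$ into the Euclidean cone $\mathrm{Cone}(Y)$ over $Y=(\mc Q,\sqrt\kappa\,d)$. By \cref{lem:cone}, $\mathrm{Cone}(Y)$ is a \cato{} space, and by \eqref{eq:coneiso} the six distances among $\iota(y),\iota(z),\iota(q),\iota(p)$ equal the chordal distances $\och yq,\och zp,\och yp,\och zq,\och yz,\och pq$.

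Reshetnyak's quadrilateral comparison \eqref{eq:R} \cite{reshetnyak68} holds for every quadruple of every \cato{} space. Applied to $\iota(y),\iota(z),\iota(q),\iota(p)$ and rewritten via \eqref{eq:coneiso}, it is exactly \eqref{eq:chordR}. Similarly, every \cato{} space is Ptolemaic \cite{foertsch07}, so \eqref{eq:P} holds for the image quadruple, and after substituting the chordal distances this is \eqref{eq:chordP}.

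No restriction on the size of the quadruple is needed. The truncation $\ol yz\wedge\Dk$ in \cref{def:chord} matches the truncation $\pi\wedge\sqrt\kappa\,d$ in the cone metric \eqref{eq:conemetric}, so \eqref{eq:coneiso} holds for all pairs. The only real content is therefore Berestovskii's theorem, which is already packaged in \cref{lem:cone}.

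The one point to check is that \eqref{eq:R} and \eqref{eq:P} are valid in arbitrary, not necessarily complete, \cato{} spaces, or else that the cone is complete. If completeness is needed, I would pass to the completion of $\mathrm{Cone}(Y)$. The completion of a \cato{} space is \cato{} \cite{bridson99}, and distances among the four image points are unchanged. This step is routine, so I expect no substantial obstacle.
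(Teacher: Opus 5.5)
Your proposal is correct and is the paper's argument: the six chordal distances are the mutual distances of $\iota(y),\iota(z),\iota(q),\iota(p)$ in the \cato{} cone of \cref{lem:cone}, and \eqref{eq:R} and \eqref{eq:P} apply to these points with no size restriction. Your completeness caveat is harmless but unnecessary, because both inequalities follow from the four-point subembedding condition, which holds in every \cato{} space without completeness; your fallback of passing to the completion would also work.
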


\begin{proof}
    By \eqref{eq:coneiso} the six chordal numbers $\och yq,\dots,\och pq$ are the six mutual distances of the four points $\iota(y),\iota(z),\iota(q),\iota(p)$ of the \cato{} space $\mathrm{Cone}(Y)$, whatever the four points are.
    Inequalities \eqref{eq:R} and \eqref{eq:P} apply to them, with no size restriction of any kind.
\end{proof}

\begin{remark}\label{rem:chordal}
    Inequality \eqref{eq:chordP} is Valentine's spherical analogue of Ptolemy's theorem \cite{valentine70b} in the form given for \catk{} spaces by G\'omez and M\'emoli \cite[Thm.~3.4]{gomez24}, there under an additional perimeter condition, which \cref{lem:cone} shows to be unnecessary.
    We have not found \eqref{eq:chordR} stated explicitly; it follows immediately from \cref{lem:cone}, that is, from Berestovskii's theorem together with \eqref{eq:R}, and degenerates back as $\kappa\searrow0$ into \eqref{eq:R}, the inequality that \emph{characterizes} \cato{} spaces \cite{berg08}.
    A different $\kappa\neq0$ counterpart of \eqref{eq:R} is due to Berg and Nikolaev themselves \cite{berg18}, through a $\kappa$-quadrilateral cosine.
    Theirs characterizes the curvature bound, but only for spaces of diameter at most $\pi/(2\sqrt\kappa)$, a hypothesis they show to be sharp; \eqref{eq:chordR} asserts an inequality inside a space already known to be \catk{}, and carries no size restriction at all: by \cref{def:chord} the six chordal distances are defined for every quadruple, and the inequality holds for every one of them.
    Both \eqref{eq:chordR} and \eqref{eq:chordP} are attained: on the spherical rectangle of \cref{ex:sphere} they hold with equality.
\end{remark}

Since \eqref{eq:chordR} and \eqref{eq:chordP} give the hypotheses of \cref{thm:algebraic}, read for the chordal distances, the trapezoid comparison transfers to them with no constant and no size restriction.

\begin{corollary}[Chordal trapezoid comparison]\label{cor:chordal-trapezoid}
    Let $y,z,q,p$ be any four points of a \catk{} space with $\kappa>0$, and put
    \begin{equation}\label{eq:chordal-sA}
        \ch s := \tfrac12\br{\och yp+\och zq}
        \eqcm\qquad
        \ch\Am := \sqrt{\ch s^2+\och yz\,\och pq}
        \eqfs
    \end{equation}
    Then, for every $\tran\in\setcc$,
    \begin{equation}\dtag{$\ch{\mathrm T}$}\label{eq:chordT}
        \tran\brOf{\och yq}+\tran\brOf{\och zp}-\tran\brOf{\och yp}-\tran\brOf{\och zq}
        \leq 2\br{\tran\brOf{\ch\Am}-\tran\brOf{\ch s}}
        \eqfs
    \end{equation}
\end{corollary}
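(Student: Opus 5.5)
The plan is to apply \cref{thm:algebraic} directly to the six chordal numbers, in the same way that \cref{thm:intro:main} was deduced from it. Concretely, set
\begin{equation*}
    a=\och yq,\quad e=\och zp,\quad b=\och yp,\quad c=\och zq,\quad u=\och yz,\quad v=\och pq
    \eqcm
\end{equation*}
which are nonnegative reals by \cref{def:chord}. With this reading, $s=\tfrac12(b+c)$ and $\Am=\sqrt{s^2+uv}$ in \cref{thm:algebraic} are exactly $\ch s$ and $\ch\Am$ of \eqref{eq:chordal-sA}, and \eqref{eq:t} becomes \eqref{eq:chordT}. It therefore suffices to check the two hypotheses \eqref{eq:r} and \eqref{eq:pstar} for these numbers.

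Hypothesis \eqref{eq:r} is literally \eqref{eq:chordR} from \cref{prop:chordal}. For \eqref{eq:pstar}, the argument of \cref{rem:six} applies: \eqref{eq:chordP} gives \eqref{eq:p}, namely $ae\leq bc+uv$. Adding \eqref{eq:r} to twice \eqref{eq:p} yields $(a+e)^2\leq(b+c)^2+4uv=4\Am^2$, and taking nonnegative square roots gives $a+e\leq2\Am$, which is \eqref{eq:pstar}.

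Both hypotheses hold, so the sufficiency direction of \cref{thm:algebraic} gives \eqref{eq:chordT} for every $\tran\in\setcc$. No size restriction enters at any point. \Cref{prop:chordal} holds for arbitrary quadruples, because the truncation in \cref{def:chord} keeps the chordal distances equal to genuine distances in the \cato{} cone of \cref{lem:cone}, and \cref{thm:algebraic} uses nothing beyond the two inequalities.

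There is no real obstacle here: the work was already done in \cref{prop:chordal} and \cref{thm:algebraic}. The one point needing care is the derivation of \eqref{eq:pstar}. It must come from combining \eqref{eq:chordR} and \eqref{eq:chordP} as in \cref{rem:six}, since it is not assumed separately, and that combination is purely algebraic.
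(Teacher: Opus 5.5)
Your proposal is correct and follows the paper's proof: the six chordal numbers satisfy \eqref{eq:r} and \eqref{eq:p} by \cref{prop:chordal}, hence \eqref{eq:pstar} by the algebra of \cref{rem:six}, and \cref{thm:algebraic} then gives \eqref{eq:chordT}. Your explicit derivation $(a+e)^2\leq(b+c)^2+4uv=4\Am^2$ and your remark that no size restriction enters both match the paper exactly.
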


\begin{proof}
    By \cref{prop:chordal} the six chordal numbers satisfy \eqref{eq:r} and \eqref{eq:p} in the reading \eqref{eq:six}, hence also \eqref{eq:pstar} by \cref{rem:six}; now apply \cref{thm:algebraic}.
\end{proof}

\begin{remark}\label{rem:chordal-trapezoid}
    Unlike \cref{thm:intro:positive}, \eqref{eq:chordT} carries no constant and no restriction on the size of the quadruple. It degenerates to \eqref{eq:T} as $\kappa\searrow0$, since $\chmap\to\mathrm{id}$ locally uniformly.
    It is sharp, by \cref{prop:equality} applied to the chordal numbers.
    What it bounds, however, is a quadruple functional built from the chordal distances rather than from the intrinsic ones, and the two cannot be converted into one another.
    On the one hand, $\tran\circ\chmap^{-1}$ is defined only on $\ab{0,\chmap(\Dk)}$ and its derivative is unbounded at the right endpoint, hence not concave, so $\tran\circ\chmap^{-1}\notin\setcc$ for every nonconstant $\tran$.
    On the other hand, even if it were, the right-hand side of \eqref{eq:chordT} is built from $\ch s$ and $\ch\Am$, which are not $\chmap$ of the intrinsic leg average and comparison diagonal.
\end{remark}

\subsection{First variation on the tangent cone}\label{app:catk:firstvar}

We use the standard notions of angle, space of directions and tangent cone \cite[Ch.~I.1, I.5, II.1, II.3]{bridson99}.
For a base point $x\in\mc Q$, $\ang{x}(w,w\pr)$ is the Alexandrov angle at $x$, $\Scone x$ is the space of directions at $x$ metrized by the angle, and $\Tcone x$ is the Euclidean cone over $\Scone x$, with elements $(\varrho,\xi)$, apex $o_x$ and metric
\begin{equation}\label{eq:conelaw}
    d_{\Tcone x}\br{(\varrho,\xi),(\varrho\pr,\zeta)}^2
    = \varrho^2+(\varrho\pr)^2-2\varrho\varrho\pr\cos\angle(\xi,\zeta)
    \eqcm
\end{equation}
and $\Log_x(w):=\br{\ol wx,\xi_w}\in\Tcone x$ for $\ol wx<\Dk$, where $\xi_w$ is the direction at $x$ of the geodesic $\seg xw$, with $\Log_x(x):=o_x$.

\begin{lemma}[Log comparison, one triangle]\label{lem:logcomparison}
    Let $x,y,y\pr\in\mc Q$ with $\ol yx,\ol{y\pr}x\leq r$ and $\ol yx+\ol{y\pr}x+\ol y{y\pr}<2\Dk$.
    Then
    \begin{equation}\label{eq:logcomp}
        d_{\Tcone x}\br{\Log_x(y),\Log_x(y\pr)}
        \ \leq\ \Th(r)\,\chmap\br{\ol y{y\pr}}
        \ \leq\ \Th(r)\,\ol y{y\pr}
        \eqfs
    \end{equation}
\end{lemma}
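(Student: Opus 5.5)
The plan is to transport the statement to the model plane $\Mk$ by angle comparison, and then verify a purely trigonometric inequality that turns out to be affine in the cosine of the angle, so that only the two extreme angles need checking.

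\emph{Reduction to the model plane.} Write $\varrho:=\ol yx$, $\varrho\pr:=\ol{y\pr}x$ and $d:=\ol y{y\pr}$. If $\varrho=0$ or $\varrho\pr=0$, the left-hand side of \eqref{eq:logcomp} is $\varrho\pr$ or $\varrho$, which equals $d$. The claim then reduces to $d\leq\Th(r)\chmap(d)$, which is the one-variable bound treated in the endpoint step below. Otherwise, let $\alpha:=\ang x(y,y\pr)$. By \eqref{eq:conelaw},
\begin{equation*}
  d_{\Tcone x}\br{\Log_x(y),\Log_x(y\pr)}^2=\varrho^2+(\varrho\pr)^2-2\varrho\varrho\pr\cos\alpha
  \eqfs
\end{equation*}
Since the perimeter of the triangle is less than $2\Dk$, it has a comparison triangle in $\Mk$ with side lengths $\varrho,\varrho\pr,d$. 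By the \catk{} angle comparison \cite[Prop.~II.1.7]{bridson99}, $\alpha\leq\bar\alpha$, where $\bar\alpha$ is the comparison angle at $\bar x$. Hence the left-hand side only grows when $\alpha$ is replaced by $\bar\alpha$. It therefore suffices to prove
\begin{equation*}
  \varrho^2+(\varrho\pr)^2-2\varrho\varrho\pr\cos\bar\alpha\leq\Th(r)^2\chmap(d)^2
\end{equation*}
for a spherical triangle with sides $\varrho,\varrho\pr\leq r$ and opposite side $d$.

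\emph{Affine structure in the angle.} Put $A:=\sqrt\kappa\varrho$, $A\pr:=\sqrt\kappa\varrho\pr$ and $\gamma:=\cos\bar\alpha\in[-1,1]$. The spherical law of cosines gives $\cos(\sqrt\kappa d)=\cos A\cos A\pr+\sin A\sin A\pr\,\gamma$. Therefore
\begin{equation*}
  \chmap(d)^2=\tfrac2\kappa\br{1-\cos A\cos A\pr-\sin A\sin A\pr\,\gamma}
  \eqcm
\end{equation*}
which is affine in $\gamma$. The left-hand side is affine in $\gamma$ as well. So the difference of the two sides is affine in $\gamma$, and it suffices to check the inequality at $\gamma=\pm1$, treating these as formal expressions even if the corresponding degenerate triangles do not occur.

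\emph{Endpoint checks.} At $\gamma=1$ the right-hand side becomes $\Th(r)^2\chmap(\abs{\varrho-\varrho\pr})^2$ and the left-hand side becomes $\abs{\varrho-\varrho\pr}^2$. With $t:=\abs{\varrho-\varrho\pr}\leq r$ we need $t/\chmap(t)\leq\Th(r)$. Now $t/\chmap(t)=\Th(t/2)$, and since $\Th$ is increasing, $\Th(t/2)\leq\Th(r)$. At $\gamma=-1$ the right-hand side is $\Th(r)^2\cdot\tfrac4\kappa\sin^2\br{\tfrac{A+A\pr}2}$ and the left-hand side is $(\varrho+\varrho\pr)^2$. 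With $\sigma:=\tfrac12(A+A\pr)\leq R<\pi$ (here $r<\Dk$ is used), the needed inequality $\sigma/\sin\sigma\leq R/\sin R$ is the monotonicity of $\Th$. The same bound with $\sigma=\sqrt\kappa d/2$ also settles the degenerate case $\varrho\varrho\pr=0$.

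\emph{Second inequality.} The second inequality in \eqref{eq:logcomp} is just $\chmap\leq\mathrm{id}$.

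\emph{Main obstacle.} The step I expect to need the most care is justifying the angle comparison, together with the bookkeeping of the perimeter and size conditions that guarantee the model triangle exists and that $\sigma<\pi$. By contrast, I do not expect the affine-in-$\cos$ reduction itself to cause difficulty.
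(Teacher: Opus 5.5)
Your proof is correct and follows the same route as the paper. Both pass to the comparison triangle in $\Mk$ by angle comparison, apply the spherical cosine rule and the monotonicity of $\Th$, and use $\chmap\leq\mathrm{id}$ for the second inequality. The only difference is how the trigonometric inequality is closed: the paper writes both sides in half-angle form and compares the constant terms (exactly your check at $\gamma=1$) and the coefficients of $\sin^2(\bar\alpha/2)$ separately, using $\Th(\varrho)\Th(\varrho')\leq\Th(r)^2$. Your check at $\gamma=-1$ needs only $\Th\bigl(\tfrac12(\varrho+\varrho')\bigr)\leq\Th(r)$; this is the exact criterion for the affine difference and, by log-convexity of $\Th$, a slightly weaker requirement, though both give $\Th(r)$ here.
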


\begin{proof}
    Write $k:=\sqrt\kappa$, $d_1:=\ol yx$, $d_2:=\ol{y\pr}x$, $\theta:=\ang{x}(y,y\pr)$ and $\bar d:=\ol y{y\pr}$; if $y=x$ or $y\pr=x$ read $\theta$ as arbitrary, the argument being insensitive to it.
    By the perimeter hypothesis a comparison triangle for $(x,y,y\pr)$ exists in $\Mk$ \cite[Lem.~I.2.14]{bridson99}; let $\bar\theta$ be its angle at the vertex corresponding to $x$.
    Angle comparison \cite[Prop.~II.1.7(4)]{bridson99} gives $\theta\leq\bar\theta$, so by the cone law \eqref{eq:conelaw}
    \begin{equation}\label{eq:logcomp1}
    \begin{gathered}
        d_{\Tcone x}\br{\Log_x(y),\Log_x(y\pr)}^2
        \ \leq\ d_1^2+d_2^2-2d_1d_2\cos\bar\theta\\
        =\ (d_1-d_2)^2+4d_1d_2\sin^2\brOf{\tfrac{\bar\theta}{2}}
        \eqfs
    \end{gathered}
    \end{equation}
    The spherical cosine rule in $\Mk$ reads
    \begin{equation*}
        \cos\br{k\bar d}=\cos(kd_1)\cos(kd_2)+\sin(kd_1)\sin(kd_2)\cos\bar\theta
        \eqfs
    \end{equation*}
    Subtracting it from $1$ and halving gives its half-angle form
    \begin{equation*}
        \sin^2\brOf{\tfrac{k\bar d}{2}}=\sin^2\brOf{\tfrac{k(d_1-d_2)}{2}}+\sin(kd_1)\sin(kd_2)\sin^2\brOf{\tfrac{\bar\theta}{2}}
        \eqcm
    \end{equation*}
    that is, after multiplication by $4/k^2$,
    \begin{equation}\label{eq:logcomp2}
        \chmap\br{\bar d}^2
        \ =\ \chmap\br{\abs{d_1-d_2}}^2
        +\frac{4}{k^2}\,\sin(kd_1)\sin(kd_2)\,\sin^2\brOf{\tfrac{\bar\theta}{2}}
        \eqfs
    \end{equation}
    Compare \eqref{eq:logcomp1} with \eqref{eq:logcomp2} term by term.
    Straight from the definitions \eqref{eq:chord} and \eqref{eq:Theta},
    \begin{equation*}
    \begin{gathered}
        (d_1-d_2)^2=\Th\brOf{\tfrac{\abs{d_1-d_2}}{2}}^2\chmap\br{\abs{d_1-d_2}}^2
        \eqcm\\
        4d_1d_2=\Th(d_1)\,\Th(d_2)\cdot\frac{4}{k^2}\sin(kd_1)\sin(kd_2)
        \eqfs
    \end{gathered}
    \end{equation*}
    Both sines are nonnegative because $kd_1,kd_2<\pi$, and $\abs{d_1-d_2}/2,d_1,d_2\leq r$, so both coefficients are at most $\Th(r)^2$ by monotonicity of $\Th$.
    Hence $d_{\Tcone x}^2\leq\Th(r)^2\chmap(\bar d)^2$.
    The second inequality of \eqref{eq:logcomp} is $\chmap\leq\mathrm{id}$.
\end{proof}

\begin{remark}[The constant of \cref{lem:logcomparison} is sharp]\label{rem:logsharp}
    No convexity of balls is used; the only restriction is the perimeter hypothesis, that is, the existence of the comparison triangle, and \cref{lem:chain} removes it.
    The constant is sharp: on $\Mk$ with $d_1=d_2=r$ the tangent cone is Euclidean and $\theta=\bar\theta$, so \eqref{eq:logcomp1} and \eqref{eq:logcomp2} are equalities and give $d_{\Tcone x}=2r\sin\br{\bar\theta/2}$ and $\chmap(\bar d)=\tfrac{2}{k}\sin(kr)\sin\br{\bar\theta/2}$, whose ratio is $\Th(r)$ for every $\bar\theta$ admitted by the perimeter hypothesis.
\end{remark}

\begin{lemma}[Log comparison along a geodesic]\label{lem:chain}
    Let $x\in\mc Q$ and let $\sigma\colon\ab{0,1}\to\mc Q$ be a geodesic all of whose points are at distance at most $r$ from $x$.
    Then
    \begin{equation}\label{eq:chain}
        d_{\Tcone x}\br{\Log_x(\sigma(0)),\Log_x(\sigma(1))}\ \leq\ \Th(r)\,\ol{\sigma(0)}{\sigma(1)}
        \eqfs
    \end{equation}
\end{lemma}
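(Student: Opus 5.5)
The plan is to subdivide the geodesic finely, apply \cref{lem:logcomparison} to each short piece, and sum the pieces with the triangle inequality in $\Tcone x$. Short pieces satisfy the perimeter hypothesis, so that restriction of \cref{lem:logcomparison} disappears.

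Since $\sigma$ is a geodesic, every point $\sigma(\lambda)$ satisfies $\ol{\sigma(\lambda)}x\leq r<\Dk$, so $\Log_x(\sigma(\lambda))$ is defined. Fix $n$ and set $\lambda_i:=i/n$ and $w_i:=\sigma(\lambda_i)$ for $i=0,\dots,n$. By the triangle inequality for the cone metric \eqref{eq:conelaw} on $\Tcone x$,
\begin{equation*}
    d_{\Tcone x}\br{\Log_x(w_0),\Log_x(w_n)}\leq\sum_{i=1}^n d_{\Tcone x}\br{\Log_x(w_{i-1}),\Log_x(w_i)}\eqfs
\end{equation*}

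Each triple $(x,w_{i-1},w_i)$ satisfies $\ol{w_{i-1}}x,\ol{w_i}x\leq r$. Its perimeter is at most $2r+\ol{w_{i-1}}{w_i}=2r+\ol{\sigma(0)}{\sigma(1)}/n$, which is less than $2\Dk$ once $n$ is large, because $r<\Dk$. So \cref{lem:logcomparison} applies and gives
\begin{equation*}
    d_{\Tcone x}\br{\Log_x(w_{i-1}),\Log_x(w_i)}\leq\Th(r)\,\ol{w_{i-1}}{w_i}\eqfs
\end{equation*}
Since $\sigma$ is a geodesic, the distances $\ol{w_{i-1}}{w_i}$ sum to $\ol{\sigma(0)}{\sigma(1)}$, and \eqref{eq:chain} follows.

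The only real issue is bookkeeping. First, we need $r<\Dk$, which holds in the setting \eqref{eq:rdiam} where the lemma is applied; if it failed, $\Th(r)$ would be undefined and there would be nothing to prove. Second, the degenerate case $w_i=x$ is handled by the convention $\Log_x(x)=o_x$, which \cref{lem:logcomparison} already covers. No continuity of $\Log_x$ or limiting argument is needed, because a single sufficiently fine partition suffices.
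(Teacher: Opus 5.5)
Your proposal is correct and is essentially the paper's proof: choose $n$ with $2r+\ol{\sigma(0)}{\sigma(1)}/n<2\Dk$, apply \cref{lem:logcomparison} to each triangle $(x,\sigma(\tfrac{i-1}{n}),\sigma(\tfrac in))$, and sum using the triangle inequality in $\Tcone x$. Both arguments use the standing assumption $r<\Dk$ in the same way.
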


\begin{proof}
    Put $\ell:=\ol{\sigma(0)}{\sigma(1)}$, the length of $\sigma$, and note $\ell\leq2r$.
    Since $r<\Dk$ there is $n\in\mathbb N$ with $2r+\ell/n<2\Dk$; put $y_i:=\sigma(i/n)$ for $0\leq i\leq n$.
    The triangle $(x,y_i,y_{i+1})$ has $\ol{y_i}x,\ol{y_{i+1}}x\leq r$ and perimeter at most $2r+\ell/n<2\Dk$, so \cref{lem:logcomparison} applies to it and gives $d_{\Tcone x}\br{\Log_x(y_i),\Log_x(y_{i+1})}\leq\Th(r)\,\ell/n$.
    Summing over $i$ and using the triangle inequality in the metric space $\Tcone x$ gives \eqref{eq:chain}.
\end{proof}

\begin{lemma}\label{lem:conelip}
    Let $x\in\mc Q$ and $\xi\in\Scone x$, and define $\beta_\xi\colon\Tcone x\to\R$ by $\beta_\xi(o_x):=0$ and $\beta_\xi\br{(\varrho,\zeta)}:=\varrho\cos\angle(\zeta,\xi)$.
    Then $\beta_\xi$ is $1$-Lipschitz and $\abs{\beta_\xi(\upsilon)}\leq d_{\Tcone x}(\upsilon,o_x)$ for every $\upsilon\in\Tcone x$.
\end{lemma}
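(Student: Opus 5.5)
The plan is to reduce both claims to facts about the cone metric \eqref{eq:conelaw} and the angle metric on $\Scone x$. I treat the bound $\abs{\beta_\xi(\upsilon)}\leq d_{\Tcone x}(\upsilon,o_x)$ first, since it is immediate. The apex is at distance $\varrho$ from $(\varrho,\zeta)$, by \eqref{eq:conelaw} with $\varrho\pr=0$, and $\abs{\varrho\cos\angle(\zeta,\xi)}\leq\varrho$. At $o_x$ both sides vanish.

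For the Lipschitz property, take $\upsilon=(\varrho,\zeta)$ and $\upsilon\pr=(\varrho\pr,\zeta\pr)$. If one of them is the apex, the claim is the bound just shown. Otherwise write $\alpha:=\angle(\zeta,\xi)$, $\alpha\pr:=\angle(\zeta\pr,\xi)$ and $\phi:=\angle(\zeta,\zeta\pr)$, where $\phi$ is read as $\pi\wedge\angle$, the truncation used in the cone metric. The triangle inequality for the angle metric gives $\abs{\alpha-\alpha\pr}\leq\phi$, and truncation at $\pi$ preserves this because $\alpha,\alpha\pr\in\ab{0,\pi}$. Since $\cos$ is decreasing on $\ab{0,\pi}$, this yields $\cos\phi\leq\cos(\alpha-\alpha\pr)$. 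Hence
\begin{equation*}
    d_{\Tcone x}(\upsilon,\upsilon\pr)^2\geq\varrho^2+(\varrho\pr)^2-2\varrho\varrho\pr\cos(\alpha-\alpha\pr)
    \eqcm
\end{equation*}
and the right-hand side is the squared Euclidean distance between the planar points $\varrho(\cos\alpha,\sin\alpha)$ and $\varrho\pr(\cos\alpha\pr,\sin\alpha\pr)$.

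Projecting these two planar points onto the first coordinate axis is $1$-Lipschitz. That projection is exactly $\abs{\varrho\cos\alpha-\varrho\pr\cos\alpha\pr}=\abs{\beta_\xi(\upsilon)-\beta_\xi(\upsilon\pr)}$, so $\beta_\xi$ is $1$-Lipschitz.

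The only point needing care is the truncation convention for angles in $\Scone x$. Alexandrov angles already lie in $\ab{0,\pi}$, so the monotonicity of the cosine applies directly and there is no real obstacle. One should also check that $\beta_\xi$ is well defined at $\varrho=0$, which holds because every representative of the apex gives the value $0$.
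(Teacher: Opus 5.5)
Your proof is correct, but it takes a different route from the paper.

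The paper never compares angles directly. It sets $\eta(\lambda):=(\lambda,\xi)$ and computes from \eqref{eq:conelaw} that
\[
\lambda-d_{\Tcone x}(\upsilon,\eta(\lambda))=\frac{2\varrho\lambda\cos\angle(\zeta,\xi)-\varrho^2}{\lambda+d_{\Tcone x}(\upsilon,\eta(\lambda))}\longrightarrow\beta_\xi(\upsilon)
\qquad(\lambda\to\infty).
\]
So $\beta_\xi$ is a pointwise limit of the $1$-Lipschitz functions $\upsilon\mapsto\lambda-d_{\Tcone x}(\upsilon,\eta(\lambda))$, that is, minus the Busemann function of the ray $\eta$. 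The bound $\abs{\beta_\xi}\leq\varrho$ is then read off exactly as you do.

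You instead use the triangle inequality for the angle metric on $\Scone x$ and the monotonicity of the cosine. This shows that the polar-coordinate map $(\varrho,\zeta)\mapsto\varrho\br{\cos\angle(\zeta,\xi),\sin\angle(\zeta,\xi)}$ from $\Tcone x$ into the Euclidean plane is $1$-Lipschitz, and you then project to the first coordinate.

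Your argument is fully explicit and proves slightly more: the whole planar map is $1$-Lipschitz, so $\varrho\sin\angle(\zeta,\xi)$ is as well. It also makes the angle triangle inequality visible at the one place it is needed. The paper's argument uses only that $d_{\Tcone x}$ is a metric, where that same triangle inequality is hidden. In exchange, it identifies $\beta_\xi$ as a Busemann function, which is the natural object behind the first-variation step in \cref{prop:firstvar}. Both are equally short.
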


\begin{proof}
    Let $\eta(\lambda):=(\lambda,\xi)$ for $\lambda>0$.
    By \eqref{eq:conelaw}, for $\upsilon=(\varrho,\zeta)$,
    \begin{equation*}
        \lambda-d_{\Tcone x}\br{\upsilon,\eta(\lambda)}
        =\frac{\lambda^2-d_{\Tcone x}\br{\upsilon,\eta(\lambda)}^2}{\lambda+d_{\Tcone x}\br{\upsilon,\eta(\lambda)}}
        =\frac{2\varrho\lambda\cos\angle(\zeta,\xi)-\varrho^2}{\lambda+d_{\Tcone x}\br{\upsilon,\eta(\lambda)}}
        \xrightarrow{\ \lambda\to\infty\ }\beta_\xi(\upsilon)
        \eqcm
    \end{equation*}
    and the same limit is $0=\beta_\xi(o_x)$ for $\upsilon=o_x$.
    Each $\upsilon\mapsto\lambda-d_{\Tcone x}(\upsilon,\eta(\lambda))$ is $1$-Lipschitz, hence so is the pointwise limit $\beta_\xi$; and $\abs{\beta_\xi\br{(\varrho,\zeta)}}\leq \varrho$.
    (Equivalently, $\beta_\xi$ is the negative of the Busemann function of the ray $\eta$.)
\end{proof}

The next proposition proves \eqref{eq:RC} by a first variation along $\seg pq$; this subsection is the only place where the tangent cone is used.

\begin{proposition}[Reshetnyak's comparison with a constant]\label{prop:firstvar}
    Let $y,z,q,p$ satisfy \eqref{eq:rdiam}.
    Then \eqref{eq:RC} holds, that is,
    \begin{equation*}
        \Quad_{\sqmap}(y,z;q,p)\ \leq\ 2\,\Th(r)\,\ol yz\,\ol pq
        \eqfs
    \end{equation*}
\end{proposition}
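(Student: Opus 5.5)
The plan is a first-variation argument along the base $\seg pq$, in which the tangent-cone estimates \cref{lem:chain} and \cref{lem:conelip} do the work. Parametrize $\seg pq$ by a constant-speed geodesic $\gamma\colon\ab{0,1}\to\mc Q$ with $\gamma(0)=p$, $\gamma(1)=q$ and speed $v=\ol pq$. Put
\begin{equation*}
    f(\lambda):=\ol y{\gamma(\lambda)}^2-\ol z{\gamma(\lambda)}^2
    \eqcm
\end{equation*}
so that $\Quad_{\sqmap}(y,z;q,p)=f(1)-f(0)$. Each $\lambda\mapsto\ol y{\gamma(\lambda)}$ is Lipschitz, and all distances involved are bounded by $r$, so $f$ is Lipschitz. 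Hence $f(1)-f(0)=\int_0^1 f\pr(\lambda)\,\dl\lambda$, and it suffices to show $f\pr(\lambda)\leq 2\,\Th(r)\,\ol yz\,\ol pq$ at almost every $\lambda$. Where the derivative exists it equals the right derivative, so only right derivatives need to be controlled.

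The first step is the first variation formula. Fix $\lambda\in[0,1)$, put $x:=\gamma(\lambda)$, and let $\xi\in\Scone x$ be the direction of $\gamma$ towards $q$. For $w$ with $0<\ol wx<\Dk$, the first variation formula in \catk{} spaces \cite[Cor.~II.3.6]{bridson99} gives
\begin{equation*}
    \frac{\dl}{\dl\lambda^+}\,\ol w{\gamma(\lambda)}=-v\cos\ang{x}(w,\xi)
    \eqfs
\end{equation*}
Multiplying by $2\ol wx$, the right derivative of $\ol w{\gamma(\lambda)}^2$ is $-2v\,\beta_\xi(\Log_x w)$, in the notation of \cref{lem:conelip}. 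This also holds trivially when $w=x$, since then $\Log_x w=o_x$ and $\beta_\xi(o_x)=0$, and the derivative of the squared distance vanishes there. All distances are below $r<\Dk$, so the formula applies to both $w=y$ and $w=z$.

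The second step bounds the derivative. Subtracting the two formulas and using that $\beta_\xi$ is $1$-Lipschitz (\cref{lem:conelip}),
\begin{equation*}
    \frac{\dl f}{\dl\lambda^+}=2v\br{\beta_\xi(\Log_x z)-\beta_\xi(\Log_x y)}\leq 2v\,d_{\Tcone x}\br{\Log_x y,\Log_x z}
    \eqfs
\end{equation*}
Since $x\in\seg pq$, every point of the geodesic $\seg yz$ lies within distance $r$ of $x$ by \eqref{eq:rdiam}. Therefore \cref{lem:chain}, applied to this geodesic, bounds the cone distance by $\Th(r)\,\ol yz$. This yields $f\pr\leq 2\Th(r)\,\ol yz\,\ol pq$ almost everywhere, and integrating over $\ab{0,1}$ gives \eqref{eq:RC}, because $\Th(r)=C_R$ by \eqref{eq:Theta-CR}.

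I expect the main obstacle to be the justification of the first variation step. Two things need care: the formula must be used in its \catk{} form, which requires distances below $\Dk$ (guaranteed by $r<\Dk$), and the passage from one-sided derivatives to the integral must be handled. The Lipschitz property of $f$ takes care of the latter via almost-everywhere differentiability, so no uniform control of one-sided derivatives is needed. The genuinely curvature-dependent content has already been isolated in \cref{lem:logcomparison,lem:chain}.
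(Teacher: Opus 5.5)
Your proposal is correct and follows the paper's proof essentially step for step. It uses the same function $\lambda\mapsto\olt y{\gamma(\lambda)}-\olt z{\gamma(\lambda)}$ along $\seg pq$, the first variation formula giving the right derivative $2\,\ol pq\,\bigl(\beta_\xi(\Log_x z)-\beta_\xi(\Log_x y)\bigr)$, the bound via \cref{lem:conelip} and \cref{lem:chain}, and integration through Lipschitz (absolute) continuity. The only thing to add is the degenerate case $\ol pq=0$: there the direction $\xi$ is undefined, but both sides of the inequality vanish.
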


\begin{proof}
    Put $\ell:=\ol pq$; we may assume $\ell>0$.
    Let $\gamma\colon\ab{0,1}\to\seg pq$ be the geodesic from $p$ to $q$, of speed $\ell$, and set $h(\lambda):=\olt y{\gamma(\lambda)}-\olt z{\gamma(\lambda)}$, so that $h(1)-h(0)=\Quad_{\sqmap}(y,z;q,p)$.
    All the distances occurring here are at most $r$ by \eqref{eq:rdiam}, and each $\lambda\mapsto\ol w{\gamma(\lambda)}$ is $\ell$-Lipschitz with values in $\ab{0,r}$; hence $h$ is Lipschitz, therefore absolutely continuous.

    Fix $\lambda\in\ab{0,1}$ with $\lambda<1$, let $\xi_\lambda\in\Scone{\gamma(\lambda)}$ be the direction of $\gamma$ at $\gamma(\lambda)$ towards $\gamma(1)$, and write $d_w:=\ol w{\gamma(\lambda)}$ and $B_w:=\beta_{\xi_\lambda}\br{\Log_{\gamma(\lambda)}(w)}$ for $w\in\cb{y,z}$.
    If $w\neq\gamma(\lambda)$, then $0<d_w\leq r<\Dk$ and the first variation formula \cite[Cor.~II.3.6]{bridson99} gives
    $\lim_{\epsilon\downarrow0}\epsilon^{-1}\br{\ol w{\gamma(\lambda+\epsilon)}-d_w}=-\ell\cos\ang{\gamma(\lambda)}(w,\xi_\lambda)=-\ell B_w/d_w$, and therefore
    \begin{equation}\label{eq:fv}
        \lim_{\epsilon\downarrow0}\frac{\olt w{\gamma(\lambda+\epsilon)}-d_w^2}{\epsilon}
        = 2d_w\cdot\brOf{-\ell\,\frac{B_w}{d_w}} = -2\ell\,B_w
        \eqcm
    \end{equation}
    the factor $d_w$ cancelling.
    If $w=\gamma(\lambda)$ then $\ol w{\gamma(\lambda+\epsilon)}=\epsilon\ell$ and $B_w=0$, so both sides of \eqref{eq:fv} vanish and the identity persists.
    Hence the right derivative $h^+(\lambda)$ exists for every $\lambda<1$ and equals $2\ell\br{B_z-B_y}$.
    Now $\abs{B_z-B_y}\leq\Th(r)\,\ol yz$ by \cref{lem:conelip} combined with \cref{lem:chain}, the latter applied at the point $\gamma(\lambda)$ to the geodesic $\seg yz$: every point of $\seg yz$ is at distance at most $r$ from $\gamma(\lambda)\in\seg pq$, by \eqref{eq:rdiam}.
    So $h^+(\lambda)\leq2\ell\,\Th(r)\,\ol yz$, and as $h$ is absolutely continuous with $h\pr=h^+$ almost everywhere, integrating over $\ab{0,1}$ gives the claim.
\end{proof}

\subsection{\texorpdfstring{Ptolemy's inequality with the constant $\Th(r)$}{Ptolemy's inequality with the constant Theta}}\label{app:catk:ptolemy}

This subsection proves \eqref{eq:PC}.
The argument is multiplicative.
By \eqref{eq:chord} and \eqref{eq:Theta} every distance is its chord times an explicit factor,
\begin{equation}\label{eq:chord-factor}
    x = \Th\brOf{\tfrac x2}\,\chmap(x)
    \qquad\br{0\leq x<\Dk}
    \eqcm
\end{equation}
and by \cref{prop:chordal} the chords satisfy Ptolemy's inequality \eqref{eq:chordP} \emph{exactly}.
Writing $\ol yq\,\ol zp-\ol yp\,\ol zq$ in terms of chords therefore leaves a single defect, the ratio of the diagonal factor to the leg factor, and the whole task is to bound that ratio.
Its logarithm is the quadruple functional of
\begin{equation}\label{eq:Lam}
    \Lambda(x) := \log\Th\brOf{\tfrac x2}
    = \log\frac{\sqrt\kappa\,x/2}{\sin\br{\sqrt\kappa\,x/2}}
    \qquad\br{0\leq x<\Dk}
    \eqcm
\end{equation}
which vanishes at $0$ and increases to $+\infty$ as $x\uparrow\Dk$, and the bound we prove for it is \cref{prop:Lambda-bound}.

For a differentiable $f$ we write $\chi_f(x):=f\pr(x)/x$ and, as in \cref{def:Q},
$\Quad_f(y,z;q,p):=f(\ol yq)+f(\ol zp)-f(\ol yp)-f(\ol zq)$.
Alongside $\chi_\Lambda(x)=\Lambda\pr(x)/x$ we write
\begin{equation}\label{eq:Kap}
    \Kap(x) := \Th(x)\,\chi_\Lambda(x) = \frac{\sqrt\kappa\,\Lambda\pr(x)}{\sin\br{\sqrt\kappa\,x}}
    \quad\text{for }0<x<\Dk,
    \qquad \Kap(0):=\tfrac{\kappa}{12}
    \eqfs
\end{equation}

\subsubsection*{The function \texorpdfstring{$\Lambda$}{Lambda}}

The series we need come from the Euler product for the sine.

\begin{lemma}\label{lem:Lambda}
    For $0\leq x<\Dk$,
    \begin{equation}\label{eq:Lambda-series}
    \begin{gathered}
        \Lambda(x) = -\sum_{n\geq1}\log\brOf{1-\frac{\kappa x^2}{4n^2\pi^2}}
        \eqcm\qquad
        \Lambda\pr(x) = \sum_{n\geq1}\frac{2\kappa x}{4n^2\pi^2-\kappa x^2}
        \eqcm\\
        \chi_\Lambda(x) = \sum_{n\geq1}\frac{2\kappa}{4n^2\pi^2-\kappa x^2}
        \eqcm\qquad
        \Lambda\prr(x) = \sum_{n\geq1}\frac{2\kappa\br{4n^2\pi^2+\kappa x^2}}{\br{4n^2\pi^2-\kappa x^2}^2}
        \eqfs
    \end{gathered}
    \end{equation}
    In particular $\Lambda$, $\Lambda\pr$, $\Lambda\prr$ and $\chi_\Lambda$ are nonnegative and increasing on $[0,\Dk)$, with $\Lambda(0)=\Lambda\pr(0)=0$ and
    \begin{equation}\label{eq:Lambda-at-zero}
        \chi_\Lambda(0)=\Lambda\prr(0)=\frac{\kappa}{12}
        \eqcm
    \end{equation}
    and $\Kap$ is increasing and continuous on $[0,\Dk)$, its value $\Kap(0)=\kappa/12$ prescribed in \eqref{eq:Kap} being the continuous extension.
\end{lemma}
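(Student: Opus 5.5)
The plan is to start from the Euler product $\sin w = w\prod_{n\geq1}\br{1-\frac{w^2}{n^2\pi^2}}$, valid for all complex $w$ and convergent absolutely and locally uniformly. We substitute $w=\sqrt\kappa\,x/2$. For $0\leq x<\Dk$ we have $w\in[0,\pi/2)$, so every factor $1-\kappa x^2/(4n^2\pi^2)$ lies in $(0,1]$ and the product is positive. Taking logarithms gives the series for $\Lambda=\log\br{w/\sin w}$, with the value $0$ at $x=0$ agreeing with $\Th(0)=1$.

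The $n$-th term of this series is bounded by a constant times $\kappa x^2/n^2$, uniformly on compact subsets of $[0,\Dk)$. The same holds for the termwise derivatives: each is a rational function whose denominator stays at least $4n^2\pi^2-\kappa x_0^2\geq 3n^2\pi^2$ on $[0,x_0]$ with $x_0<\Dk$. So the differentiated series converge uniformly on compacts, termwise differentiation is justified, and we obtain the stated formulas.

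First, differentiating $-\log\br{1-\kappa x^2/(4n^2\pi^2)}$ gives $2\kappa x/\br{4n^2\pi^2-\kappa x^2}$; dividing by $x$ gives the formula for $\chi_\Lambda$. Differentiating once more by the quotient rule, the numerator is $2\kappa(4n^2\pi^2-\kappa x^2)+2\kappa x\cdot 2\kappa x = 2\kappa(4n^2\pi^2+\kappa x^2)$, which gives $\Lambda\prr$.

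Each term of each series is nonnegative and nondecreasing in $x$ on $[0,\Dk)$, and strictly increasing for $n=1$. Hence all four functions are nonnegative and increasing, with $\Lambda(0)=\Lambda\pr(0)=0$. At $x=0$ both $\chi_\Lambda$ and $\Lambda\prr$ equal $\sum_n 2\kappa/(4n^2\pi^2)=\frac{\kappa}{2\pi^2}\cdot\frac{\pi^2}{6}=\kappa/12$, using $\zeta(2)=\pi^2/6$.

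For $\Kap$, write $\Kap=\Th\cdot\chi_\Lambda$ on $(0,\Dk)$. This is a product of two positive increasing functions, since $\Th$ is increasing by \eqref{eq:Theta}: $w/\sin w$ is increasing on $(0,\pi)$. The product is therefore increasing and continuous there. As $x\downarrow0$, $\Th\to1$ and $\chi_\Lambda\to\kappa/12$, so $\Kap(0)=\kappa/12$ is the continuous extension, and monotonicity extends to $[0,\Dk)$. The identity $\Kap=\sqrt\kappa\Lambda\pr/\sin(\sqrt\kappa x)$ in \eqref{eq:Kap} follows directly from $\Th(x)/x=\sqrt\kappa/\sin(\sqrt\kappa x)$.

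The only real obstacle is justifying termwise differentiation up to second order, and it is handled by the uniform bound on compacts described above.
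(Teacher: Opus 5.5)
Your proof is correct and follows the paper's argument: Euler's product, termwise differentiation, termwise nonnegativity and monotonicity, $\zeta(2)=\pi^2/6$ at the origin, and $\Kap=\Th\cdot\chi_\Lambda$ as a product of positive increasing continuous functions. Your explicit $O(n^{-2})$ bound on the differentiated series is a slightly more careful justification of the termwise differentiation than the paper's appeal to locally uniform convergence of the logarithmic series.
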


\begin{proof}
    Put $w:=\sqrt\kappa\,x/2\in[0,\pi/2)$, so that $\Lambda(x)=\log(w/\sin w)$ and $w^2/(n^2\pi^2)=\kappa x^2/(4n^2\pi^2)$.
    Euler's product $\sin w/w=\prod_{n\geq1}\br{1-w^2/(n^2\pi^2)}$ converges locally uniformly on $\abs{w}<\pi$, and none of its factors vanishes there, so taking logarithms gives the first formula, valid for $\abs{x}<2\Dk$; the series of logarithms converges locally uniformly there, hence may be differentiated term by term, which gives the other three.
    In each of the last three series every summand is nonnegative on $[0,\Dk)$ and increasing there---the numerator does not decrease and the positive denominator does not increase---so the same holds for the sums, and $\Lambda(x)=\int_0^x\Lambda\pr\geq0$ is increasing as well.
    Evaluating the third series at $x=0$ gives $\sum_{n\geq1}\kappa/(2n^2\pi^2)=\kappa/12$, and the fourth gives the same value; that these are the limits of $\chi_\Lambda$ and $\Lambda\prr$ at $0$ is again locally uniform convergence.
    Finally $\Kap=\Th\cdot\chi_\Lambda$ is a product of two positive increasing continuous functions, $\Th$ by \eqref{eq:Theta} and $\chi_\Lambda$ by the above; since $\Th(0)=1$, its limit at $0$ is $\kappa/12$.
\end{proof}

\begin{lemma}[Tightness identity]\label{lem:tightness}
    For $0\leq x<\Dk$,
    \begin{equation}\label{eq:tightness}
        \Th\brOf{\tfrac x2}^2+\Kap(x)\,x^2 = \Th(x)
        \eqfs
    \end{equation}
    In particular $\Th(x/2)^2\leq\Th(x)$, with equality only at $x=0$.
\end{lemma}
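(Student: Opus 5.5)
The plan is a direct computation in the half-angle variable, followed by reading off the inequality from the sign of $\Kap$. Put $k:=\sqrt\kappa$ and $w:=kx/2\in[0,\pi/2)$. At $x=0$ both sides of \eqref{eq:tightness} equal $1$, since $\Th(0)=1$. So assume $0<x<\Dk$, which gives $0<w<\pi/2$, so $\sin w$, $\cos w$ and $\sin(2w)$ are all positive.

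First I express every term through $w$, using \eqref{eq:Theta}, \eqref{eq:Lam} and \eqref{eq:Kap}:
\begin{equation*}
  \Th\brOf{\tfrac x2}=\frac{w}{\sin w},\qquad
  \Th(x)=\frac{2w}{\sin(2w)}=\frac{w}{\sin w\cos w}.
\end{equation*}
Differentiating $\Lambda(x)=\log w-\log\sin w$ with $\dl w/\dl x=k/2$ gives
\begin{equation*}
  \Lambda\pr(x)=\frac k2\brOf{\frac1w-\cot w}.
\end{equation*}
Hence
\begin{equation*}
  \Kap(x)\,x^2=\frac{k\,\Lambda\pr(x)}{\sin(2w)}\cdot\frac{4w^2}{k^2}
  =\frac{2w^2\br{1/w-\cot w}}{2\sin w\cos w}
  =\frac{w}{\sin w\cos w}-\frac{w^2}{\sin^2 w}.
\end{equation*}
Adding $\Th(x/2)^2=w^2/\sin^2 w$ cancels the last term and leaves $w/(\sin w\cos w)=\Th(x)$. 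This is \eqref{eq:tightness}.

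For the consequence, it suffices to know that $\Kap(x)x^2\ge0$, with equality only at $x=0$. This follows from \cref{lem:Lambda}: for $0<x<\Dk$ each summand of the series for $\Lambda\pr$ is strictly positive, and $\sin(kx)>0$. Therefore $\Kap(x)>0$ and $x^2>0$, so $\Th(x/2)^2<\Th(x)$ for $x>0$, while at $x=0$ both sides equal $1$.

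The only step needing care is differentiating $\Lambda$ correctly, in particular the chain-rule factor $k/2$. The rest is trigonometric bookkeeping with no real obstacle.
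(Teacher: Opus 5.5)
Your proposal is correct and matches the paper's proof: the same half-angle substitution $w=\sqrt\kappa\,x/2$, the same derivative $\Lambda\pr(x)=\tfrac{\sqrt\kappa}{2}(1/w-\cot w)$, and the same cancellation against $\Th(x/2)^2=w^2/\sin^2 w$. The paper writes that cancellation as $\Kap(x)x^2=\Th(x)(1-w\cot w)$ with $\Th(x)\,w\cot w=\Th(x/2)^2$. Your separate handling of $x=0$ and your justification of $\Kap>0$ from the series in \cref{lem:Lambda} are small, welcome additions.
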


\begin{proof}
    Put $w:=\sqrt\kappa\,x/2$, so that
    \begin{equation*}
        \Th\brOf{\tfrac x2}=\frac{w}{\sin w}
        \eqcm\qquad
        \Th(x)=\frac{2w}{\sin 2w}=\frac{w}{\sin w\cos w}
        \eqfs
    \end{equation*}
    Differentiating $\Lambda(x)=\log w-\log\sin w$ gives $\Lambda\pr(x)=\tfrac{\sqrt\kappa}{2}\br{1/w-\cot w}$, hence
    \begin{equation*}
        x\,\Lambda\pr(x) = w\brOf{\frac1w-\cot w} = 1-w\cot w
        \eqfs
    \end{equation*}
    Since $\Kap(x)x^2=\Th(x)\,\chi_\Lambda(x)\,x^2=\Th(x)\,x\,\Lambda\pr(x)$, this gives $\Kap(x)x^2=\Th(x)\br{1-w\cot w}$, while
    \begin{equation*}
        \Th(x)\,w\cot w = \frac{w}{\sin w\cos w}\cdot\frac{w\cos w}{\sin w}
        = \frac{w^2}{\sin^2w} = \Th\brOf{\tfrac x2}^2
        \eqcm
    \end{equation*}
    and adding the two displays gives \eqref{eq:tightness}.
    The last assertion follows because $\Kap>0$ and $x^2>0$ for $x>0$.
\end{proof}

\begin{lemma}\label{lem:Lambda-second}
    For $0<x<\Dk$ one has $0<\Lambda\prr(x)\leq\Kap(x)$.
    Consequently, for $0\leq\rho<\Dk$,
    \begin{equation}\label{eq:sup-K}
        \sup_{0\leq x\leq\rho}\ \max\brOf{\Lambda\prr(x),\ \Kap(x)} = \Kap(\rho)
        \eqfs
    \end{equation}
\end{lemma}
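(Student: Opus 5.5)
The plan is to compare the series of \cref{lem:Lambda} term by term. Write $\alpha_n:=4n^2\pi^2$ and $X:=\kappa x^2\in(0,\pi^2)$, so that $\alpha_n-X>0$ for all $n\geq1$. Then
\begin{equation*}
    \Lambda\prr(x)=\sum_{n\geq1}\frac{2\kappa(\alpha_n+X)}{(\alpha_n-X)^2},
    \qquad
    \chi_\Lambda(x)=\sum_{n\geq1}\frac{2\kappa}{\alpha_n-X}\eqfs
\end{equation*}
Positivity of $\Lambda\prr$ is immediate, since every summand is positive. For the upper bound, $\Kap(x)=\Th(x)\chi_\Lambda(x)$, but $\Th$ is not a series in the same terms. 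So I would avoid a termwise comparison with $\Th$ and instead work with closed forms in $w:=\sqrt\kappa\,x/2\in(0,\pi/2)$.

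From the proof of \cref{lem:tightness}, $x\Lambda\pr(x)=1-w\cot w$. Differentiating once more gives
\begin{equation*}
    \Lambda\prr(x)=\frac{\kappa}{4}\Bigl(\frac{1}{\sin^2 w}-\frac1{w^2}\Bigr)\eqcm
\end{equation*}
because $\Lambda\pr=\tfrac{\sqrt\kappa}{2}(1/w-\cot w)$ and $\dl w/\dl x=\sqrt\kappa/2$. Similarly,
\begin{equation*}
    \Kap(x)=\frac{\sqrt\kappa\,\Lambda\pr(x)}{\sin 2w}=\frac{\kappa}{2}\cdot\frac{1/w-\cot w}{\sin 2w}\eqfs
\end{equation*}
The claim $\Lambda\prr\leq\Kap$ therefore becomes the elementary inequality
\begin{equation*}
    \frac{1}{\sin^2 w}-\frac{1}{w^2}\leq\frac{1-w\cot w}{w\sin w\cos w}\qquad(0<w<\tfrac\pi2)\eqfs
\end{equation*}
Multiplying by $w^2\sin^2 w\cos w>0$ turns this into
\begin{equation*}
    (w^2-\sin^2 w)\cos w\leq w\sin w-w^2\cos w\eqcm
\end{equation*}
that is, $2w^2\cos w-\sin^2 w\cos w-w\sin w\leq0$. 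I expect this trigonometric inequality to be the main obstacle.

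My first attempt would be Taylor expansion. At second order the left side is $2w^2-w^2-w^2=0$. At fourth order it is $-w^4-(-\tfrac{w^4}{2}-\tfrac{w^4}{3})+\tfrac{w^4}{6}=0$. At sixth order, the three contributions are $2w^2\cdot\tfrac{w^4}{24}=\tfrac{w^6}{12}$, then $-\sin^2w\cos w$ contributing $-\tfrac{2w^6}{45}-\tfrac{w^6}{24}+\tfrac{w^6}{6}$ (which I would double-check), and finally $-w\sin w$ contributing $-w\cdot\tfrac{w^5}{120}$. The sign of this sixth-order coefficient decides the local behaviour.

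A cleaner route is to divide by $\cos w$ and set $t=\tan w$. The inequality becomes $2w^2-\sin^2 w\leq w\tan w$. With $\sin^2 w=t^2/(1+t^2)$, this reads $2w^2\leq wt+t^2/(1+t^2)$. I would view this as a quadratic in $w$, namely $2w^2-tw-t^2/(1+t^2)\leq0$. It holds precisely when $w$ lies below the positive root $\bigl(t+\sqrt{t^2+8t^2/(1+t^2)}\bigr)/4$. So it suffices to show that $4w\leq t\bigl(1+\sqrt{(9+t^2)/(1+t^2)}\bigr)$. For small $t$ the right side is about $4t$, consistent with $w\leq t$. Near $w\to\pi/2$ we have $t\to\infty$ and the right side is about $2t$, so the inequality holds there. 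I would prove it by checking that $g(w)=t(1+\sqrt{\cdot})-4w$ vanishes at $0$ and has a nonnegative derivative.

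Finally, \eqref{eq:sup-K} follows directly. $\Kap$ is increasing by \cref{lem:Lambda}, and $\Lambda\prr\leq\Kap$, with equality $\Lambda\prr(0)=\Kap(0)=\kappa/12$ at $x=0$. Hence the supremum over $[0,\rho]$ of the maximum is $\Kap(\rho)$.
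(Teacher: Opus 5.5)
Your reduction is the paper's own. You arrive at $2w^2\le\sin^2w+w\tan w$ on $(0,\pi/2)$, which is exactly \eqref{eq:amgm-target}; the paper first rescales to $\kappa=1$ while you keep $\kappa$ inside $w$, which changes nothing. Your derivation of \eqref{eq:sup-K} from the monotonicity of $\Kap$ is also the paper's.

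The real difference is how the trigonometric inequality is proved. The paper weakens the right-hand side by AM--GM to $2\sqrt{w\sin^3w/\cos w}$. It is then left with $(\sin w/w)^3\ge\cos w$, which it proves via $\psi(s)=s(\cos s+2)-3\sin s$ and $\psi'''(s)=s\sin s>0$. You instead solve the target exactly as a quadratic in $w$, so nothing is lost. You are then left with $g(w)=t(1+S)-4w\ge0$, where $t=\tan w$ and $S=\sqrt{(9+t^2)/(1+t^2)}$.

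You only announce that $g'\ge0$. Since this is the entire content of the lemma, you must actually carry it out. It does go through cleanly. With $v:=1+t^2$ one has $\mathrm{d}t/\mathrm{d}w=v$, $S^2v=v+8$ and $\mathrm{d}S/\mathrm{d}t=-8t/(Sv^2)$, so
\[
  Sv\,g'(w)=Sv^2(1+S)-4Sv-8(v-1)=\sqrt{v(v+8)}\,(v-4)+v^2+8\eqcm
\]
This is positive for $v\ge4$. For $1\le v<4$ both sides of $v^2+8\ge(4-v)\sqrt{v(v+8)}$ are nonnegative, and their squares differ by $(v^2+8)^2-(4-v)^2v(v+8)=64(v-1)^2\ge0$. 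Together with $g(0)=0$ this gives $4w\le t(1+S)$, hence the quadratic inequality, hence the target.

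So your route replaces the paper's two steps (AM--GM, then a third-derivative argument) by a single polynomial identity. The paper's route, in exchange, isolates the classical inequality $(\sin w/w)^3\ge\cos w$ as a self-contained fact.

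Drop the Taylor paragraph. It can only ever give the inequality near $w=0$, and as written it contains a sign slip. The sixth-order coefficient of $\sin^2w\cos w$ is $\frac1{24}+\frac16+\frac2{45}$. So the sixth-order coefficient of $2w^2\cos w-\sin^2w\cos w-w\sin w$ is $-\frac8{45}$. With your signs it would come out as $+\frac7{45}$, which would wrongly suggest that the inequality fails near $0$.
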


\begin{proof}
    Positivity is \cref{lem:Lambda}.
    For the upper bound, note that $\Lambda$ depends on $\kappa$ only through the scaling $\Lambda(x)=\Lambda_1\br{\sqrt\kappa\,x}$, where $\Lambda_1$ is the function \eqref{eq:Lam} for $\kappa=1$; hence $\Lambda\prr(x)=\kappa\Lambda_1\prr\br{\sqrt\kappa\,x}$ and, by \eqref{eq:Kap}, $\Kap(x)=\kappa\,\mathrm{K}_1\br{\sqrt\kappa\,x}$.
    So we may and do assume $\kappa=1$, and the claim is $\Lambda\prr(x)\sin x\leq\Lambda\pr(x)$ for $0<x<\pi$.
    Here $\Lambda\pr(x)=\tfrac1x-\tfrac12\cot\tfrac x2$ and $\Lambda\prr(x)=-\tfrac1{x^2}+\tfrac1{4\sin^2(x/2)}$.
    Substituting $w=x/2\in(0,\pi/2)$ and $\sin x=2\sin w\cos w$, the claim reads
    \begin{equation*}
        \frac{\cos w}{2\sin w}-\frac{\sin w\cos w}{2w^2}
        \ \leq\ \frac{1}{2w}-\frac{\cos w}{2\sin w}
        \eqcm
    \end{equation*}
    Multiplying by $2w^2\sin w>0$ and rearranging, this is equivalent to
    \begin{equation*}
        2w^2\cos w\ \leq\ w\sin w+\sin^2w\cos w
        \eqcm
    \end{equation*}
    that is, after division by $\cos w>0$, to
    \begin{equation}\label{eq:amgm-target}
        2w^2\ \leq\ \sin^2w+w\tan w
        \qquad\br{0<w<\tfrac\pi2}
        \eqfs
    \end{equation}
    The arithmetic--geometric mean inequality gives
    \begin{equation*}
        \sin^2w+w\tan w\ \geq\ 2\sqrt{w\sin^3w/\cos w}
        \eqcm
    \end{equation*}
    so \eqref{eq:amgm-target} follows from
    \begin{equation}\label{eq:sinw-cube}
        \brOf{\frac{\sin w}{w}}^{3}\ \geq\ \cos w
        \qquad\br{0<w<\tfrac\pi2}
        \eqcm
    \end{equation}
    since \eqref{eq:sinw-cube} gives $\sin^3w/\cos w\geq w^3$ and hence $2\sqrt{w\cdot w^3}=2w^2$.

    To prove \eqref{eq:sinw-cube} put $h(w):=3\log\sin w-3\log w-\log\cos w$, so that $h(0^+)=0$ and $h\pr(w)=3\cot w+\tan w-3/w$.
    Multiplying by $w\sin w\cos w>0$, the inequality $h\pr>0$ is equivalent to $w\br{3\cos^2w+\sin^2w}>3\sin w\cos w$, that is, using $3\cos^2w+\sin^2w=\cos2w+2$ and $2\sin w\cos w=\sin2w$, to $\psi(s)>0$ for $s=2w\in(0,\pi)$, where
    \begin{equation*}
        \psi(s) := s\br{\cos s+2}-3\sin s
        \eqfs
    \end{equation*}
    Now $\psi(0)=\psi\pr(0)=\psi\prr(0)=0$ and $\psi^{\prime\prime\prime}(s)=s\sin s>0$ on $(0,\pi)$, so successively $\psi\prr>0$, $\psi\pr>0$ and $\psi>0$ there.
    Hence $h\pr>0$, so $h>0$, which is \eqref{eq:sinw-cube}.

    For \eqref{eq:sup-K}: on $\ab{0,\rho}$ both $\Lambda\prr$ and $\Kap$ are at most $\Kap(\rho)$, the first by what was just proved together with the monotonicity of $\Kap$ and the second by that monotonicity alone; and the value $\Kap(\rho)$ is attained at $x=\rho$.
\end{proof}

\subsubsection*{The mixed second difference}

Let $\gamma\colon\ab{0,u}\to\mc Q$ be the unit-speed geodesic from $y$ to $z$ and $\eta\colon\ab{0,v}\to\mc Q$ the unit-speed geodesic from $q$ to $p$, and put
\begin{equation}\label{eq:Xi-def}
    \Xi(\sigma,\varphi) := \ol{\gamma(\sigma)}{\eta(\varphi)}
    \eqfs
\end{equation}
The four corners of the rectangle $\ab{0,u}\times\ab{0,v}$ are
\begin{equation*}
    \Xi(0,0)=a,\qquad \Xi(u,v)=e,\qquad \Xi(0,v)=b,\qquad \Xi(u,0)=c
    \eqcm
\end{equation*}
so that $\Quad_\Lambda(y,z;q,p)=\Lambda(a)+\Lambda(e)-\Lambda(b)-\Lambda(c)$ is the \emph{mixed second difference} of $\Lambda\circ\Xi$ over that rectangle.
Two consequences follow, one combinatorial and one differential.

\begin{lemma}[Exact additivity]\label{lem:additivity}
    Let $0=\sigma_0<\dots<\sigma_N=u$ and $0=\varphi_0<\dots<\varphi_N=v$, and put $y_i:=\gamma(\sigma_i)$ and $q_j:=\eta(\varphi_j)$.
    Then
    \begin{equation}\label{eq:additivity}
        \Quad_\Lambda(y,z;q,p)
        = \sum_{i,j=0}^{N-1}\Quad_\Lambda\br{y_i,y_{i+1};q_j,q_{j+1}}
        \eqfs
    \end{equation}
    Moreover the two bases of the quadruple $\br{y_i,y_{i+1};q_j,q_{j+1}}$ are $\sigma_{i+1}-\sigma_i$ and $\varphi_{j+1}-\varphi_j$.
\end{lemma}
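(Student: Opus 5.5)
The identity is a telescoping sum of the mixed second difference over a grid. I will write it out in terms of the function $\Xi$ of \eqref{eq:Xi-def}. For each cell,
\begin{equation*}
    \Quad_\Lambda\br{y_i,y_{i+1};q_j,q_{j+1}}
    = \Lambda\br{\Xi(\sigma_i,\varphi_j)}+\Lambda\br{\Xi(\sigma_{i+1},\varphi_{j+1})}
    -\Lambda\br{\Xi(\sigma_i,\varphi_{j+1})}-\Lambda\br{\Xi(\sigma_{i+1},\varphi_j)}
    \eqcm
\end{equation*}
by \cref{def:Q} in the reading \eqref{eq:six}. The diagonals are $\ol{y_i}{q_j}$ and $\ol{y_{i+1}}{q_{j+1}}$, and the legs are $\ol{y_i}{q_{j+1}}$ and $\ol{y_{i+1}}{q_j}$. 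This matches the corner assignment $\Xi(0,0)=a$, $\Xi(u,v)=e$, $\Xi(0,v)=b$, $\Xi(u,0)=c$ given just before the lemma. Writing $F(\sigma,\varphi):=\Lambda\br{\Xi(\sigma,\varphi)}$, the cell term is therefore the rectangular difference $\Delta_{ij}F := F(\sigma_{i+1},\varphi_{j+1})-F(\sigma_{i+1},\varphi_j)-F(\sigma_i,\varphi_{j+1})+F(\sigma_i,\varphi_j)$.

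Next I will sum over $j$ for fixed $i$. With $g_i(\varphi):=F(\sigma_{i+1},\varphi)-F(\sigma_i,\varphi)$ we have $\Delta_{ij}F=g_i(\varphi_{j+1})-g_i(\varphi_j)$, so the sum over $j$ telescopes to $g_i(v)-g_i(0)$. Summing over $i$, each of $F(\cdot,v)$ and $F(\cdot,0)$ telescopes in turn. This leaves $F(u,v)-F(0,v)-F(u,0)+F(0,0)=\Lambda(e)-\Lambda(b)-\Lambda(c)+\Lambda(a)=\Quad_\Lambda(y,z;q,p)$, which is \eqref{eq:additivity}.

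For the bases of the cell quadruple: $\gamma$ and $\eta$ are unit-speed geodesics, so $\ol{y_i}{y_{i+1}}=\sigma_{i+1}-\sigma_i$ and $\ol{q_j}{q_{j+1}}=\varphi_{j+1}-\varphi_j$. The functional $\Lambda$ is defined only on $\left[0,\Dk\right)$, so I will also check that all arguments are admissible. Every distance $\Xi(\sigma,\varphi)$ is between a point of $\seg yz$ and a point of $\seg pq$, hence at most $r<\Dk$ by \eqref{eq:rdiam}, so every term is finite and no infinite values appear in the telescoping.

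The only step needing real care is the bookkeeping of which corner is a diagonal and which is a leg. Since $\eta$ runs from $q$ to $p$, the index $\varphi=0$ corresponds to $q$, and the sign pattern must match \cref{def:Q} with $(y_i,y_{i+1};q_j,q_{j+1})$ in the roles of $(y,z;q,p)$. I will verify this once against the stated corners. The rest is routine telescoping.
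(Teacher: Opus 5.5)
Your proposal is correct and matches the paper's proof: it writes each cell term as the mixed second difference of $F=\Lambda\circ\Xi$ over the cell, telescopes over the grid to the corner combination $\Lambda(a)+\Lambda(e)-\Lambda(b)-\Lambda(c)$, and reads the bases off from the unit-speed geodesics. Your added check that every argument of $\Lambda$ is at most $r<\Dk$ is a harmless addition that the paper leaves implicit in its standing assumption \eqref{eq:rdiam}.
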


\begin{proof}
    Write $F(\sigma,\varphi):=\Lambda\br{\Xi(\sigma,\varphi)}$.
    By \cref{def:Q} the summand indexed by $(i,j)$ is
    \begin{equation*}
        F(\sigma_i,\varphi_j)+F(\sigma_{i+1},\varphi_{j+1})-F(\sigma_i,\varphi_{j+1})-F(\sigma_{i+1},\varphi_j)
        \eqcm
    \end{equation*}
    the mixed second difference of $F$ over the cell $\ab{\sigma_i,\sigma_{i+1}}\times\ab{\varphi_j,\varphi_{j+1}}$.
    Summing mixed second differences over a grid telescopes, first in $j$ and then in $i$, to the mixed second difference over the whole rectangle, which is $\Quad_\Lambda(y,z;q,p)$.
    The last assertion holds because a subsegment of a geodesic is a geodesic, so $\ol{y_i}{y_{i+1}}=\sigma_{i+1}-\sigma_i$ and $\ol{q_j}{q_{j+1}}=\varphi_{j+1}-\varphi_j$.
\end{proof}

\begin{lemma}[The mixed derivative in the model plane]\label{lem:mixed}
    Let $\bar\gamma,\bar\eta$ be unit-speed geodesics of $\Mk$, let $\bar\Xi$ be given by \eqref{eq:Xi-def} for them, and let $f$ be twice continuously differentiable with $f\pr\geq0$.
    At every parameter pair with $0<\bar\Xi<\Dk$,
    \begin{equation}\label{eq:mixed}
        \partial_\sigma\partial_\varphi f\br{\bar\Xi}
        = -\alpha\mu\,f\prr\br{\bar\Xi}-\beta\nu\,\Th\br{\bar\Xi}\chi_f\br{\bar\Xi}
        \eqcm
    \end{equation}
    where $(\alpha,\beta)$ and $(\mu,\nu)$ are unit vectors of $\R^2$ depending on the pair.
    Consequently
    \begin{equation}\label{eq:mixed-bound}
        \abs{\partial_\sigma\partial_\varphi f\br{\bar\Xi}}
        \ \leq\ \max\brOf{\abs{f\prr\br{\bar\Xi}},\ \Th\br{\bar\Xi}\chi_f\br{\bar\Xi}}
        \eqfs
    \end{equation}
\end{lemma}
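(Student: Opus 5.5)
Chain rule first, then a single quantity to bound. Work in the model plane $\Mk$ (the sphere of radius $\kappa^{-1/2}$). Write $\Xi=\bar\Xi(\sigma,\varphi)$ and use the chain rule:
\begin{equation*}
\partial_\sigma\partial_\varphi f(\Xi)=f\prr(\Xi)\,\partial_\sigma\Xi\,\partial_\varphi\Xi+f\pr(\Xi)\,\partial_\sigma\partial_\varphi\Xi .
\end{equation*}
By the first variation formula, which is smooth here because $0<\Xi<\Dk$ and so no cut locus intervenes, $\partial_\sigma\Xi=-\cos\angle_{\bar\gamma(\sigma)}(\dot{\bar\gamma},\bar\eta(\varphi))$ and $\partial_\varphi\Xi=-\cos\angle_{\bar\eta(\varphi)}(\dot{\bar\eta},\bar\gamma(\sigma))$. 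Set $\alpha:=-\partial_\sigma\Xi$ and $\mu:=\partial_\varphi\Xi$, up to a sign convention chosen so that the leading minus sign in \eqref{eq:mixed} comes out right. Let $\beta,\nu$ be the sines of the same two angles, with signs fixed by orientation. Then $(\alpha,\beta)$ and $(\mu,\nu)$ are unit vectors, and the first term is $-\alpha\mu f\prr(\Xi)$.

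The main step is the mixed derivative of the distance function itself. I claim that in $\Mk$
\begin{equation*}
\partial_\sigma\partial_\varphi\Xi=-\beta\nu\,\frac{\sqrt\kappa}{\sin(\sqrt\kappa\,\Xi)} .
\end{equation*}
This is the spherical analogue of the Euclidean fact that the Hessian of the distance in the directions transverse to the connecting geodesic is $1/\Xi$ times the product of the normal components. On the sphere, the Jacobi field along the connecting geodesic $[\bar\gamma(\sigma),\bar\eta(\varphi)]$ that vanishes at one end and has unit normal value at the other end has the other end's normal derivative equal to $\sqrt\kappa/\sin(\sqrt\kappa\,\Xi)$. So the second variation of length, under moving one endpoint with normal component $\beta$ and the other with normal component $\nu$, gives $\pm\beta\nu\sqrt\kappa/\sin(\sqrt\kappa\Xi)$. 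The tangential components contribute nothing to the mixed term, because the first variation depends only on the endpoint velocities and the curves are geodesics.

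Alternatively, and more concretely, I would verify the claim by embedding $\Mk\subset\R^3$. Write $\cos(\sqrt\kappa\,\Xi)=\kappa\langle\bar\gamma(\sigma),\bar\eta(\varphi)\rangle$ and differentiate twice. This gives
\begin{equation*}
-\sqrt\kappa\sin(\sqrt\kappa\Xi)\,\partial_\sigma\partial_\varphi\Xi-\kappa\cos(\sqrt\kappa\Xi)\,\partial_\sigma\Xi\,\partial_\varphi\Xi=\kappa\langle\dot{\bar\gamma},\dot{\bar\eta}\rangle .
\end{equation*}
Decompose $\dot{\bar\gamma}$ and $\dot{\bar\eta}$ into components along the connecting great circle and normal to its plane. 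Parallel transport along the circle shows that $\langle\dot{\bar\gamma},\dot{\bar\eta}\rangle$ equals $\beta\nu$ plus the in-plane term, and the in-plane term cancels against the $\cos$ term. This computation, with the signs, is the only real obstacle.

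Once the claim is established, multiply it by $f\pr(\Xi)=\Xi\,\chi_f(\Xi)$. Since $\Xi\sqrt\kappa/\sin(\sqrt\kappa\Xi)=\Th(\Xi)$, the second term becomes $-\beta\nu\,\Th(\Xi)\chi_f(\Xi)$, which proves \eqref{eq:mixed}.

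For \eqref{eq:mixed-bound}, view the right side of \eqref{eq:mixed} as $-\langle(\alpha,\beta),(A\mu,B\nu)\rangle$ with $A=f\prr(\Xi)$ and $B=\Th(\Xi)\chi_f(\Xi)\ge0$; the sign of $B$ uses $f\pr\ge0$. By Cauchy--Schwarz its absolute value is at most $|(A\mu,B\nu)|=\sqrt{A^2\mu^2+B^2\nu^2}\le\max(|A|,B)$, because $\mu^2+\nu^2=1$.
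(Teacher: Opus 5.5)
Your proposal is correct and follows essentially the paper's route: embed $\Mk$ in $\R^3$, differentiate $\cos(\sqrt\kappa\,\bar\Xi)=\kappa\langle\bar\gamma,\bar\eta\rangle$ twice, and decompose $\dot{\bar\gamma},\dot{\bar\eta}$ along the connecting great circle and a common normal $n$. The in-plane term $\alpha\mu\cos(\sqrt\kappa\,\bar\Xi)$ then cancels, leaving $\partial_\sigma\partial_\varphi\bar\Xi=-\beta\nu\sqrt\kappa/\sin(\sqrt\kappa\,\bar\Xi)$; the paper first rescales to $\kappa=1$, and the signs come out as you predict once $\beta,\nu$ are the components along that same $n$. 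Your Cauchy--Schwarz bound by $\sqrt{A^2\mu^2+B^2\nu^2}$ is an equally valid variant of the paper's estimate $|\alpha\mu|+|\beta\nu|\leq1$.
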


\begin{proof}
    Both sides of \eqref{eq:mixed} are multiplied by $\lambda^{-2}$ when the metric is scaled by $\lambda>0$, that is, when $\kappa$ is replaced by $\lambda^{-2}\kappa$ and $f$ by $f(\cdot/\lambda)$: the left-hand side because unit-speed geodesics are reparametrized by $\lambda$ and $\bar\Xi$ is multiplied by it, the right-hand side because $f\prr$ and $\chi_f$ both acquire the factor $\lambda^{-2}$ while $\Th\br{\bar\Xi}$ is unchanged.
    So we may and do assume $\kappa=1$, with $\Mk=S^2$ the unit sphere of $\R^3$, $\Th(x)=x/\sin x$ and $\bar\Xi=\arccos\ip GE$ for $G:=\bar\gamma(\sigma)$ and $E:=\bar\eta(\varphi)$.

    Fix a parameter pair with $0<\bar\Xi<\pi$ and choose an orthonormal frame $\cb{G,w,n}$ of $\R^3$ with $E=\cos\bar\Xi\,G+\sin\bar\Xi\,w$.
    The tangent space at $G$ is spanned by $w$ and $n$, so $\bar\gamma\pr=\alpha w+\beta n$ with $\alpha^2+\beta^2=1$; the tangent space at $E$ is spanned by $n$ and $w_E:=-\sin\bar\Xi\,G+\cos\bar\Xi\,w$, so $\bar\eta\pr=\mu w_E+\nu n$ with $\mu^2+\nu^2=1$.
    Differentiating $\cos\bar\Xi=\ip GE$ in $\sigma$ and in $\varphi$ gives
    \begin{equation*}
        -\sin\bar\Xi\ \partial_\sigma\bar\Xi=\ip{\bar\gamma\pr}{E}=\alpha\sin\bar\Xi
        \eqcm\qquad
        -\sin\bar\Xi\ \partial_\varphi\bar\Xi=\ip{G}{\bar\eta\pr}=-\mu\sin\bar\Xi
        \eqcm
    \end{equation*}
    that is $\partial_\sigma\bar\Xi=-\alpha$ and $\partial_\varphi\bar\Xi=\mu$.
    Differentiating the first of these in $\varphi$,
    \begin{equation*}
        -\cos\bar\Xi\,\br{\partial_\varphi\bar\Xi}\br{\partial_\sigma\bar\Xi}-\sin\bar\Xi\ \partial_\sigma\partial_\varphi\bar\Xi
        = \ip{\bar\gamma\pr}{\bar\eta\pr} = \alpha\mu\cos\bar\Xi+\beta\nu
        \eqcm
    \end{equation*}
    whose left-hand side is $\alpha\mu\cos\bar\Xi-\sin\bar\Xi\,\partial_\sigma\partial_\varphi\bar\Xi$; the terms $\alpha\mu\cos\bar\Xi$ cancel and there remains
    $\partial_\sigma\partial_\varphi\bar\Xi=-\beta\nu/\sin\bar\Xi$.
    Now
    \begin{equation*}
        \partial_\sigma\partial_\varphi f\br{\bar\Xi}
        = f\prr\br{\bar\Xi}\,\partial_\sigma\bar\Xi\,\partial_\varphi\bar\Xi
            + f\pr\br{\bar\Xi}\,\partial_\sigma\partial_\varphi\bar\Xi
        = -\alpha\mu f\prr\br{\bar\Xi}-\frac{\beta\nu\,f\pr\br{\bar\Xi}}{\sin\bar\Xi}
        \eqcm
    \end{equation*}
    which is \eqref{eq:mixed}, because $f\pr(x)/\sin x=\Th(x)\chi_f(x)$ at $\kappa=1$.
    For \eqref{eq:mixed-bound}, the Cauchy--Schwarz inequality gives
    $\abs{\alpha\mu}+\abs{\beta\nu}\leq\sqrt{\alpha^2+\beta^2}\sqrt{\mu^2+\nu^2}=1$, so \eqref{eq:mixed} is at most $\max\br{\abs{f\prr},\Th\chi_f}\br{\abs{\alpha\mu}+\abs{\beta\nu}}$ in absolute value.
\end{proof}

\begin{proposition}[Model-plane bound]\label{prop:model-mixed}
    Let $\bar\gamma\colon\ab{0,u\pr}\to\Mk$ and $\bar\eta\colon\ab{0,v\pr}\to\Mk$ be unit-speed geodesics with $\bar\Xi\leq\rho<\Dk$ throughout, and write $\bar y:=\bar\gamma(0)$, $\bar z:=\bar\gamma(u\pr)$, $\bar q:=\bar\eta(0)$, $\bar p:=\bar\eta(v\pr)$.
    Then
    \begin{equation}\label{eq:model-mixed}
        \Quad_\Lambda\br{\bar y,\bar z;\bar q,\bar p}\ \leq\ \Kap(\rho)\,u\pr v\pr
        \eqfs
    \end{equation}
\end{proposition}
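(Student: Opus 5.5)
The plan is to write the left-hand side of \eqref{eq:model-mixed} as an integral of the mixed derivative and bound the integrand pointwise. Put $F(\sigma,\varphi):=\Lambda\br{\bar\Xi(\sigma,\varphi)}$ on the rectangle $\ab{0,u\pr}\times\ab{0,v\pr}$. By the corner identification recorded before \cref{lem:additivity}, $\Quad_\Lambda(\bar y,\bar z;\bar q,\bar p)$ is the mixed second difference of $F$ over the rectangle. Wherever $F$ is $C^2$, this difference equals $\int_0^{u\pr}\!\int_0^{v\pr}\partial_\sigma\partial_\varphi F\,\dl\varphi\,\dl\sigma$.

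At points with $0<\bar\Xi<\Dk$, \cref{lem:mixed} applies with $f=\Lambda$. Here $\Lambda$ is smooth on $[0,\Dk)$, with $\Lambda\pr\geq0$ and $\Lambda\prr>0$ by \cref{lem:Lambda}. We have $\Th(\bar\Xi)\chi_\Lambda(\bar\Xi)=\Kap(\bar\Xi)$ by \eqref{eq:Kap}, so \eqref{eq:mixed-bound} gives $\abs{\partial_\sigma\partial_\varphi F}\leq\max\br{\Lambda\prr(\bar\Xi),\Kap(\bar\Xi)}$. Since $\bar\Xi\leq\rho$ throughout, \cref{lem:Lambda-second}, specifically \eqref{eq:sup-K}, bounds this by $\Kap(\rho)$. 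Integrating over the rectangle yields \eqref{eq:model-mixed}.

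The main obstacle is regularity at the set $Z:=\cb{\bar\Xi=0}$, where the two geodesics meet. There $\bar\Xi$ is only Lipschitz, and \cref{lem:mixed} does not apply. My first approach is to note that $\Lambda$ is an even real-analytic function of $x$ (\cref{lem:Lambda}: it is a series in $x^2$). Hence $F=\tilde\Lambda\br{\bar\Xi^2}$ with $\tilde\Lambda$ smooth. On $S^2$ (scaling to $\kappa=1$), $\bar\Xi$ is a smooth function of $\cos\bar\Xi=\ip GE$ near $\bar\Xi=0$, and $\bar\Xi^2$ is smooth there as well. Since $\bar\Xi\leq\rho<\Dk$ rules out antipodal points, $F$ is $C^2$, in fact smooth, on the whole closed rectangle. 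Its mixed derivative is continuous, so the pointwise bound, valid off $Z$, extends to $Z$ by continuity, provided $Z$ has empty interior.

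If $Z$ does have interior, the geodesics coincide along arcs. In that case I would fall back on a direct argument. If $\bar\gamma$ and $\bar\eta$ lie on a common great circle, $\bar\Xi$ is locally $\abs{\pm\sigma\pm\varphi+c}$, and $F$ reduces to $\Lambda$ of an affine function. Its mixed derivative is then $\mp\Lambda\prr\leq\Kap(\rho)$ in absolute value, so the bound holds regardless. Alternatively, I would perturb $\bar\eta$ slightly so that $Z$ becomes finite, and pass to the limit using the continuity of both sides in the endpoints.
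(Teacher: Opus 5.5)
Your proposal is correct and is essentially the paper's proof: the same smoothness argument ($\Lambda$ analytic in $x^2$, $(\arccos t)^2$ analytic near $t=1$, no antipodal pairs since $\bar\Xi\leq\rho<\Dk$), the same pointwise bound from \cref{lem:mixed} and \eqref{eq:sup-K}, and the same continuity extension to $Z$. One slip: $\bar\Xi$ itself is not smooth in $\ip GE$ at $1$, since $\arccos$ has a square-root singularity there; what is smooth is $\bar\Xi^2$, which is all you use. Also, your fallback is never needed: $\bar\eta$ is a unit-speed geodesic, hence injective, so each slice $\{\varphi\colon\bar\Xi(\sigma,\varphi)=0\}$ has at most one point and $Z$ always has empty interior, even when the two geodesics overlap along an arc.
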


\begin{proof}
    The first series in \eqref{eq:Lambda-series} exhibits $\Lambda$ as a real-analytic function of $x^2$ on $\abs{x}<2\Dk$.
    Writing $G:=\bar\gamma(\sigma)$ and $E:=\bar\eta(\varphi)$ as points of $\Mk\subset\R^3$, so that $\ip GE=\kappa^{-1}\cos\br{\sqrt\kappa\,\bar\Xi}$, we have
    $\bar\Xi^2=\kappa^{-1}\brOf{\arccos\br{\kappa\ip GE}}^2$, and $t\mapsto\br{\arccos t}^2$ is real-analytic near $t=1$.
    Since $\bar\Xi\leq\rho<\Dk$, the composite $\Lambda\circ\bar\Xi$ is therefore smooth on all of $\ab{0,u\pr}\times\ab{0,v\pr}$, \emph{including at parameter pairs where $\bar\Xi=0$}, and
    \begin{equation*}
        \Quad_\Lambda\br{\bar y,\bar z;\bar q,\bar p}
        = \int_0^{u\pr}\!\!\int_0^{v\pr}\partial_\sigma\partial_\varphi\,\Lambda\br{\bar\Xi}\dl\varphi\dl\sigma
        \eqcm
    \end{equation*}
    the left-hand side being the mixed second difference of $\Lambda\circ\bar\Xi$ over the rectangle.
    On the open set where $\bar\Xi>0$, \cref{lem:mixed} with $f=\Lambda$ and \cref{lem:Lambda-second} via \eqref{eq:sup-K} bound the integrand by $\Kap(\rho)$; the integrand is continuous, and so is $\Kap$ at $0$ by \cref{lem:Lambda}, so the same bound holds where $\bar\Xi=0$.
    Integrating gives \eqref{eq:model-mixed}.
\end{proof}

\subsubsection*{From the model plane to \texorpdfstring{$\mc Q$}{Q}}

The passage to a general \catk{} space is by subdivision and the four-point subembedding of Bridson and Haefliger, which we recall in the labeling \eqref{eq:six}.

\begin{proposition}[{Four-point subembedding \cite[Def.~II.1.10 and Prop.~II.1.11]{bridson99}}]\label{prop:subembed}
    Let $y\pr,z\pr,q\pr,p\pr$ be four points of a \catk{} space whose $4$-cycle perimeter satisfies
    $\ol{y\pr}{z\pr}+\ol{z\pr}{q\pr}+\ol{q\pr}{p\pr}+\ol{p\pr}{y\pr}<2\Dk$.
    Then there are $\bar y\pr,\bar z\pr,\bar q\pr,\bar p\pr\in\Mk$ with the four sides equal to the corresponding sides of the original,
    \begin{equation*}
        \ol{\bar y\pr}{\bar z\pr}=\ol{y\pr}{z\pr},\quad
        \ol{\bar z\pr}{\bar q\pr}=\ol{z\pr}{q\pr},\quad
        \ol{\bar q\pr}{\bar p\pr}=\ol{q\pr}{p\pr},\quad
        \ol{\bar p\pr}{\bar y\pr}=\ol{p\pr}{y\pr}
        \eqcm
    \end{equation*}
    and with both diagonals no smaller,
    $\ol{\bar y\pr}{\bar q\pr}\geq\ol{y\pr}{q\pr}$ and $\ol{\bar z\pr}{\bar p\pr}\geq\ol{z\pr}{p\pr}$.
\end{proposition}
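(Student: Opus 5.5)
The plan is the classical gluing of comparison triangles along a diagonal, as in \cite[Prop.~II.1.11]{bridson99}. Write $\ell:=\ol{y\pr}{z\pr}+\ol{z\pr}{q\pr}+\ol{q\pr}{p\pr}+\ol{p\pr}{y\pr}<2\Dk$.

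\textbf{Step 1 (the comparison triangles exist).} Since $\ol{y\pr}{q\pr}\leq\min\br{\ol{y\pr}{z\pr}+\ol{z\pr}{q\pr},\ \ol{q\pr}{p\pr}+\ol{p\pr}{y\pr}}$, the perimeter of each of the triangles $\Delta(y\pr,z\pr,q\pr)$ and $\Delta(y\pr,q\pr,p\pr)$ is at most $\ell<2\Dk$. The same holds for the two triangles cut off by the other diagonal $z\pr p\pr$. So all four comparison triangles exist in $\Mk$ \cite[Lem.~I.2.14]{bridson99}, and all the geodesics involved are unique.

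\textbf{Step 2 (gluing along $y\pr q\pr$).} Place comparison triangles $\Delta(\bar y\pr,\bar z\pr,\bar q\pr)$ and $\Delta(\bar y\pr,\bar q\pr,\bar p\pr)$ in $\Mk$ on opposite sides of a common segment $\seg{\bar y\pr}{\bar q\pr}$. By construction the four sides and the diagonal $y\pr q\pr$ are preserved. Suppose the glued figure is convex, i.e.\ the angle sums at $\bar y\pr$ and at $\bar q\pr$ are both at most $\pi$. Then the segment $\seg{\bar z\pr}{\bar p\pr}$ meets $\seg{\bar y\pr}{\bar q\pr}$ in a point $\bar m$. Let $m$ be the corresponding point on $\seg{y\pr}{q\pr}$. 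The \catk{} comparison inequality in each of the two triangles gives $\ol{z\pr}m\leq\ol{\bar z\pr}{\bar m}$ and $\ol m{p\pr}\leq\ol{\bar m}{\bar p\pr}$, hence
\[
\ol{z\pr}{p\pr}\ \leq\ \ol{z\pr}m+\ol m{p\pr}\ \leq\ \ol{\bar z\pr}{\bar p\pr}.
\]
The other diagonal is preserved exactly, so we are done in this case.

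\textbf{Step 3 (the non-convex case; the main obstacle).} Suppose instead that, say, the angle sum at $\bar y\pr$ exceeds $\pi$. I would then switch diagonals: glue the comparison triangles of $\Delta(y\pr,z\pr,p\pr)$ and $\Delta(z\pr,q\pr,p\pr)$ along $\seg{\bar z\pr}{\bar p\pr}$. Now $z\pr p\pr$ is preserved, and it suffices to show that this second gluing is convex; then Step 2, with the roles of the diagonals swapped, gives $\ol{y\pr}{q\pr}\leq\ol{\bar y\pr}{\bar q\pr}$.

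To prove this convexity I would use Alexandrov's lemma \cite[Lem.~I.2.16]{bridson99} in $\Mk$. Reflexness at $\bar y\pr$ in the first gluing means that the model distance across the first gluing from $\bar z\pr$ to $\bar p\pr$ equals the length of the broken path through $\bar y\pr$. Comparing this with the comparison triangle of $\Delta(y\pr,z\pr,p\pr)$, the lemma should give two things: the angles at $\bar z\pr$ and $\bar p\pr$ in the new triangle are at least the corresponding angles of the old ones, and the angle at $\bar y\pr$ is less than $\pi$. From these monotonicity statements one reads off that the new angle sums at $\bar z\pr$ and $\bar p\pr$ are at most $\pi$. Because the perimeter bound $\ell<2\Dk$ keeps every triangle inside a hemisphere-sized region, the hinge monotonicity of spherical trigonometry applies throughout.

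The delicate point, and what I expect to be the main obstacle, is the bookkeeping that rules out both gluings being non-convex simultaneously. I would try to handle it by contradiction: if both were reflex, the two Alexandrov-lemma inequalities would force $\ol{\bar z\pr}{\bar p\pr}$ to be strictly larger than itself. Degenerate cases (coincident points, or a triangle collapsing onto a segment) I would treat by continuity or by a direct check.
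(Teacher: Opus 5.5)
Your Steps~1 and~2 are correct. Step~2 is exactly the convex case of the argument in \cite[Prop.~II.1.11]{bridson99}, which is where the paper takes this proposition from; the paper gives no proof of its own.

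The gap is in Step~3. It rests on the claim that the gluings along $y'q'$ and along $z'p'$ cannot both be non-convex. That claim is false, so no bookkeeping will establish it. Work in $\Mk$ itself, which is \catk{}, with a small configuration. Choose $z',p',q'$ with $\ang{z'}(p',q')=\tfrac{3\pi}{4}$. Let $y'$ be a point near $z'$ on the geodesic leaving $z'$ at angle $\tfrac\pi2$ from $\seg{z'}{p'}$, so that $y'$ lies inside the triangle $z'q'p'$. All comparison triangles are congruent to the actual ones, so every angle sum is a sum of actual angles:
\begin{itemize}
\item In the gluing along $y'q'$, the sum at $\bar y'$ is $\ang{y'}(z',q')+\ang{y'}(q',p')=2\pi-\ang{y'}(p',z')>\pi$, because the three angles at the interior point $y'$ fill $2\pi$.
\item In the gluing along $z'p'$, the sum at $\bar z'$ is $\ang{z'}(y',p')+\ang{z'}(p',q')=\tfrac\pi2+\tfrac{3\pi}4>\pi$.
\end{itemize}
So both gluings are reflex. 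Also, across a reflex gluing the model distance $\ol{\bar z'}{\bar p'}$ is strictly shorter than the broken path through $\bar y'$, not equal to it.

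The missing idea is to straighten at the reflex vertex instead of switching diagonals. Suppose the angle sum at $\bar y'$ exceeds $\pi$.
\begin{itemize}
\item Take a triangle $\tilde z'\tilde q'\tilde p'$ in $\Mk$ with $\ol{\tilde z'}{\tilde q'}=\ol{z'}{q'}$, $\ol{\tilde q'}{\tilde p'}=\ol{q'}{p'}$ and $\ol{\tilde z'}{\tilde p'}=\ol{z'}{y'}+\ol{y'}{p'}$. It exists: its perimeter is $\ell<2\Dk$, and the reflex angle gives $\ol{z'}{y'}+\ol{y'}{p'}\leq\ol{z'}{q'}+\ol{q'}{p'}$.
\item Let $\tilde y'\in\seg{\tilde z'}{\tilde p'}$ be the point at distance $\ol{z'}{y'}$ from $\tilde z'$.
\item Alexandrov's lemma \cite[Lem.~I.2.16]{bridson99}, with $A=\bar q'$, $C=\bar y'$, $B=\bar z'$, $B'=\bar p'$, gives $\ol{\tilde q'}{\tilde y'}\geq\ol{\bar q'}{\bar y'}=\ol{q'}{y'}$.
\end{itemize}
The four sides are preserved by construction. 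The other diagonal satisfies $\ol{\tilde z'}{\tilde p'}=\ol{z'}{y'}+\ol{y'}{p'}\geq\ol{z'}{p'}$ by the triangle inequality in the original space. Hence $(\tilde y',\tilde z',\tilde q',\tilde p')$ is already the required configuration. A reflex vertex at $\bar q'$ is treated symmetrically, and the second gluing is never needed.
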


\begin{proposition}[The $\Lambda$-bound]\label{prop:Lambda-bound}
    Let $y,z,q,p$ satisfy \eqref{eq:rdiam}.
    Then
    \begin{equation}\label{eq:Lambda-bound}
        \Quad_\Lambda(y,z;q,p)\ \leq\ \Kap(r)\,\ol yz\,\ol pq
        \eqfs
    \end{equation}
\end{proposition}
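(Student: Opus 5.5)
The plan is to subdivide both segments finely, bound each small cell by comparison with the model plane, and sum the cell bounds using exact additivity. Parametrize $\seg yz$ by the unit-speed geodesic $\gamma\colon\ab{0,u}\to\mc Q$ and $\seg pq$ by $\eta\colon\ab{0,v}\to\mc Q$ from $q$ to $p$, as in \eqref{eq:Xi-def}. Choose uniform partitions $0=\sigma_0<\dots<\sigma_N=u$ and $0=\varphi_0<\dots<\varphi_N=v$. By \cref{lem:additivity}, $\Quad_\Lambda(y,z;q,p)$ is the sum over cells of $\Quad_\Lambda(y_i,y_{i+1};q_j,q_{j+1})$, where each cell has bases $u/N$ and $v/N$. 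It therefore suffices to show, for each cell,
\begin{equation*}
    \Quad_\Lambda(y_i,y_{i+1};q_j,q_{j+1})\leq \Kap(r)\,\frac uN\,\frac vN \eqcm
\end{equation*}
since summing the $N^2$ cells then gives \eqref{eq:Lambda-bound}.

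\textbf{Bounding one cell.} Write $y'=y_i$, $z'=y_{i+1}$, $q'=q_j$, $p'=q_{j+1}$. All four points lie on $\seg yz\cup\seg pq$, so every cross distance is at most $r<\Dk$. Once $N$ is large, the 4-cycle perimeter is at most $2\,\ol{y'}{p'}+O(1/N)$. This is below $2\Dk$ provided $\ol{y'}{p'}<\Dk$, which holds because $r<\Dk$.

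With the perimeter bound in hand, \cref{prop:subembed} supplies a model quadruple $\bar y',\bar z',\bar q',\bar p'$ in $\Mk$. It has the same sides, namely both bases and both legs $\ol{y'}{p'}$ and $\ol{z'}{q'}$, and diagonals no smaller than the originals. Because $\Lambda$ is increasing (\cref{lem:Lambda}), this gives
\begin{equation*}
    \Quad_\Lambda(y',z';q',p')\leq \Quad_\Lambda(\bar y',\bar z';\bar q',\bar p') \eqfs
\end{equation*}
This requires the labeling to match: $\Quad_\Lambda$ adds the diagonal terms and subtracts the legs, while the subembedding preserves the sides and enlarges the diagonals.

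\textbf{Applying the model-plane bound.} Join $\bar y'$ to $\bar z'$ and $\bar q'$ to $\bar p'$ by the short geodesics in $\Mk$ and apply \cref{prop:model-mixed} with some $\rho$. We need every distance between a point of one short model segment and a point of the other to be at most $\rho$. Those distances differ from the leg and diagonal lengths of the model quadruple by at most the base lengths $O(1/N)$.

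\textbf{Main obstacle.} The model diagonals can exceed the original ones, so $\rho$ must control them. The model diagonals are bounded by the triangle inequality in $\Mk$ through a leg and a base, giving at most $r+O(1/N)$. We may therefore take $\rho=\rho_N:=r+2(u+v)/N$, which is below $\Dk$ for large $N$. This gives the per-cell bound $\Kap(\rho_N)\,uv/N^2$. Summing yields $\Quad_\Lambda(y,z;q,p)\le\Kap(\rho_N)\,uv$, and letting $N\to\infty$ together with the continuity of $\Kap$ (\cref{lem:Lambda}) gives \eqref{eq:Lambda-bound}.

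The point I expect to need the most care is that perimeter condition and the labeling for \cref{prop:subembed}. Both must hold uniformly over all cells, and the uniform bound on the original cross distances, coming from the segment diameter $r$ in \eqref{eq:rdiam}, is exactly what provides this.
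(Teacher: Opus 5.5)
Your proposal is correct and follows the paper's proof essentially step for step: uniform subdivision with \cref{lem:additivity}, the four-point subembedding plus monotonicity of $\Lambda$ on each cell, \cref{prop:model-mixed} with $\rho_N\downarrow r$, and continuity of $\Kap$. The differences are cosmetic, and two are worth fixing. The paper bounds $\bar\Xi\leq u\pr+\ol{\bar y'}{\bar p'}+v\pr$ by routing through the leg, which the subembedding preserves, so the enlarged model diagonals you flag as the main obstacle never need to be controlled. The paper also disposes of the case $\ol yz\,\ol pq=0$ separately; you should do the same, since the strict partitions of \cref{lem:additivity} require $u,v>0$.
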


\begin{proof}
    If $u=0$ then $y=z$, so $a=c$ and $e=b$ and both sides of \eqref{eq:Lambda-bound} vanish; likewise if $v=0$.
    So assume $u,v>0$.
    Fix $N\in\mathbb N$ and take the uniform partitions $\sigma_i:=iu/N$ and $\varphi_j:=jv/N$ of \cref{lem:additivity}, so that every cell has $u\pr=u/N$ and $v\pr=v/N$.
    Put $\varepsilon_N:=(u+v)/N$ and $\rho_N:=r+\varepsilon_N$, and let $N$ be large enough that $\rho_N<\Dk$.

    Fix a cell and consider the quadruple $\br{y_i,y_{i+1};q_j,q_{j+1}}$, whose six distances we write $a\pr,e\pr,b\pr,c\pr,u\pr,v\pr$ as in \eqref{eq:six}.
    Each of $a\pr,b\pr,c\pr,e\pr$ is the distance between a point of $\seg yz$ and a point of $\seg pq$, hence at most $r$ by \eqref{eq:rdiam}, so the $4$-cycle perimeter of the quadruple is
    \begin{equation*}
        u\pr+c\pr+v\pr+b\pr\ \leq\ \varepsilon_N+2r\ <\ 2\Dk
    \end{equation*}
    for $N$ large.
    \Cref{prop:subembed} therefore applies and yields $\bar y\pr,\bar z\pr,\bar q\pr,\bar p\pr\in\Mk$ with $\bar u\pr=u\pr$, $\bar v\pr=v\pr$, $\bar b\pr=b\pr$, $\bar c\pr=c\pr$, $\bar a\pr\geq a\pr$ and $\bar e\pr\geq e\pr$.
    As $\Lambda$ is nondecreasing (\cref{lem:Lambda}),
    \begin{equation*}
        \Quad_\Lambda\br{y_i,y_{i+1};q_j,q_{j+1}}\ \leq\ \Quad_\Lambda\br{\bar y\pr,\bar z\pr;\bar q\pr,\bar p\pr}
        \eqfs
    \end{equation*}
    Parametrize $\seg{\bar y\pr}{\bar z\pr}$ by $\bar\gamma$ and $\seg{\bar q\pr}{\bar p\pr}$ by $\bar\eta$, both at unit speed and in that direction.
    Going from $\bar\gamma(\sigma)$ back to $\bar y\pr$, across the side $\ol{\bar y\pr}{\bar p\pr}=b\pr$ and forward to $\bar\eta(\varphi)$, the triangle inequality gives
    \begin{equation*}
        \bar\Xi(\sigma,\varphi)\ \leq\ \sigma+\ol{\bar y\pr}{\bar p\pr}+\br{v\pr-\varphi}
        \ \leq\ u\pr+b\pr+v\pr\ \leq\ r+\varepsilon_N=\rho_N
        \eqcm
    \end{equation*}
    so \cref{prop:model-mixed} applies with $\rho=\rho_N$ and bounds the right-hand side by $\Kap(\rho_N)u\pr v\pr$.
    Summing over the $N^2$ cells and using \cref{lem:additivity},
    \begin{equation*}
        \Quad_\Lambda(y,z;q,p)\ \leq\ N^2\cdot\frac uN\cdot\frac vN\cdot\Kap(\rho_N)
        = \Kap(\rho_N)\,uv
        \eqfs
    \end{equation*}
    Letting $N\to\infty$ gives $\rho_N\downarrow r$, and $\Kap$ is continuous at $r<\Dk$ by \cref{lem:Lambda}, so the right-hand side tends to $\Kap(r)uv$.
\end{proof}

\subsubsection*{Proof of the inequality}

One elementary observation remains.
It is what allows the two bounds available for the defect---one from the size of the factors, one from \cref{prop:Lambda-bound}---to be traded against each other at the optimal rate.

\begin{lemma}\label{lem:minsplit}
    Let $A>0$ and $E\geq0$. Then $\min\br{A-\Psi,\ \Psi E}\leq AE/(1+E)$ for every $\Psi\in(0,A]$.
\end{lemma}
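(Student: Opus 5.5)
The plan is to compare the two entries of the minimum by a case split at the value of $\Psi$ where they coincide. Both functions of $\Psi$ are monotone on $(0,A]$: the map $\Psi\mapsto A-\Psi$ is nonincreasing, and $\Psi\mapsto\Psi E$ is nondecreasing because $E\geq0$. So the minimum of the two is largest at their crossing point. That point is
\begin{equation*}
    \Psi_* := \frac{A}{1+E}\in(0,A]
    \eqcm
\end{equation*}
and there both entries equal $AE/(1+E)$, since $A-\Psi_* = AE/(1+E) = \Psi_* E$.

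First case, $\Psi\leq\Psi_*$. Here $\Psi E\leq\Psi_* E = AE/(1+E)$, and the minimum is bounded by its second entry.

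Second case, $\Psi\geq\Psi_*$. Here $A-\Psi\leq A-\Psi_* = AE/(1+E)$, and the minimum is bounded by its first entry.

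Together the two cases give the claim for every $\Psi\in(0,A]$. An equivalent one-line route is to take the convex combination
\begin{equation*}
    \frac{E}{1+E}\,(A-\Psi)+\frac{1}{1+E}\,\Psi E = \frac{AE}{1+E}
\end{equation*}
and use that a minimum is at most any weighted average with nonnegative weights summing to $1$. I would present this version, since it needs no case split.

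There is no real obstacle. The only points to check are that $1+E>0$, so that the weights are well defined, and that $E\geq0$, so that both weights are nonnegative.
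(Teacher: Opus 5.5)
Your proof is correct, and the case split at $\Psi_*=A/(1+E)$ is exactly the argument the paper gives. The convex-combination version you would prefer to present is an equally valid one-line alternative that avoids the case split; like the case split, it works for every real $\Psi$, not only $\Psi\in(0,A]$.
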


\begin{proof}
    If $\Psi\leq A/(1+E)$ then $\Psi E\leq AE/(1+E)$; otherwise $A-\Psi<A-A/(1+E)=AE/(1+E)$.
\end{proof}

\begin{theorem}[Ptolemy's inequality in \catk{} spaces]\label{thm:ptolemy}
    Let $y,z,q,p$ satisfy \eqref{eq:rdiam}.
    Then
    \begin{equation}\label{eq:ptolemy}
        \ol yq\,\ol zp\ \leq\ \ol yp\,\ol zq+\Th(r)\,\ol yz\,\ol pq
        \eqfs
    \end{equation}
\end{theorem}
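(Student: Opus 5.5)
The plan is multiplicative. I would compare each distance with its chord via \eqref{eq:chord-factor}, use the exact chordal Ptolemy inequality \eqref{eq:chordP} of \cref{prop:chordal}, and control the resulting defect factor through \cref{prop:Lambda-bound}. The tightness identity of \cref{lem:tightness} then recombines the two error terms into exactly $\Th(r)$.

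\emph{Step 1: pass to chords.} By \eqref{eq:rdiam} all six distances are at most $r<\Dk$, so the chords are untruncated and \eqref{eq:chord-factor} applies to each distance. Write $F_d:=\Th(\tfrac a2)\Th(\tfrac e2)$ and $F_l:=\Th(\tfrac b2)\Th(\tfrac c2)$. By \eqref{eq:Lam}, $\log(F_d/F_l)=\Quad_\Lambda(y,z;q,p)=:Q$. Multiplying \eqref{eq:chordP} by $F_d$ gives
\begin{equation*}
  ae = F_d\,\och yq\,\och zp \leq \frac{F_d}{F_l}\,bc + F_d\,\och yz\,\och pq
  \eqfs
\end{equation*}
The last term is bounded using $a,e\leq r$, the monotonicity of $\Th$ and $\chmap\leq\mathrm{id}$:
\begin{equation*}
  F_d\,\och yz\,\och pq \leq \Th(\tfrac r2)^2\,uv
  \eqfs
\end{equation*}

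\emph{Step 2: control the defect with \cref{lem:minsplit}.} Put $D:=ae-F_d\,\och yz\,\och pq$ and $E:=e^{Q}-1$, so that Step 1 reads $D\leq (1+E)\,bc$. If $D\leq bc$, for instance when $Q\leq0$, then
\begin{equation*}
  ae\leq bc+\Th(\tfrac r2)^2uv\leq bc+\Th(r)uv
\end{equation*}
by \cref{lem:tightness}, and we are done. Otherwise $0<bc<D$ and $Q>0$, so $E>0$. Apply \cref{lem:minsplit} with $A=D$ and $\Psi=bc\in(0,A]$. Both $A-\Psi$ and $\Psi E$ are at least $D-bc$, the second because $D\leq(1+E)bc$. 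Hence
\begin{equation*}
  D-bc\leq \frac{DE}{1+E}=D\,(1-e^{-Q})\leq D\,Q\leq ae\,Q
  \eqfs
\end{equation*}
Now \cref{prop:Lambda-bound} gives $Q\leq\Kap(r)\,uv$, and $ae\leq r^2$. Therefore $D-bc\leq \Kap(r)\,r^2\,uv$.

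\emph{Step 3: combine.} Adding the bounds of Steps 1 and 2 gives
\begin{equation*}
  ae-bc\leq \bigl(\Th(\tfrac r2)^2+\Kap(r)\,r^2\bigr)uv=\Th(r)\,uv
  \eqcm
\end{equation*}
where the equality is the tightness identity \eqref{eq:tightness}. This is \eqref{eq:ptolemy}.

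The main obstacle is already handled by \cref{prop:Lambda-bound}. What remains delicate is the bookkeeping that makes the constants add up exactly. The crude bound $e^Q-1$ would lose sharpness, so one must use $1-e^{-Q}\leq Q$ multiplied by $D\leq ae\leq r^2$ rather than by $bc$. This is precisely what the min-split lemma provides. I would also check that the sign cases $D\leq0$ and $Q\leq0$ are covered by the first branch of Step 2.
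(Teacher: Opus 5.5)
Your proof is correct and follows essentially the paper's own route: chord factorization \eqref{eq:chord-factor}, the exact chordal Ptolemy inequality \eqref{eq:chordP}, the main term bounded by $\Th(\tfrac r2)^2uv$, the defect bounded by $r^2\br{1-e^{-\Kap(r)uv}}\leq\Kap(r)r^2uv$ via \cref{prop:Lambda-bound}, and recombination through \cref{lem:tightness}. The only difference is in the bookkeeping of the defect. You bound it as $D-bc\leq(1-e^{-Q})D\leq(1-e^{-Q})\,ae$ using $ae\leq r^2$; in your application \cref{lem:minsplit} amounts to no more than the rearrangement $bc\geq e^{-Q}D$ of Step~1. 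The paper instead bounds the factor gap $\posp{\Pidia-\Pileg}\leq\Th(\tfrac r2)^2\br{1-e^{-\Kap(r)uv}}$ and multiplies it by $\ch b\ch c\leq\chmap(r)^2$.
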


\begin{proof}
    Use the abbreviations \eqref{eq:six} and write $\ch x:=\chmap(x)$ for the chord of a distance $x$, so that $x=e^{\Lambda(x)}\ch x$ by \eqref{eq:chord-factor} and \eqref{eq:Lam}.
    Put
    \begin{equation*}
        \Pidia := \Th\brOf{\tfrac a2}\Th\brOf{\tfrac e2}
        \eqcm\qquad
        \Pileg := \Th\brOf{\tfrac b2}\Th\brOf{\tfrac c2}
        \eqcm
    \end{equation*}
    so that $ae=\Pidia\,\ch a\ch e$ and $bc=\Pileg\,\ch b\ch c$, and
    \begin{equation}\label{eq:ratio-is-exp}
        \frac{\Pidia}{\Pileg} = \exp\brOf{\Quad_\Lambda(y,z;q,p)}
        \eqfs
    \end{equation}
    All six distances are at most $r$ and $\Th$ is increasing, so
    \begin{equation}\label{eq:PiPi-bounds}
        1\leq\Pileg\leq\Th\brOf{\tfrac r2}^2
        \eqcm\qquad
        1\leq\Pidia\leq\Th\brOf{\tfrac r2}^2
        \eqfs
    \end{equation}
    We abbreviate $\Pi_r:=\Th(r/2)^2$, the common upper bound in \eqref{eq:PiPi-bounds}, and $E:=e^{\Kap(r)uv}-1\geq0$.

    \emph{Step 1: split off the chordal inequality.}
    All six distances are at most $r<\Dk$, so the truncation in \cref{def:chord} is inactive and the chordal distances of the quadruple are $\ch a,\ch e,\ch b,\ch c,\ch u,\ch v$; hence \eqref{eq:chordP} gives $\ch a\ch e\leq\ch b\ch c+\ch u\ch v$.
    Hence
    \begin{equation}\label{eq:ptolemy-split}
        ae-bc = \Pidia\,\ch a\ch e-\Pileg\,\ch b\ch c
        \ \leq\ \Pidia\,\ch u\ch v+\posp{\Pidia-\Pileg}\ch b\ch c
        \eqfs
    \end{equation}

    \emph{Step 2: the main term.}
    By \eqref{eq:PiPi-bounds} and $\chmap\leq\mathrm{id}$,
    \begin{equation}\label{eq:ptolemy-main}
        \Pidia\,\ch u\ch v\ \leq\ \Pi_r\,uv
        \eqfs
    \end{equation}

    \emph{Step 3: two bounds on the defect.}
    First, $\Pidia\leq\Pi_r$ gives $\posp{\Pidia-\Pileg}\leq\Pi_r-\Pileg$, the right-hand side being nonnegative by \eqref{eq:PiPi-bounds}.
    Second, by \eqref{eq:ratio-is-exp} and \cref{prop:Lambda-bound},
    \begin{equation*}
        \posp{\Pidia-\Pileg} = \Pileg\posp{e^{\Quad_\Lambda(y,z;q,p)}-1}
        \ \leq\ \Pileg\br{e^{\Kap(r)uv}-1} = \Pileg E
        \eqfs
    \end{equation*}
    So $\posp{\Pidia-\Pileg}\leq\min\br{\Pi_r-\Pileg,\ \Pileg E}$, and \cref{lem:minsplit} with $A=\Pi_r$ and $\Psi=\Pileg\in(0,\Pi_r]$ turns this into
    \begin{equation}\label{eq:defect-bound}
        \posp{\Pidia-\Pileg}
        \ \leq\ \frac{\Pi_r E}{1+E}
        = \Pi_r\brOf{1-\frac1{1+E}}
        = \Pi_r\br{1-e^{-\Kap(r)uv}}
        \eqfs
    \end{equation}

    \emph{Step 4: the defect term.}
    Since $b,c\leq r$ and $\chmap$ is increasing, $\ch b\ch c\leq\chmap(r)^2$, and $\chmap(r)^2\,\Pi_r=r^2$ by \eqref{eq:chord-factor}.
    So \eqref{eq:defect-bound} gives, using $1-e^{-t}\leq t$ for $t\geq0$,
    \begin{equation}\label{eq:ptolemy-defect}
        \posp{\Pidia-\Pileg}\ch b\ch c
        \ \leq\ r^2\br{1-e^{-\Kap(r)uv}}
        \ \leq\ \Kap(r)\,r^2\,uv
        \eqfs
    \end{equation}

    Combining \eqref{eq:ptolemy-split}, \eqref{eq:ptolemy-main} and \eqref{eq:ptolemy-defect} and then applying the tightness identity \eqref{eq:tightness} at $x=r$,
    \begin{equation*}
        ae-bc\ \leq\ \Pi_r\,uv+\Kap(r)r^2\,uv
        = \brOf{\Th\brOf{\tfrac r2}^2+\Kap(r)r^2}uv
        = \Th(r)\,uv
        \eqcm
    \end{equation*}
    which is \eqref{eq:ptolemy}.
\end{proof}

\subsection{The inputs assembled}\label{app:catk:inputs}

\begin{remark}[$\Th(r)$ is sharp]\label{rem:theta-sharp}
    The constant of \eqref{eq:RC} cannot be improved for any $r\in(0,\Dk)$, even on $\Mk$.
    Let $\mc B:=\bar B(N,r/2)\subset\Mk$ be the closed ball of radius $r/2$ about a point $N$; it is convex, being of radius less than $\tfrac12\Dk$, and of diameter $r$.
    For $0<\phi<\pi$ let $y,z,q,p\in\partial\mc B$ be the points of longitude $0$, $\phi$, $\pi$ and $\pi+\phi$ respectively, $N$ being the pole.
    Two points of colatitude $\hat\varrho:=\sqrt\kappa\,r/2$ whose longitudes differ by $\alpha$ are at distance $d$ with $\cos(\sqrt\kappa\,d)=\cos^2\hat\varrho+\sin^2\hat\varrho\cos\alpha$.
    A longitude difference of $\pi$ gives $\ol yq=\ol zp=r$, and differences $\phi$ and $\pi\pm\phi$ give
    \begin{equation}\label{eq:sharpconf}
        \cos\br{\sqrt\kappa\,\ol yz}=\cos^2\hat\varrho+\sin^2\hat\varrho\cos\phi
        \eqcm\qquad
        \cos\br{\sqrt\kappa\,\ol yp}=\cos^2\hat\varrho-\sin^2\hat\varrho\cos\phi
        \eqcm
    \end{equation}
    with $\ol yp=\ol zq$ and $\ol yz=\ol pq$.
    Put $\varepsilon:=\sin^2\hat\varrho\,(1-\cos\phi)$, so that $\cos\br{\sqrt\kappa\,\ol yz}=1-\varepsilon$ and $\cos\br{\sqrt\kappa\,\ol yp}=\cos(\sqrt\kappa r)+\varepsilon$, and let $\phi\downarrow0$, that is $\varepsilon\downarrow0$.
    Then $\ol yp=r-\varepsilon/\br{\sqrt\kappa\sin(\sqrt\kappa r)}+O(\varepsilon^2)$ and $\olt yz=2\varepsilon/\kappa+O(\varepsilon^2)$, hence
    \begin{equation*}
    \begin{gathered}
        \Quad_{\sqmap}(y,z;q,p)=2\br{r-\ol yp}\br{r+\ol yp}=\frac{4r\varepsilon}{\sqrt\kappa\sin(\sqrt\kappa r)}+O(\varepsilon^2)
        \eqcm\\
        2\,\ol yz\,\ol pq=2\,\olt yz=\frac{4\varepsilon}{\kappa}+O(\varepsilon^2)
        \eqcm
    \end{gathered}
    \end{equation*}
    whose ratio tends to $\sqrt\kappa\,r/\sin(\sqrt\kappa r)=\Th(r)$.
    The quadruple lies in the convex set $\mc B$, so $\diam\br{\seg yz\cup\seg pq}\leq r$, with equality because $\ol yq=r$.
    The same expansions give, with $s$ and $\Am$ as in \eqref{eq:sA},
    \begin{equation}\label{eq:sharplim}
        \frac{\Quad_{\mathrm{id}}(y,z;q,p)}{2\br{\Am-s}}\ \longrightarrow\ \Th(r)
        \eqcm\qquad
        \frac{\ol yq\,\ol zp-\ol yp\,\ol zq}{\ol yz\,\ol pq}\ \longrightarrow\ \Th(r)
        \eqcm
    \end{equation}
    so $\Th(r)$ is a lower bound for any constant admissible in \eqref{eq:PC} as well; and, since \eqref{eq:TC} at $\tran=\sqmap$ reduces to \eqref{eq:RC}, for \eqref{eq:TC} too.
    The first limit shows in addition that \eqref{eq:TC} at $\tran=\mathrm{id}$, the linear bound entering the proof of \cref{thm:intro:positive}, carries the least possible constant.
    This one family therefore settles every claim about the constant at once.
\end{remark}

We can now assemble the two theorems of \cref{ssec:intro:positive}.

\begin{proof}[Proof of \cref{thm:intro:RC,thm:intro:PC}]
    By \eqref{eq:rdiam} and \eqref{eq:Theta-CR} we have $\Th(r)=C_R=R/\sin R$, with $\Th(0)=1$.
    Inequality \eqref{eq:RC} is \cref{prop:firstvar} and \eqref{eq:PC} is \cref{thm:ptolemy}.
    That neither constant can be lowered is \cref{rem:theta-sharp}: the family constructed there has $\diam\br{\seg yz\cup\seg pq}=r$ and realizes $\Th(r)$ in the limit for both ratios.
\end{proof}

\begin{remark}[The range $R<\pi$ is optimal]\label{rem:range}
    Let $\mc Q$ be a circle of circumference $2\Dk$ with its arclength metric, a \catk{} space of diameter $\Dk$, and let $\theta\in\R/2\Dk\mathbb Z$ be the arclength coordinate.
    For $\epsilon,\ell>0$ with $2\epsilon+\ell<\Dk$ take $y=0$, $z=\ell$, $q=\Dk-\epsilon$ and $p=\Dk+\epsilon+\ell$.
    Then $\ol yq=\ol zp=\Dk-\epsilon$, $\ol yp=\ol zq=\Dk-\epsilon-\ell$, $\ol yz=\ell$ and $\ol pq=2\epsilon+\ell$, so
    \begin{equation}\label{eq:range-optimal}
        \frac{\Quad_{\sqmap}(y,z;q,p)}{2\,\ol yz\,\ol pq}=\frac{2(\Dk-\epsilon)-\ell}{2\epsilon+\ell}
        \eqcm
    \end{equation}
    which is unbounded as $\epsilon,\ell\downarrow0$.
    Here $\seg yz\cup\seg pq$ contains a pair of antipodal points, so $R=\pi$: no constant works at the right endpoint of the range, in \cref{thm:intro:RC,thm:intro:PC} or in \cref{thm:intro:positive}.
\end{remark}

\section{Deferred proofs}\label{app:deferred}

\subsection{When both hypotheses are equalities}\label{app:tight}

On the sextuples for which \eqref{eq:r} and \eqref{eq:p} are both equalities, \eqref{eq:t} collapses to a statement about one function of one variable.

\begin{lemma}[Both hypotheses tight]\label{lem:rp-tight}
Let $a,e,b,c,u,v\geq0$ satisfy \eqref{eq:r} and \eqref{eq:p} with equality, and let $s$ and $\Am$ be as in \cref{thm:algebraic}.
Then $\abs{a-e}=\abs{b-c}$ and $a+e=2\Am$; hence, after relabeling if necessary such that $a\leq e$ and $b\leq c$,
\begin{equation}\label{eq:eq-shift}
  a = b+w \eqcm \qquad e = c+w \eqcm \qquad\text{where } w := \Am-s \geq 0
  \eqfs
\end{equation}
Writing $g(x) := \tran(x+w)-\tran(x)$, inequality \eqref{eq:t} reads
\begin{equation}\label{eq:eq-gform}
  g(b) + g(c) \leq 2g\brOf{\tfrac{b+c}{2}}
  \eqcm
\end{equation}
and, unless $b=c$ or $w=0$, it is an equality if and only if $\dtran$ is affine on $\ab{\min(b,c),\max(b,c)+w}$; if $b=c$ or $w=0$ it is an equality for every $\tran\in\setcc$.
\end{lemma}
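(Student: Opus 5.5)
The plan is to extract the two identities by adding and subtracting the two equalities, and then to read \eqref{eq:t} as Jensen's inequality for a concave function $g$.

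\textbf{The two identities.} Adding the equality in \eqref{eq:r} to twice the equality in \eqref{eq:p} gives
\begin{equation*}
  (a+e)^2=(b+c)^2+4uv=4s^2+4uv=4\Am^2 ,
\end{equation*}
so $a+e=2\Am$ because all quantities are nonnegative. Subtracting twice \eqref{eq:p} from \eqref{eq:r} gives
\begin{equation*}
  (a-e)^2=(b-c)^2 ,
\end{equation*}
so $\abs{a-e}=\abs{b-c}$.

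\textbf{Relabeling and the shift form.} Inequality \eqref{eq:t} is symmetric in $a\leftrightarrow e$ and in $b\leftrightarrow c$, and so are the hypotheses. We may therefore assume $a\leq e$ and $b\leq c$. Then $e-a=c-b$ and $e+a=b+c+2w$ with $w:=\Am-s$. Solving this linear system gives $a=b+w$ and $e=c+w$. We have $w\geq0$ since $uv\geq0$. Substituting into \eqref{eq:t}, with $\Am=s+w$ and $s=\tfrac12(b+c)$, gives exactly \eqref{eq:eq-gform}.

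\textbf{Concavity of $g$ and the inequality.} Since $\dtran$ is concave, its increments $x\mapsto\dtran(x+w)-\dtran(x)$ are nonincreasing. Hence $g\pr$ is nonincreasing, so $g$ is concave, and \eqref{eq:eq-gform} follows from Jensen's inequality at the midpoint. If $b=c$ or $w=0$, both sides coincide trivially: in the first case the two arguments agree, in the second $g\equiv0$. This settles the last clause.

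\textbf{Equality case.} Assume $b<c$ and $w>0$. Equality in the midpoint Jensen inequality for a concave $g$ holds exactly when $g$ is affine on $\ab{b,c}$. Concavity makes the two chords from the midpoint lie below $g$ and forces both halves to be linear with the same slope.

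I expect the main obstacle to be translating "$g$ affine on $\ab{b,c}$" into "$\dtran$ affine on $\ab{b,c+w}$".

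For the converse direction, suppose $\dtran$ is affine on $\ab{b,c+w}$. Then $g\pr(x)=\dtran(x+w)-\dtran(x)$ is constant on $\ab{b,c}$, so $g$ is affine there.

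For the forward direction, suppose $g$ is affine on $\ab{b,c}$. Then $\dtran(x+w)-\dtran(x)$ is constant, say equal to $k$, for $x\in\ab{b,c}$. I would work with the right derivative $f:=\ddrtran$ of the concave function $\dtran$; it exists everywhere and is nonincreasing. Differentiating the constant increment from the right gives $f(x+w)=f(x)$ for $x\in[b,c)$. Since $f$ is nonincreasing, $f$ must be constant on $[x,x+w]$ for each such $x$. Taking the union over $x\in[b,c)$ shows $f$ is constant on $[b,c+w)$. Hence $\dtran$ is affine on $\ab{b,c+w}$, using continuity at the endpoint.
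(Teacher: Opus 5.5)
Your proposal is correct and follows essentially the same route as the paper. You combine the two equalities to get $(a-e)^2=(b-c)^2$ and $(a+e)^2=4\Am^2$, rewrite \eqref{eq:t} as midpoint concavity of $g$, and turn equality ($g$ affine on $\ab{b,c}$) into affinity of $\dtran$ on $\ab{b,c+w}$ through the nonincreasing right derivative $\ddrtran$. The only cosmetic difference is that you obtain $\ddrtran(x+w)=\ddrtran(x)$ for every $x\in[b,c)$ by differentiating the constant increment from the right, whereas the paper states it almost everywhere; either suffices.
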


\begin{proof}
Subtracting twice \eqref{eq:p} from \eqref{eq:r}, both equalities by assumption, gives $(a-e)^2=(b-c)^2$, and then
\begin{equation*}
  (a+e)^2 = (a-e)^2+4ae = (b-c)^2+4\br{bc+uv} = (b+c)^2+4uv = 4\Am^2
  \eqfs
\end{equation*}
So $\cb{b,c}=\cb{s-\delta,s+\delta}$ and $\cb{a,e}=\cb{\Am-\delta,\Am+\delta}$ with $\delta:=\tfrac12\abs{b-c}$, which is \eqref{eq:eq-shift}; both sides of \eqref{eq:t} are symmetric in $a\leftrightarrow e$ and in $b\leftrightarrow c$, so the relabeling is free.
With \eqref{eq:eq-shift} the left-hand side of \eqref{eq:t} is $g(b)+g(c)$, and the right-hand side is $2(\tran(s+w)-\tran(s))=2g(s)$ because $\Am=s+w$; this is \eqref{eq:eq-gform}, that is, midpoint concavity of $g$.
Now $g\pr(x)=\dtran(x+w)-\dtran(x)$, and this is nonincreasing in $x$ for every $w\geq0$ precisely when $\dtran$ is concave; so $g$ is concave and \eqref{eq:eq-gform} holds.
If $b=c$ or $w=0$ both sides agree for every $\tran$.
Otherwise equality forces $g$ to be affine on $\ab{b,c}$, that is $\ddrtran(x+w)=\ddrtran(x)$ for almost every $x\in\ab{b,c}$, where $\ddrtran$ is the right derivative of the concave function $\dtran$; as $\ddrtran$ is nonincreasing, that says that $\dtran$ is affine on $\ab{b,c+w}$, and conversely.
\end{proof}

\begin{proof}[Proof of \cref{prop:equality}]
\ref{it:eq:trap}$\Rightarrow$\ref{it:eq:all}: if $b=c$ and $a=e=\Am$ then $s=b=c$ and both sides of \eqref{eq:t} equal $2(\tran(\Am)-\tran(s))$, for every $\tran$.

\ref{it:eq:all}$\Rightarrow$\ref{it:eq:one}: $\tran=x^{3/2}$ lies in $\setcc$, and its derivative $\tfrac32x^{1/2}$ is strictly concave, hence affine on no nondegenerate interval.

\ref{it:eq:one}$\Rightarrow$\ref{it:eq:trap}: let $\tran$ be as in \ref{it:eq:one}.
Both sides of \eqref{eq:t} are unchanged when a constant is added to $\tran$, so we may assume $\tran\in\setcco$.
Let $\eta_0,\eta_1,\nu$ be as in \cref{lem:representation} applied to $f=\dtran$, and let $\Gfun,\Phifun$ be as in \cref{lem:choquet}.
By \eqref{eq:decomposition}, the difference of the two sides of \eqref{eq:t} is $\eta_0\Gfun(0)+\eta_1\Phifun(\infty)+\int\Phifun\dl\nu$, where $\Gfun(0)\leq0$ and $\Phifun(\infty)\leq0$ by \cref{rem:endpoints} and $\Phifun\leq0$ on $\Rpp$ by \cref{lem:tent,lem:reduced}.
All three terms are nonpositive, so equality forces $\int\Phifun\dl\nu=0$, that is, $\Phifun=0$ $\nu$-almost everywhere.
Now $\nu((\sigma,\theta))=f\pr(\sigma)-f\pr(\theta-)$ for $0<\sigma<\theta$, and this vanishes only if the nonincreasing function $f\pr$ is constant on $[\sigma,\theta)$, which would make $\dtran$ affine there; so $\nu$ charges every nondegenerate interval.
As $\Phifun$ is continuous and nonpositive, $\Phifun(\theta_0)<0$ at some $\theta_0$ would give $\Phifun<0$ on a nondegenerate interval of positive $\nu$-measure, so $\Phifun\equiv0$ on $\Rpp$, and then $\Gfun\equiv0$ on $\Rp$, because $\Gfun$ is continuous and $\Phifun(\theta)=\int_0^\theta\Gfun$.

With $m,h,\delta$ as in \cref{lem:tent}, $\Gfun(0)=2m-2\Am=0$ gives $m=\Am$, and then $\Phifun(\infty)=h^2-\delta^2-(\Am^2-m^2)=0$ gives $h=\delta$.
By \eqref{eq:tent-G}, the identity $\Gfun\equiv0$ now reads $\tent mh\equiv\tent sh$.
If $h>0$, two tents of the same height $h$ agree only if their centers agree, so $s=m=\Am$, which contradicts $\Am^2=s^2+uv$ with $uv>0$.
Hence $h=\delta=0$, that is, $a=e=m=\Am$ and $b=c$.
\end{proof}

\subsection{The four-point conditions}\label{app:fourpoint}

\begin{lemma}[Symmetric forms]\label{lem:fourpoint-symmetric}
The three ways of splitting $\cb{y,z,q,p}$ into two pairs single out the three \emph{opposite pairs} $\cb{a,e}$, $\cb{b,c}$, $\cb{u,v}$; put
\begin{equation}\label{eq:Spi}
\begin{aligned}
  \sigma_1 &:= a+e, & \sigma_2 &:= b+c, & \sigma_3 &:= u+v, \\
  S_1 &:= a^2+e^2, & S_2 &:= b^2+c^2, & S_3 &:= u^2+v^2, \\
  \pi_1 &:= ae, & \pi_2 &:= bc, & \pi_3 &:= uv \eqfs
\end{aligned}
\end{equation}
Then, whenever $\cb{i,j,k}=\cb{1,2,3}$,
\begin{align}
  \textnormal{(R)} &\iff \abs{S_i-S_j}\leq2\pi_k \eqcm \label{eq:Rall}\\
  \textnormal{(P)} &\iff \pi_1,\pi_2,\pi_3 \text{ satisfy the triangle
    inequality} \eqcm \label{eq:Pall}\\
  \textnormal{(P}^{*}\textnormal{)} &\iff \abs{\sigma_i^2-\sigma_j^2}\leq4\pi_k
    \eqfs \label{eq:Psall}
\end{align}
\end{lemma}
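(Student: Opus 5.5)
The plan is to enumerate, for each condition, how many genuinely distinct inequalities arise over the $24$ labelings, and then to rewrite each of them in terms of the opposite pairs. A labeling $(y,z,q,p)$ determines which opposite pair plays the diagonals, which the legs and which the bases. In \eqref{eq:r}, \eqref{eq:p} and \eqref{eq:pstar}, the diagonals $a,e$ and the legs $b,c$ enter only through symmetric functions of each pair: $a^2+e^2$, $ae$, $a+e$, and $b^2+c^2$, $bc$, $b+c$ (the latter also through $s=\tfrac12\sigma$ and $\Am^2=s^2+uv$). The bases enter only through $uv$. So each inequality depends only on the assignment of the three opposite pairs to the roles diagonal, leg, base. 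This gives a surjection from the $24$ labelings onto the $6$ ordered role assignments $(i,j,k)$ (diagonal pair $i$, leg pair $j$, base pair $k$). First I will check this surjectivity explicitly. Swapping $z$ and $p$, for example, exchanges diagonals and bases. The symmetries of \cref{lem:symmetry} and their compositions then reach all permutations of the three pairs.

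With this reduction, each equivalence is a direct rewrite. For \eqref{eq:r} with roles $(i,j,k)$ the inequality reads $S_i-S_j\le 2\pi_k$. Ranging over both orders of $(i,j)$ for fixed $k$ gives $\abs{S_i-S_j}\le2\pi_k$, which is \eqref{eq:Rall}. For \eqref{eq:p} the inequality reads $\pi_i\le\pi_j+\pi_k$. Since $\pi_j+\pi_k$ is symmetric in $j,k$, ranging over $i$ gives exactly the three triangle inequalities among $\pi_1,\pi_2,\pi_3$, which is \eqref{eq:Pall}. For \eqref{eq:pstar} the inequality is $\sigma_i\le 2\sqrt{\sigma_j^2/4+\pi_k}$. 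Both sides are nonnegative, so squaring is an equivalence and gives $\sigma_i^2\le\sigma_j^2+4\pi_k$. Symmetrizing in $(i,j)$ yields $\abs{\sigma_i^2-\sigma_j^2}\le4\pi_k$, which is \eqref{eq:Psall}.

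The only step needing care is the bookkeeping that every ordered role assignment is realized by some labeling, so that the stated symmetric forms are implied and not merely implying. I would handle it by exhibiting the three transpositions of roles as explicit relabelings. The two sign symmetries of \cref{lem:symmetry} swap diagonals with legs. The relabeling $(y,z,q,p)\mapsto(y,p,q,z)$ makes $\ol yp,\ol zq$ the bases and $\ol yz,\ol pq$ the legs, so it swaps legs with bases. Transpositions generate all of $S_3$, which completes the argument. Everything else is routine algebra.
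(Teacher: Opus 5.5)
Your proposal is correct and is essentially the paper's proof: relabelings act on the three opposite pairs by a common permutation, all six permutations occur, and each of \eqref{eq:r}, \eqref{eq:p} and \eqref{eq:pstar} (the last squared, via $4\Am^2=\sigma_2^2+4\pi_3$) becomes the stated symmetric form once both orders of $(i,j)$ are collected. One slip to fix: in your first paragraph, swapping $z$ and $p$ exchanges legs with bases, not diagonals with bases (that would be swapping $z$ and $q$), as your final paragraph correctly states.
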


\begin{proof}
A relabeling permutes $(\sigma_1,\sigma_2,\sigma_3)$, $(S_1,S_2,S_3)$ and $(\pi_1,\pi_2,\pi_3)$ by one and the same permutation of $\cb{1,2,3}$, and the $24$ labelings realize all six permutations, four labelings each.
For the labeling in which $\cb{u,v}$ is the pair of bases, \eqref{eq:r} reads $S_1-S_2\leq2\pi_3$, and interchanging $y$ with $z$ turns it into $S_2-S_1\leq2\pi_3$; likewise \eqref{eq:p} reads $\pi_1\leq\pi_2+\pi_3$, and \eqref{eq:pstar} squared reads $\sigma_1^2\leq\sigma_2^2+4\pi_3$, because $4\Am^2=\sigma_2^2+4\pi_3$.
\end{proof}

\begin{proof}[Proof of \cref{prop:fourpoint}]
\ref{it:fp:H}\ A subset of a metric space is a metric space, and \eqref{eq:R} and \eqref{eq:P} apply to every labeling of the four points.
(There is a converse: a four-point metric space embeds in a \cato{} space if and only if it satisfies the weighted quadruple inequalities of \cite{gromov01} and \cite[Theorem~4.9(iv)]{sturm03}, by \cite[Theorem~1.3]{toyoda20}; we do not use it below.)

\ref{it:fp:RP}\ In each labeling, adding \eqref{eq:r} to twice \eqref{eq:p} gives the square of \eqref{eq:pstar}.
Symmetrically: $\sigma_i^2-\sigma_j^2=(S_i-S_j)+2(\pi_i-\pi_j)$, where \eqref{eq:Rall} bounds the first bracket by $2\pi_k$ in absolute value and \eqref{eq:Pall} bounds the second by $2\pi_k$.

\ref{it:fp:T}\ \cref{thm:algebraic}, applied in each labeling separately.

\ref{it:fp:complete}\ Call a condition \emph{derived} from a set $\mc X$ if \ref{it:fp:H}--\ref{it:fp:T} produce it from $\mc X$.
Reading those three items off, $\textnormal{(H)}$ is derived only from $\textnormal{(H)}$; $\textnormal{(M)}$ and $\textnormal{(P)}$ only from themselves and $\textnormal{(H)}$; $\textnormal{(R)}$ from itself, $\textnormal{(T)}$ and $\textnormal{(H)}$; $\textnormal{(P}^{*}\textnormal{)}$ from itself, $\textnormal{(T)}$, $\textnormal{(H)}$ and $\textnormal{(R)}\wedge\textnormal{(P)}$; and $\textnormal{(T)}$ exactly when both $\textnormal{(R)}$ and $\textnormal{(P}^{*}\textnormal{)}$ are derived.
A row that satisfies $\mc X$ and fails $\textnormal{(Y)}$ also settles every subset of $\mc X$, so only the $\mc X$ maximal with $\textnormal{(Y)}$ not derived have to be treated; since $\textnormal{(H)}$ derives everything and $\textnormal{(T)}$ may be traded for $\textnormal{(R)}\wedge\textnormal{(P}^{*}\textnormal{)}$, these maximal sets lie in $\cb{\textnormal{(M)},\textnormal{(R)},\textnormal{(P)},\textnormal{(P}^{*}\textnormal{)}}$.
There are eight of them, and \cref{tab:fourpoint} supplies a row for each:
\begin{center}
\small
\begin{tabular}{@{}ll@{\qquad}l@{}}
\toprule
$\textnormal{(Y)}$ & maximal $\mc X$ not deriving $\textnormal{(Y)}$ & row \\
\midrule
$\textnormal{(H)}$ & $\textnormal{(M)},\textnormal{(R)},\textnormal{(P)},\textnormal{(P}^{*}\textnormal{)}$ & \ref{it:w:notH} \\
$\textnormal{(M)}$ & $\textnormal{(R)},\textnormal{(P)},\textnormal{(P}^{*}\textnormal{)}$ & \ref{it:w:notM} \\
$\textnormal{(P)}$ & $\textnormal{(M)},\textnormal{(R)},\textnormal{(P}^{*}\textnormal{)}$ & \ref{it:w:notP} \\
$\textnormal{(R)}$ & $\textnormal{(M)},\textnormal{(P)},\textnormal{(P}^{*}\textnormal{)}$ & \ref{it:w:notR} \\
$\textnormal{(P}^{*}\textnormal{)}$ & $\textnormal{(M)},\textnormal{(R)}$ \quad and \quad $\textnormal{(M)},\textnormal{(P)}$ & \ref{it:w:notPs}, \ref{it:w:notRPs} \\
$\textnormal{(T)}$ & $\textnormal{(M)},\textnormal{(R)}$ \quad and \quad $\textnormal{(M)},\textnormal{(P)},\textnormal{(P}^{*}\textnormal{)}$ & \ref{it:w:notPs}, \ref{it:w:notR} \\
\bottomrule
\end{tabular}
\end{center}

It remains to verify the rows.
In each of them $\textnormal{(R)}$, $\textnormal{(P)}$ and $\textnormal{(P}^{*}\textnormal{)}$ are read off \cref{lem:fourpoint-symmetric}, $\textnormal{(T)}$ is $\textnormal{(R)}\wedge\textnormal{(P}^{*}\textnormal{)}$ by \ref{it:fp:T}, and $\textnormal{(M)}$ is the twelve triangle inequalities.
\begin{enumerate}[label=\textup{(\alph*)}]
  \item\label{it:w:simplex} $(1,1,1,1,1,1)$ is the regular simplex of $\R^3$,
    so $\textnormal{(H)}$ holds and with it everything else.
  \item\label{it:w:notH} $(1,2,1,2,2,2)$: put $q,y,p$ on a line at $-1,0,1$ and
    let $z$ be at distance $2$ from each of them; all twelve triangle
    inequalities hold, one of them with equality.
    Here $\sigma=(3,3,4)$, $S=(5,5,8)$ and $\pi=(2,2,4)$, so \eqref{eq:Rall}
    reads $0\leq8$, $3\leq4$, $3\leq4$ and \eqref{eq:Pall} reads $4\leq2+2$:
    both hold, the latter with equality, and
    $\textnormal{(P}^{*}\textnormal{)}$ follows by \ref{it:fp:RP}.
    But if the four points lay in a \cato{} space then $y$ would be a midpoint
    of $q$ and $p$, and the \cato{} inequality would give
    \begin{equation*}
      \ol zy^{2}\leq\tfrac12\ol zq^{2}+\tfrac12\ol zp^{2}
        -\tfrac14\ol pq^{2}=2+2-1=3
      \eqcm
    \end{equation*}
    whereas $\ol zy^{2}=4$.
  \item\label{it:w:notM} $(1,2,1,3,1,3)$: $\sigma=(3,4,4)$, $S=(5,10,10)$ and
    $\pi=(2,3,3)$, so \eqref{eq:Rall} reads $5\leq6$, $5\leq6$, $0\leq4$ and
    $(2,3,3)$ is a triangle; $\textnormal{(P}^{*}\textnormal{)}$ follows by
    \ref{it:fp:RP}.
    But $\ol zq=3>2=\ol yz+\ol yq$.
  \item\label{it:w:notP} $(1,4,2,5,5,3)$: $\sigma=(5,7,8)$, $S=(17,29,34)$ and
    $\pi=(4,10,15)$.
    Now \eqref{eq:Rall} reads $12\leq30$, $17\leq20$, $5\leq8$ and
    \eqref{eq:Psall} reads $24\leq60$, $39\leq40$, $15\leq16$, so
    $\textnormal{(T)}$ holds; but $15>4+10$, so $\textnormal{(P)}$ fails.
    All twelve triangle inequalities hold, one of them with equality.
  \item\label{it:w:notR} $(1,2,2,1,1,3)$: $\sigma=(3,3,4)$, $S=(5,5,10)$ and
    $\pi=(2,2,3)$.
    Here $(2,2,3)$ is a triangle and \eqref{eq:Psall} reads $0\leq12$,
    $7\leq8$, $7\leq8$; but \eqref{eq:Rall} asks for $5\leq4$, so
    $\textnormal{(R)}$ fails and with it $\textnormal{(T)}$.
    All twelve triangle inequalities hold, two of them with equality.
  \item\label{it:w:notPs} $(3,4,2,4,3,1)$: $\sigma=(7,6,4)$, $S=(25,20,10)$ and
    $\pi=(12,8,3)$.
    Now \eqref{eq:Rall} reads $5\leq6$, $15\leq16$, $10\leq24$, so
    $\textnormal{(R)}$ holds; but $12>8+3$, so $\textnormal{(P)}$ fails, and
    \eqref{eq:Psall} asks for $\abs{49-36}=13\leq12$, so
    $\textnormal{(P}^{*}\textnormal{)}$ and with it $\textnormal{(T)}$ fails.
    All twelve triangle inequalities hold, one of them with equality.
  \item\label{it:w:notRPs} $(1,1,1,1,1,2)$ is the member of the family of
    \ref{it:w:notH} with $z$ at distance $1$ from $q,y,p$; here
    $\sigma=(2,2,3)$, $S=(2,2,5)$ and $\pi=(1,1,2)$.
    Then $(1,1,2)$ is a triangle, so $\textnormal{(P)}$ holds; but
    \eqref{eq:Rall} asks for $3\leq2$ and \eqref{eq:Psall} for $5\leq4$, so
    neither $\textnormal{(R)}$ nor $\textnormal{(P}^{*}\textnormal{)}$ holds.
\end{enumerate}
In every row but \ref{it:w:simplex} and \ref{it:w:notH} one of $\textnormal{(M)}$, $\textnormal{(R)}$, $\textnormal{(P)}$ already fails, so $\textnormal{(H)}$ fails by \ref{it:fp:H}.
\end{proof}

\subsection{\texorpdfstring{The witnesses of \cref{prop:necessity}}{The witnesses}}\label{app:witnesses}

\begin{proof}[Proof of \cref{prop:necessity}]
Both sides of \eqref{eq:T} are unchanged when a constant is added to $\tran$, so we may take $\tran\in\setcco$; and by \cref{prop:choquet-structure} such a $\tran$ is $\lambda\mathrm{id}+\mu x^2+\int\psi_\theta\,\nu(\dl\theta)$ with $\lambda,\mu\geq0$ and $\nu$ a nonnegative measure on $\Rpp$, where $\psi_\theta(x)=x^2-\posp{x-\theta}^2$; this is \eqref{eq:choquet-tau} renormalized, with $\lambda=\eta_0$, $\mu=\tfrac12\eta_1$ and $\nu$ halved.
Both sides are linear in $\tran$, so it is enough to evaluate their difference, in the reading \eqref{eq:six},
\begin{equation}\label{eq:necessity-gap}
  \Delta(\tran) := \tran(a)+\tran(e)-\tran(b)-\tran(c)-2\br{\tran(\Am)-\tran(s)}
\end{equation}
on these generators; \eqref{eq:T} fails exactly when $\Delta(\tran)>0$.

\ref{it:nec:R}\ Put $y,z,q$ on a line at $0,1,4$ and let $p$ be a fourth point at distance $3$ from each of them, so that
\begin{equation}\label{eq:witness-P}
  a=\ol yq=4,\ \ e=\ol zp=3,\ \  b=\ol yp=3,\ \  c=\ol zq=3,\ \ 
  u=\ol yz=1,\ \  v=\ol pq=3
  \eqfs
\end{equation}
All twelve triangle inequalities hold, one of them with equality.
The three products of opposite pairs are $\ol yq\,\ol zp=12$, $\ol yp\,\ol zq=9$ and $\ol yz\,\ol pq=3$; they satisfy the triangle inequality, which by \cref{lem:fourpoint-symmetric} is exactly \eqref{eq:P} for every labeling---with equality for the one written---while \eqref{eq:R} fails, $a^2+e^2-b^2-c^2=7>6=2uv$.
Here $s=3$ and $\Am=2\sqrt3$, so $\Delta(\tran)=\tran(4)+\tran(3)-2\tran(2\sqrt3)$.
On the generators,
\begin{equation}\label{eq:witness-P-gaps}
    \begin{gathered}
  \Delta(\mathrm{id}) = 7-4\sqrt3,\qquad \Delta(x^2)=1,
  \\[6pt]
  \Delta(\psi_\theta) =
  \begin{cases}
    \br{14-8\sqrt3}\theta, & 0\leq\theta\leq3,\\
    \theta^2-\br{8\sqrt3-8}\theta+9, & 3\leq\theta\leq2\sqrt3,\\
    -(\theta-3)(\theta-5), & 2\sqrt3\leq\theta\leq4,\\
    1, & \theta\geq4,
  \end{cases}
  \end{gathered}
\end{equation}
and all of these are positive for $\theta>0$: the first because $7^2=49>48=(4\sqrt3)^2$ and $14^2=196>192=(8\sqrt3)^2$; the second because its discriminant $(8\sqrt3-8)^2-36=220-128\sqrt3$ is negative, $220^2=48400<49152=128^2\cdot3$; the third because $3<2\sqrt3<4<5$.
So $\Delta(\tran)>0$ unless $\lambda=\mu=0$ and $\nu=0$, that is, unless $\tran$ is constant.

\ref{it:nec:P}\ Take
\begin{equation}\label{eq:witness-R}
  a=e=13,\qquad b=14,\qquad c=10,\qquad u=7,\qquad v=3
  \eqfs
\end{equation}
All twelve triangle inequalities hold, one of them with equality, so these are the six distances of a four-point metric space.
The three sums of squares over opposite pairs are $338$, $296$ and $58$, and the three products are $169$, $140$ and $21$, so the three differences $42$, $280$, $238$ are at most the three doubled products $42$, $280$, $338$: by \cref{lem:fourpoint-symmetric}, \eqref{eq:R} holds for every labeling, with equality in the first two cases, the first of which is the labeling written, while \eqref{eq:P} fails, $ae=169>161=bc+uv$.
Here $s=12$, $\Am=\sqrt{165}$ and $\diam=14$, so $\Delta(\tran)=2\tran(13)+2\tran(12)-\tran(14)-\tran(10)-2\tran(\sqrt{165})$.
On the generators, $\Delta(\mathrm{id})=26-2\sqrt{165}$ and $\Delta(x^2)=0$, and
\begin{equation}\label{eq:witness-R-gaps}
  \Delta(\psi_\theta) =
  \begin{cases}
    \br{52-4\sqrt{165}}\theta, & 0\leq\theta\leq10,\\
    -\theta^2+\br{72-4\sqrt{165}}\theta-100, & 10\leq\theta\leq12,\\
    \theta^2+\br{24-4\sqrt{165}}\theta+188, & 12\leq\theta\leq\sqrt{165},\\
    -\theta^2+24\theta-142, & \sqrt{165}\leq\theta\leq13,\\
    (\theta-14)^2, & 13\leq\theta\leq14,\\
    0, & \theta\geq14.
  \end{cases}
\end{equation}
Every case is positive on $(0,14)$.
The first because $52^2=2704>2640=(4\sqrt{165})^2$, which also gives $\Delta(\mathrm{id})>0$ since $26^2=676>660$.
The second is concave and positive at both endpoints, its values there being $520-40\sqrt{165}$ and $620-48\sqrt{165}$ with $520^2=270400>264000$ and $620^2=384400>380160$.
The third has negative discriminant, $(24-4\sqrt{165})^2-752=2464-192\sqrt{165}$ with $2464^2=6071296<6082560=192^2\cdot165$.
The fourth has roots $12\pm\sqrt2$, and $12-\sqrt2<\sqrt{165}\leq\theta\leq13<12+\sqrt2$.
So $\Delta(\tran)>0$ unless $\lambda=0$ and $\nu((0,14))=0$, that is, unless $\tran(x)=(\mu+\nu(\Rpp))x^2$ on $\ab{0,14}=\ab{0,\diam}$, and then $\Delta(\tran)=0$.
\end{proof}

\subsection{The converse result}\label{app:proof:converse}

\begin{proof}[Proof of \cref{thm:converse}]
    \emph{The increments are concave.} Let $b,c,v\geq0$ and take the four collinear points $y,p,q,z$ on a line of the Euclidean plane, at coordinates $0$, $b$, $b+v$, $b+v+c$ along that line, so that $a=b+v$, $e=c+v$, $u=b+c+v$ and the sixth distance is $v$.
    Both \eqref{eq:r} and \eqref{eq:p} are equalities here, and $\Am-s=v$, so by \cref{lem:rp-tight} the hypothesis reduces on them to $g_v(b)+g_v(c)\leq2g_v(\tfrac{b+c}{2})$, where
    \begin{equation}\label{eq:converse-g}
        g_v := \tran(\cdot+v)-\tran(\cdot)
        \eqcm
    \end{equation}
    and $b,c,v\geq0$ are arbitrary.
    Thus $g_v$ is midpoint concave on $\Rp$, and it is measurable, being a difference of measurable functions.
    A measurable midpoint-concave function on an open interval is concave, by Sierpi\'nski's theorem \cite{sierpinski20}; so $g_v$ is concave on $\Rpp$, hence continuous there.
    Midpoint concavity at the endpoint $0$ then extends concavity to $\Rp$: iterating it gives $g_v((1-\lambda)x)\geq\lambda g_v(0)+(1-\lambda)g_v(x)$ for dyadic $\lambda\in(0,1)$ and $x>0$, and continuity on $\Rpp$ gives it for all $\lambda\in(0,1)$.
    So $g_v$ is concave on $\Rp$ for every $v\geq0$.
    
    \emph{$\tran$ nondecreasing.} Let $\rho,\eta\geq0$ and take the rectangle
    \begin{equation}\label{eq:rectangle}
        y=(0,0),\qquad z=(\rho,0),\qquad q=(0,\eta),\qquad p=(\rho,\eta)
    \end{equation}
    in the Euclidean plane.
    Its six distances are $\ol yq=\ol zp=\eta$, $\ol yp=\ol zq=\sqrt{\rho^2+\eta^2}$ and $\ol yz=\ol pq=\rho$, so that $s=\sqrt{\rho^2+\eta^2}$ and $\Am=\sqrt{s^2+\rho^2}=\sqrt{\eta^2+2\rho^2}$, and \eqref{eq:T} reads
    \begin{equation}\label{eq:rectangle-conclusion}
        2\tran(\eta)-2\tran\brOf{\sqrt{\rho^2+\eta^2}}
        \leq
        2\tran\brOf{\sqrt{\eta^2+2\rho^2}}-2\tran\brOf{\sqrt{\rho^2+\eta^2}}
        \eqcm
    \end{equation}
    that is, $\tran(\eta)\leq\tran(\sqrt{\eta^2+2\rho^2})$.
    Given $0\leq\eta\leq x$, choosing $\rho=\sqrt{(x^2-\eta^2)/2}$ gives $\tran(\eta)\leq\tran(x)$.
    
    \emph{$\tran$ convex.} Since $\tran$ is nondecreasing, $g_v\geq0$ on $\Rp$ for every $v\geq0$.
    A concave function that is nonnegative on all of $\Rp$ is nondecreasing: if $\phi(x_1)>\phi(x_2)$ for some $x_1<x_2$, concavity would give $\phi(x)\leq\phi(x_2)-\tfrac{\phi(x_1)-\phi(x_2)}{x_2-x_1}(x-x_2)\to-\infty$ as $x\to\infty$.
    Applied to $\phi=g_v$ this says that $\tran(x+v)-\tran(x)$ is nondecreasing in $x$ for every $v\geq0$, that is, $\tran$ is Wright convex.
    In particular, comparing the increments at $x$ and at $x+v$ gives $2\tran(x+v)\leq\tran(x)+\tran(x+2v)$, so $\tran$ is midpoint convex on $\Rp$; being measurable, it is convex on $\Rpp$ by Sierpi\'nski's theorem again, and on $\Rp$ by the same extension argument as above.
    
    \emph{$\tran$ differentiable, $\dtran$ concave.} Being convex, $\tran$ has a right derivative $\dtran_+$ everywhere on $\Rpp$, nondecreasing, and $(g_v)\pr_+=\dtran_+(\cdot+v)-\dtran_+(\cdot)$; concavity of $g_v$ says this is nonincreasing, for every $v>0$.
    Given $x_1<x_2$, take $\mu=\tfrac12(x_1+x_2)$ and $v=\tfrac12(x_2-x_1)$: the value at $x_1$ is $\dtran_+(\mu)-\dtran_+(x_1)$ and the value at $\mu$ is $\dtran_+(x_2)-\dtran_+(\mu)$, so $2\dtran_+(\mu)\geq\dtran_+(x_1)+\dtran_+(x_2)$.
    Thus $\dtran_+$ is midpoint concave on $\Rpp$ and measurable, being monotone; hence it is concave by Sierpi\'nski's theorem, and therefore continuous on $\Rpp$.
    A convex function with continuous right derivative is differentiable, so $\tran$ is differentiable on $\Rpp$ with $\dtran=\dtran_+$ concave.
\end{proof}

\subsection{The reduced problem}\label{app:reduced}

Throughout this subsection hypothesis \eqref{eq:reduced-hyp} is in force and $P,N,D$ are as in \eqref{eq:closed-forms} and \eqref{eq:closed-forms-D}, so that $\Phifun=P-N-D$ and \cref{lem:reduced} is the statement $P\leq N+D$ on $\Rp$.
We write $\xi:=\Am-m\geq0$ and $\Kk:=\Am^2-m^2=\xi(2m+\xi)$, so that the formula for $D$ in \eqref{eq:closed-forms-D} reads $D(\theta)=2\xi\theta$ for $\theta\leq m$ and $D(\theta)=\Kk-\posp{\Am-\theta}^2$ for $\theta\geq m$.
We split on the sign of $h^2-\Kk$; the two cases are \cref{lem:case1,lem:case2}, which together are \cref{lem:reduced}.

\begin{lemma}[The case $h^2\leq \Kk$; \leanchecked]\label{lem:case1}
If $h^2\leq \Kk$ then $P(\theta)\leq D(\theta)$ for all $\theta\geq0$.
In particular \cref{lem:reduced} holds, since $N\geq0$.
\end{lemma}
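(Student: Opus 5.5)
Since $N\geq0$, the final clause is immediate once $P\leq D$ is shown. I will compare the explicit piecewise formulas \eqref{eq:closed-forms} and \eqref{eq:closed-forms-D} on the two regimes $\theta\leq m$ and $\theta\geq m$, writing $\xi=\Am-m$ and $\Kk=\xi(2m+\xi)$. The hypothesis $h^2\leq\Kk$ will be used in the form $h\leq\sqrt{\Kk}$. If $h=0$ then $P\equiv0\leq D$, so I assume $h>0$.

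\emph{Regime $\theta\leq m$.} Here $D(\theta)=2\xi\theta$ is linear and $P(\theta)=\tfrac12\posp{\theta-m+h}^2$ is convex in $\theta$. It vanishes for $\theta\leq m-h$, where there is nothing to prove. On $[m-h,m]$ the difference $D-P$ is concave, so it suffices to check the endpoints. At $\theta=m-h$ the difference is $2\xi(m-h)\geq0$. At $\theta=m$ I need $\tfrac12h^2\leq2\xi m$.

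The hypothesis gives only $h^2\leq2\xi m+\xi^2$, so this endpoint is the delicate point. When $\xi^2\leq2\xi m$ we have $h^2\leq4\xi m$ and the check works. When $\xi>2m$ it can fail, but then $h\leq m<\xi$ also gives $h^2\leq\xi^2$, which does not obviously help. Before going on, I would test the endpoint bound directly with small values: take $m=h$ small and $\xi$ large. Then $P(m)=h^2/2$ and $D(m)=2\xi m=2\xi h\geq h^2/2$ because $\xi\geq h$. So the bound survives, using both $h\leq m$ and $h^2\leq\xi(2m+\xi)$. A cleaner way to organise this is to bound $P(m)=\tfrac12h^2\leq\tfrac12 hm$ and then show $hm\leq4\xi m$, that is $h\leq4\xi$. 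Indeed, if $\xi<h/4$ then $\Kk=\xi(2m+\xi)<\tfrac h4(2m+\tfrac h4)\leq\tfrac h4\cdot\tfrac94m$, which is below $h^2$ whenever $h>\tfrac{9}{16}m$. This needs an additional case split on $h$ against $m$, so the cleanest route is to prove $\tfrac12h^2\leq2\xi m$ directly. I would handle it by cases on whether $\xi\geq h/2$. If $\xi\geq h/2$, then $2\xi m\geq hm\geq h^2$. If $\xi<h/2$, then $h^2\leq\Kk=2\xi m+\xi^2<2\xi m+h^2/4$, so $\tfrac34h^2<2\xi m$. In both cases $\tfrac12h^2\leq2\xi m$.

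\emph{Regime $\theta\geq m$.} Here $P(\theta)=h^2-\tfrac12\posp{m+h-\theta}^2$ and $D(\theta)=\Kk-\posp{\Am-\theta}^2$. For $\theta\geq\max(m+h,\Am)$ the inequality is exactly $h^2\leq\Kk$. For the intermediate pieces I would split on the order of $m+h$ and $\Am$. On each piece $D-P$ is a quadratic in $\theta$ whose leading coefficient is one of $-1+\tfrac12$, $-1$, or $+\tfrac12$. On concave pieces I check the endpoints, using continuity at $\theta=m$ from the first regime and the value at infinity. The only convex piece is $[\Am,m+h]$ when $\Am<m+h$; there $D-P=\Kk-h^2+\tfrac12(m+h-\theta)^2\geq0$ directly.

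The main obstacle is the concave piece $[m,\min(m+h,\Am)]$. There $D-P=\Kk-h^2-(\Am-\theta)^2+\tfrac12(m+h-\theta)^2$, which has leading coefficient $-\tfrac12$, so the endpoint check must be done carefully. At $\theta=m$ it reduces to the first regime's value. At the other endpoint, either it equals $\Kk-h^2\geq0$ (when that endpoint is $\Am$), or it equals $\Kk-h^2-(\Am-m-h)^2$ with $m+h\leq\Am$. In the latter case I expand and use $h\leq\xi$ and $h^2\leq\Kk$.
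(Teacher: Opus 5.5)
Your proof is correct and follows essentially the paper's route. You split at $\theta=m$ and reduce the range $\theta\le m$ to the endpoint bound $\tfrac12h^2\le2m\xi$, proved by splitting on $\xi$ versus $h/2$ where the paper splits on $\xi$ versus $2m$ to get $h^2\le4m\xi$; you then treat $\theta\ge m$ piecewise according to the order of $m+h$ and $\Am$. One small slip: when the right endpoint of the concave piece is $\Am<m+h$, the value of $D-P$ there is $\Kk-h^2+\tfrac12(m+h-\Am)^2$, not $\Kk-h^2$, though it is still nonnegative, so nothing breaks.
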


\begin{proof}
\emph{(a) $h^2\leq 4m\xi$.} If $m=0$ then $h=0$ and both sides vanish.
Let $m>0$.
If $\xi\leq2m$ then, since $\Kk=\xi(2m+\xi)$, $h^2\leq\Kk=2m\xi+\xi^2\leq2m\xi+2m\xi=4m\xi$.
If $\xi>2m$ then $4m\xi>8m^2\geq m^2\geq h^2$, using $h\leq m$.

\emph{(b) The range $\theta\leq m$.} By \eqref{eq:closed-forms} and \eqref{eq:closed-forms-D}, $P(\theta)-D(\theta)=\tfrac12\posp{\theta-m+h}^2-2\xi\theta$, which is convex on $[0,m]$, vanishes at $\theta=0$, and equals $\tfrac12h^2-2m\xi\leq0$ at $\theta=m$ by (a).
A convex function that is nonpositive at both endpoints of an interval is nonpositive on it.

\emph{(c) The range $\theta\geq m$.} By \eqref{eq:closed-forms} and \eqref{eq:closed-forms-D},
\begin{equation*}
  P(\theta)-D(\theta) = \br{h^2-\Kk} - \tfrac12\posp{m+h-\theta}^2 + \posp{\Am-\theta}^2
  \eqcm
\end{equation*}
so, $h^2\leq\Kk$ being assumed, it suffices to prove
\begin{equation}\label{eq:phi-bound}
  \Upsilon(\theta) := \posp{\Am-\theta}^2-\tfrac12\posp{m+h-\theta}^2 \leq \Kk-h^2
  \qquad(\theta\geq m)
  \eqfs
\end{equation}
At $\theta=m$ we have $\Upsilon(m)=\xi^2-\tfrac12h^2$, and $\Upsilon(m)\leq\Kk-h^2 \iff \tfrac12 h^2\leq\Kk-\xi^2=2m\xi$, which is (a).

If $m+h\leq\Am$: on $[m,m+h]$, $\Upsilon\pr(\theta)=\theta+m+h-2\Am\leq2(m+h)-2\Am\leq0$; on $[m+h,\Am]$, $\Upsilon\pr(\theta)=-2(\Am-\theta)\leq0$; and on $[\Am,\infty)$, $\Upsilon\leq0\leq\Kk-h^2$.
Hence $\Upsilon\leq\Upsilon(m)$ throughout and \eqref{eq:phi-bound} follows.

If $m+h>\Am$: on $[m,\Am]$ we have $\Upsilon\prr=2-1=1>0$, so $\Upsilon$ is convex there and bounded by its endpoint values $\Upsilon(m)\leq\Kk-h^2$ and $\Upsilon(\Am)=-\tfrac12(m+h-\Am)^2\leq0\leq\Kk-h^2$; and on $[\Am,\infty)$, $\Upsilon\leq0$.
\end{proof}

\begin{lemma}[The case $h^2>\Kk$; \leanchecked]\label{lem:case2}
If $h^2>\Kk$ then $P(\theta)\leq N(\theta)+D(\theta)$ for all $\theta\geq0$.
\end{lemma}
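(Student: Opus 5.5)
The plan is to study $\Phifun=P-N-D$ directly. It is a continuous, piecewise-quadratic function of $\theta$ with $\Phifun(0)=0$ and
\[
\Phifun(\infty)=h^2-\delta^2-\Kk\leq 0
\]
by \eqref{eq:reduced-hyp}. Its derivative is $\Gfun=\tent mh-\tent s\delta-2\posp{\Am-\theta\vee m}$, by \eqref{eq:tent-G}. The first step extracts what the case hypothesis $h^2>\Kk$ buys. Since $\Kk=\xi(2m+\xi)\geq 2m\xi$ and $h\leq m$, we get $\xi<h^2/(2m)\leq h/2$, hence $\Am<m+\tfrac h2$. So the comparison diagonal sits inside the support $[m-h,m+h]$ of the $P$-tent, and the linear barrier $D$ alone cannot dominate $P$; this is exactly where Lemma~\ref{lem:case1} fails. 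In compensation, $\delta^2\geq h^2-\Kk>0$. Together with $s\leq\Am<m+\tfrac h2$ and $\delta\leq s$, this forces the $N$-tent to carry enough mass, located not too far to the right, to absorb the excess of $P$ over $D$.

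The second step is a breakpoint case analysis. The breakpoints of $P,N,D$ are $m-h$, $m$, $m+h$, $s-\delta$, $s$, $s+\delta$ and $\Am$. Between consecutive breakpoints $\Phifun$ is a quadratic with leading coefficient in $\{0,\pm\tfrac12,\pm1,\dots\}$. On pieces where $\Phifun$ is convex, it suffices to check the two endpoints. On pieces where it is concave, one argues via the sign of $\Gfun$ instead. The argument splits into two ranges.

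For small $\theta$, namely $\theta\leq m$, one has $\Gfun\leq \tent mh-2\xi$. On $[0,m-h]$ this gives $\Gfun\leq0$, so $\Phifun\leq0$ there. On $[m-h,m]$, $\Phifun$ is bounded by its value at $m$ or at the point where the $P$-tent overtakes the combined slope of $N+D$.

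For large $\theta$, namely $\theta\geq\Am$, I would use the tail representation
\[
\Phifun(\theta)=\Phifun(\infty)-\tfrac12\posp{m+h-\theta}^2+\tfrac12\posp{s+\delta-\theta}^2 .
\]
This reduces matters to showing $\tfrac12\posp{s+\delta-\theta}^2\leq \delta^2-h^2+\Kk+\tfrac12\posp{m+h-\theta}^2$. That inequality should follow from $s\leq\Am$: beyond $\Am$ the $N$-tent has already released most of its mass. This is the analogue of the $\Upsilon$ argument in Lemma~\ref{lem:case1}(c).

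The main obstacle is the middle window $[m,\Am]$ (and possibly up to $m+h$). There $P$ has slope close to $h$ while $D$ contributes only $2(\Am-\theta)\leq 2\xi<h$, so positivity of $N$ is genuinely needed. Whether the $N$-tent has started rising there depends on the relative order of $s-\delta$ and $m$, so the cases $s+\delta\leq m$, $s-\delta\leq m<s+\delta$ and $s-\delta>m$ will have to be treated separately. In each case I would first try to show that $\Phifun$ on this window is either convex, so the endpoint values at $m$ and $\Am$ suffice, or that it is maximized at a point where $\Gfun=0$. At that point one substitutes $\delta^2\geq h^2-\Kk$ and $\Am\leq m+\tfrac h2$ and reduces to a polynomial inequality in $(h,m,\delta,s,\xi)$, verified by completing squares. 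This window is also where the interlocking nature of the case analysis is expected to bite, so I would organise the cases to be checked mechanically, as the Lean verification suggests.
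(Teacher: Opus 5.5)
There is a genuine gap. Apart from $[0,m-h]$, where $\Gfun\leq-2\xi\leq0$, your proposal does not actually settle any range.

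\emph{On $[m-h,m]$.} You never show $\Phifun(m)\leq0$. The description is also off: a point where the $P$-tent overtakes the slope of $N+D$ is a sign change of $\Gfun$ from negative to positive, so it is a minimum of $\Phifun$, not a maximum.

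\emph{On $[m,\Am]$.} This window is left as an open obstacle, with a proposed three-way split and an unspecified polynomial inequality.

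\emph{For $\theta\geq\Am$.} The tail claim does not ``follow from $s\leq\Am$''. The configuration $s=\Am$, $\delta=\dz:=\sqrt{h^2-\Kk}$ is admissible, and there $\Phifun(\Am)=\tfrac12\dz^2-\tfrac12(h-\xi)^2$. So at $\theta=\Am$ your inequality is \emph{equivalent} to $h-\xi\geq\dz$. This is the real content of the lemma; it is the key inequality $h-\dz\geq\xi$ of the paper's proof. It comes from $(h-\xi)^2-\dz^2=2\xi(m-h+\xi)\geq0$ together with $h-\xi>0$, that is, from $h\leq m$ and $\Kk=\xi(2m+\xi)$. It is given neither by $s\leq\Am$ nor by your bound $\xi<h/2$ alone.

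With these facts in hand, your direct route closes quickly, and no case split on the position of $s\pm\delta$ is needed.

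\emph{Convexity on $[m-h,\Am]$.} Your bound gives $\Am<m+h$. Hence $\Phifun''=1-\tent{s}{\delta}'\geq0$ almost everywhere on $(m-h,m)$, where $P''=1$ and $D''=0$. The same formula holds on $(m,\Am)$, where $P''=-1$ and $D''=-2$. So $\Phifun$ is convex on all of $[m-h,\Am]$ and bounded there by $\max(\Phifun(m-h),\Phifun(\Am))$; $\Phifun(m)$ is not even needed.

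\emph{The tail.} Write $\theta=\Am+x$ with $x\geq0$. Your tail formula and $s\leq\Am$ reduce the claim to $\tfrac12\posp{\delta-x}^2-\tfrac12\posp{h-\xi-x}^2\leq\delta^2-\dz^2$. This follows from $\dz\leq\delta$ and $h-\xi\geq\dz$ by treating $x\leq\dz$ and $x>\dz$ separately.

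\emph{The paper's route.} The paper instead uses that $N$ is nondecreasing in $\delta$ and nonincreasing in $s$ to reduce to the extremal case $\delta=\dz$, $s=\Am$, where $\Phifun(\infty)=0$. It then proves convexity on $[0,m]$ with the endpoint check $\Phifun(m)\leq0$, and handles $\theta\geq m$ through the tail integral $\int_\theta^\infty\Gfun\geq0$. The same key inequality does the work in both.
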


\begin{proof}
Put $\dz:=\sqrt{h^2-\Kk}$, so that $0\leq\dz\leq h$ and, by the hypothesis $h^2-\delta^2\leq\Kk$, $\dz^2 = h^2-\Kk\leq\delta^2$, that is $\dz\leq\delta$.

\emph{(d) Two free consequences.} The case is vacuous when $m=0$, for then $h=0$ and $h^2=0\leq\Kk$.
For $m>0$, $\Kk<h^2\leq m^2$ gives $\Am^2<2m^2$, so
\begin{equation}\label{eq:xi-lt-m}
  \Am<\sqrt2m, \qquad\text{in particular}\qquad \xi = \Am-m<m
  \eqfs
\end{equation}

\emph{(e) Worst case.} The tent $\tent s\delta$ is nondecreasing in $\delta$, both arguments of the minimum in \eqref{eq:tent} being so; and for $\delta\leq s$ the primitive $N$ is nonincreasing in $s$, because then the left foot $s-\delta$ is nonnegative, so $N(\theta)=\widehat N(\theta-s)$ for a fixed nondecreasing $\widehat N$.
(The translation argument needs $\delta\leq s$, and so does the integral of \eqref{eq:tent}: for $\delta=1$ and $\theta=\tfrac12$ one gets $\int_0^\theta\tent s\delta=\tfrac38$ at $s=0$ but $\tfrac7{16}$ at $s=\tfrac14$. The closed form \eqref{eq:closed-forms}, which is the definition in force here and in the Lean development, is nonincreasing in $s$ for every $\delta$: its $s$-derivative is $-\posp{\theta-s+\delta}$ for $\theta\leq s$ and $-\posp{s+\delta-\theta}$ for $\theta\geq s$.) Since $\delta$ and $s$ occur nowhere else in $\Phifun=P-N-D$, and the constraints permit any $\delta\geq\dz$ and any $s\leq\Am$, it suffices to treat
\begin{equation*}
  \delta=\dz, \qquad s=\Am
  \eqcm
\end{equation*}
which is admissible since $\dz\leq h\leq m\leq\Am$, and along the path $(s,\dz)\to(\Am,\dz)$ the constraint $\delta\leq s$ is preserved.
With this choice $\Phifun(\infty)=h^2-\dz^2-\Kk=0$.

\emph{(f) The key inequality.}
\begin{equation}\label{eq:key}
  h-\dz \geq \xi \eqfs
\end{equation}
Indeed $h>0$, since $h^2>\Kk\geq0$; and $h^2-\dz^2=\Kk=\xi(2m+\xi)$ with $h+\dz\leq2m$ (as $\dz\leq h\leq m$), so
\begin{equation*}
  h-\dz = \frac{\xi(2m+\xi)}{h+\dz} \geq \frac{\xi(2m+\xi)}{2m} \geq \xi
  \eqfs
\end{equation*}
In particular $\Am = m+\xi \leq m+h-\dz$.

\emph{(g) The range $\theta\leq m$.} Put $\theta_1:=m-h$ and $\theta_2:=\Am-\dz$.
Here $N(\theta)=\tfrac12\posp{\theta-\theta_2}^2$, so
\begin{equation*}
  \Phifun(\theta) = \tfrac12\posp{\theta-\theta_1}^2-\tfrac12\posp{\theta-\theta_2}^2-2\xi\theta
  \eqfs
\end{equation*}
By \eqref{eq:key}, $\theta_2-\theta_1=\xi+h-\dz\geq2\xi\geq0$, so $\theta_1\leq\theta_2$ and $\Phifun\prr=\ind_{\theta>\theta_1}-\ind_{\theta>\theta_2}\geq0$: $\Phifun$ is convex on $[0,m]$.
Moreover $\Phifun(0)=0$ and
\begin{equation}\label{eq:Phi-at-m}
  \Phifun(m) = \tfrac12h^2-\tfrac12\posp{\dz-\xi}^2-2m\xi \leq 0
  \eqfs
\end{equation}
To see \eqref{eq:Phi-at-m}, substitute $h^2=\dz^2+\xi(2m+\xi)$.
If $\dz\geq \xi$ this gives the identity $\Phifun(m)=\xi(\dz-m)$, which is nonpositive because $\dz\leq h\leq m$ and $\xi\geq0$---no division is needed, so $\xi=0$ is covered.
If $\dz<\xi$, then \eqref{eq:Phi-at-m} reads $\dz^2+\xi^2\leq2m\xi$, and indeed $\dz^2+\xi^2<2\xi^2\leq2m\xi$ by \eqref{eq:xi-lt-m}.
Convexity now gives $\Phifun(\theta)\leq\max(\Phifun(0),\Phifun(m))=0$ on $[0,m]$.

\emph{(h) The range $\theta\geq m$.} Since $\Phifun(\infty)=0$, the claim $\Phifun(\theta)\leq0$ is equivalent to $\int_\theta^\infty\Gfun\geq0$.
Writing $\bar P=h^2-P$, $\bar N=\dz^2-N$ and $\bar D=\Kk-D$, this is $F(\theta)\geq0$ where, for $m\leq \theta\leq\Am$,
\begin{equation*}
  F(\theta)=\tfrac12\posp{m+h-\theta}^2+\tfrac12\posp{\theta-\Am+\dz}^2-\dz^2-\posp{\Am-\theta}^2
  \eqfs
\end{equation*}
Beyond $\Am$ one has instead $\bar N(\theta)=\tfrac12\posp{\Am+\dz-\theta}^2$ and $\bar D=0$, so there $F(\theta)=\tfrac12\posp{m+h-\theta}^2-\tfrac12\posp{\Am+\dz-\theta}^2\geq0$, because $m+h\geq\Am+\dz$ by \eqref{eq:key}.

On $[m,\Am]$ we have $\theta\leq\Am\leq m+h$, so $F\prr=\ind_{\theta<m+h}+\ind_{\theta>\Am-\dz}-2=\ind_{\theta>\Am-\dz}-1 \in\cb{-1,0}$: $F$ is concave on $[m,\Am-\dz]$ and affine on $[\Am-\dz,\Am]$.
Such a function is bounded below by the minimum of its values at $m$, at $\Am-\dz$ when that point lies in $[m,\Am]$, and at $\Am$.
Now
\begin{align*}
  F(m) &= \tfrac12h^2+\tfrac12\posp{\dz-\xi}^2-\dz^2-\xi^2 = -\Phifun(m)\geq0
    &&\text{by \eqref{eq:Phi-at-m}},\\
  F(\Am-\dz) &= \tfrac12\br{h-\xi+\dz}^2-2\dz^2\geq0
    &&\iff h-\xi\geq\dz,\\
  F(\Am) &= \tfrac12\br{h-\xi}^2-\tfrac12\dz^2\geq0
    &&\iff h-\xi\geq\dz,
\end{align*}
and $h-\xi\geq\dz$ is \eqref{eq:key}.
(The last two lines drop a $\posp{\cdot}$, legitimately, using $h-\xi\geq\dz\geq0$; and when $\Am-\dz<m$ the middle point lies outside $[m,\Am]$, $F$ is affine on all of $[m,\Am]$, and the two endpoints suffice.)
\end{proof}

\subsection{Proofs for the consequences}\label{app:geometry}

\begin{lemma}[The diameter is dominated; triangle inequality only]\label{lem:diam-bound}
For any four points of a metric space,
\begin{equation}\label{eq:diam-dominated}
\begin{gathered}
  \tfrac12\br{\ol yq+\ol zp+\ol yz+\ol pq} \geq \diam
  \eqcm\\
  \tfrac12\br{\ol yp+\ol zq+\ol yz+\ol pq} \geq \diam
  \eqfs
\end{gathered}
\end{equation}
Consequently $\sqrt{\smin^2+\ol yz\,\ol pq}+\smin\geq\diam$.
\end{lemma}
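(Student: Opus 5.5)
The plan is to prove the two inequalities in \eqref{eq:diam-dominated} by checking each of the six distances against the corresponding half-sum, using only the triangle inequality. I then deduce the consequence from a simple bound on $\sqrt{\smin^2+\ol yz\,\ol pq}+\smin$.

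For the first inequality, set $S:=\ol yq+\ol zp+\ol yz+\ol pq$. The four distances $\ol yq,\ol zp,\ol yz,\ol pq$ form the $4$-cycle $y\to q\to p\to z\to y$. Each edge of a cycle is at most the sum of the other three edges. Adding the edge to both sides gives that each edge is at most $S/2$.

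The remaining two distances, $\ol yp$ and $\ol zq$, are the "diagonals" of this cycle. For $\ol yp$ there are two paths along the cycle: $\ol yp\le\ol yq+\ol qp$ and $\ol yp\le\ol yz+\ol zp$. Adding them gives $2\,\ol yp\le S$. Similarly, $\ol zq\le\ol zp+\ol pq$ and $\ol zq\le\ol zy+\ol yq$ give $2\,\ol zq\le S$.

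The second inequality is the same argument applied to the cycle $y\to p\to q\to z\to y$, whose edges are $\ol yp,\ol pq,\ol qz,\ol zy$ and whose diagonals are $\ol yq$ and $\ol zp$.

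For the consequence, write $w:=\ol yz+\ol pq$. First, $\sqrt{\smin^2+\ol yz\,\ol pq}\ge\smin$, but that bound alone is too weak. Instead I want
\[
\sqrt{\smin^2+\ol yz\,\ol pq}+\smin\ \ge\ \smin+\tfrac12 w .
\]
This fails in general: at $\smin=0$ the left side is $\sqrt{\ol yz\,\ol pq}$, which is smaller than $\tfrac12w$ when the bases differ. So the case $\smin$ small needs an extra input, namely \cref{lem:cross-dominates}, which gives $\smin\ge\tfrac12\abs{\ol yz-\ol pq}$. With that, squaring gives
\[
\smin^2+\ol yz\,\ol pq\ \ge\ \tfrac14\abs{\ol yz-\ol pq}^2+\ol yz\,\ol pq=\tfrac14 w^2,
\]
so $\sqrt{\smin^2+\ol yz\,\ol pq}\ge\tfrac12w$.

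Hence $\sqrt{\smin^2+\ol yz\,\ol pq}+\smin\ge\tfrac12w+\smin$. Here $\tfrac12w+\smin$ equals one of the two half-sums in \eqref{eq:diam-dominated}, chosen according to which pair attains the minimum in $\smin$. That half-sum is at least $\diam$ by the first part. The main step to get right is recognizing that \cref{lem:cross-dominates} is needed to lower-bound the square root by $\tfrac12(\ol yz+\ol pq)$.
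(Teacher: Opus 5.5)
Your proof is correct and is essentially the paper's. The paper also bounds each of the six distances by the relevant half-sum using only the triangle inequality: two two-step paths are added for the two distances outside the sum, and a three-step path is used for the four inside it. It obtains the other inequality by relabeling, and deduces the consequence exactly as you do, via \cref{lem:cross-dominates} and $\sqrt{\smin^2+\ol yz\,\ol pq}\geq\tfrac12(\ol yz+\ol pq)$.

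One small expository slip: your aside that the intermediate bound ``fails'' at $\smin=0$ with unequal bases describes a configuration that \cref{lem:cross-dominates} rules out in a metric space. So that bound in fact always holds there, as your own argument then shows.
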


\begin{proof}
Use the abbreviations \eqref{eq:six}.
For the second inequality in \eqref{eq:diam-dominated}, the triangle inequality along the eight paths
\begin{equation*}
  \begin{array}{ll@{\qquad}ll}
    a\leq b+v, & a\leq u+c, & e\leq c+v, & e\leq u+b,\\
    b\leq u+c+v, & c\leq u+b+v, & u\leq b+v+c, & v\leq b+u+c
  \end{array}
\end{equation*}
gives $2x\leq b+c+u+v$ for each of the six distances $x$---for $a$ and $e$ by adding the two bounds, for the others directly.
For the first inequality, apply this to the relabeled quadruple $(y,z;p,q)$.
Relabeling permutes the same four points, so the diameter is unchanged, while the six distances become $(b,c,a,e,u,v)$: the pairs $\cb{a,e}$ and $\cb{b,c}$ are exchanged and $u,v$ are fixed.

For the consequence: by \cref{lem:cross-dominates}, $\smin\geq\tfrac12\abs{u-v}$, so $\sqrt{\smin^2+uv}\geq\tfrac12(u+v)$ and therefore
\begin{align*}
  \sqrt{\smin^2+uv}+\smin
  &\geq \tfrac12(u+v)+\min\brOf{\tfrac{a+e}{2},\tfrac{b+c}{2}}
  \\&= \min\brOf{\tfrac{a+e+u+v}{2},\tfrac{b+c+u+v}{2}}
  \\&\geq \diam
  \eqfs
\end{align*}
\end{proof}

\begin{proof}[Proof of \cref{cor:twosided,cor:twosided-catk,rem:bound-order}]
We give the \cato{} case; \cref{cor:twosided-catk} is the same argument with \eqref{eq:TC} in place of \eqref{eq:T}, and we point out at the end where the constant enters and where it does not.
Write $u=\ol yz$, $v=\ol pq$ and $B(x):=2\tran(\sqrt{x^2+uv})-2\tran(x)$, which is nonincreasing by \cref{lem:gap-monotone}, so that \eqref{eq:ts:trapezoid} is $B(\smin)$ and, by \cref{lem:cross-dominates} and $\tfrac14(u-v)^2+uv=\tfrac14(u+v)^2$, \eqref{eq:ts:bases} is $B(\tfrac12\abs{u-v})\geq B(\smin)$.

\eqref{eq:T} applied to $y,z,q,p$ gives $\Quadt\leq B(\sleg)$ with $\sleg:=\tfrac12(\ol yp+\ol zq)$, and applied to $z,y,q,p$, whose six distances are $(c,b,e,a,u,v)$ in the notation \eqref{eq:six}, it gives $-\Quadt\leq B(\sdia)$ with $\sdia:=\tfrac12(\ol yq+\ol zp)$, by \cref{lem:symmetry}.
As $B$ is nonincreasing, $\abs{\Quadt}\leq\max(B(\sleg),B(\sdia))=B(\smin)$, which is \eqref{eq:ts:trapezoid}; and $\eqref{eq:ts:trapezoid}\leq\eqref{eq:ts:bases}$ was just noted.

For \eqref{eq:ts:diam}, write $t_{\ms{min}}:=\sqrt{\smin^2+uv}$ and $M:=\tfrac12(t_{\ms{min}}+\smin)$.
The upper bound in \eqref{eq:hh} gives $\tran(t_{\ms{min}})-\tran(\smin)\leq(t_{\ms{min}}-\smin)\dtran(M)$, and $t_{\ms{min}}-\smin=(t_{\ms{min}}^2-\smin^2)/(t_{\ms{min}}+\smin)=uv/(2M)$, so $B(\smin)\leq uv\dtran(M)/M$.
By \cref{lem:diam-bound}, $2M\geq\diam$, and $x\mapsto\dtran(x)/x$ is nonincreasing (\cref{lem:ratio-monotone}), so $\dtran(M)/M\leq2\dtran(\tfrac12\diam)/\diam$: this is \eqref{eq:ts:diam} together with $\eqref{eq:ts:trapezoid}\leq\eqref{eq:ts:diam}$.

Applying \eqref{eq:ccdiff-abs} with $x_1=\tfrac u2$ and $x_2=\tfrac v2$ turns \eqref{eq:ts:bases} into \eqref{eq:ts:hhminmax}, and $\max(u,v)\leq\diam$ with \cref{lem:ratio-monotone} turns \eqref{eq:ts:diam} into \eqref{eq:ts:hhminmax} as well.
The same monotonicity gives $v\dtran(\tfrac u2)\leq u\dtran(\tfrac v2)$ for $v\leq u$, so \eqref{eq:ts:hhminmax} is the smaller of the two bounds in \eqref{eq:ts:hh}, and likewise for \eqref{eq:ts:Wminmax} in \eqref{eq:ts:W}.
By the definition \eqref{eq:Wtau} of $\ctranobest{\tran}$ we have $2\dtran(\tfrac12 x)\leq\ctranobest{\tran}\dtran(x)$ for every $x>0$, which turns \eqref{eq:ts:hh} into \eqref{eq:ts:W} and \eqref{eq:ts:hhminmax} into \eqref{eq:ts:Wminmax}.
Finally $\eqref{eq:ts:bases}\leq\eqref{eq:ts:sqrt}\leq\eqref{eq:ts:sum}$ is \cref{lem:KJ-chain}.

That no further relation holds is seen on three configurations, with $\tran=x^{\alpha}$.
On the four collinear points $y,p,q,z$ of $\R$ at $-\tfrac u2,-\tfrac v2,\tfrac v2,\tfrac u2$, with $v\leq u$, one has $\smin=\tfrac12(u-v)$ and $\diam=u$, so \eqref{eq:ts:diam}, \eqref{eq:ts:hhminmax} and \eqref{eq:ts:Wminmax} all equal $2v\dtran(\tfrac u2)$.
For $u=v=1$ and $1<\alpha<2$ that is $\alpha2^{2-\alpha}$, strictly larger than $2=\eqref{eq:ts:bases}=\eqref{eq:ts:sqrt}=\eqref{eq:ts:sum}$; for $\alpha=1$ and $v<u$ it is $2v<2\sqrt{uv}<u+v$, so it is strictly smaller than \eqref{eq:ts:sqrt} and \eqref{eq:ts:sum}.
On two parallel bases of equal length $u$ at distance $H$, with $\alpha=1$, \eqref{eq:ts:diam} is $2u^2/\sqrt{u^2+H^2}\to0$ as $H\to\infty$ while \eqref{eq:ts:bases} stays $2u$.
Together these separate the seven pairs left open by \eqref{eq:bound-order}.

For \cref{cor:twosided-catk}, only the first paragraph after the definition of $B$ uses the trapezoid comparison, and it uses it exactly twice: for the labelings $(y,z;q,p)$ and $(z,y;q,p)$.
These have the same pair of bases, namely $\cb{y,z}$ and $\cb{p,q}$, hence the same two geodesic segments and the same $r$.
There \eqref{eq:TC} gives $\Quadt\leq2\tran\brOf{\sqrt{\sleg^2+C_R\,uv}}-2\tran(\sleg)\leq C_R\,B(\sleg)$, the second step by \eqref{eq:gap-scaling} with $x=\sleg$, $\lambda=uv$ and $C=C_R\geq1$; likewise $-\Quadt\leq C_R\,B(\sdia)$, and so $\abs{\Quadt}\leq C_R\,B(\smin)$.
Stopping before the second step gives \eqref{eq:ts:trapezoid-catk}: writing $B_C(x):=2\tran\brOf{\sqrt{x^2+C\,uv}}-2\tran(x)$, which is nonincreasing in $x$ by \cref{lem:gap-monotone}, the same two applications of \eqref{eq:TC} give $\Quadt\leq B_{C_R}(\sleg)$ and $-\Quadt\leq B_{C_R}(\sdia)$, hence $\abs{\Quadt}\leq B_{C_R}(\smin)$.
Everything after that paragraph is a chain of inequalities between the right-hand sides alone---\cref{lem:diam-bound,lem:ratio-monotone,lem:ccdiff,lem:KJ-chain} and \eqref{eq:Wtau}---which uses no curvature bound at all, only the triangle inequality among the four points and properties of $\setcc$.
Multiplying that chain by $C_R\geq1$ preserves it, which is \cref{cor:twosided-catk}.
In particular the diameter appearing in \eqref{eq:ts:diam} remains $\diam\cb{y,z,q,p}$ of \eqref{eq:diam}: it enters only through \cref{lem:diam-bound}, whose conclusion $2M\geq\diam$ need not survive the replacement of $\diam\cb{y,z,q,p}$ by the larger $\diam\br{\seg yz\cup\seg pq}$.
\end{proof}

\begin{proof}[Proof of \cref{prop:Wtau}]
Write $u=\ol yz$ and $v=\ol pq$, and call a constant $L$ \emph{admissible} if $\QuadOf yzqp\leq L\,\ol pq\,\dtran\brOf{\ol yz}$ holds for every quadruple of every \cato{} space.
For $y=p$ and $z=q$ one has $\ol yp=\ol zq=0$ and $\ol yq=\ol zp=u=v$, so $\abs{\Quadt}$ and the right-hand sides of \eqref{eq:ts:sqrt} and \eqref{eq:ts:sum} all equal $2\tran(u)$.

\emph{Upper bound.} By \eqref{eq:ts:Wminmax},
\begin{equation*}
  \Quadt\leq\ctranobest{\tran}\min(u,v)\dtran(\max(u,v))\leq\ctranobest{\tran}\,v\,\dtran(u)
  \eqcm
\end{equation*}
where the second inequality is an equality for $v\leq u$, and for $v>u$ it reads $u\dtran(v)\leq v\dtran(u)$, which holds because $x\mapsto\dtran(x)/x$ is nonincreasing (\cref{lem:ratio-monotone}).
So $\ctranobest{\tran}$ is admissible.

\emph{Lower bound.} On the collinear quadruple of the previous proof, with $v\leq u$, $s_0:=\tfrac12(u-v)$ and $t_0:=\tfrac12(u+v)$, one has $\Quadt=2(\tran(t_0)-\tran(s_0))=2\int_{(u-v)/2}^{(u+v)/2}\dtran$, so
\begin{equation*}
  \frac{\Quadt}{v\dtran(u)}
  = \frac{2}{v\dtran(u)}\int_{(u-v)/2}^{(u+v)/2}\dtran
  \xrightarrow[v\searrow0]{}
  \frac{2\dtran(u/2)}{\dtran(u)}
  \eqcm
\end{equation*}
since $\dtran$ is concave, hence continuous, on $\Rpp$.
Taking the supremum over $u>0$ shows that no $L<\ctranobest{\tran}$ is admissible.

The two suprema in \eqref{eq:Wtau} agree because $\dtran(x_1)+\dtran(x_2)\leq2\dtran(\tfrac{x_1+x_2}2)$ (\cref{lem:dtran-subadd}), so the second supremum is attained when $x_1=x_2$.
Finally $\ctranobest{\tran}\geq1$ by the first inequality of \cref{lem:dtran-subadd} and $\ctranobest{\tran}\leq2$ because $\dtran$ is nondecreasing.
\end{proof}

\section{\texorpdfstring{The class $\setcco$}{The class S0}}\label{app:class}

\subsection{Members}\label{ssec:app:members}

\Cref{tab:members} lists some members of $\setcco$.

\begin{table}[ht]
\centering
\small
\begin{tabular}{@{}lllcl@{}}
\toprule
$\tran(x)$ & $\dtran(x)$ & $\ddtran(x)$ & $\ctranobest{\tran}$ & name/reference\\
\midrule
$x^{\alpha}$, $\alpha\in[1,2]$
  & $\alpha x^{\alpha-1}$
  & $\alpha(\alpha-1)x^{\alpha-2}$
  & $2^{2-\alpha}$
  & \\
  $\psi_\theta$
  & $2\min(x,\theta)$
  & $2\ind_{x<\theta}$
  & $2$
  & Huber \cite{huber64} \\

$\theta^2(\sqrt{1+x^2/\theta^2}-1)$
  & $x/\sqrt{1+x^2/\theta^2}$
  & $(1+x^2/\theta^2)^{-3/2}$
  & $2$
  & pseudo-Huber \cite{charbonnier94} \\
$\log\cosh x$
  & $\tanh x$
  & $\operatorname{sech}^2 x$
  & $2$
  & \cite{green90} \\
$x\log(1+x)$
  & $\log(1+x)+\tfrac{x}{1+x}$
  & $\tfrac{1}{1+x}+\tfrac{1}{(1+x)^2}$
  & $2$
  & \\
$\int_0^x\log(1+\sigma+\tfrac12\sigma^2)\dl\sigma$
  & $\log(1+x+\tfrac12x^2)$
  & $\tfrac{1+x}{1+x+x^2/2}$
  & $2$
  & Catoni \cite{catoni12} \\
$cx-c^2\log(1+x/c)$
  & $cx/(c+x)$
  & $c^2/(c+x)^2$
  & $2$
  & Fair \cite[Section 6.4.5]{Rey1983} \\

\bottomrule
\end{tabular}
\caption{Some members of $\setcco$, for $\alpha\in[1,2]$ and $\theta,c\in\Rpp$; here $\psi_\theta(x)=x^2-\posp{x-\theta}^2$.
The column $\ctranobest{\tran}$ gives the sharp constant \eqref{eq:Wtau} of \cref{prop:Wtau}; the supremum defining it is attained only for $x^{\alpha}$ and $\psi_\theta$.}
\label{tab:members}
\end{table}

\subsection{Elementary properties}\label{ssec:app:properties}

Throughout this subsection $\tran\in\setcco$.
\begin{lemma}[{\cite[Lemma~3]{quadruple}}]\label{lem:dtran-subadd}
For $x_1,x_2\in\Rp$,
\begin{equation*}
  \dtran(x_1+x_2)\leq\dtran(x_1)+\dtran(x_2)\leq2\dtran\brOf{\frac{x_1+x_2}{2}}
  \eqfs
\end{equation*}
\end{lemma}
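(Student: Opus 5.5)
We prove the two inequalities separately, working with $f:=\dtran$, which for $\tran\in\setcco$ is nonnegative, nondecreasing and concave on $\Rp$; we extend it to $0$ by its limit, as in \cref{def:class}. Both inequalities are statements about such an $f$ alone, and we use only these three properties.

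\emph{Right inequality.} This is midpoint concavity of $f$: $\tfrac12\br{f(x_1)+f(x_2)}\leq f\brOf{\tfrac{x_1+x_2}2}$. On $\Rpp$ it is immediate from concavity. If one of $x_1,x_2$ is $0$, the extended $f$ is still concave on $\Rp$. Indeed, $f(0)$ is the limit of concave functions' values, and the inequality $f(\lambda x+(1-\lambda)x')\geq\lambda f(x)+(1-\lambda)f(x')$ passes to the limit $x\searrow0$, because $f$ is continuous on $\Rpp$ and monotone.

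\emph{Left inequality.} This is subadditivity of $f$. The plan is the standard argument that a concave function with $f(0)\geq0$ is subadditive. Let $x_1,x_2\geq0$ with $S:=x_1+x_2>0$; the case $S=0$ is trivial because $f(0)\geq0$. Concavity on $[0,S]$, writing $x_i=\tfrac{x_i}{S}S+\br{1-\tfrac{x_i}{S}}0$, gives
\begin{equation*}
  f(x_i)\geq \tfrac{x_i}{S}f(S)+\br{1-\tfrac{x_i}{S}}f(0)\geq\tfrac{x_i}{S}f(S)
  \eqcm
\end{equation*}
using $f(0)\geq0$, which holds since $f\geq0$ (because $\tran$ is nondecreasing). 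Summing over $i=1,2$ yields $f(x_1)+f(x_2)\geq f(S)$.

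The only delicate point is the endpoint $0$, namely that the extension $f(0)=\lim_{x\searrow0}f(x)$ keeps $f$ concave and nonnegative on the closed half-line. This holds because $f$ is nondecreasing, so the limit exists, and it is $\geq0$ as a limit of nonnegative values. Everything else is one line.
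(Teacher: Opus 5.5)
Your proof is correct. The paper does not prove this lemma itself; it imports it from \cite[Lemma~3]{quadruple}, so there is no in-paper argument to match, and your proof serves as a self-contained replacement.

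\begin{itemize}
\item The right inequality is midpoint concavity of $\dtran$.
\item The left inequality is the standard subadditivity of a concave function with nonnegative value at $0$, via $\dtran(x_i)\geq\tfrac{x_i}{S}\dtran(S)$.
\item You correctly isolate and handle the one delicate point: the extension $\dtran(0)=\lim_{x\searrow0}\dtran(x)$ keeps $\dtran$ concave and nonnegative on $\Rp$.
\end{itemize}

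A route closer to the paper's own machinery would be to check both inequalities on the generators of \cref{lem:representation} and then integrate. The generators are the constant $\eta_0\geq0$, the linear term, and the ramps $\sigma\wedge\theta$; each of them is subadditive and midpoint concave. That route, however, needs the Stieltjes representation and still requires a limit at $0$.

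Your argument needs only concavity and $\dtran(0)\geq0$. Monotonicity of $\dtran$ enters only to guarantee that the limit defining $\dtran(0)$ exists. It is therefore both simpler and more general.
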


\begin{lemma}[{\cite[Lemma~3]{quadruple}}]\label{lem:dtran-factor}
For $x,a\in\Rp$,
\begin{equation*}
  \dtran(ax)\geq a\dtran(x)\text{ if } a\leq1,
  \qquad
  \dtran(ax)\leq a\dtran(x)\text{ if } a\geq1
  \eqfs
\end{equation*}
\end{lemma}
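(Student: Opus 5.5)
The last statement is \cref{lem:dtran-factor}: for $\tran\in\setcco$ and $x,a\in\Rp$, $\dtran(ax)\geq a\dtran(x)$ if $a\leq1$ and $\dtran(ax)\leq a\dtran(x)$ if $a\geq1$. The plan is to use only that $f:=\dtran$ is concave and nonnegative on $\Rp$, with $f(0)=\lim_{x\searrow0}\dtran(x)\geq0$ as in \cref{def:class}. Nonnegativity holds because $\tran$ is nondecreasing, and it passes to the limit at $0$.

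\emph{Case $a\leq1$.} Since $f$ is concave on $\Rp$, the extension at $0$ by the right limit keeps it concave, and continuity of $f$ at $0$ from the right is built into that definition. Writing $ax=a\cdot x+(1-a)\cdot0$ and applying concavity gives
\begin{equation*}
  f(ax)\geq a f(x)+(1-a)f(0)\geq a f(x),
\end{equation*}
where the second step uses $f(0)\geq0$ and $1-a\geq0$. The boundary cases $a=0$ and $x=0$ are then trivial: for $a=0$ the claim is $f(0)\geq0$, and for $x=0$ it is $f(0)\geq af(0)$, which holds since $a\leq1$ and $f(0)\geq0$.

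\emph{Case $a\geq1$.} I would reduce to the first case by substitution. Apply it with factor $1/a\leq1$ at the point $ax$; this gives $f(x)=f\bigl(\tfrac1a\cdot ax\bigr)\geq\tfrac1a f(ax)$. Multiplying by $a>0$ yields $f(ax)\leq af(x)$.

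There is no real obstacle in this argument. The only point needing care is that concavity must hold on the closed half-line including $0$ with the limit value, and that this value is nonnegative; both follow from \cref{def:class} together with the monotonicity of $\tran$.
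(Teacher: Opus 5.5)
Your proof is correct: concavity of the extended $\dtran$ on $\Rp$, applied between $x$ and $0$ together with $\dtran(0)\geq0$, gives the case $a\leq1$, and applying that case with factor $1/a$ at the point $ax$ gives the case $a\geq1$. The paper gives no argument of its own for this lemma and only cites \cite[Lemma~3]{quadruple}, so your standard argument makes it self-contained.

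The one step you assert rather than prove is that the right-limit extension keeps $\dtran$ concave on $\Rp$. It follows by letting $y\searrow0$ in $\dtran(\lambda x+(1-\lambda)y)\geq\lambda\dtran(x)+(1-\lambda)\dtran(y)$, using that the concave $\dtran$ is continuous on $\Rpp$.
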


\begin{lemma}[{\cite[Lemma~2]{quadruple}}; the Hermite--Hadamard inequality for the concave function $\dtran$]\label{lem:ccdiff}
For $x_1,x_2\in\Rp$,
\begin{equation}\label{eq:hh}
  \abs{x_1-x_2}\frac{\dtran(x_1)+\dtran(x_2)}{2}
  \leq\abs{\tran(x_1)-\tran(x_2)}
  \leq\abs{x_1-x_2}\dtran\brOf{\frac{x_1+x_2}{2}}
  \eqfs
\end{equation}
In particular, for $x_1,x_2\geq0$,
\begin{equation}\label{eq:ccdiff-abs}
  \tran(x_1+x_2)-\tran\brOf{\abs{x_1-x_2}}
  \leq2\min(x_1,x_2)\dtran\brOf{\max(x_1,x_2)}
  \eqfs
\end{equation}
\end{lemma}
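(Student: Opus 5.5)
The plan is to prove both inequalities of \eqref{eq:hh} by writing $\tran(x_2)-\tran(x_1)=\int_{x_1}^{x_2}\dtran$ and using only that $\dtran$ is concave on $\Rp$. This is just the Hermite--Hadamard inequality for the concave function $\dtran$ on the interval between $x_1$ and $x_2$, and \eqref{eq:ccdiff-abs} will then follow by specialization.

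Without loss of generality let $x_1\leq x_2$ and put $\ell:=x_2-x_1$. Since $\tran\in\setcco$ is convex with $\tran(0)=0$, it is continuous on $\Rp$ and absolutely continuous, so $\tran(x_2)-\tran(x_1)=\int_{x_1}^{x_2}\dtran(\sigma)\,\dl\sigma\geq0$; here we use that $\dtran$ is nondecreasing and hence bounded on compacts. This removes the absolute value in the middle of \eqref{eq:hh}.

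For the lower bound, concavity gives $\dtran(x_1+\lambda\ell)\geq(1-\lambda)\dtran(x_1)+\lambda\dtran(x_2)$, and integrating over $\lambda\in[0,1]$ yields $\ell\,\tfrac12(\dtran(x_1)+\dtran(x_2))$. At $x_1=0$ this uses the extension $\dtran(0)=\dtran(0+)$, which keeps $\dtran$ concave on $\Rp$. For the upper bound, pair symmetric points about the midpoint $\mu$: concavity gives $\dtran(\mu-t)+\dtran(\mu+t)\leq2\dtran(\mu)$, so $\int_{x_1}^{x_2}\dtran\leq\ell\,\dtran(\mu)$. Alternatively, use the supporting line of $\dtran$ at $\mu$, whose integral over the interval equals $\ell\,\dtran(\mu)$.

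For \eqref{eq:ccdiff-abs}, apply the upper bound of \eqref{eq:hh} with the arguments $x_1+x_2$ and $\abs{x_1-x_2}$. Their difference is $2\min(x_1,x_2)$ and their midpoint is $\max(x_1,x_2)$, so the right-hand side is exactly $2\min(x_1,x_2)\,\dtran(\max(x_1,x_2))$.

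No step is a real obstacle. The only care needed is at the endpoint $0$: the integral representation of $\tran$ there, and the concavity of the extended $\dtran$ on the closed half-line. Both are already settled in \cref{def:class} and in the proof of \cref{lem:choquet}.
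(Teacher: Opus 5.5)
Your proposal is correct and follows the intended argument. The paper gives no proof of its own: it cites \cite[Lemma~2]{quadruple} and describes the lemma as the Hermite--Hadamard inequality for the concave $\dtran$, applied to $\tran(x_2)-\tran(x_1)=\int_{x_1}^{x_2}\dtran$, and \eqref{eq:ccdiff-abs} follows from the arguments $x_1+x_2$ and $\abs{x_1-x_2}$ exactly as you do (continuity of $\tran$ at $0$ needs $\tran$ nondecreasing as well as convex with $\tran(0)=0$, but that is part of $\tran\in\setcco$).
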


\begin{lemma}\label{lem:ratio-monotone}
The function $x\mapsto\dtran(x)/x$ is nonincreasing on $\Rpp$.
\end{lemma}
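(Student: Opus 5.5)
The claim is that $x\mapsto\dtran(x)/x$ is nonincreasing on $\Rpp$ for $\tran\in\setcco$. I would deduce it directly from \cref{lem:dtran-factor}, or, to stay self-contained, from concavity of $\dtran$ together with $\dtran(0)\geq0$.

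Fix $0<x_1<x_2$ and write $x_1=\lambda x_2$ with $\lambda=x_1/x_2\in(0,1)$. \Cref{lem:dtran-factor} with $a=\lambda\leq1$ gives $\dtran(\lambda x_2)\geq\lambda\dtran(x_2)$. Dividing by $x_1=\lambda x_2>0$ yields $\dtran(x_1)/x_1\geq\dtran(x_2)/x_2$, which is the claim.

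If one prefers not to rely on the cited lemma, the same inequality follows from the definitions. Since $\tran$ is nondecreasing, $\dtran\geq0$ on $\Rpp$, and hence $\dtran(0)=\lim_{x\searrow0}\dtran(x)\geq0$. The extended $\dtran$ is concave on $\Rp$, because a concave function on $\Rpp$ remains concave after extension to the endpoint by its limit. Concavity then gives
\[
\dtran(\lambda x_2)=\dtran\brOf{\lambda x_2+(1-\lambda)\cdot0}\geq\lambda\dtran(x_2)+(1-\lambda)\dtran(0)\geq\lambda\dtran(x_2).
\]
The only point needing any care is this boundary step, namely justifying that the limit $\dtran(0)$ exists, is finite and nonnegative, and preserves concavity. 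It holds because $\dtran$ is nondecreasing and bounded below by $0$, so the limit exists, and concavity inequalities pass to the limit as points tend to $0$.
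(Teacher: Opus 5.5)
Your proof is correct and is exactly the paper's argument: the paper also just applies \cref{lem:dtran-factor} with $a=x_1/x_2\leq1$ and $x=x_2$. Your self-contained second paragraph, which derives $\dtran(\lambda x_2)\geq\lambda\dtran(x_2)$ from concavity of $\dtran$ and $\dtran(0)\geq0$, is a valid proof of the half of that cited lemma you need, so it only makes the argument self-contained rather than changing the route.
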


\begin{proof}
This is \cref{lem:dtran-factor} with $a=x_1/x_2\leq1$ and $x=x_2$.
\end{proof}

\begin{lemma}\label{lem:gap-monotone}
Let $\lambda\geq0$.
Then
\begin{equation}\label{eq:gap-monotone}
  x \longmapsto 2\br{\tran\brOf{\sqrt{x^2+\lambda}}-\tran(x)}
\end{equation}
is nonincreasing on $\Rp$.
\end{lemma}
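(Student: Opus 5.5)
The plan is to differentiate in $x$ and compare derivatives. Throughout, $\tran\in\setcco$ and $\lambda\ge0$; the case $\lambda=0$ is trivial, so assume $\lambda>0$. Put $t(x):=\sqrt{x^2+\lambda}>x$. The map $x\mapsto 2(\tran(t(x))-\tran(x))$ is locally absolutely continuous on $\Rp$, since $\tran$ is convex and finite there. Because $\tran$ is differentiable on $\Rpp$ and $t$ is smooth with $t'(x)=x/t(x)$, its derivative on $\Rpp$ is
\begin{equation*}
  2\Bigl(\frac{x}{t(x)}\,\dtran(t(x))-\dtran(x)\Bigr).
\end{equation*}
It therefore suffices to show
\begin{equation*}
  \frac{\dtran(t)}{t}\le\frac{\dtran(x)}{x}\qquad\text{for } 0<x\le t,
\end{equation*}
and this is exactly \cref{lem:ratio-monotone}, which says that $x\mapsto\dtran(x)/x$ is nonincreasing on $\Rpp$.

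So the derivative is nonpositive on $\Rpp$, and the function is nonincreasing on $\Rpp$. Continuity at $x=0$ then extends this to $\Rp$: $\tran$ is continuous at $0$, as shown in the proof of \cref{lem:choquet}, and $t$ is continuous.

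The only point needing care is the regularity justification, namely that a function with a nonpositive derivative everywhere on an open interval is nonincreasing. Here the derivative exists everywhere on $\Rpp$, because $\tran$ is differentiable there by \cref{def:class}. The mean value theorem then applies directly, so no absolute-continuity argument is even needed.

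If $\tran$ is merely in $\setcc$ rather than $\setcco$, subtract the constant $\tran(0)$ first; this does not change the function being studied.
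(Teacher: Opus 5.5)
Your proof is correct and is essentially the paper's argument: both differentiate on $\Rpp$, write the derivative as $2x\bigl(\dtran(t)/t-\dtran(x)/x\bigr)$ with $t=\sqrt{x^2+\lambda}>x$, and conclude from \cref{lem:ratio-monotone}. The only difference is the regularity step: the paper uses local Lipschitz continuity on $\Rp$, which follows from $\dtran$ being bounded on compacts, while you use the mean value theorem on $\Rpp$ plus continuity at $0$, which works equally well. (Your passing remark that convexity and finiteness alone give absolute continuity on $\Rp$ is false without monotonicity, since a convex function may jump up at $0$, but you do not rely on it.)
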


\begin{proof}
For $\lambda=0$ the map vanishes identically, so assume $\lambda>0$ and write $W(x)=\sqrt{x^2+\lambda}>x$.
Since $\dtran$ is nondecreasing and hence bounded on compacts, the map is locally Lipschitz on $\Rp$, so it suffices that its derivative be nonpositive on $\Rpp$, where it exists by \cref{def:class}.
That derivative is
\begin{equation*}
  2\br{\dtran\brOf{W}\tfrac{x}{W}-\dtran(x)}
  = 2x\br{\frac{\dtran(W)}{W}-\frac{\dtran(x)}{x}}
  \leq 0
  \eqcm
\end{equation*}
because $W\geq x$ and $x\mapsto\dtran(x)/x$ is nonincreasing by \cref{lem:ratio-monotone}.
\end{proof}

\begin{lemma}\label{lem:gap-scaling}
Let $\tran\in\setcc$, $\lambda\geq0$, and $C\geq1$.
Then, for every $x\geq0$,
\begin{equation}\label{eq:gap-scaling}
  \tran\brOf{\sqrt{x^2+C\lambda}}-\tran(x)
  \ \leq\ C\br{\tran\brOf{\sqrt{x^2+\lambda}}-\tran(x)}
  \eqfs
\end{equation}
\end{lemma}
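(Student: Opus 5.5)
We aim to show that the function
\[
  F(C) := \tran\brOf{\sqrt{x^2+C\lambda}}-\tran(x)
\]
satisfies $F(C)\leq C\,F(1)$ for $C\geq1$. Since $F(0)=0$, this follows once $C\mapsto F(C)$ is concave on $\Rp$: concavity together with $F(0)=0$ makes $C\mapsto F(C)/C$ nonincreasing on $\Rpp$, so $F(C)/C\leq F(1)$ for all $C\geq1$. The degenerate case $\lambda=0$ is trivial, since both sides then vanish, so we take $\lambda>0$ from now on. Adding a constant to $\tran$ changes neither side, so we may also assume $\tran\in\setcco$.

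To prove the concavity, substitute $w=\sqrt{x^2+C\lambda}$, so that $C=(w^2-x^2)/\lambda$ is an increasing function of $w$ on $[x,\infty)$. Then
\[
  \frac{\dl F}{\dl C}=\dtran(w)\,\frac{\dl w}{\dl C}=\frac{\lambda}{2}\cdot\frac{\dtran(w)}{w}.
\]
By \cref{lem:ratio-monotone}, the ratio $\dtran(w)/w$ is nonincreasing in $w$, hence also nonincreasing in $C$. A function whose derivative is nonincreasing is concave, so $F$ is concave.

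The only delicate point is regularity at $C=0$ when $x=0$, where $w=\sqrt{C\lambda}$ and the derivative formula could blow up at the endpoint. We handle it by checking that $F$ is continuous on $\Rp$ (as $\tran$ is continuous) and differentiable on $\Rpp$ with a nonincreasing derivative. That already suffices for concavity on $\Rp$, because concavity on the open interval extends to the closed one by continuity. The remaining step is the chord argument: for $C\geq1$, write $1=(1-1/C)\cdot0+(1/C)\cdot C$ and apply concavity to obtain $F(1)\geq F(C)/C$, which is \eqref{eq:gap-scaling}.
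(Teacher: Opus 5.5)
Your proof is correct and is essentially the paper's own. The paper shows that $\phi(\lambda)=\tran(\sqrt{x^2+\lambda})-\tran(x)$ is concave with $\phi(0)=0$, because its derivative $\dtran(w)/(2w)$ with $w=\sqrt{x^2+\lambda}$ is nonincreasing by \cref{lem:ratio-monotone}; your $F$ is the same function up to the linear reparametrization $F(C)=\phi(C\lambda)$, and the closing chord argument is identical.
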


\begin{proof}
Put $\phi(\lambda):=\tran\brOf{\sqrt{x^2+\lambda}}-\tran(x)$, so that $\phi(0)=0$.
Being convex and nondecreasing on $\Rp$, $\tran$ is continuous there, hence so is $\phi$; and for $\lambda>0$,
\begin{equation*}
  \phi\pr(\lambda) = \frac{\dtran\brOf{\sqrt{x^2+\lambda}}}{2\sqrt{x^2+\lambda}}
  \eqcm
\end{equation*}
which is nonincreasing in $\lambda$, because $y\mapsto\dtran(y)/y$ is nonincreasing (\cref{lem:ratio-monotone}) and $y=\sqrt{x^2+\lambda}$ increases with $\lambda$.
So $\phi$ is concave on $\Rp$ with $\phi(0)=0$, hence $\lambda\mapsto\phi(\lambda)/\lambda$ is nonincreasing on $\Rpp$ and $\phi(C\lambda)\leq C\phi(\lambda)$ for $C\geq1$; at $\lambda=0$ both sides vanish.
\end{proof}

\begin{lemma}\label{lem:KJ-chain}
For all $u,v\geq0$,
\begin{equation}\label{eq:KJ-chain}
  2\br{\tran\brOf{\tfrac{u+v}{2}}-\tran\brOf{\tfrac{\abs{u-v}}{2}}}
  \leq 2\tran\brOf{\sqrt{uv}}
  \leq \tran(u)+\tran(v)
  \eqfs
\end{equation}
\end{lemma}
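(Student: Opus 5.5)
The two inequalities of \cref{lem:KJ-chain} are handled separately, and both are checked on the generators of $\setcco$. Each side is linear in $\tran$, and every $\tran\in\setcco$ vanishes at $0$ and is a nonnegative mixture of $\mathrm{id}$, $x^2$ and the Huber functions $\psi_\theta$. So it suffices to verify both inequalities for these functions. A direct route avoids the representation altogether and is the one I will try first.

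\emph{Right inequality.} Write $w:=\sqrt{uv}$. I will use that $\tran(\sqrt{x})$ is concave in $x$. Indeed, its derivative is $\dtran(\sqrt x)/(2\sqrt x)$, which is nonincreasing by \cref{lem:ratio-monotone}, so $g(x):=\tran(\sqrt x)$ is concave on $\Rp$. That alone gives the wrong direction. Instead I use $e^{x}$: set $k(t):=\tran(e^t)$, with $u=e^{t_1}$ and $v=e^{t_2}$. Then $2\tran(\sqrt{uv})\leq\tran(u)+\tran(v)$ is midpoint convexity of $k$. Now $k'(t)=e^t\dtran(e^t)$, and this is nondecreasing because $\dtran\geq0$ is nondecreasing. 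Hence $k$ is convex. The cases $u=0$ or $v=0$ follow from $\tran(0)=0\leq\tran(\cdot)$, using monotonicity and $\tran\geq0$ on $\setcco$.

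\emph{Left inequality.} Assume $u\geq v$, and write $t_0=\tfrac12(u+v)$ and $s_0=\tfrac12(u-v)$, so that $t_0^2=s_0^2+uv$. The left side is then $2(\tran(\sqrt{s_0^2+uv})-\tran(s_0))$. By \cref{lem:gap-monotone} with $\lambda=uv$, this gap function is nonincreasing in $x$. Evaluating at $x=s_0\geq0$ therefore gives at most its value at $x=0$, which is $2(\tran(\sqrt{uv})-\tran(0))=2\tran(\sqrt{uv})$, since $\tran(0)=0$.

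Both steps are short. The only point needing care is the convexity of $t\mapsto\tran(e^t)$ together with the boundary cases at $0$; I do not expect a real obstacle.
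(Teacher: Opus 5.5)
Your proof is correct. The left inequality is argued exactly as in the paper: apply \cref{lem:gap-monotone} with $\lambda=uv$, compare its values at $x=\tfrac12\abs{u-v}$ and at $x=0$, and use $\tran(0)=0$.

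For the right inequality your route differs slightly. The paper writes $2\tran(\sqrt{uv})\leq2\tran(\tfrac{u+v}{2})\leq\tran(u)+\tran(v)$. That is, it applies the arithmetic--geometric mean inequality and then uses that $\tran$ is nondecreasing and midpoint convex. You instead prove that $t\mapsto\tran(e^t)$ is convex, from $\dtran\geq0$ being nondecreasing. Both arguments use exactly the same two properties of $\tran$. In fact the paper's chain is precisely the standard proof that $\tran\circ\exp$ is convex for a nondecreasing convex $\tran$. The paper's version needs neither differentiability nor a separate treatment of the case $uv=0$, so it is a little shorter. Your derivative computation is nonetheless valid, and you handle the boundary case correctly.

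Two passages in your write-up play no role and can be dropped. One is the opening remark about reducing to the generators $\mathrm{id}$, $x^2$, $\psi_\theta$. The other is the detour through concavity of $\tran(\sqrt x)$.
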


\begin{proof}
Since $\tfrac14(u-v)^2+uv=\tfrac14(u+v)^2$, the left-hand side of \eqref{eq:KJ-chain} is the map \eqref{eq:gap-monotone} with $\lambda=uv$ evaluated at $\tfrac12\abs{u-v}$, and $2\tran(\sqrt{uv})$ is its value at $0$, because $\tran(0)=0$.
So the first inequality is \cref{lem:gap-monotone}.
For the second, $\sqrt{uv}\leq\tfrac12(u+v)$ and $\tran$ is nondecreasing and convex, so $2\tran(\sqrt{uv})\leq2\tran(\tfrac{u+v}2)\leq\tran(u)+\tran(v)$.
\end{proof}

\subsection{Structure}\label{ssec:app:stability}

Up to the linear term $\eta_0x$, the following proposition is \cite[Proposition~1.2]{Pinelis2015} and the remark following it, formulated there for the even extensions of the functions in $\setcco$ with $\dtran(0)=0$.

\begin{proposition}[Choquet structure; cf.\ {\cite[Proposition~1.2]{Pinelis2015}}]\label{prop:choquet-structure}
The extreme rays of the cone $\setcco$ are $\R_+\mathrm{id}$, $\R_+x^2$ and $\R_+\psi_\theta$ for $\theta\in\Rpp$, where $\psi_\theta(x)=x^2-\posp{x-\theta}^2$ is the Huber loss with threshold $\theta$.
The ray $\R_+\mathrm{id}$ must be listed separately: it is extreme, since $\mathrm{id}=\tran_1+\tran_2$ with $\tran_i\in\setcco$ forces $\dtran_1+\dtran_2\equiv1$ with both summands nondecreasing, hence both constant; and it does not lie in the cone generated by the others, whose derivatives all tend to $0$ at $0+$, although it lies in its closure, as $\psi_\theta/(2\theta)\to\mathrm{id}$ uniformly on $\Rp$ for $\theta\searrow0$ \cite{Pinelis2015}.
Equivalently, by \cref{lem:representation}, every $\tran\in\setcco$ is
\begin{equation}\label{eq:choquet-tau}
  \tran(x) = \eta_0x + \tfrac{\eta_1}{2}x^2
    + \tfrac12\int_{\Rpp}\psi_\theta(x)\nu(\dl\theta)
\end{equation}
for some $\eta_0,\eta_1\geq0$ and a nonnegative measure $\nu$ on $\Rpp$.
\end{proposition}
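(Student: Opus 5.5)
The plan is to derive \cref{prop:choquet-structure} from \cref{lem:representation} applied to $f=\dtran$, and then to check extremality of each listed ray by hand. First I establish the representation \eqref{eq:choquet-tau}. For $\tran\in\setcco$, $\dtran$ is nonnegative, nondecreasing and concave on $\Rpp$. By \cref{lem:representation}, $\dtran(\sigma)=\eta_0+\eta_1\sigma+\int(\sigma\wedge\theta)\nu(\dl\theta)$. As in the proof of \cref{lem:choquet}, convexity and $\tran(0)=0$ give $\tran(x)=\int_0^x\dtran$. Integrating term by term with Tonelli, and using $\int_0^x(\sigma\wedge\theta)\dl\sigma=\tfrac12\psi_\theta(x)$, which is a direct check from $\psi_\theta'(x)=2\min(x,\theta)$ and $\psi_\theta(0)=0$, yields \eqref{eq:choquet-tau}. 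Conversely, each generator lies in $\setcco$: $\mathrm{id}$, $x^2$ and $\psi_\theta$ have nondecreasing concave derivatives $1$, $2x$ and $2\min(x,\theta)$. Since $\setcco$ is a cone, every such mixture lies in it.

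Next I identify the extreme rays through the derivative. The map $\tran\mapsto\dtran$ is injective on $\setcco$ and additive. So $\tran=\tran_1+\tran_2$ in $\setcco$ means $\dtran=\dtran_1+\dtran_2$ with each $\dtran_i$ nonnegative, nondecreasing and concave.
\begin{itemize}
\item For $\mathrm{id}$: both $\dtran_i$ are nondecreasing and sum to a constant, so both are constant, as stated.
\item For $x^2$: $\dtran_1+\dtran_2=2x$, and both are concave. Since $\dtran_2=2x-\dtran_1$ is then convex, it is affine, and so is $\dtran_1$. Both vanish at $0$ by nonnegativity, so each $\dtran_i$ is a multiple of $x$.
\item For $\psi_\theta$: on $(\theta,\infty)$ both $\dtran_i$ are constant by the $\mathrm{id}$-argument. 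On $(0,\theta)$ both are affine and vanish at $0$ by the $x^2$-argument. Continuity at $\theta$ then forces $\dtran_i=c_i\min(x,\theta)$.
\end{itemize}

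Conversely, any $\tran$ not on these rays is not extreme. I read this off the representation measure $(\eta_0,\eta_1,\nu)$, which is uniquely determined by $\dtran$ by \cref{lem:representation}. If at least two of the pieces $\eta_0$, $\eta_1$, $\nu$ are nonzero, or if $\nu$ is not a point mass, then splitting the data (or splitting $\nu$ across a threshold) gives a nontrivial decomposition. The remaining cases are exactly the listed rays.

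Finally I check that $\R_+\mathrm{id}$ is not redundant. Every function in the cone generated by $x^2$ and the $\psi_\theta$ has $\dtran(0+)=\eta_0=0$, so $\mathrm{id}$ is not in that cone. On the other hand, $\psi_\theta/(2\theta)=x-x^2/(2\theta)$ on $[0,\theta]$ and $=\theta/2$ beyond. Hence $\abs{\psi_\theta/(2\theta)-x}\le$, hmm, not uniformly small at large $x$; the uniform statement needs the paper's normalization from \cite{Pinelis2015}, and I would cite it there.

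The main obstacle is the non-extremality direction: making precise that the cone is simplicial in the representing measure. This requires uniqueness of the triple $(\eta_0,\eta_1,\nu)$ given $\dtran$, which follows because $\eta_0=\dtran(0+)$, $\eta_1=\lim\dtran'$ and $\nu=-\dl\dtran'$ are determined by $\dtran$.
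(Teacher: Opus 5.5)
Your derivation of \eqref{eq:choquet-tau} is the same as the paper's: \cref{lem:representation} for $f=\dtran$, then $\tran=\int_0^x\dtran$, Tonelli, and $\int_0^x(\sigma\wedge\theta)\dl\sigma=\tfrac12\psi_\theta(x)$.

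For the extreme rays you take a different, more self-contained route. The paper transfers through $\tran\mapsto\dtran$ the known extreme rays of the cone of nonnegative nondecreasing concave functions (constants, identity, ramps $\sigma\mapsto\sigma\wedge\theta$), and adds the separate argument for $\R_+\mathrm{id}$. You instead verify extremality of $x^2$ and $\psi_\theta$ by hand, and show that nothing else is extreme by splitting the representing data. Your hand checks are correct:
\begin{itemize}
\item each $\dtran_i$ is affine on $(0,\theta)$, being concave and equal to $2x$ minus a concave function;
\item it vanishes at $0+$, because both summands are nonnegative and their sum tends to $0$;
\item the pieces glue at $\theta$ by continuity.
\end{itemize}
The converse direction also works, once you add two points. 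First, a nonzero $\nu$ that is not a point mass admits some $\theta_0$ with $\nu((0,\theta_0])>0$ and $\nu((\theta_0,\infty))>0$. Second, the triple is unique: $\eta_0=\dtran(0+)$, the right derivative of $\dtran$ is $\eta_1+\nu((\sigma,\infty))$, and $\nu((\sigma,\infty))\leq\sigma^{-1}\int(\sigma\wedge\theta)\nu(\dl\theta)<\infty$ decreases to $0$ as $\sigma\to\infty$. This proves directly what the paper leaves to the literature.

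Your last paragraph, however, contains a computational slip that leads you to doubt a true part of the statement. Since $\psi_\theta(x)=x^2$ for $x\leq\theta$ and $\psi_\theta(x)=2\theta x-\theta^2$ for $x\geq\theta$, we have $\psi_\theta(x)/(2\theta)=x^2/(2\theta)$ on $\ab{0,\theta}$ and $x-\theta/2$ beyond. The expressions you wrote, $x-x^2/(2\theta)$ and $\theta/2$, are the difference $x-\psi_\theta(x)/(2\theta)$, not $\psi_\theta(x)/(2\theta)$ itself. That difference is nondecreasing on $\ab{0,\theta}$ and equal to $\theta/2$ from $\theta$ on. Hence $\sup_{x\geq0}\abs{\psi_\theta(x)/(2\theta)-x}=\theta/2\to0$. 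The uniform convergence on $\Rp$ holds with exactly the normalization in the statement, and no appeal to a different normalization in the cited source is needed. With this correction the proposal is complete.
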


\begin{proof}
Apply \cref{lem:representation} to $f=\dtran$ as in the proof of \cref{lem:choquet}, and integrate from $0$ to $x$.
Since $\tran(0)=0$ we get $\tran(x)=\int_0^x\dtran$, and term by term $\int_0^x \eta_0\dl\sigma = \eta_0x$, $\int_0^x \eta_1\sigma\dl\sigma =\tfrac{\eta_1}{2}x^2$, and, by Tonelli,
\begin{equation*}
  \int_0^x\!\int_{\Rpp}(\sigma\wedge\theta)\nu(\dl\theta)\dl\sigma
  = \int_{\Rpp}\!\int_0^x(\sigma\wedge\theta)\dl\sigma\nu(\dl\theta)
  = \tfrac12\int_{\Rpp}\psi_\theta(x)\nu(\dl\theta)
  \eqcm
\end{equation*}
because $\int_0^x(\sigma\wedge\theta)\dl\sigma =\tfrac12x^2-\tfrac12\posp{x-\theta}^2=\tfrac12\psi_\theta(x)$.
This is \eqref{eq:choquet-tau}.
Conversely each of $\mathrm{id}$, $x^2$ and $\psi_\theta$ lies in $\setcco$, so \eqref{eq:choquet-tau} exhibits $\setcco$ as the closed convex cone they generate; that they are the extreme rays is the corresponding statement for the cone of nonnegative nondecreasing concave functions, whose extreme rays are the constants, the identity and the ramps $\sigma\mapsto\sigma\wedge\theta$, together with the argument given above for $\R_+\mathrm{id}$.
\end{proof}

\section{Formal verification}\label{app:lean}
\Cref{lem:reduced}, with the case analysis of \cref{lem:case1,lem:case2}, has been formalized in Lean~4 as the theorem \texttt{TrapezoidComparison.reduced} of the supplementary file \texttt{TrapezoidComparison.lean}; nothing else in the paper has been.
The statement of \texttt{reduced} is
\begin{quote}\small\ttfamily
    theorem reduced \{m h s $\delta$ t $\theta$ : $\mathbb{R}$\}\\
    \hspace*{1em}(hh : 0 $\le$ h) (hhm : h $\le$ m) (hmt : m $\le$ t)\\
    \hspace*{1em}(h$\delta$ : 0 $\le$ $\delta$) (h$\delta$s : $\delta$ $\le$ s) (hst : s $\le$ t)\\
    \hspace*{1em}(hcon : h\textasciicircum{}2 - $\delta$\textasciicircum{}2 $\le$ t\textasciicircum{}2 - m\textasciicircum{}2)\\
    \hspace*{1em}(ht0 : 0 $\le$ $\theta$) :\\
    \hspace*{1em}tentPrim m h $\theta$ $\le$ tentPrim s $\delta$ $\theta$ + dropPrim m t $\theta$
\end{quote}
with \texttt{tentPrim} the closed form of $P$ and of $N$, and \texttt{dropPrim} that of $D$; the hypotheses are \eqref{eq:reduced-hyp}, the comparison diagonal $\Am$ being written \texttt{t}.
The file contains no \texttt{sorry}, and \texttt{\#print axioms} reports only \texttt{propext}, \texttt{Classical.choice} and \texttt{Quot.sound}; checked against Lean \texttt{v4.33.0} and Mathlib tag \texttt{v4.33.0}, that is commit \texttt{db584cd6d46c92f2\allowbreak 09a44c0f1c829460\allowbreak d327499d}.
Its sole dependency is \texttt{import Mathlib}.
It is distributed as an ancillary file of the arXiv version of this paper, together with a \texttt{lean-toolchain} and a \texttt{lakefile.toml} recording that pin, so that the development can be rebuilt exactly.
The closed forms \eqref{eq:closed-forms} and \eqref{eq:closed-forms-D} of $P$, $N$ and $D$ are taken as definitions in the Lean file; that they agree with the integrals \eqref{eq:PND}, and everything else in the paper, is checked only on paper.

\section*{Statements and Declarations}

\subsection*{Competing interests}
The author has no competing interests to declare.

\subsection*{Funding}
No specific funding was received for this work.

\subsection*{Data availability}
No datasets were generated or analyzed. The Lean~4 development of \cref{app:lean} is provided as an ancillary file of the arXiv version.

\subsection*{AI use}
The results in this paper were obtained with the assistance of the large language model Claude Opus 5 (Anthropic), which was used interactively to generate and refine candidate arguments. All statements and proofs were independently verified by the author, who is solely responsible for the content of the paper.

\bibliographystyle{amsplain-doi}
\bibliography{references}

\end{document}